\documentclass[11pt,reqno]{amsart}

\usepackage{amsfonts,latexsym,amssymb,amsmath,amsthm}
\usepackage{todonotes}
\usepackage[a4paper,margin=30mm,marginparwidth=26mm,marginparsep=2mm]{geometry}
\usepackage[shortlabels]{enumitem}
\usepackage{mathrsfs}
\usepackage[backend=biber]{biblatex}
\usepackage{comment}
\usepackage{braket}
\usepackage{bbm}
\usepackage{hyperref}
\usepackage[nameinlink,capitalise]{cleveref}
\usepackage{microtype}
\usepackage{mathabx}

\usepackage{tikz}
\usetikzlibrary{arrows.meta,calc}

\numberwithin{equation}{section}

\hypersetup{
    colorlinks,
    linkcolor={red!50!black},
    citecolor={blue!50!black},
    urlcolor={blue!80!black}
}

\newcommand{\diag}{\mathrm{diag}}
\newcommand{\eps}{\varepsilon}
\newcommand{\tens}{\otimes}

\newcommand{\N}{\mathbb{N}}
\newcommand{\Z}{\mathbb{Z}}
\newcommand{\R}{\mathbb{R}}
\newcommand{\C}{\mathbb{C}}

\newcommand{\Dcal}{\mathcal D}
\newcommand{\Jcal}{\mathcal J}
\newcommand{\Hcal}{\mathcal H}
\newcommand{\Cinf}{\mathit C^{\infty}}
\newcommand{\Cdotinf}{\dot{\mathit C}^{\infty}}
\newcommand{\Scal}{\mathcal S}
\newcommand{\Mcal}{\mathcal{M}}
\newcommand{\Dscr}{\mathscr{D}}
\newcommand{\Escr}{\mathscr{E}}
\newcommand{\Ascr}{\mathscr{A}}
\newcommand{\Nscr}{\mathscr{N}}
\newcommand{\Sscr}{\mathscr{S}}
\newcommand{\Fock}{\mathfrak{F}}
\newcommand{\Fscr}{\mathscr{F}}
\newcommand{\sob}{\mathsf{H}}

\newcommand{\comps}{\mathrm{comp}}
\newcommand{\locs}{\mathrm{loc}}
\newcommand{\pc}{\mathrm{pc}}
\newcommand{\fc}{\mathrm{fc}}
\newcommand{\spc}{\mathrm{spc}}
\newcommand{\sfc}{\mathrm{sfc}}
\newcommand{\scs}{\mathrm{sc}}

\newcommand{\WF}{\mathsf{WF}}
\newcommand{\bWF}{\partial\WF}
\newcommand{\im}{\mathrm{i}}
\newcommand{\vol}{\mathrm{vol}}
\newcommand{\n}{\mathsf{n}}
\newcommand{\sym}[1]{\mathit{S}^{#1}}
\DeclareMathOperator{\ran}{\mathrm{ran}}
\DeclareMathOperator{\dist}{\mathrm{dist}}

\DeclareMathOperator{\Sf}{\mathbb S}
\DeclareMathOperator{\Id}{\mathrm{Id}}
\DeclareMathOperator{\Ell}{\mathrm{Ell}}
\DeclareMathOperator{\Char}{\mathrm{Char}}
\DeclareMathOperator{\dom}{\mathrm{dom}}
\DeclareMathOperator{\FT}{\mathcal F}
\DeclareMathOperator{\supp}{\mathrm{supp}}
\DeclareMathOperator{\singsupp}{\mathrm{singsupp}}
\DeclareMathOperator{\Op}{\mathrm{Op}}
\DeclareMathOperator{\Diff}{\mathrm{Diff}}
\DeclareMathOperator{\LS}{\mathrm{LS}}
\DeclareMathOperator{\spec}{\mathrm{spec}}

\renewcommand{\bar}[1]{\overline{#1}}
\renewcommand{\tilde}[1]{\widetilde{#1}}
\renewcommand{\check}[1]{\widecheck{#1}}
\renewcommand{\phi}{\varphi}
\renewcommand{\hat}[1]{\widehat{#1}}
\renewcommand{\b}{{}^{\mathit{b}}\!}
\DeclareMathOperator{\diverg}{\mathrm{div}}

\def\setquotient#1#2{{}^{#1}\!\!\diagup_{#2}}

\newcommand{\de}{\mathop{}\!\mathrm{d}}
\newcommand{\VF}{\mathfrak{X}}
\newcommand{\Sstar}{\mathbb{S}^{\ast\!}}
\newcommand{\Tstar}{\mathit{T}^{\ast\!}}
\newcommand{\bT}{\b{\,T}}

\newcommand{\bTs}[1]{{}^{\mathit b}T^{\ast}#1}          
\newcommand{\bSs}[1]{{}^{\mathit b}\mathbb{S}^{\ast}#1} 
\newcommand{\compcosphere}[1]{{\tilde{\Sf}^\ast}\!{#1}}

 \newtheorem{theorem}{Theorem}[section]
 \newtheorem*{theorem*}{Theorem}
 \newtheorem{lemma}[theorem]{Lemma}
 \newtheorem{corollary}[theorem]{Corollary}
 \newtheorem{prop}[theorem]{Proposition}

 \theoremstyle{definition}
 \newtheorem{definition}[theorem]{Definition}
 \newtheorem{example}[theorem]{Example}
 \newtheorem{remark}[theorem]{Remark}

\allowdisplaybreaks
\title[Hadamard states and timelike boundaries]{Hadamard states for spacetimes with timelike boundaries}

\author[A.P.~Contini]{Alessandro Pietro Contini}
\address{Leibniz University Hannover, Institute of Analysis, 30167 Hannover, Germany}  \email{alessandro.contini@math.uni-hannover.de}

\author[A.~Strohmaier]{Alexander Strohmaier}
\address{Leibniz University Hannover, Institute of Analysis, 30167 Hannover, Germany}  \email{a.strohmaier@math.uni-hannover.de}

\subjclass[MSC2020]{81T20, 35A21}
\keywords{Microlocal analysis, Hadamard states, spectral theory, quantum field theory on curved spacetimes, timelike boundary}

\thanks{APC thanks Nikolas Eptaminitakis, Arne Hofmann and Rafi Islam for useful conversations throughout the project}

\date{\today}

\begin{document}

\begin{abstract}
	We formulate and prove existence of pure quasifree Hadamard states for the Klein--Gordon field with Dirichlet boundary condition on a globally hyperbolic spacetime with timelike boundary. The microlocal condition is imposed on the one-particle Hilbert-space-valued distribution $p\circ G_D$ rather than on the two-point function. This is expressed by a $b$-wave-front set condition on the compressed future cone. The analysis on the manifold with corners $\bar M\times\bar M$ is then almost completely avoided. We prove that our version of the Hadamard condition is universal in the class of quasi-free Hadamard states. We also prove a propagation theorem for positive-energy microlocal splittings which uses only a microlocal splitting in the small $b$-calculus and well-posedness of the Dirichlet problem, and in particular requires no propagation theorem for reflected or gliding rays; combined with an explicit computation in the ultrastatic case and a Fulling--Narcowich--Wald deformation argument this yields the construction of a pure quasi-free Dirichlet--Hadamard state.
\end{abstract}

\maketitle


\section{Introduction and statement of main results}

The Hadamard condition is the microlocal form of the positive-frequency condition for quantum fields on curved spacetimes. On globally hyperbolic spacetimes without boundary, Radzikowski's theorem identifies the Hadamard condition with a precise wave-front set condition on the two-point function \cite{radzikowski_micro-local_1996}. This formulation is now one of the basic tools in the construction of physically meaningful states in algebraic quantum field theory on curved spacetimes. The construction of Hadamard states, and the analysis of their properties for wave-type equations, can be understood as a microlocal splitting of the canonical propagators and fundamental solutions into past and future directed components. Indeed, Radzikowski's work is based heavily on the work of Duistermaat and H\"ormander on distinguished parametrices and Fourier integral operators \cite{duistermaat_FIO_II_1972}. This provides a very interesting connection between microlocal analysis of PDEs and quantum field theory on curved spacetimes, see for example \cite{brunetti_microlocal_1996,brunetti_microlocal_2000}

The purpose of this paper is to clarify the corresponding notion for spacetimes with timelike boundary and to construct microlocal splittings for the Klein--Gordon equation in this setting. We consider a globally hyperbolic spacetime with timelike boundary $\bar M$, and the real scalar field governed by a formally self-adjoint normally hyperbolic operator
\begin{equation*}
P=\Box_g+V
\end{equation*}
with Dirichlet boundary condition and $V$ a smooth potential, bounded from above and bounded from below by a positive constant. The presence of the boundary leads to several related difficulties. First, the correct functional-analytic setting is more delicate than in the boundaryless case: supports may meet the boundary, and the relevant Cauchy data spaces are fractional Sobolev spaces, in which boundary conditions are less direct to formulate. Second, the ordinary two-point function is a distribution on $\bar M\times \bar M$, a manifold with corners. A direct formulation of the Hadamard condition on this product would therefore require microlocal analysis on a cornered space. Finally, a direct use of propagation of singularities for boundary value problems leads to the delicate analysis of reflected and gliding rays, and in particular to the behaviour of the null-geodesic flow at glancing points.

One of the main points of this paper is that these difficulties can be separated. The correct definition of a Hadamard state with Dirichlet boundary condition is obtained by formulating the microlocal condition at the level of the one-particle distribution, rather than directly at the level of the scalar two-point function. This Hilbert-space-valued point of view is in the spirit of the use of Hilbert-space-valued distributions and their wave-front sets in \cite{strohmaier_reeh-schlieder_2002}. More precisely, if
\begin{equation*}
G_D=G^+_D-G^-_D
\end{equation*}
is the Dirichlet Pauli--Jordan propagator and if $p$ denotes the projection onto the positive spectral subspace determined by a one-particle structure, then the relevant object is the Hilbert-space-valued distribution
\begin{equation*}
\Phi=p\circ G_D .
\end{equation*}
The two-point function is recovered from it by
\begin{equation*}
\omega_2(f,h)=\left(\Phi(\bar f),\Phi(h)\right).
\end{equation*}
Thus the microlocal analysis is carried out on $\bar M$, not on $\bar M^2$. This is the first simplification: the use of Hilbert-space-valued distributions allows us to avoid corners altogether.

The second point is that the correct boundary singularity is not the full $b$-wave-front set as such. The $b$-wave-front set provides the invariant language in which the analysis is carried out, but for distributions which are normally regular the relevant boundary object is the boundary wave-front set
\begin{equation*}
\partial\WF(u)=\WF_b(u)\cap \b\Tstar_{\partial M}\bar M .
\end{equation*}
This wave-front set measures the failure of strong tangential regularity at the boundary. In the present problem normal regularity is supplied by the fact that the boundary is non-characteristic for $P$. The remaining singular information is therefore tangential, and is encoded by $\partial\WF$; see \cite{chazarain_reflection_1976,de_gosson_microlocal_1982}.

The third point is the identification of the correct function spaces. Test functions have to be allowed to have compact support meeting the boundary. In particular, $\Cinf_c(M)$ is not invariant under time evolution: singularities and supports may propagate to the timelike boundary. The correct test space for the Dirichlet field is therefore $\Cinf_c(\bar M)$, together with the quotient by the image of $P$ appropriate to the Dirichlet Pauli--Jordan propagator.

On a Cauchy surface $\bar\Sigma$, the corresponding one-particle space is not an ad hoc Sobolev space. It is forced by the spectral calculus of the Dirichlet operator. If
\begin{equation*}
A=\Delta_D+V ,
\end{equation*}
with $\Delta_D\ge 0$, we use the convention
\begin{equation*}
\Dcal_s=\dom(A^{s/2}) .
\end{equation*}
Then, in the range relevant here,
\begin{equation*}
\dot\sob^{1/2}_{\comps}(\bar\Sigma)\oplus \sob^{-1/2}_{\comps}(\bar\Sigma) \subset \Dcal_{1/2}\oplus \Dcal_{-1/2} \subset
\dot\sob^{1/2}_{\locs}(\bar\Sigma)\oplus \sob^{-1/2}_{\locs}(\bar\Sigma)
\end{equation*}
and equality in case the spacetime is spatially compact.
The dot in $\dot\sob^{1/2}$ is important: this is the fractional Dirichlet space, equivalently the Lions--Magenes space $\sob^{1/2}_{00}(\Sigma)$, not a naive zero-trace space \cite{lions_non-homogeneous_1972_I}. This identification is one of the basic inputs in the construction of the field and in the proof that $p\circ G_D$ is a Hilbert-space-valued distribution on $\bar M$.

We define a quasifree state to be Dirichlet--Hadamard if its one-particle distribution satisfies the microlocal positive-energy condition. More precisely, the Hilbert-space-valued distribution $\Phi$ is required to be normally regular, its interior wave-front set is required to have positive time-orientation, and its boundary wave-front set is required to satisfy
\begin{equation*}
\partial\WF(\Phi)\subset \tilde{V}^{+}_{\partial M}\bar M ,
\end{equation*}
where $\tilde{V}^{+}_{\partial M}\bar M$ denotes the compressed future causal cone over the boundary. Equivalently, the $b$-wave-front set of $\Phi$ is contained in the compressed future causal cone inside the $b$-cotangent bundle:
\begin{equation*}
\WF_b(\Phi)\subset \tilde{V}^{+}\bar M .
\end{equation*}
By the calculus of Hilbert-space-valued distributions, this condition implies a corresponding microlocal condition for the scalar two-point function. It is therefore the natural analogue, in the presence of a timelike boundary, of the usual microlocal spectrum condition.

There is also a second, purely microlocal, result in the paper which may be of independent interest. It concerns the propagation of positive energy for solutions of the Dirichlet problem. Let the distribution $u$ solve
\begin{equation*}
Pu=0,\qquad u|_{\partial M}=0.
\end{equation*}
Suppose that near one Cauchy surface the singularities of $u$ have only future time-orientation, in the sense that for a neighbourhood $U$ of this Cauchy surface one has
\begin{equation*}
\WF_b(u)\cap \b\Tstar U\subset \tilde{V}^{+}U,
\end{equation*}
where $\tilde{V}^+U$ is the compressed future causal cone. We prove that the same inclusion holds globally:
\begin{equation*}
\WF_b(u)\subset \tilde{V}^{+}\bar M .
\end{equation*}
Equivalently, absence of negative-energy singularities near one Cauchy surface implies absence of negative-energy singularities everywhere.

The proof of this propagation result is deliberately economical. Although related statements can be obtained from the general propagation-of-singularities theorems of Melrose and Sj\"ostrand \cite{melrose_singularities_BVP_I_1978,melrose_singularities_BVP_II_1982}, we do not use a full propagation theorem for boundary value problems, and in particular do not need to follow reflected or gliding rays. Instead we use the small $b$-calculus only to perform a microlocal splitting. A $b$-pseudo-differential partition of unity gives, modulo smooth terms,
\begin{equation*}
u=u_+ + u_- + u_s ,
\end{equation*}
where $u_+$ is microlocalised in the future cone, $u_-$ in the past cone, and $u_s$ in the spacelike region. Boundary elliptic regularity for the Dirichlet problem shows that the spacelike term is smooth. If the original solution has no past singularities near a Cauchy surface, then the past component $u_-$ is smooth there. Well-posedness and ordinary propagation of smoothness for the Dirichlet problem then imply that $u_-$ is smooth everywhere. Hence
\begin{equation*}
u=u_+ \mod \Cinf(\bar M),
\end{equation*}
which is the desired global positive-energy condition.

The $b$-calculus is used to make the future/past/spacelike splitting invariant up to the boundary; after this splitting has been made, the global argument requires only smooth well-posedness of the Dirichlet problem. Thus the result can be viewed as a propagation theorem for positive-energy microlocal splittings, rather than as a full boundary propagation theorem.

We then prove existence of Dirichlet--Hadamard states. In the ultrastatic case $\bar M=\R\times\bar\Sigma$, the Dirichlet spectral calculus gives an explicit one-particle structure. The positive-frequency field is described by the functional calculus of $\Delta_D+V$, and the estimates on Dirichlet eigenfunctions show that the associated Hilbert-space-valued distribution has $b$-wave-front set contained in the compressed future cone:
\begin{equation*}
\WF_b(p\circ G_D)\subset \tilde{V}^{+}\bar M .
\end{equation*}
This gives a Dirichlet--Hadamard state in the ultrastatic model.

The general case is obtained by a deformation argument in the spirit of Fulling--Narcowich--Wald \cite{fulling_singularity_1981}. One deforms the given globally hyperbolic spacetime with timelike boundary to an ultrastatic one in the past, constructs the Dirichlet--Hadamard state there, and then propagates the positive-energy microlocal condition by the theorem described above. Applying the same argument across the region where the deformed spacetime agrees with the original one gives the desired state on $\bar M$.

The main existence result is therefore the following.

\begin{theorem*}[A]
	Let $\bar M$ be a globally hyperbolic spacetime with timelike boundary, let $V$ be a smooth function on $\bar{M}$, and let
	\begin{equation*}
	P=\Box_g+V
	\end{equation*}
	be a scalar, formally self-adjoint, normally hyperbolic operator of order two with Dirichlet boundary condition. Then the corresponding Dirichlet field algebra admits a Dirichlet--Hadamard state.
\end{theorem*}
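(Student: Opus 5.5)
The plan follows the outline of the introduction: construct the state in the ultrastatic model, then transport the positive-energy condition to $\bar M$ by a Fulling--Narcowich--Wald deformation and the propagation theorem. Throughout, the Dirichlet--Hadamard condition is read at the level of the one-particle distribution $\Phi=p\circ G_D$, as
\begin{equation*}
\WF_b(\Phi)\subset\tilde V^+\bar M .
\end{equation*}

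\emph{Step 1: a deformed spacetime.} Choose a Cauchy temporal function, which splits $\bar M\cong\R\times\bar\Sigma$ with $g=-\beta\,dt^2+h_t$ and $\partial M=\R\times\partial\Sigma$. On the same manifold, build a metric $\tilde g$ and a potential $\tilde V$ with three properties:
\begin{itemize}
\item $(\tilde g,\tilde V)$ coincides with $(g,V)$ for $t>t_1$;
\item it is ultrastatic, $\tilde g=-dt^2+h_0$ with $\tilde V=\tilde V(x)$ positive and bounded below, for $t<t_0$;
\item the deformed spacetime is globally hyperbolic with timelike boundary, and $\{t\}\times\bar\Sigma$ stays Cauchy for all $t$.
\end{itemize}
In the intermediate region one interpolates in $t$ after rescaling the lapse, exactly as in the boundaryless case. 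The timelike character of the boundary survives because convex combinations of Lorentzian metrics for which $\partial_t$ is timelike and $\partial M$ is timelike remain so. If $V$ is not bounded below, first modify $V$ in the past as well. This is harmless: $V$ only enters lower-order terms and does not affect $\WF_b$.

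\emph{Step 2: the ultrastatic state.} On the ultrastatic part, take the Dirichlet spectral one-particle structure. With $A=\Delta_D+\tilde V$, the one-particle space is the completion of $\Dcal_{1/2}\oplus\Dcal_{-1/2}$, and $p$ is the projection onto the $e^{-\im tA^{1/2}}$ frequencies. The ultrastatic computation announced above gives $\WF_b(p\circ \tilde G_D)\subset\tilde V^+$ on the full ultrastatic spacetime, and in particular on a slab $U=(-\infty,t_0)\times\bar\Sigma$.

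\emph{Step 3: propagation on the deformed spacetime.} The one-particle map is determined by Cauchy data, so the state extends to a quasifree state on the Dirichlet field algebra of $(\bar M,\tilde g)$. Fix any $\psi$ in the one-particle space. The scalar distribution $u=(\psi,\Phi(\cdot))$ solves $\tilde Pu=0$ with $u|_{\partial M}=0$, and $\WF_b(u)\cap\b\Tstar U\subset\tilde V^+U$. The propagation theorem then gives $\WF_b(u)\subset\tilde V^+\bar M$. To obtain the Hilbert-space-valued statement, I would not argue vector by vector. Instead I would run the splitting argument directly on $\Phi$: write
\begin{equation*}
\Phi=\Phi_++\Phi_-+\Phi_s
\end{equation*}
with $b$-pseudodifferential cutoffs acting on the Hilbert-space-valued distribution. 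Boundary ellipticity makes $\Phi_s$ smooth with values in the one-particle space. $\Phi_-$ solves the Dirichlet problem modulo smooth errors and is smooth near $U$, so by Hilbert-space-valued smooth well-posedness it is smooth everywhere. Normal regularity comes from the non-characteristic boundary.

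\emph{Step 4: transfer to $\bar M$.} On $t>t_1$ the deformed spacetime agrees with $\bar M$. The Dirichlet field algebras of $\tilde M$ and $\bar M$ are both isomorphic, via the time-slice property, to the algebra of the region $t>t_1$, since $G_D$ is determined by Cauchy data on a surface in that region. So the state pulls back to $\bar M$, and $\WF_b(\Phi)\cap\b\Tstar\{t>t_1\}\subset\tilde V^+$. Applying the propagation theorem once more, now on $\bar M$ with $U=\{t>t_1\}$, gives the global inclusion. Purity is preserved at each stage because the one-particle structure is transported unitarily.

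\emph{Main obstacle.} I expect the real difficulty to be Step 1 together with the well-posedness input. Global hyperbolicity with timelike boundary must hold throughout the deformation. The symplectic map on Cauchy data, and the identification with $\Dcal_{1/2}\oplus\Dcal_{-1/2}$, must be continuous across the interpolation region, so that the pulled-back one-particle structure is still defined on $\Cinf_c(\bar M)$. I would first try to handle this with energy estimates for the Dirichlet problem in the $\dot\sob^{1/2}\oplus\sob^{-1/2}$ spaces, interpolating between the $\dot\sob^1\oplus L^2$ and $L^2\oplus\sob^{-1}$ estimates.
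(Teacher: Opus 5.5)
Your proposal is essentially the paper's proof. You deform $(g,V)$ on a time slab to an ultrastatic model with positive potential in the past, use the explicit positive-energy estimate there, and apply the propagation theorem for positive-energy splittings twice, passing between the three spacetimes by the time-slice property. The step you flag as the main obstacle needs no $\dot\sob^{1/2}\oplus\sob^{-1/2}$ energy estimates. The paper chooses the ultrastatic spatial metric $\tilde h$ complete on $\bar\Sigma$ and with $\tilde h\ge\beta^{-1}h_t$ on the interpolation slab. Then causal curves of the deformed metric are $g$-causal there and cannot escape to infinity in finite time in the ultrastatic past, which makes $\bar\Sigma_{-1}$ Cauchy. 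Completeness of your $h_0$ is needed anyway for the ultrastatic model to be globally hyperbolic. Continuity of the transported one-particle distribution is automatic, because the time-slice map $f\mapsto f-P(\psi G^-_Df)$ is continuous on $\Cinf_c(\bar M)$ by smooth well-posedness.
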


\subsection*{Comparison with existing literature}

Concerning well-posedness and regularity for non-characteristic hyperbolic mixed problems, the classical references are the works of \textcite{massey_abstract_1972}, \textcite{rauch_differentiability_1974}, and more recently the comprehensive book by \textcite{benzoni-gavage_multi-dimensional_2007}. In the geometric setting of spacetimes with timelike boundaries, many people have analysed initial-boundary-value problems related to our setup. For example, \textcite{ginoux_cauchy_friedrichs_2022} discussed the Cauchy problem for Friedrichs-type hyperbolic systems on globally hyperbolic spacetimes with timelike boundary. The case of the Dirac operator has also been investigated, both for local boundary conditions by \textcite{grosse_well-posedness_2020} and for local-in-time, nonlocal-in-space conditions by \textcite{baer_cauchy_2022}. In this context, we supply a proof of the existence of propagators for Klein--Gordon operators on general globally hyperbolic spacetimes timelike boundary.

The study of the propagation of singularities for solutions to hyperbolic non-characteristic boundary-value problems has a long history. Following the pioneering works of Lax--Nirenberg \cite{nirenberg_lectures_1979}, \textcite{taylor_reflection_1975} and \textcite{chazarain_reflection_1976}, \textcite{melrose_singularities_BVP_I_1978, melrose_singularities_BVP_II_1982} introduced the $b$-wave-front set. Its systematic study, including the extension to non-normally regular distributions, was undertaken by \textcite{melrose1982transformationofboundaryproblems} in his first presentation of the $b$-calculus. The general, geometric framework also provided understanding for the boundary wave-front set as studied by \textcite{chazarain_reflection_1976} and \textcite{de_gosson_microlocal_1982}. Our argument uses no propagation-of-singularities theorem for boundary-value problems. It relies only on boundary elliptic regularity, smooth well-posedness of the Dirichlet problem, and the definition of the $b$-wave-front set.

In the context of quantum field theory on spacetimes with timelike boundaries, \textcite{dappiaggi_maxwell_timelike_2020} analysed the Maxwell operator on differential forms and, conditionally on the existence of propagators, constructed the algebras of observables. Taking advantage of the deformation argument of \textcite{fulling_singularity_1981}, the Hadamard condition for the Dirac field on a manifold with timelike boundary has been studied by \textcite{drago_moller_2022}. Pseudo-differential methods in the construction of Hadamard states go back to the work of Junker \textcite{junker_hadamard_1996} (see also \textcite{junker_adiabatic_2002}) and were substantially developed by \textcite{gerard_construction_2014} and others. To the best of our knowledge, however, these constructions have not been generalised to the setting of a spacetime with boundary. On the other hand, working on asymptotically anti-deSitter spacetimes, \textcite{wrochna_holographic_2017} proposed and analysed a Hadamard-like condition via the methods of \textcite{vasy_wave_2012} and \textcite{hintz_2016_quasilinear_wave}. However, the geometric setting is quite different from ours, since, there, the boundary exists only conformally. On the other hand, the classical approach (as described for example in the survey by \textcite{khavkine_algebraic_qft_2015}), based on the construction of the Hadamard parametrix, was recently employed in a preprint by \textcite{costeri_hadamard_2025} in order to construct propagators and Hadamard states on Minkowski half-space with respect to the Robin boundary condition. However, the authors do not go beyond this local model. To our knowledge, ours is the first construction of Hadamard states for the Klein--Gordon field on a general globally hyperbolic spacetime with timelike boundary.

\subsection*{Conventions}
We collect here the conventions in force throughout the paper.
\begin{itemize}
	\item The Lorentzian signature is $(+,-,\dots,-)$. A vector $X$ is timelike if $g(X,X)>0$, spacelike if $g(X,X)<0$ or $X=0$, and lightlike if $X\neq0$ and $g(X,X)=0$; a covector is timelike, spacelike or lightlike according to the character of its metric dual. A vector/covector is causal if it is either lightlike or timelike. In an adapted frame, for $\alpha=\tau\de t+\zeta_j\de z^j$ one has $g^{-1}(\alpha,\alpha)=\tau^2-|\zeta|^2$, and $\alpha$ is future-directed if it is causal and $g^{-1}(\alpha,\de t)>0$, i.e. $\tau>0$.
	\item $\Box_g$ denotes the Laplace--Beltrami (d'Alembert) operator of the Lorentzian metric $g$, while on a Riemannian manifold $\Delta_g$ denotes the \textit{positive} Laplacian, $\Delta_g=-\diverg_g\operatorname{grad}_g$. With the above signature these two conventions fit together: an ultrastatic metric $g=\de t^2-g_{\bar\Sigma}$ gives
	\begin{equation*}
		\Box_g=\partial^2_{tt}+\Delta_{g_{\bar\Sigma}},
	\end{equation*}
	which is the form of the wave operator used throughout.
	\item The Fourier and Mellin transforms are normalised as in \cref{sect:distributions on manifolds with boundary}: the oscillatory factors are $e^{-\im z\zeta}$ and $x^{-\im\lambda-1}$, and each transform carries the prefactor $(2\pi)^{-d/2}$, where $d$ is the number of variables transformed.
	\item The scalar product of a complex Hilbert space is written $\left(\cdot,\cdot\right)$ and is conjugate-linear in the \textit{first} and linear in the \textit{second} variable. The pairing between a space of distributions and its dual is \textit{bilinear} and is written $\braket{\cdot,\cdot}$.
	\item $\tilde G^\pm_D$ denote the retarded and advanced inverses of the Dirichlet problem $(P,\gamma_0)$ (see \cref{theo:propagators}) with arbitrary boundary datum; $G^\pm_D=\tilde G^\pm_D(\,\cdot\,,0)$ denote their restrictions to vanishing boundary datum; and $G_D=G^+_D-G^-_D$ is the Dirichlet Pauli--Jordan propagator.  Where the difference has to be taken with arbitrary boundary datum (in the exact sequence of \cref{lemma:exact sequence}) we write $\tilde G_D=\tilde G^+_D-\tilde G^-_D$, so that $G_D=\tilde G_D(\,\cdot\,,0)$.
\end{itemize}

The paper is organised as follows. We begin by recalling the distribution spaces on manifolds with boundary that are used throughout the paper, including supported and extensible distributions, normally regular distributions, and the Sobolev spaces associated with the Dirichlet operator. We then develop the Hilbert-space-valued version of the $b$-wave-front set and explain the relation with distributions on the product space. The next section discusses globally hyperbolic spacetimes with timelike boundary and the Dirichlet Pauli--Jordan propagator, and proves the propagation theorem for positive-energy splitting. After recalling the construction of the field algebra of the Dirichlet--Klein--Gordon equation, we construct and analyse the ultrastatic model, proving the positive-energy microlocal estimate there. Finally, we apply the deformation argument to obtain the existence of Dirichlet--Hadamard states. An appendix recalls the parts of the small $b$-calculus used in the proof.

\section{Distributions on a manifold with boundary}\label{sect:distributions on manifolds with boundary}

Let $\bar{M}$ be a smooth manifold with (embedded) boundary, let $M$ be its interior, and assume a volume form has been chosen. In the main part of this paper, $\bar{M}$ will always be a smooth submanifold (possibly with boundary) of a Lorentzian spacetime. It is thus no restriction to forgo any mention of density bundles and assume a global density has been chosen.

Recall that a \textit{boundary-defining function} is a (possibly only locally defined) smooth function $x\colon \bar{M}\to \R$ that is nonnegative and vanishes to exactly first order at boundary points. When we write down expressions in coordinates near a point $p\in\partial M$, we will stick to the following convention: a coordinate chart will be written as $(x,y^{1},\dots,y^{n-1})$, where $x$ is a local boundary-defining function near $p$ and $y^j$ are coordinates on $\partial M$, lifted to $\bar{M}$ near $p$ using $x$. It is always possible, using the \textit{collar neighbourhood theorem}, to find a \textit{globally defined} boundary-defining function. The coordinates chosen in such a way are called \textit{adapted coordinates}.

\subsection*{Supported and extensible distributions}The basic spaces of distributions are the supported $\dot\Dscr'(\bar{M})$ and extensible $\Dscr'(\bar{M})$ distributions, defined as
\begin{align*}
	\dot\Dscr'(\bar{M})&\equiv \left(\Cinf_c(\bar{M})\right)',\\
	\Dscr'(\bar{M})&\equiv \left(\Cdotinf_c(\bar M)\right)'.
\end{align*}
In accordance with the above notation, for a distribution type $\Fscr$, we will always denote in what follows the set of supported elements of type $\Fscr$ by $\dot\Fscr$ and simply by $\Fscr$ the space of extensible objects (see \cref{lemma:exact sequence of distributions} for the explanation of the name). Moreover, the notation $\Fscr_{\comps}$ will be used for $\Fscr$-distributions of compact support and $\Fscr_{\locs}$ for objects that are, locally near any point of $\bar{M}$, distributions of type $\Fscr$. In the case of compact $\bar M$, these two classes clearly coincide. Finally, we denote by $\dot{\Fscr}_K(\bar{M})$ the set of distributions of type $\Fscr$, supported by $\bar{M}$, whose support is entirely contained in the set $K\subset \bar{M}$.

The relation between supported and extensible spaces is encoded in
\begin{lemma}[see Section I.1 in \cite{melrose1982transformationofboundaryproblems}]\label{lemma:exact sequence of distributions}
	\begin{enumerate}
		\item The following sequence is exact
		\begin{equation}\label{eq:exact sequence for supported/extensible distributions}
			0\to \dot\Dscr'_{\partial M}(\bar{M})\hookrightarrow\dot\Dscr'(\bar{M})\xrightarrow{\cdot|_M}\{u|_{M}\colon u\in\Dscr'(\bar{M})\}\to 0,
		\end{equation}
		\item A distribution $u\in\Fscr(M)$ is in ${\Fscr}(\bar{M})$ if, and only if, for any enlargement $N\supset \bar{M}$ we find $U\in\Fscr(N)$ such that $U|_{M}=u$.
		\item A distribution $u\in\Fscr(M)$ is in $\dot\Fscr(\bar{M})$ if, and only if, for any enlargement $N\supset \bar{M}$ we have $\tilde u\in \Fscr(N)$, where $\tilde u(\phi)=u(\phi|_{M})$ for all $\phi\in\Cinf_c(N)$ is the ``extension-by-zero'' of $u$.
		\item In particular, on any enlargement $N$ of $\bar{M}$, the ``extension-by-zero'' of supported distributions $u\in\dot\Fscr(\bar M)$ realises them as elements of $\Fscr(N)$.
	\end{enumerate}
\end{lemma}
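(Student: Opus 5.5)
The plan is to reduce everything to duality and to the corresponding statements for the compactly supported and supported-at-$\partial M$ test-function spaces, working throughout on a fixed enlargement $N\supset\bar M$. Such an $N$ can be obtained by gluing an open collar $(-1,0]\times\partial M$ to $\bar M$ along the collar neighbourhood. The basic input is the pair of restriction and extension maps on test functions:
\begin{itemize}
\item Seeley extension gives a continuous extension operator $E\colon\Cinf_c(\bar M)\to\Cinf_c(N)$ with $(E\phi)|_{\bar M}=\phi$. Hence restriction $\Cinf_c(N)\to\Cinf_c(\bar M)$ is surjective and open, and by the open mapping theorem for LF-spaces the quotient topology coincides with the usual one.
\item $\Cdotinf_c(\bar M)$, the functions vanishing to infinite order at $\partial M$, is identified through extension by zero with the closed subspace of $\Cinf_c(N)$ of functions supported in $\bar M$.
\end{itemize}

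For (1), I dualise the exact sequence of test-function spaces
\begin{equation*}
0\to\Cdotinf_c(\bar M)\hookrightarrow\Cinf_c(\bar M)\to\Cinf_c(\bar M)/\Cdotinf_c(\bar M)\to0 .
\end{equation*}
\begin{itemize}
\item Restricting $u\in\dot\Dscr'(\bar M)$ to the subspace $\Cdotinf_c(\bar M)\supset\Cinf_c(M)$ produces an element of $\Dscr'(\bar M)$, and this is the map $\cdot|_M$.
\item Its kernel consists of the $u$ annihilating $\Cdotinf_c(\bar M)$. By Borel's lemma these are the $u$ that depend only on Taylor jets at $\partial M$, that is, the $u$ with support in $\partial M$. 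This identifies the kernel with $\dot\Dscr'_{\partial M}(\bar M)$.
\item Surjectivity is Hahn--Banach: $\Cdotinf_c(\bar M)$ is a closed subspace of the LF-space $\Cinf_c(\bar M)$, so every continuous functional on it extends. To stay within strict-inductive-limit arguments, I would extend separately on each step $\Cinf_K$ and patch with a partition of unity.
\end{itemize}

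For (2), given $u\in\Fscr(\bar M)$, I set $U(\phi)=u(\phi|_{\bar M})$ restricted to $\Cdotinf_c$. Here I use that $\Fscr(\bar M)$ is defined as the space of restrictions, or equivalently that the extension operator $E$ is transposable; for a Sobolev-type $\Fscr$ this needs $E$ to be bounded on the relevant scale. I take that from the Seeley/Lions--Magenes construction of $E$. Conversely, if $u=U|_M$ with $U\in\Fscr(N)$, then $u$ pairs continuously against $\Cdotinf_c(\bar M)\subset\Cinf_c(N)$.

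For (3), I use that $u\in\dot\Fscr(\bar M)$ acts on $\Cinf_c(\bar M)$. Then $\tilde u=u\circ(\cdot|_{\bar M})$ is continuous on $\Cinf_c(N)$, because restriction is continuous. For the converse, given $\tilde u$ supported in $\bar M$, I define $u(\phi)=\tilde u(E\phi)$. This is independent of the choice of $E$, because two extensions differ by a function vanishing on $\bar M$, and $\tilde u$ annihilates such functions since its support lies in $\bar M$. Item (4) is then a restatement of (3).

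The step I expect to be the main obstacle is the independence in (3) when $\supp\tilde u$ touches $\partial M$. A function vanishing on $\bar M$ but only to finite order at $\partial M$ must still be annihilated. I would handle this with the structure theorem for distributions near their support, which bounds $\tilde u$ locally by finitely many derivatives. I would combine this with the fact that $\phi$ vanishes identically on $\bar M$ together with all its derivatives there, by continuity from $N\setminus\bar M$, and approximate $\phi$ by functions supported in $N\setminus\bar M$. A related technical point is the Hahn--Banach extension in (1) over LF-spaces, for which I would again work on each $\Cinf_K$ and glue with a partition of unity.
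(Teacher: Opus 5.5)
The paper gives no proof of this lemma and simply defers to Melrose, Section I.1. Your route is the standard argument of that kind: Seeley extension, duality, Hahn--Banach applied on each $\Cinf_K$ and patched with a partition of unity (which correctly sidesteps the subspace-versus-inductive-limit topology issue), and the finite order of distributions near their support. Parts (1), (3) and (4) are essentially sound.

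\textbf{The gap: forward direction of (2).} For $\Fscr=\Dscr'$ this step fails as written.
\begin{itemize}
\item In the paper $\Dscr'(\bar M)$ is \emph{defined} as the dual of $\Cdotinf_c(\bar M)$. So appealing to ``$\Fscr(\bar M)$ is defined as the space of restrictions'' is circular.
\item The formula $U(\phi)=u(\phi|_{\bar M})$ is not defined, since $\phi|_{\bar M}\notin\Cdotinf_c(\bar M)$ in general.
\item Transposing your $E$ does not repair this. $E^t$ maps $\Dscr'(N)\to\dot\Dscr'(\bar M)$, which is the opposite direction. It also does not preserve restrictions to $M$, because $E\phi$ does not in general vanish outside $\bar M$ for $\phi\in\Cinf_c(M)$.
\end{itemize}

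The fix uses what you already proved. By the surjectivity in (1), choose $\tilde u\in\dot\Dscr'(\bar M)$ with $\tilde u|_M=u$, and set $U=\tilde u\circ(\cdot|_{\bar M})\in\Dscr'(N)$.

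If you want a genuine transpose argument instead, take a Seeley extension operator $E'$ from the \emph{exterior} closed region $N\setminus M$. Then $T\phi=\phi-E'(\phi|_{N\setminus M})$ is a continuous projection of $\Cinf_c(N)$ onto $\Cdotinf_c(\bar M)$ which fixes $\Cinf_c(M)$. Hence $U=u\circ T$ is a continuous extension of $u$ to $N$, obtained without Hahn--Banach.

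\textbf{Two smaller points.}
\begin{itemize}
\item In (1), Borel's lemma is not what identifies the kernel. The nontrivial inclusion is that a $u\in\dot\Dscr'(\bar M)$ with $\supp u\subset\partial M$ annihilates $\Cdotinf_c(\bar M)$. This follows from density of $\Cinf_c(M)$ in $\Cdotinf_c(\bar M)$ for the topology of $\Cinf_c(\bar M)$: multiply by $1-\chi(x/\epsilon)$ and use $\phi=O(x^\infty)$. Alternatively, use the same finite-order argument as in (3). The other inclusion is trivial.
\item In (3), the derivatives of a $\phi$ vanishing on $\bar M$ vanish on $\partial M$ by continuity from $M$, where $\phi\equiv0$, not from $N\setminus\bar M$. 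Such a $\phi$ is automatically flat at $\partial M$, so it never vanishes ``only to finite order'' there. The actual issue is only that $\supp\phi$ may still meet $\supp\tilde u$ along $\partial M$, and this is exactly what your finite-order argument handles.
\end{itemize}
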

In view of the above Lemma, the support and singular support of $u\in\Dscr'(\bar{M})$, as an ordinary distribution on $M$, are defined to be
\begin{align*}\label{eq:support of extensible distribution}
	\supp u&=\bigcap\{\supp\tilde u\colon \tilde u\in\dot\Dscr'(\bar{M}),\ \tilde u|_{M}=u\},\\
	\singsupp u&=\bigcap\{\singsupp\tilde u\colon \tilde u\in\dot\Dscr'(\bar{M}),\ \tilde u|_{M}=u\}.
\end{align*}

The spaces of ``negligible'' (to be made more precise later on) distributions in the analysis of non-characteristic boundary value problems are, in Hörmander's notation, $\dot\Ascr(\bar{M})=I(\bar{M},\nu^\ast\partial M)$, namely the space of (supported) conormal distributions to the boundary on $\bar{M}$, of any order and of type $(1,0)$, and its extensible counterpart $\Ascr(\bar{M})$. More explicitly, elements of $\dot\Ascr(\bar{M})$ are given, in local boundary coordinates $(x,y^1,\dots,y^{n-1})\in\bar{M}$ and canonical dual variables $(\zeta,\eta_1,\dots,\eta_{n-1})\in T^\ast_{(x,y)}\bar{M}$, by an oscillatory integral
\begin{equation}
	\label{eq:conormal distributions}
	u(x,y)=\int_{\R} e^{\im x\zeta}a(y,\zeta)\de\zeta,
\end{equation}
where $a$ is a symbol in the Hörmander class with respect to $\zeta$, i.e. $a$ is a smooth function on $\R^{n-1}\times \R$ for which an $m\in\R$ exists, satisfying
\begin{equation}\label{eq:symbol estimates}
	|\partial^\alpha_y \partial^k_\zeta a(y,\zeta)|\leq C_{K\alpha k}\braket{\zeta}^{m-k}
\end{equation}
for any multi-index $\alpha\in\N^{n-1}$ and any $ k\in\N$, where $y$ varies in the compact set $K\subset\R^{n-1}$.
Typical elements of $\dot\Ascr(\bar{M})$ are $\phi(y)\tens\delta^{(k)}(x)$, that is, tensor products of smooth functions on $\partial M$ and derivatives of $\delta(x)$, the Dirac distribution in the normal variable. On the other hand, distributions which are singular along the boundary are not elements of $\dot\Ascr(\bar{M})$, as the regularity of these objects would worsen if we were to act with vector fields along the boundary. That is, elements of $\dot\Ascr(\bar{M})$ are \textit{tangentially regular} in the terminology of \cite{de_gosson_microlocal_1982}.

Elements of the dual space $\Ascr'(\bar{M})=(\dot\Ascr_{\comps}(\bar{M}))'$, equipped with the weak-$\ast$ topology, are known as \textit{dual-conormal distributions}. We refer to \cite{alvarez_lopez_topology_2024} for a detailed study of the topology of these spaces.

\begin{lemma}[Equation (3.25) in \cite{melrose1982transformationofboundaryproblems}, see also \cite{alvarez_lopez_topology_2024}]\label{lemma:conormal+dual-conormal=smooth}
	We have
	\begin{equation}
		\dot\Ascr(\bar{M})\cap\Ascr'(\bar{M})=\Ascr(\bar{M})\cap\Ascr'(\bar{M})=\Cinf(\bar{M}).
	\end{equation}
\end{lemma}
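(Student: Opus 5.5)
We have to show $\dot\Ascr(\bar M)\cap\Ascr'(\bar M)=\Ascr(\bar M)\cap\Ascr'(\bar M)=\Cinf(\bar M)$. The inclusion $\Cinf(\bar M)\subset\Ascr(\bar M)\cap\Ascr'(\bar M)$ is immediate. A smooth function up to the boundary pairs continuously with compactly supported conormal distributions, so it lies in $\Ascr'$. Its restriction to $M$ is conormal of type $(1,0)$, so it lies in $\Ascr$.

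For the reverse inclusion I would localise. Since both spaces are local, a partition of unity reduces the claim to a single adapted chart $[0,\infty)_x\times\R^{n-1}_y$. In the interior, conormality to $\partial M$ means smoothness, so only behaviour at $x=0$ matters. The supported case I would reduce to the extensible one. By the exact sequence of \cref{lemma:exact sequence of distributions}, an element of $\dot\Ascr$ is an extensible conormal distribution plus a finite sum $\sum_k a_k(y)\tens\delta^{(k)}(x)$ with $a_k$ smooth. Such a sum is not dual-conormal unless it vanishes: pairing it with the conormal test elements $\phi(y)\chi(x)x^{-j}$ (or their analogues with log terms), which belong to $\dot\Ascr_{\comps}$, fails to define a continuous functional. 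So it suffices to treat $u\in\Ascr(\bar M)\cap\Ascr'(\bar M)$.

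For this I would use the Mellin/Fourier description. Conormality means $u$ is tangentially regular: all $(x\partial_x)^j\partial_y^\alpha u$ lie in a fixed weighted $L^2$-type space. Equivalently, after a Mellin transform in $x$, $u$ is smooth in $y$ with a meromorphic expansion, that is, an asymptotic expansion $u\sim\sum x^{z}\log^k x\, a_{z,k}(y)$ as $x\to0$. The dual-conormal condition says that $u$ extends continuously to pairings with all of $\dot\Ascr_{\comps}$, and in particular with $\delta^{(k)}(x)\tens\phi(y)$ and with arbitrarily singular test elements $\chi(x)x^{-N}\phi(y)$. Testing against these should force every term in the expansion with non-integer exponent $z$, or with a $\log$ factor, to vanish. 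It should also force $u$ to admit Taylor coefficients at $x=0$ for all orders, with continuity in the weak-$\ast$ topology. Together these give an expansion $u\sim\sum_j x^j a_j(y)$ with remainders $O(x^N)$ in all tangential derivatives. Since $\partial_x = x^{-1}(x\partial_x)$ acts well on such expansions, Borel's lemma then gives $u\in\Cinf(\bar M)$.

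The main obstacle is the dual-conormal step. One has to show precisely that continuity of $\braket{u,\cdot}$ on $\dot\Ascr_{\comps}$ excludes non-integer and logarithmic exponents and controls the remainders. Here I would follow Melrose's argument around (3.25) in \cite{melrose1982transformationofboundaryproblems}. The idea is to pair against $\chi(x)x^{-z}\phi(y)$, which is meromorphic in $z$ as a family in $\dot\Ascr$, with poles at $z\in\N+1$ whose residues are $\delta^{(k)}$. Holomorphy of the paired function, forced by the continuity of $u$, then produces poles only at the admissible integer points.
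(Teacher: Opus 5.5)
There is a genuine gap, and it sits exactly where the two hypotheses have to interact.

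Your second step assumes that conormality is equivalent to a meromorphic Mellin transform, i.e.\ to an expansion $u\sim\sum x^{z}\log^k x\,a_{z,k}(y)$. That describes \emph{polyhomogeneous} conormal distributions, not $\Ascr(\bar M)$. A general element of $\Ascr$ lies locally in some $x^\alpha\sob^\infty_b$. By \cref{theo:paley-wiener for conormal}, its Mellin transform is only holomorphic in a half-plane, with rapid decay on the horizontal lines there, and it need not continue at all. For instance, $\chi(x)\,x^{1/2}\cos(\log\log(1/x))$ is conormal, since every $(x\partial_x)^k\cos(\log\log(1/x))$ is bounded near $0$. Yet its Mellin transform has a branch point at $\lambda=-\im/2$. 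So there is no expansion on which your ``testing'' could act to remove non-integer or logarithmic terms. The meromorphic continuation has to come from the \emph{dual-conormal} hypothesis, through the mechanism of your last paragraph. One writes $\Mcal u(\lambda)=\braket{u,\chi x_+^{-\im\lambda-1}}$, where the test family is meromorphic in $\dot\Ascr_{\comps}$ with simple poles at $-\im\N$ and residues proportional to $\delta^{(k)}$. This is \cref{theo:paley-wiener for dual-conormal}.

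More seriously, meromorphy with poles only at $-\im\N$ does not yield the remainder bounds ``$O(x^N)$ in all tangential derivatives'' that you assert. Those bounds are the entire content of the lemma. \Cref{example:A' vs N'} lies in $\Ascr'$ and has an entire, bounded Mellin transform: it has no poles and all residues are zero, yet it is not smooth. Dual-conormality only gives \emph{polynomial} bounds for $\prod_{j\le N}(\lambda+\im j)\Mcal u$ on strips.

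The missing step combines these bounds with the rapid decay that conormality gives on a line $\Im\lambda=L$ inside the half-plane of holomorphy.
\begin{enumerate}
\item Run the three-lines argument of \cref{lemma:independence of the line} for $\prod_{j\le N}(\lambda+\im j)\Mcal u$ on the strip $-N-\tfrac34\le\Im\lambda\le L$, jointly in $(\Re\lambda,\eta)$. This gives rapid decay as $|\Re\lambda|\to\infty$, uniformly for $-N-\tfrac12\le\Im\lambda\le L$.
\item Shift the Mellin inversion contour down to $\Im\lambda=-N-\tfrac12$, picking up the residues at $\lambda=0,-\im,\dots,-\im N$. This gives $u=\sum_{j\le N}x^ja_j(y)+r_N$.
\item The $a_j$ are smooth in $y$ because the decay holds in $\eta$. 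By \cref{lemma:properties of Mellin transform}, $r_N\in x^{N+1/2}\sob^\infty_b$ locally.
\item Since $\partial_x^kr_N=x^{-k}\prod_{l<k}(x\partial_x-l)r_N$, you get $u\in C^k$ up to the boundary for every $k$.
\end{enumerate}
Borel's lemma is not needed. The paper only quotes this lemma from Melrose, but this is the argument its Mellin toolkit is set up for.

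A smaller point concerns your reduction of the supported case. For an integer $j\ge1$, $\chi(x)x^{-j}$ is not a well-defined element of $\dot\Ascr_{\comps}$; it is fixed only modulo $\delta^{(k)}$ terms. The correct reason a nonzero $\sum_ka_k\tens\delta^{(k)}$ is not dual-conormal is different. It annihilates $\Cdotinf_c$, which is dense in $\dot\Ascr_{\comps}$, yet it pairs nontrivially with some $\phi\in\Cinf_c(\bar M)\subset\dot\Ascr_{\comps}$. This is the fact $\dot\Ascr'\cap\dot\Dscr'_{\partial M}=\{0\}$ used in \cref{lemma:extensible=supported for dual-conormal}.
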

\begin{remark}
	\cref{lemma:conormal+dual-conormal=smooth} (1) is an instance of the principle that the regularity of a distribution can be established by looking separately at tangential and normal directions (see for example \cite{de_gosson_microlocal_1982}). However, distributions in $\Ascr'(\bar{M})$, while admitting distributional traces at $\partial M$ of all orders, are not in general ``smooth in $x$'' (see \cref{example:A' vs N'} below).
\end{remark}

\subsection*{Sobolev spaces}Assume for simplicity that $\bar{M}$ is compact. If $g$ is a Riemannian metric on $\bar{M}$, we let $\sob^k(\bar{M})$ denote the intrinsic Sobolev spaces defined by $g$ (that is, by requiring that all covariant derivatives up to order $k$ of the elements be square-integrable with respect to the volume form of $g$). This is independent of the choice of $g$. The fractional spaces, i.e. $\sob^s(\bar{M})$ for $s\in\R$, are defined to be the interpolation spaces between $L^2(\bar{M})$ and $\sob^k(\bar{M})$ for some $k\geq s$. In accordance with \cref{lemma:exact sequence of distributions}, $\sob^s(\bar{M})$ is the set of restrictions to $M$ of distributions in $\sob^s(N)$ for any Riemannian enlargement $N$ (with compatible metric). Similarly, $\dot{\sob}^s(\bar{M})$ is the set of $\sob^s(N)$ distributions with support inside $\bar{M}$ (for arbitrary enlargement $N$). Equivalently, $u\in\dot\sob^s(\bar{M})$ if, for any enlargement $N$ of $\bar{M}$, the extension by 0 of $u$ is in $\sob^{s}(N)$.

Let $\Delta_g$ be the Laplace--Beltrami operator, defined as an unbounded operator on $L^2(\bar{M})$ with the dense domain $\Cinf_c(M)$ (in our notation $\Delta_g=-\operatorname{div}_g\operatorname{grad}_g$ is a positive operator). By $\Delta_D$ we denote the Dirichlet extension of $\Delta_g$, whose domain is $\dot\sob^1(\bar{M})\cap\sob^2(\bar{M})$. This is the same as the Friedrichs extension. We shall consider the Schrödinger-type operator
\begin{equation}\label{eq:laplacian with potential}
	A=\Delta_D+V,
\end{equation}
where $V$ is a bounded smooth function on $\bar{M}$, bounded away from zero (for example $V=m^2$ for some $m>0$). Under these assumptions, $A$ is positive and invertible, so we can construct the operators $A^s$ for each $s\ge0$ using the spectral theorem. The domains of these powers are denoted by
\begin{equation*}\label{eq:domain of powers of Laplacian}
	\Dcal_s=\dom ((\Delta_D+V)^{s/2})=\left\{f\in\Dscr'(\bar{M})\colon\int_\R \mu^{s}\de m_f(\mu)<\infty\right\},
\end{equation*}
where $m_f$ denotes the spectral measure associated with $A$ and $f$. Owing to the boundedness of $V$, the theorem of Kato--Rellich implies that, for $0\le s\leq 1$, $\Dcal_s$ coincides with the domain of $\Delta_D^{s/2}$. We also consider the dual spaces $\Dcal_{-s}$ of $\Dcal_s$, on which the negative powers $A^{-s/2}$ act as isomorphisms to $L^2$.

Interpolation theory (see Lemma 4.11 in \cite{chandler-wilde_interpolation_2015}, or \cite{bergh_interpolation_1976} for a general discussion) gives
\begin{lemma}\label{lemma:domains of powers of Laplacian}
	We have
	\begin{enumerate}
		\item For every $s\in[0,1]$, $\Dcal_s=\dot\sob^s(\bar{M})$;
		\item For every $s\in[-1,0]$, $\Dcal_s=\sob^{s}(\bar{M})$;
		\item For every $s\in\R$, $\dot\sob^s(\bar{M})=\sob^{-s}(\bar{M})'$ with respect to the $L^2(M)$ pairing.
	\end{enumerate}
\end{lemma}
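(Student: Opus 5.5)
Part (3) comes first, since (2) follows from (1) and (3) by duality. For (3), take an enlargement $N$ of $\bar M$, which we may assume compact without boundary, and identify $\dot\sob^s(\bar M)$ with the closed subspace of $\sob^s(N)$ of distributions supported in $\bar M$. By \cref{lemma:exact sequence of distributions}, $\sob^{-s}(\bar M)$ is the quotient of $\sob^{-s}(N)$ by the subspace of distributions supported in $N\setminus M$. The $L^2(N)$ pairing identifies $\sob^{-s}(N)$ with the dual of $\sob^s(N)$. The dual of a quotient is the annihilator of the kernel, so $\sob^{-s}(\bar M)'$ is the annihilator in $\sob^s(N)$ of the distributions in $\sob^{-s}(N)$ supported in $N\setminus M$.

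This annihilator clearly contains $\dot\sob^s(\bar M)$. For the reverse inclusion, take $u\in\sob^s(N)$ annihilating all of $\Cinf_c(N\setminus\bar M)$. Then $\supp u\subset\bar M$, which proves (3). The same argument shows that the pairing extends the $L^2(M)$ pairing on smooth functions.

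For (1), I first check the endpoints. At $s=0$, $\Dcal_0=L^2(M)=\dot\sob^0(\bar M)$. At $s=1$, $\Dcal_1=\dom(A^{1/2})$ is the form domain of the Friedrichs extension, i.e. the closure of $\Cinf_c(M)$ in the $\sob^1$ norm. This closure is $\dot\sob^1(\bar M)$, and by Kato--Rellich (or simply boundedness of $V$) the graph norm of $A^{1/2}$ is equivalent to the $\sob^1$ norm. Next, the family $\Dcal_s=\dom(A^{s/2})$ is a complex (equivalently real $K$-method) interpolation scale: $[\Dcal_0,\Dcal_1]_\theta=\Dcal_\theta$ by the spectral theorem, which reduces everything to weighted $L^2$ spaces via the spectral representation. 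It therefore remains to show
\[
[L^2(M),\dot\sob^1(\bar M)]_\theta=\dot\sob^\theta(\bar M).
\]
This is the main obstacle, because $\dot\sob^s$ is defined as a subspace of $\sob^s(N)$, and a subspace of an interpolation scale need not interpolate to the corresponding subspace. I would handle it by exhibiting a common bounded projection (a retract). Concretely, I would use a universal extension operator $E$ from $\bar M^c:=N\setminus M$ (a Stein/Seeley extension, bounded $\sob^s(N\setminus M)\to\sob^s(N)$ for all $0\le s\le1$) and set
\[
Q u=u-E\bigl(u|_{N\setminus M}\bigr).
\]
This $Q$ is a bounded projection of $\sob^s(N)$ onto $\dot\sob^s(\bar M)$ simultaneously for all $s\in[0,1]$. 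It is a projection because $u|_{N\setminus M}=0$ when $u$ is supported in $\bar M$. Its range is $\dot\sob^s$ because $(Qu)|_{N\setminus\bar M}=0$ and $Qu\in\sob^s(N)$. Since a complemented subspace interpolates (retraction principle), the interpolation result follows from $[L^2(N),\sob^1(N)]_\theta=\sob^\theta(N)$. This is exactly the content of Lemma 4.11 of \cite{chandler-wilde_interpolation_2015} for Lipschitz or smooth domains, which I would invoke, noting that our setting is a finite union of charts and that localisation commutes with the retraction.

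Finally, (2) follows from (1) and (3) by duality. For $s\in[-1,0]$, $\Dcal_s$ is by definition the dual of $\Dcal_{-s}=\dot\sob^{-s}(\bar M)$, and (3) applied with $-s\ge0$ identifies this dual with $\sob^{s}(\bar M)$. One must check that the two identifications are compatible, i.e. that both are realised by the extension of the $L^2(M)$ pairing. This holds because $A^{-s/2}$ is self-adjoint and the inclusions are dense.
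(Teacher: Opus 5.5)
Your route is the one the paper relies on. The paper gives no argument of its own and only cites interpolation theory (Lemma 4.11 of \cite{chandler-wilde_interpolation_2015}, \cite{bergh_interpolation_1976}). Your spectral interpolation for $\Dcal_s$, the retraction $Q$ onto $\dot\sob^s(\bar M)$ and the annihilator duality are a correct unpacking of that citation, except for one step you present as obvious.

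\textbf{The step that is not obvious.} In (3), set $K=\{v\in\sob^{-s}(N)\colon \supp v\subset N\setminus M\}$. The inclusion $\dot\sob^s(\bar M)\subset K^\perp$ cannot be obtained from supports. Both $u$ and $v$ may reach $\partial M$; think of $u=\phi\tens\delta_{\partial M}\in\dot\sob^s(\bar M)$ for $s<-1/2$. So $\braket{u,v}=0$ does not follow from disjoint supports.

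In fact $K=\Cinf_c(M)^\perp$, so by the bipolar theorem $K^\perp$ is exactly the closure of $\Cinf_c(M)$ in $\sob^s(N)$. Your inclusion is therefore \emph{equivalent} to density of $\Cinf_c(M)$ in $\dot\sob^s(\bar M)$. This is where the regularity of $\partial M$ enters; the density can fail for sufficiently rough domains.

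For smooth boundary it is a standard fact, proved with a partition of unity, a small inward normal translation and mollification. The paper records it as $\dot\sob^s(\bar M)=\tilde\sob^s(\bar M)$ in the notation of \cite{mclean_strongly_2000}. You already use the case $s=1$ when you identify the Friedrichs form domain with $\dot\sob^1(\bar M)$; here you need it for every $s$. The inclusion you call the reverse one is the trivial direction.

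\textbf{A smaller point in (2).} Applying (3) with $-s$ gives $\dot\sob^{-s}(\bar M)=\sob^{s}(\bar M)'$, whereas you need $\dot\sob^{-s}(\bar M)'=\sob^{s}(\bar M)$. Passing from one to the other uses reflexivity of these Hilbert spaces. This is harmless, but it should be stated.
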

\begin{corollary}
	\label{cor:symplectic domain of Laplacian}
	For every $s\in[-1,1]$, in particular for $s=1/2$, the space $\dot\sob^s(\bar{M})\oplus\sob^{-s}(\bar{M})$ is a symplectic vector space.
\end{corollary}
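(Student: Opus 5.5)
We first fix what has to be checked: the form
\begin{equation*}
\sigma\bigl((f_1,g_1),(f_2,g_2)\bigr)=\braket{f_1,g_2}-\braket{f_2,g_1},\qquad (f_i,g_i)\in\dot\sob^s(\bar M)\oplus\sob^{-s}(\bar M),
\end{equation*}
must be well defined, bilinear, antisymmetric and nondegenerate. Here $\braket{\cdot,\cdot}$ denotes the bilinear extension of the $L^2(M)$ pairing, restricted to real elements if one works with real symplectic spaces; otherwise the complex bilinear version is used. Bilinearity and antisymmetry are immediate from the formula, so only well-definedness and nondegeneracy need an argument. The model to keep in mind is the Cauchy data space of the Klein--Gordon equation, with position in $\Dcal_s$ and momentum in $\Dcal_{-s}$.

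For well-definedness we use \cref{lemma:domains of powers of Laplacian}(3): $\dot\sob^s(\bar M)=\sob^{-s}(\bar M)'$ with respect to the $L^2(M)$ pairing. Hence the pairing $\dot\sob^s\times\sob^{-s}\to\C$ extends continuously from smooth functions, and each term of $\sigma$ makes sense. For $s\in[-1,1]$ one could equally invoke parts (1) and (2) and note that $\Dcal_s$ and $\Dcal_{-s}$ are dual via $\braket{f,g}=(A^{s/2}\bar f,A^{-s/2}g)$. Only the range $|s|\le1$ is needed to identify these spaces with the spectral domains, which is exactly where the corollary is stated.

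For nondegeneracy, suppose $\sigma((f_1,g_1),(f_2,g_2))=0$ for all $(f_2,g_2)$. Taking $f_2=0$ gives $\braket{f_1,g_2}=0$ for every $g_2\in\sob^{-s}(\bar M)$. By the duality in (3), with $\dot\sob^s$ embedding injectively into $(\sob^{-s})'$, this forces $f_1=0$. Taking $g_2=0$ gives $\braket{f_2,g_1}=0$ for every $f_2\in\dot\sob^s(\bar M)$. This forces $g_1=0$, because $\sob^{-s}(\bar M)$ is the dual of $\dot\sob^s(\bar M)$ as well. That follows from (3) applied with $-s$ together with reflexivity, and concretely from the spectral description, since $A^{s/2}$ and $A^{-s/2}$ are unitary identifications with $L^2$.

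The only point needing care is this reflexive, two-sided duality. (3) as stated gives $\dot\sob^s=(\sob^{-s})'$. We also need that $\sob^{-s}$ separates points of $\dot\sob^s$ and conversely. Both are Hilbert spaces, so (3) plus Riesz representation yields both directions; alternatively one argues via $\Dcal_{\pm s}$. For $s=1/2$ this gives the symplectic space $\dot\sob^{1/2}(\bar M)\oplus\sob^{-1/2}(\bar M)=\Dcal_{1/2}\oplus\Dcal_{-1/2}$. In the noncompact case the same argument applies to the comp/loc variants used in the introduction, pairing compactly supported elements against local ones.
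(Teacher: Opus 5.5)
Your proof is correct and follows the paper's argument: the canonical form on $V\oplus V'$ with $V=\dot\sob^s(\bar M)=\sob^{-s}(\bar M)'$ by \cref{lemma:domains of powers of Laplacian}(3). You also usefully make explicit the two-sided nondegeneracy that the paper leaves implicit; note that this follows from (3) for the same $s$ together with reflexivity, not from (3) with $-s$, which concerns $\dot\sob^{-s}$ and $\sob^{s}$. Your side remark identifying $\dot\sob^s(\bar M)\oplus\sob^{-s}(\bar M)$ with $\Dcal_s\oplus\Dcal_{-s}$ via parts (1)--(2) holds for $s\in[0,1]$ but fails for $s\le-1/2$, where those parts give $\Dcal_s\oplus\Dcal_{-s}=\sob^s(\bar M)\oplus\dot\sob^{-s}(\bar M)$ instead; your main argument does not rely on it.
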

\begin{proof}
	Any vector space of the form $V\oplus V'$ is symplectic with respect to $\omega((f,\alpha),(h,\beta))=\braket{\alpha, h}-\braket{\beta, f}$. Since $\dot\sob^s(\bar{M})=\left(\sob^{-s}(\bar{M})\right)'$, the proof is complete.
\end{proof}
We remark that, for all $s\notin \Z+\frac{1}{2}$, $\dot\sob^{s}(\bar{M})=\sob^s_0(\bar{M})$, the closure of $\Cinf_c(M)$ in the topology of $\sob^s(\bar{M})$, while at half-integers we have $\dot\sob^{k+1/2}(\bar{M})\subsetneq \sob^{k+1/2}_0(\bar{M})$. This latter space will not be used.

\begin{remark}\label{remark:sobolev spaces}
	\begin{enumerate}
		\item The space $\dot\sob^{1/2}(\bar{M})$ is the classical space $\sob^{1/2}_{00}(M)$ of Lions and Magenes. See Theorem 11.7 in \cite{lions_non-homogeneous_1972_I} for an explicit definition via Gagliardo seminorms and the characterisation as interpolation space in the scale $\sob^s_0(\bar{M})$.
		\item Since the boundary is smooth, $\dot\sob^s(\bar{M})=\tilde{\sob}^s(\bar{M})$ for every $s\in\R$ in the notation of \cite{mclean_strongly_2000}, which reconciles \cref{lemma:domains of powers of Laplacian} with Theorem 3.30 in the reference.
		\item Notice that $\Dcal_s\neq\dot\sob^s(\bar{M})$ and $\Dcal_{-s}\neq\sob^{-s}(\bar{M})$ if $s$ is sufficiently large. Indeed we have $\bigcap_{s}\dot\sob^s(\bar{M})=\Cdotinf(\bar{M})$, while for elements of $\bigcap_s\Dcal_s$ only $\gamma_0 A^j u$ has to vanish. This is related to the fact that $\dom(\Delta_D^k)\subsetneq\dot\sob^1(\bar{M})\cap\sob^{2k}(\bar{M})$ for $k\ge 2$, since the smaller space encodes boundary conditions of higher order. For our discussion of $s=1/2$ this does not play a role.
	\end{enumerate}
\end{remark}

If $\bar{M}$ is noncompact, one needs to discuss separately compactly supported and local versions of the distribution spaces above. The meaningful differences are that the dual to a \textit{loc} space is a \textit{comp} space, and that the domains $\Dcal_s$ for positive $s$ in general will lie between $\dot\sob^s_{\comps}(\bar{M})$ and $\sob^s_{\locs}(\bar{M})$. Since an explicit description is cumbersome and depends on the ``geometry at infinity'' of $\bar{M}$, we retain the notation $\Dcal_s$ whenever possible (see \cref{sect:ultrastatic}). Particularly relevant is the analogue of \cref{cor:symplectic domain of Laplacian}, which has the same proof.
\begin{lemma}\label{lemma:symplectic domain of laplacian - noncompact}
	For any $s\in \R$, the space $\Dcal_s\oplus\Dcal_{-s}$ is symplectic.
\end{lemma}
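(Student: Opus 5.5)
The plan follows the proof of \cref{cor:symplectic domain of Laplacian}. The only real input there was that the second summand is the dual of the first, so I would prove that $\Dcal_{-s}=(\Dcal_s)'$ for every $s\in\R$, with respect to a pairing extending the $L^2$ pairing. Then I would equip $\Dcal_s\oplus\Dcal_{-s}$ with
\begin{equation*}
\omega\bigl((f,\alpha),(h,\beta)\bigr)=\braket{\alpha,h}-\braket{\beta,f}.
\end{equation*}
This form is bilinear and antisymmetric by construction, so what remains is nondegeneracy. Nondegeneracy reduces to the statement that the duality pairing between $\Dcal_s$ and $\Dcal_{-s}$ is separating in each variable. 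Indeed, if $\omega((f,\alpha),\cdot)=0$, then testing against $(h,0)$ gives $\braket{\alpha,h}=0$ for all $h\in\Dcal_s$, and testing against $(0,\beta)$ gives $\braket{\beta,f}=0$ for all $\beta\in\Dcal_{-s}$.

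To obtain the duality, I would use the spectral theorem for the positive, invertible, self-adjoint operator $A=\Delta_D+V$ on $L^2(\bar M)$. For $s\ge0$, $\Dcal_s=\dom(A^{s/2})$ is a Hilbert space with norm $\|A^{s/2}f\|$. Define $\Dcal_{-s}$ as the completion of $L^2$ in the norm $\|A^{-s/2}f\|$; this agrees with the paper's definition of $\Dcal_{-s}$ as the dual of $\Dcal_s$, with $A^{-s/2}$ an isomorphism onto $L^2$. The pairing $\braket{\alpha,h}=\int \alpha h$, initially defined on $L^2\times\Dcal_s$, satisfies
\begin{equation*}
|\braket{\alpha,h}|=|(A^{-s/2}\bar\alpha,A^{s/2}h)|\le\|\alpha\|_{-s}\|h\|_{s}.
\end{equation*}
Here I use that $A$ is real, i.e. commutes with complex conjugation, because $V$ is real and the Dirichlet condition is real. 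By this bound the pairing extends continuously. Using the unitary maps $A^{\pm s/2}$, it becomes the $L^2$ bilinear pairing, and the Riesz representation theorem then shows that $\Dcal_{-s}$ is exactly the dual of $\Dcal_s$ and that the pairing is perfect. For $s<0$ the statement is obtained by swapping the roles of $s$ and $-s$, since $\omega$ changes only by a sign.

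The main obstacle is consistency in the noncompact case. There the spaces $\Dcal_{\pm s}$ should be realised as spaces of extensible distributions, so that $\braket{\alpha,h}$ agrees with the distributional pairing used later for the Pauli--Jordan propagator. I would handle this by noting that $\Cinf_c(M)$ lies in $\Dcal_s$ for every $s$, and that it is dense in $\Dcal_s$ for $|s|\le1$, which is the range of $s$ the paper actually uses. Because the pairing restricted to $\Cinf_c(M)$ is the $L^2$ pairing, elements of $\Dcal_{-s}$ define distributions, and the abstract duality matches the distributional one. For general $s$, I would assert only the abstract Hilbert-space duality, which suffices for symplecticity.
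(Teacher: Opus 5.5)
Your proposal is correct and follows the paper's route. The paper simply reuses the proof of \cref{cor:symplectic domain of Laplacian}: any space $V\oplus V'$ carries the symplectic form $\omega((f,\alpha),(h,\beta))=\braket{\alpha,h}-\braket{\beta,f}$, and $\Dcal_{-s}$ is by definition the dual of $\Dcal_s$. Your spectral-theorem argument that the pairing is perfect, together with the swap for $s<0$, just makes explicit what the paper leaves implicit.
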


The restriction map to $M$ relates the supported and extensible Sobolev spaces in a more nuanced way than general distributions.
\begin{lemma}\label{lemma:restriction and Sobolev spaces}
	Consider the restriction map to $M$ as in \cref{lemma:exact sequence of distributions}. For any $s\in\R$, it extends to bounded operators
	\begin{align*}
		\dot\sob^s_\comps(\bar{M})&\to \sob^s_\comps(\bar{M})\\
		\dot\sob^s_\locs(\bar{M})&\to \sob^s_\locs(\bar{M}).
	\end{align*}
	Moreover, the above extensions are surjective if $s<1/2$ and injective if $s >-1/2$, so they are in particular isomorphisms in the range $s\in(-1/2,1/2)$.
\end{lemma}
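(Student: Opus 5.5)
Since the statement is local, I will work in adapted coordinates $(x,y)$ on a collar $[0,\epsilon)_x\times\R^{n-1}_y$ and reduce to the model half-space $\bar{\R}^n_+=\{x\ge 0\}$, using a partition of unity and the fact that multiplication by cut-offs preserves all the spaces involved. The interior part is trivial: there the restriction is the identity on $\sob^s_\comps(M)$. The restriction map $r$ on $\dot\sob^s(\bar M)$ is $\tilde u\mapsto \tilde u|_M$. Here we view $\dot\sob^s$ as a subspace of $\sob^s(N)$, and $\sob^s(\bar M)$ carries the quotient norm $\|v\|=\inf\{\|U\|_{\sob^s(N)}:U|_M=v\}$. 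Then $\|r\tilde u\|\le\|\tilde u\|_{\sob^s(N)}$ directly from the definition. This gives boundedness for every $s$, and the comp and loc versions follow by localisation.

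\emph{Injectivity for $s>-1/2$.} The kernel of $r$ consists of elements of $\dot\sob^s(\bar M)$ supported in $\partial M$. By the structure theorem for distributions supported on a hypersurface, such elements are locally finite sums $\sum_{k\le m}\phi_k(y)\otimes\delta^{(k)}(x)$ with $\phi_k\in\Dscr'(\partial M)$. I will compute the Fourier transform in $x$. Take the top term $\phi_m\otimes\delta^{(m)}$, test against $\hat\phi_m(\eta)$ localised where it is nonzero, and use a Littlewood--Paley cut in $\eta$. The $\sob^s$ norm then contains $\int\langle(\xi,\eta)\rangle^{2s}|\xi|^{2m}\de\xi$, which diverges whenever $2s+2m\ge-1$. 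With $m\ge 0$ and $s>-1/2$ this always diverges, so the kernel is trivial.

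\emph{Surjectivity for $s<1/2$.} Given $v\in\sob^s(\bar M)$, take any extension $U\in\sob^s(N)$ and set $\tilde u=\mathbbm 1_{\bar M}U$. The needed input is the classical fact that multiplication by the Heaviside function $H(x)$ is bounded on $\sob^s(\R^n)$ for $|s|<1/2$. By duality and complex interpolation it suffices to check $0\le s<1/2$. This reduces to the one-dimensional statement, treating $y$ as a parameter via the norm $\langle\eta\rangle^{2s}\|\cdot\|_{L^2_x}^2+\|\cdot\|^2_{L^2_\eta \sob^s_x}$. In one dimension the Gagliardo seminorm of $H f$ is bounded by that of $f$ plus $\int |f(x)|^2|x|^{-2s}\de x$, and the latter is controlled by Hardy's inequality precisely for $s<1/2$. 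For negative $s$ one must define $H U$ as the adjoint of multiplication by $H$ on $\sob^{-s}$, which is bounded there since $-s<1/2$. Then $\tilde u$ is supported in $\bar M$ and lies in $\sob^s(N)$, so $\tilde u\in\dot\sob^s(\bar M)$, and $r\tilde u=v$.

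\emph{Isomorphism on $(-1/2,1/2)$ and expected obstacle.} On $(-1/2,1/2)$ the two facts combine to give a bounded bijection. It is an isomorphism either by the open mapping theorem, or directly because the inverse $v\mapsto H U$ is bounded, independently of the chosen $U$ by injectivity. I expect the main obstacle to be the Heaviside multiplier estimate, specifically handling negative $s$ and the mixed tangential/normal anisotropic norm near $s\to\pm1/2$. If that becomes awkward, I would fall back on the interpolation characterisation in \cref{lemma:domains of powers of Laplacian} and cite Lions--Magenes, Theorem 11.4, for $\sob^s_0=\sob^s$ when $s<1/2$.
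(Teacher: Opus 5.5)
The paper states this lemma without proof, so this review judges your argument on its own. Boundedness via the quotient norm and injectivity via the structure theorem are fine; your Fourier computation in fact gives injectivity also at $s=-1/2$, since $\int\braket{\xi}^{-1}\de\xi$ diverges. The Heaviside-multiplier argument correctly gives surjectivity, and hence the isomorphism, for $-1/2<s<1/2$.

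The gap is that the lemma asserts surjectivity for \emph{every} $s<1/2$, but your construction $\tilde u=HU$ only makes sense for $|s|<1/2$. For $s\le-1/2$, your adjoint definition of $HU$ needs $H$ to be bounded on $\sob^{-s}$ with $-s\ge 1/2$, and this fails: already $H\phi\notin\sob^{1/2}_\locs$ when $\phi\in\Cinf_c$ does not vanish on $\{x=0\}$. The Lions--Magenes fallback has the same limitation, since $\sob^s_0=\sob^s$ is a statement about $0\le s\le 1/2$. So the range $s\le-1/2$ is simply not covered.

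For $s<-1/2$ with $s+\frac12\notin\Z$, you can close the gap with your own tool. In the model, iterate $U=\braket{D}^{-2}U-\sum_j\partial_j\bigl(\partial_j\braket{D}^{-2}U\bigr)$, where $\braket{D}^{2}=1-\sum_j\partial_j^2$. This writes $U=\sum_{|\alpha|\le k}\partial^\alpha W_\alpha$ with $W_\alpha\in\sob^{s+k}(\R^n)$. Choose $k$ with $s+k\in(-\frac12,\frac12)$ and set $\tilde u=\sum_{|\alpha|\le k}\partial^\alpha(HW_\alpha)$. This is supported in $\{x\ge0\}$, lies in $\sob^s$, and restricts to $U|_M$. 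Alternatively, use duality and the closed range theorem: $\dot\sob^{-s}(\bar M)\hookrightarrow\sob^{-s}(\bar M)$ is bounded below for $-s>\frac12$, $-s\notin\frac12+\Z$.

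At $s\in\{-\frac12,-\frac32,\dots\}$, however, no argument can work, because the claim is false there. By \cref{lemma:domains of powers of Laplacian}~(3), the restriction $\dot\sob^{-1/2}(\bar M)\to\sob^{-1/2}(\bar M)$ is the Banach adjoint of the inclusion $\dot\sob^{1/2}(\bar M)\hookrightarrow\sob^{1/2}(\bar M)$. This inclusion has dense range, since $\Cinf_c(M)$ is dense in $\sob^{1/2}(\bar M)$. Its range is not closed, since $\dot\sob^{1/2}(\bar M)=\sob^{1/2}_{00}\subsetneq\sob^{1/2}(\bar M)$. By the closed range theorem, the restriction therefore has non-closed range and is not onto. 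The same reasoning works at $s=-j-\frac12$, using $\dot\sob^{j+1/2}\subsetneq\sob^{j+1/2}_0$.

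Thus the correct statement is surjectivity for $s<\frac12$, $s\notin\{-\frac12,-\frac32,\dots\}$. The isomorphism on $(-\frac12,\frac12)$, which your argument does establish, is unaffected.
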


\subsection*{$b$-objects} 
There is (see Section 6 in \cite{alvarez_lopez_topology_2024}) another description of dual-conormal distributions, which emphasises the role of weights. For it, we define $b$-Sobolev spaces on $\bar{\R^n_+}$ in the same vein as the ordinary ones, however using the singular measure $dx/x$. That is, we have $L^2_b(\bar{\R^n_+})\equiv L^2(\frac{dx}{x}dy)=\sqrt{x}L^2(\bar{\R^n_+})$ and, for $s\in\N$,
\begin{equation}\label{eq:b-Sobolev spaces}
	\sob^s_b(\bar{\R^n_+})\equiv \{u\in L^2_b\colon \forall k\in\N,\alpha\in\N^{n-1}, k+|\alpha|\leq s\implies (x \partial_x)^k \partial_y^\alpha u\in L^2_b(\bar{\R^n_+})\}.
\end{equation}
The fractional and negative-order ones are defined as usual via interpolation and duality, and the intersection and union of them all are denoted, as usual, by $\sob^\infty_b$ and $\sob^{-\infty}_b$. Then, the weighted $L^2_b$-based Sobolev spaces are the spaces $x^l\sob^s_b$ for $l\in\R$, and one has
\begin{lemma}[see Corollary 6.49 in \cite{alvarez_lopez_topology_2024}]
	Let $K\subset\bar{\R^n_+}$ be compact. We have:
	\begin{enumerate}
		\item $\Dscr'_K(\bar{\R^n_+})=\bigcup_{\alpha\in\R}x^\alpha\sob^{-\infty}_{b,K}(\bar{\R^n_+})$;
		\item $\Ascr_K(\bar{\R^n_+})=\bigcup_{\alpha\in\R}x^\alpha \sob^{\infty}_{b,K}(\bar{\R^n_+})$;
		\item $\dot\Ascr'_K(\bar{\R^n_+})=\bigcap_{\alpha\in\R}x^\alpha \sob^{-\infty}_{b,K}(\bar{\R^n_+})$;
		\item $\Cdotinf_K(\bar{\R^n_+})=\bigcap_{\alpha\in\R}x^\alpha\sob^{\infty}_{b,K}(\bar{\R^n_+})$.
	\end{enumerate}
\end{lemma}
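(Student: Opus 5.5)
The plan is to reduce all four identities to the Mellin transform in the normal variable, using the convention fixed in the Conventions. Write $x=e^{-t}$. The map $u\mapsto x^{-\alpha}u$ composed with this change of variables is an isomorphism from $x^\alpha\sob^s_b(\bar{\R^n_+})$ onto the ordinary weighted Sobolev space $e^{\alpha t}\sob^s(\R\times\R^{n-1})$ of distributions supported in a half-line $t\ge t_0$. Here $x\partial_x=-\partial_t$, and $\frac{dx}{x}\,dy=dt\,dy$. Compact support $K\subset\bar{\R^n_+}$ becomes support in $\{t\ge t_0\}\times K'$. Each item then becomes a statement about the behaviour of a function on a half-cylinder as $t\to+\infty$.

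\textbf{Items (2) and (4).} For (4), take $u\in\Cdotinf_K$. Taylor expansion at $x=0$ gives $u=O(x^N)$ together with all derivatives, for every $N$. Since $x\partial_x$ and $\partial_y$ preserve this, $u\in x^\alpha\sob^\infty_b$ for all $\alpha$. Conversely, if $u\in x^\alpha\sob^\infty_b$ for all $\alpha$, Sobolev embedding in $(t,y)$ gives pointwise bounds $|(x\partial_x)^k\partial_y^\beta u|\le C x^{N}$ for all $N$. Writing $\partial_x=x^{-1}(x\partial_x)$, every ordinary derivative is also $O(x^N)$, so $u$ extends smoothly by zero.

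For (2), a conormal distribution as in \eqref{eq:conormal distributions}, restricted to $x>0$, is smooth. Integration by parts in $\zeta$ shows that it is $O(x^{-m-1})$, and the same holds after applying $(x\partial_x)^k\partial_y^\beta$, because $x\partial_x$ acts on the symbol as $-\partial_\zeta\zeta$, which preserves the order. Hence $u\in x^{\alpha}\sob^\infty_b$ for $\alpha<-m-1/2$. Conversely, elements of $x^\alpha\sob^\infty_b$ are smooth in $x>0$ and stable under $x\partial_x,\partial_y$ with the weight fixed. This is the standard characterisation of extensible conormal distributions by iterated regularity under $\mathcal V_b$, and I would cite Melrose for it.

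\textbf{Items (1) and (3).} These follow by duality. The $L^2$ pairing $\braket{u,v}=\int uv\,dx\,dy$ identifies $(x^\alpha\sob^s_b)'$ with $x^{-1-\alpha}\sob^{-s}_b$, because $dx=x\cdot\frac{dx}{x}$. Then $\Dscr'_K=(\Cdotinf)'$ restricted to $K$ is the dual of the Fréchet intersection in (4). By the standard fact that the dual of a projective limit of Hilbert spaces with dense compatible inclusions is the union of the duals, this yields (1). Similarly, $\dot\Ascr'_K$ is the dual of the inductive limit in (2), namely the compactly supported $\Ascr$, and is therefore the intersection of the duals. This gives (3).

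\textbf{Main obstacle.} The delicate point is the topological one. The dualities above require the inclusions to have dense range and the limits to be regular or strict, so that bounded sets and continuity behave well. Moving between supported and extensible spaces under duality also uses the conventions of \cref{lemma:exact sequence of distributions}. I would handle this by using that $\Cdotinf_c$ is dense in every $x^\alpha\sob^s_b$, via cutoffs in $t$ and mollification. For the inductive limit, the inclusions $x^\alpha\sob^\infty_b\to x^{\alpha'}\sob^\infty_b$ for $\alpha'<\alpha$ are compact on a fixed compact $K$, so the limit is a regular (DFS-type) limit, which makes the duality statements rigorous. Beyond this, I would rely on \cite{alvarez_lopez_topology_2024}.
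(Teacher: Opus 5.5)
\textbf{Comparison with the paper.} The paper does not prove this lemma. It is imported verbatim from Corollary~6.49 of \cite{alvarez_lopez_topology_2024}, where it comes out of a systematic study of the topologies of these spaces. Your sketch is therefore an independent and more elementary route:
\begin{itemize}
\item in the coordinate $t=-\log x$ the weighted $b$-spaces become exponentially weighted ordinary Sobolev spaces on $\R_t\times\R^{n-1}$;
\item (4) and (2) reduce to pointwise estimates as $t\to+\infty$, using Sobolev embedding in $(t,y)$, $\partial_x=x^{-1}(x\partial_x)$, and the symbol rule $x\partial_x\leftrightarrow-\partial_\zeta\zeta$;
\item (1) and (3) follow by duality with respect to $\int uv\,\de x\,\de y$, under which $(x^\alpha\sob^s_b)'=x^{-1-\alpha}\sob^{-s}_b$.
\end{itemize}
This works, and it makes transparent where the weights come from. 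What the citation buys the paper is the topological information (agreement of the Fr\'echet and LF topologies), which your duality step uses tacitly. Several statements in your write-up need correcting.

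\textbf{Minor slips.} Under $x=e^{-t}$ one has $x^\alpha\sob^s_b\cong e^{-\alpha t}\sob^s(\R\times\R^{n-1})$, or unweighted $\sob^s$ if you first multiply by $x^{-\alpha}$. Because the $b$-measure is $\de x/x$, the bound $u=O(x^{-m-1})$ gives $u\in x^\alpha\sob^\infty_b$ for $\alpha<-m-1$, not $\alpha<-m-\frac12$. Neither affects a union over $\alpha$.

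\textbf{The duality step for (1) and (3).} $\Dscr'_K$ is \emph{not} the dual of the Fr\'echet space $\Cdotinf_K$ of (4). A Dirac mass at a point of $\R^n_+$ on the topological boundary of $K$ lies in $\Dscr'_K$ but annihilates every element of $\Cdotinf_K$, so the map $\Dscr'_K\to(\Cdotinf_K)'$ is not even injective. Argue with a cutoff instead.
\begin{itemize}
\item Take $\chi\in\Cinf_c(\bar{\R^n_+})$ with $\chi\equiv1$ near $K$ and $\supp\chi\subset K'$.
\item Your estimates in (4) show that the $\Cinf$ topology of $\Cdotinf_{K'}$ is generated by the norms of the spaces $x^\alpha\sob^s_b$.
\item Continuity of $u\in\Dscr'_K$ then gives $|u(\phi)|=|u(\chi\phi)|\le C\|\phi\|_{x^\alpha\sob^s_b}$ for all $\phi\in\Cdotinf_c$ and some fixed $\alpha,s$.
\item Density of $\Cdotinf_c$ in $x^\alpha\sob^s_b$ and the Riesz theorem yield $u\in x^{-1-\alpha}\sob^{-s}_{b,K}$.
\end{itemize}
The same cutoff is needed in (3).

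\textbf{The ``main obstacle''.} No regularity of the inductive limit is required. A functional is continuous on a locally convex inductive limit if and only if it is continuous on every step, so $\bigl(\bigcup_\alpha E_\alpha\bigr)'=\bigcap_\alpha E_\alpha'$ as sets. What (3) does need is that the topology of $\Ascr_c$ defining $\dot\Ascr'$ agrees with the inductive-limit topology of the spaces $x^\alpha\sob^\infty_{b,K}$. If $\Ascr_c$ is topologised by weighted sup-norms of $b$-derivatives, your Sobolev-embedding estimates give continuous inclusions in both directions, and that suffices.

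Your claim that $x^\alpha\sob^\infty_{b,K}\hookrightarrow x^{\alpha'}\sob^\infty_{b,K}$ is compact is false. Every zero-neighbourhood of the source is cut out by finitely many $\sob^s_b$-norms and is unbounded in $x^{\alpha'}\sob^{s+1}_b$. The inclusion only maps bounded sets to relatively compact ones, so this is not a DFS limit; drop that sentence.

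\textbf{The converse half of (2).} The statement that fixed-weight $b$-regularity yields an element of the extensible H\"ormander class used in the paper is the one substantive step, and you only cite it. That is acceptable: it is Melrose's definition of conormality, and it reduces to H\"ormander's iterated-regularity characterisation (Vol.~III, \S18.2) after extending $x^Nu$ by zero for large $N$ and undoing the factor $x^N$ by integrating the symbol in $\zeta$. Be aware, though, that this is where the real content of the lemma lies.
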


The analysis of singularities of solutions to boundary-value problems takes place in the $b$-cotangent bundle $\bTs\bar{M}$. We recall its definition in Appendix \cref{app:b-calculus}, whose notation we use here.

\subsection*{The Mellin transform}
The importance of $\b\Tstar\bar{M}$ resides in the fact that it is the carrier of the singularities of boundary value problems. Namely, for distributions $u\in\dot\Dscr'(\bar{M})$, there is a conic subset $\WF_b(u)\subset \b\Tstar\bar{M}$, the $b$-wave-front set, which describes the location and direction of certain singularities, and is analogous to the ordinary wave-front set. For certain classes of distributions, it can be defined similarly to $\WF(\cdot)$, using a mixed Mellin--Fourier transform. Our convention for these transforms is the following, $d$ denoting in each case the number of variables transformed:
\begin{align*}
	\FT(f)(\zeta)&=(2\pi)^{-d/2}\int_{\R^d}e^{-\im z\zeta}f(z)\de z,\\
	\Mcal(f)(\lambda)&=(2\pi)^{-d/2}\int_0^\infty x^{-\im\lambda-1}f(x)\de x,
\end{align*}
where $f\in\Cinf_c(\R^d)$ or $f\in\Cdotinf_c([0,\infty)^d)$, respectively. Here, the Mellin parameter $\lambda$ is complex, while the Fourier variables are real. However, even for the Mellin variable we are interested in the decay along horizontal lines $\Im\lambda=L$, in which case we write consistently $\lambda=\xi+\im L$ with $\xi=\Re\lambda$ and study the behaviour as $|\xi|\to\infty$.

We will not be very precise as to which are the maximal domains of $\FT$ and $\Mcal$ (we refer the interested reader to \cite{szmydt_mellin_1992} for an extensive discussion). We record below only the properties used in the sequel. In particular, if $u$ is a distribution of compact support (extensible or supported), the local Mellin--Fourier transform at a point $q\in\partial M$ is defined in local coordinates by the pairing (where we write $\hat{u}$ for $\Mcal\FT(u)$)
\begin{equation}\label{eq:mellin-fourier transform of extensible distribution}
	\hat{u}(\lambda,\eta)=(2\pi)^{-n/2}\braket{u,x^{-\im \lambda-1}e^{-\im y\eta}}.
\end{equation}
\begin{remark}\label{rem:mellin and supported distributions}
	The Mellin transform is not injective on $\dot\Dscr'(\bar{\R^n_+})$, since its kernel is exactly $\dot\Dscr'_{\{x=0\}}(\bar{\R^n_+})$: if $U,V\in\dot\Dscr'(\bar{\R^n_+})$ and $u=U-V\in\dot\Dscr'_{\{x=0\}}(\bar{\R^n_+})$, then near each point $p\in \{x=0\}$ we have $u=\sum_{k=0}^Na_k(y)\partial^k\delta(x)$ for some distributions $a_k$ on $\{x=0\}$. For large $\Im \lambda$, pairing this against $x^{-\im\lambda-1}$ makes sense and returns 0, irrespective of $\lambda$. As we will see below, $\Mcal u(\lambda)$ depends complex analytically on the parameter, therefore $\Mcal u$ is the zero function. Therefore, $\Mcal (U)$ only depends on the equivalence class of $U$ in $\Dscr'(\bar{\R^n_+})$. In view of \cref{lemma:conormal+dual-conormal=smooth}, injectivity is restored for dual-conormal distributions. This is the crux of why we can test the $b$-wave-front set via $\Mcal$.
\end{remark}
In \cref{lemma:properties of Mellin transform} we collect rather standard properties of $\Mcal$ and $\Mcal\FT$ on $\bar{\R^n_+}$.
\begin{lemma}\label{lemma:properties of Mellin transform}
	We have:
	\begin{enumerate}
		\item Whenever both sides make sense, $\Mcal[(xD_x)u](\lambda)=\lambda\hat{u}(\lambda)$ and, for all $\alpha\in\R$, $\Mcal[x^\alpha u](\lambda)=\hat u(\lambda+\im\alpha)$;
		\item $\Mcal\FT\colon x^\alpha\sob^s_b\to \braket{(\Re\lambda,\eta)}^{-s}L^2(\{\Im\lambda={-\alpha}\}\times\R^{n-1})$ is an isometric isomorphism, in particular $\Mcal\FT\colon L^2(\frac{dx}{x}dy)\xrightarrow{\sim}L^2(d\xi d\eta)$.
	\end{enumerate}
\end{lemma}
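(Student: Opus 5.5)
The plan is to prove both items by reducing them to the classical Fourier transform, through the exponential change of variables $x=e^{-t}$ (equivalently $t=-\log x$), which maps $(0,\infty)$ diffeomorphically onto $\R$. Under this substitution $\de x/x=-\de t$ and $x^{-\im\lambda-1}\de x=e^{\im\lambda t}e^{t}e^{-t}\de t$ up to orientation, so that
\begin{equation*}
\Mcal(f)(\lambda)=(2\pi)^{-1/2}\int_\R e^{\im\lambda t}f(e^{-t})\de t .
\end{equation*}
For real $\lambda=\xi$ this is the one-dimensional Fourier transform of $t\mapsto f(e^{-t})$, evaluated at $-\xi$. For $\lambda=\xi+\im L$ it is the Fourier transform of $e^{-Lt}f(e^{-t})=x^{L}f(x)$, again evaluated at $-\xi$. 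All transform properties then come from the corresponding Fourier statements. I would first verify everything for $f\in\Cdotinf_c$, where every integral converges absolutely and $\Mcal f$ is entire in $\lambda$, and then extend by density and continuity.

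For item (1), the weight identity is immediate: $x^\alpha x^{-\im\lambda-1}=x^{-\im(\lambda+\im\alpha)-1}$, so $\Mcal[x^\alpha u](\lambda)=\hat u(\lambda+\im\alpha)$. For the derivative identity I would integrate by parts in $x$. One has $xD_x(x^{-\im\lambda-1})\cdot x=\ldots$, or more cleanly, in the $t$ variable, $xD_x=-D_t$, and the Fourier transform turns $D_t$ into multiplication by the dual variable. The sign bookkeeping between $e^{\im\lambda t}$ and $-D_t$ then gives exactly $\lambda\hat u(\lambda)$. The boundary terms vanish for $\Cdotinf_c$ functions, since they vanish to infinite order at $x=0$ and have compact support.

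For distributions, the phrase ``whenever both sides make sense'' means we interpret $\hat u$ as the pairing \eqref{eq:mellin-fourier transform of extensible distribution}. The identity then follows by transposition: the transpose of $xD_x$ with respect to $\de x$ is $-D_x x$, and applying it to $x^{-\im\lambda-1}$ produces $\lambda x^{-\im\lambda-1}$. The $y$-variables are carried along by the ordinary Fourier transform, which commutes with everything here.

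For item (2), I would start with $\alpha=0$ and $s=0$. In the coordinates $(t,y)$, the space $L^2(\frac{\de x}{x}\de y)$ is exactly $L^2(\R^n,\de t\,\de y)$, and $\Mcal\FT$ becomes the $n$-dimensional Fourier transform, composed with the reflection $\xi\mapsto-\xi$. Plancherel therefore gives an isometric isomorphism onto $L^2(\de\xi\,\de\eta)$ on the line $\Im\lambda=0$; here the normalisation $(2\pi)^{-d/2}$ is what makes it isometric rather than merely bounded.

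Next I treat $s\in\N$. Definition \eqref{eq:b-Sobolev spaces} is stated through the vector fields $x\partial_x=-\partial_t$ and $\partial_y$, so $\sob^s_b$ corresponds exactly to the standard Sobolev space $\sob^s(\R^n_{t,y})$. The classical Fourier characterisation $\braket{(\xi,\eta)}^{s}\hat u\in L^2$ then holds, with an equivalent norm, and becomes isometric if the $\sob^s_b$ norm is defined via the Fourier multiplier. For general real $s$ I would use interpolation and duality, which commute with this unitary identification, since the weighted $L^2$ spaces $\braket{\cdot}^{-s}L^2$ interpolate and dualise in the same way.

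Finally, for a general weight $\alpha$, I would apply item (1). Indeed $u\in x^\alpha\sob^s_b$ means $u=x^\alpha v$ with $v\in\sob^s_b$, so $\hat u(\lambda)=\hat v(\lambda-\im\alpha)$. This moves the line $\Im\lambda=0$ to $\Im\lambda=-\alpha$, matching the sign convention in the statement.

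The only real subtlety is making sense of the extension from $\Cdotinf_c$ to these spaces. The relevant density facts are, first, that $\Cdotinf_c(\bar{\R^n_+})$ corresponds to $\Cinf_c(\R^n)$ in the $t$-picture, up to the rapid decay as $t\to+\infty$, which is dense in every $\sob^s$. Second, the pairing definition agrees with the $L^2$ extension on the overlap. I expect the main (mild) obstacle to be precisely this consistency between the distributional pairing definition and the Plancherel extension, together with tracking the sign and orientation conventions so that the line is $\Im\lambda=-\alpha$ and not $+\alpha$.
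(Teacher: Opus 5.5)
Your proof is correct. The paper states this lemma without proof as a collection of standard facts, and your reduction via $t=-\log x$ is the standard argument: it gives $x\partial_x=-\partial_t$ and identifies $L^2(\de x/x\,\de y)$ with $L^2(\de t\,\de y)$, so that $\Mcal\FT$ becomes a reflected, Plancherel-normalised Fourier transform on $\R^n_{t,y}$.

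There is one sign slip to fix in the weighted step. By item (1), writing $u=x^\alpha v$ gives $\hat u(\lambda)=\hat v(\lambda+\im\alpha)$, not $\hat v(\lambda-\im\alpha)$. With the correct sign, $\hat u$ on $\{\Im\lambda=-\alpha\}$ is exactly $\hat v$ on the real line, which is the conclusion you state. Your written formula would instead have produced the line $\Im\lambda=+\alpha$.
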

As for the Fourier transform, there are results of Paley--Wiener type that characterise the image of $\Mcal$ when restricted to certain spaces of compactly supported distributions. Since they are the motivation for the study of singularities, we recall below those that are useful to us. First, notice that, for a distribution on $\bar{\R_+}$ with compact support (whether extensible or supported), we always have that $\Mcal u$ is holomorphic in a half-plane $\Im\lambda>L$, with $L$ depending on the order of $u$. Thus, we interpret here $\Mcal u$ as a $\Dscr'(\R^{n-1})$-valued holomorphic function in some half-plane, namely, for every $\psi\in\Cinf_c(\R^{n-1})$ we have that $\braket{\Mcal u(\lambda),\psi}$ is a holomorphic function on $\{\Im \lambda>L\}$ for some $L\gg0$.

\begin{theorem}[Paley--Wiener for $\Escr'$]\label{theo:paley-wiener for extensible}
		The Mellin transform of a distribution $u\in\Escr'(\bar{\R^n_+})$ satisfies the following:
	\begin{enumerate}
		\item There exists $R>0$ such that, for all $\lambda$ for which $\Mcal u$ is defined, $\supp\Mcal u(\lambda)\subset B(0,R)\subset\R^{n-1}$;
		\item There exist constants $C_1, C_2, r, N>0$ such that
		\begin{equation}\label{eq:paley-wiener bound}
			\|\Mcal u(\lambda)\|_{\sob^{-r}(\R^{n-1})}\leq C_1e^{C_2|\Im \lambda|}(1+|\lambda|)^{N}.
		\end{equation}
	\end{enumerate}
	Conversely, if a $\Dscr'(\R^{n-1})$-valued function, holomorphic in a half-plane $\Im\lambda>K$, satisfies the two conditions above, then it is the Mellin transform of a unique $u\in\Escr'(\bar{\R^n_+})$.
\end{theorem}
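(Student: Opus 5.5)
The plan is to reduce to the Euclidean Paley--Wiener theorem by the change of variables $x=e^{-t}$, which turns the Mellin transform in $x$ into a Fourier transform in $t$. Concretely, for $u\in\Escr'(\bar{\R^n_+})$ (an extensible distribution with compact support) choose an extension $\tilde u\in\dot\Dscr'(\bar{\R^n_+})$ with compact support; by \cref{rem:mellin and supported distributions} the value of $\Mcal u$ does not depend on this choice. Then
\begin{equation*}
\hat u(\lambda,\eta)=(2\pi)^{-n/2}\braket{\tilde u,x^{-\im\lambda-1}e^{-\im y\eta}} .
\end{equation*}
This pairing is well defined for $\Im\lambda>L$, where $L$ is large compared with the order of $\tilde u$: the function $x^{-\im\lambda-1}=x^{\Im\lambda-1}e^{-\im\Re\lambda\log x}$ is then $\mathit C^k$ up to $x=0$, with $k$ at least the order, and it is holomorphic in $\lambda$.

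For part (1), notice that only the Mellin part in $x$ is taken: the statement treats $\Mcal u(\lambda)$ as a distribution on $\R^{n-1}_y$. The support claim is then immediate. If $\supp u\subset[0,R']\times B(0,R)$, then $\braket{\Mcal u(\lambda),\psi}=\braket{\tilde u,x^{-\im\lambda-1}\psi(y)}$, and this vanishes whenever $\supp\psi\cap B(0,R)=\emptyset$.

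For part (2), write $\tilde u$ locally as a finite sum of derivatives $\partial_x^j\partial_y^\alpha$ of continuous compactly supported functions, with $j+|\alpha|\le N_0$. Integrating by parts moves $\partial_x^j$ onto $x^{-\im\lambda-1}$ and produces factors polynomial in $\lambda$ times $x^{-\im\lambda-1-j}$. The $\partial_y^\alpha$ stay on $\psi$, which is why the natural norm is $\sob^{-r}(\R^{n-1})$ with $r>N_0+(n-1)/2$. Boundary terms at $x=0$ are absent for $\Im\lambda$ large, and at $x=R'$ by compact support. The remaining integrals are bounded by $\int_0^{R'}x^{\Im\lambda-1-j}\de x\lesssim R'^{\Im\lambda}$, which is $\le e^{C_2|\Im\lambda|}$ with $C_2=|\log R'|+1$. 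This gives \eqref{eq:paley-wiener bound}. Holomorphy follows by differentiating under the pairing.

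For the converse, given $F(\lambda)$ satisfying (1) and (2), holomorphic on $\Im\lambda>K$, I would define $u$ by the inverse Mellin transform along a line $\Im\lambda=c>K$:
\begin{equation*}
u=(2\pi)^{-1/2}\int_{\Im\lambda=c}x^{\im\lambda}F(\lambda)\de\Re\lambda ,
\end{equation*}
interpreted distributionally after dividing by $(\lambda-\im c')^{N+2}$ and applying $(xD_x-\im c')^{N+2}$, using \cref{lemma:properties of Mellin transform}(1). Cauchy's theorem shows that the result is independent of $c$. This produces an element of $x^{c}\sob^{-\infty}_b$, hence of $\Dscr'$ by the preceding lemma. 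Support in $y$ comes from (1).

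The main obstacle is support in $x$ bounded above, and it is handled as in Paley--Wiener. Write $F=(\lambda-\im c')^{N+2}G$, so that $G$ is integrable along horizontal lines, with bound $e^{C_2\Im\lambda}$. Shift the contour to $\Im\lambda=c\to+\infty$. For $x>e^{C_2}$ the factor $|x^{\im\lambda}|=x^{-c}$ beats $e^{C_2c}$, so the integral tends to $0$, giving $\supp u\subset[0,e^{C_2}]$. Finally, $\Mcal u=F$ follows from Mellin inversion in $L^2$ on the line, by \cref{lemma:properties of Mellin transform}(2). Uniqueness is the injectivity of $\Mcal$ on $\Dscr'$, which follows from that same isometry after a weight shift.
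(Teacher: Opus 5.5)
The paper gives no proof of this theorem; it is recorded as standard background, so there is no argument of the authors' to compare with. Your proposal is the standard proof and it is correct. The substitution $x=e^{-t}$ turns $\Mcal$ into the Fourier--Laplace transform of a distribution supported in the half-line $[-\log R',\infty)$. You then run the Paley--Wiener--Schwartz argument directly on the Mellin side, with \cref{lemma:properties of Mellin transform}(2) supplying inversion and uniqueness, and the contour shift $\Im\lambda\to+\infty$ giving the support bound $x\le e^{C_2}$.

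A few points to tidy up:
\begin{enumerate}
\item By \cref{lemma:properties of Mellin transform}(2), data on the line $\Im\lambda=c$ corresponds to $x^{-c}\sob^{s}_b$, not $x^{c}\sob^s_b$. This is harmless, since only the union over weights is used.
\item In the forward direction, the bound (2) can only hold on a half-plane $\Im\lambda\ge L$, with $L$ exceeding the order of $\tilde u$ by more than one. It cannot hold on the whole half-plane of holomorphy. For example, if $\chi\in\Cinf_c([0,\infty))$ with $\chi\equiv1$ near $0$, then $\Mcal\chi$ is holomorphic on $\Im\lambda>0$ but behaves like $\im(2\pi)^{-1/2}/\lambda$ as $\lambda\to0$. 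The statement must be read this way, and this is what you actually prove.
\item The identification $\Dscr'_K=\bigcup_\alpha x^\alpha\sob^{-\infty}_{b,K}$ is stated for compact $K$. In the converse, apply it after you have established the support bounds, or after multiplying by a cutoff, rather than before.
\item The regularising factor $(\lambda-\im c')^{N+2}$ must not vanish on the half-plane, so take $c'\le K$.
\item In the structure-theorem step, it does no harm if the continuous functions $g_{j\alpha}$ are not supported in $\{x\ge0\}$. Extend $x^{-\im\lambda-1}$ by zero to $x<0$; it is then $C^{N_0}$ on $\R^n$ once $\Im\lambda>N_0+1$, so the pairing and the derivative transfer need no boundary terms.
\end{enumerate}
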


\begin{theorem}[Paley--Wiener for $\Ascr$]\label{theo:paley-wiener for conormal}
	A distribution $u\in\Escr'(\bar{\R^n_+})$ is in $\Ascr(\bar{\R^n_+})$ if, and only if,
	its Mellin transform $\Mcal u$ takes values in smooth functions on $\R^{n-1}$ with rapid decay (together with all their derivatives) along every horizontal line in the half-plane on which it is holomorphic.
\end{theorem}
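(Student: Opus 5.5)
We prove the two implications separately. Throughout we work locally: $u\in\Escr'(\bar{\R^n_+})$, and $\Mcal u$ denotes the $\Dscr'(\R^{n-1})$-valued holomorphic function on a half-plane $\Im\lambda>L$ from \cref{theo:paley-wiener for extensible}.

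\emph{Necessity.} Suppose $u\in\Ascr(\bar{\R^n_+})$ has compact support $K$. By the characterisation $\Ascr_K=\bigcup_\alpha x^\alpha\sob^\infty_{b,K}$, we have $u\in x^\alpha\sob^s_b$ for some fixed $\alpha$ and every $s$. \cref{lemma:properties of Mellin transform}(2) then gives
\begin{equation*}
\braket{(\Re\lambda,\eta)}^{s}\,\Mcal\FT u\in L^2(\{\Im\lambda=-\alpha\}\times\R^{n-1})\quad\text{for all } s.
\end{equation*}
Shifting the weight, $x^{\beta}u\in x^{\alpha+\beta}\sob^\infty_b$ for every $\beta\ge0$, since multiplication by $x^\beta$ preserves $\sob^\infty_b$ on compact sets. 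So the same bound holds on every line $\Im\lambda=-\alpha-\beta$, and by \cref{lemma:properties of Mellin transform}(1) this covers every horizontal line $\Im\lambda\le-\alpha$. Since $\Mcal$ is holomorphic for $\Im\lambda$ large, the relevant half-plane is $\Im\lambda>L$ with $L\le-\alpha$ (in our orientation convention), so all its lines are covered.

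Next we convert $L^2$-decay into pointwise rapid decay of $\Mcal u(\lambda)$ as an element of $\Cinf(\R^{n-1})$ with derivatives. On a fixed line, $\eta$-weights give Sobolev regularity in $y$ uniformly, hence smoothness by Sobolev embedding. To control $\xi$ pointwise rather than in $L^2$, we use holomorphy: a Cauchy estimate on small discs around each point of the line bounds the sup by $L^2$ norms on nearby lines, which are controlled as above. Together with the $\braket{\xi}^{s}$ weights and $\partial_y^\gamma$ (i.e.\ $\eta^\gamma$) weights, this yields
\begin{equation*}
\sup_{\Im\lambda=c}\ \braket{\lambda}^N\,\bigl\|\partial_y^\gamma\Mcal u(\lambda)\bigr\|_{L^\infty}<\infty\quad\text{for all } N,\gamma.
\end{equation*}

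\emph{Sufficiency.} Suppose $\Mcal u(\lambda)$ is smooth in $y$, with all $y$-derivatives rapidly decreasing along horizontal lines. Compact support of $u$ in $y$ gives, by \cref{theo:paley-wiener for extensible}(1), that $\supp\Mcal u(\lambda)\subset B(0,R)$ uniformly in $\lambda$. Hence $\Mcal u(\lambda)\in\Cinf_c$, and rapid decay of $\partial_y^\gamma$ controls $\braket{\eta}^{k}\FT_y\Mcal u$ in $L^2_\eta$, uniformly along the line. Therefore on a line $\Im\lambda=-\alpha$ inside the half-plane,
\begin{equation*}
\braket{(\xi,\eta)}^s\,\Mcal\FT u\in L^2\quad\text{for all } s.
\end{equation*}
By \cref{lemma:properties of Mellin transform}(2) this means $v\in x^\alpha\sob^\infty_b$, where $v$ is the inverse transform of the restriction of $\Mcal u$ to that line.

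The main obstacle is identifying $v$ with $u$: the Mellin transform only sees the class of $u$ in $\Dscr'$, by \cref{rem:mellin and supported distributions}. Since $u$ is extensible, and by the converse part of \cref{theo:paley-wiener for extensible} the Mellin transform determines a unique element of $\Escr'(\bar{\R^n_+})$, we get $u=v$ near $\supp u$. The problem is that $v$ could a priori fail to be compactly supported in $x$. To handle this we pick a cutoff $\chi\in\Cinf_c$ with $\chi=1$ near $\supp u$. Then $\chi v\in x^\alpha\sob^\infty_{b,K}$ and $\chi v=\chi u=u$, so $u\in\Ascr_K$ by the characterisation above. The remaining check is that the inversion integral is independent of the choice of line; this follows by contour shifting using holomorphy and the decay, with the exponential-type bound \eqref{eq:paley-wiener bound} controlling the horizontal segments.
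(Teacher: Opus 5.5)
Since the paper only recalls this theorem and gives no proof, I assess your argument on its own. The sufficiency half is essentially sound, apart from the identification step discussed at the end. The necessity half, however, has a genuine error: the weight shift goes the wrong way.

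From $x^\beta u\in x^{\alpha+\beta}\sob^\infty_b$ you get weighted $L^2$ control of $\Mcal(x^\beta u)$ on $\Im\lambda=-\alpha-\beta$. But by \cref{lemma:properties of Mellin transform}(1), $\Mcal(x^\beta u)(\lambda)=\Mcal u(\lambda+\im\beta)$, so this is $\Mcal u$ on the same line $\Im\lambda=-\alpha$ again, and nothing new is gained. The claimed coverage of all lines $\Im\lambda\le-\alpha$ is in fact false. Take $u=\chi(x)\phi(y)$ with $\chi(0)=1$ and $\phi\neq0$: it lies in $x^{-1/2}\sob^\infty_b$, yet $\Mcal u$ has a pole at $\lambda=0$. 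Lines with $\Im\lambda\le-\alpha$ could never cover an upper half-plane anyway.

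The step you need is the inclusion $x^\alpha\sob^\infty_{b,K}\subset x^{\alpha'}\sob^\infty_{b,K}$ for $\alpha'\le\alpha$. Write $u=x^{\alpha'}\,(x^{\alpha-\alpha'}w)$; this is where your remark that multiplication by a nonnegative power of $x$ preserves $\sob^\infty_{b,K}$ belongs. It gives the weighted $L^2$ bounds on every line $\Im\lambda\ge-\alpha$, locally uniformly in $\Im\lambda$. Your Cauchy-estimate argument on discs contained in $\{\Im\lambda>-\alpha\}$ then gives the pointwise decay.

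Even after this correction, the sentence ``the relevant half-plane is $\Im\lambda>L$ with $L\le-\alpha$, so all its lines are covered'' fails when $L<-\alpha$. You only reach $\{\Im\lambda>-\alpha\}$, or the union of these over all admissible $\alpha$, i.e.\ the half-plane where the Mellin integral converges, and in general nothing more is true. For instance, $u=\chi(x)\phi(y)\sin\bigl((\log x)^2\bigr)$ is conormal, since it lies in $x^\alpha\sob^\infty_b$ for every $\alpha<0$. After the substitution $x=e^{-s}$, its Mellin transform is, up to an entire function decaying along the real axis, $(2\pi)^{-1/2}\phi(y)\int_0^\infty e^{\im\lambda s}\sin(s^2)\,\de s$. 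This is entire (rotate the contour by $\pm\pi/4$), but completing the square shows that its modulus tends to a nonzero constant along $\Im\lambda=0$. So ``the half-plane on which it is holomorphic'' has to be understood as the half-plane of convergence, not the maximal domain of holomorphy, and your proof should make that choice explicit.

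In the sufficiency half, the appeal to the uniqueness clause of \cref{theo:paley-wiener for extensible} to get $u=v$ near $\supp u$ is circular. You do not yet know that $v$ lies in $\Escr'$, its Mellin transform is only given on one line, and the identity $\chi v=\chi u$ presupposes what is to be shown. The contour shift you list as the ``remaining check'' is the actual argument, and it makes the cutoff unnecessary:
\begin{itemize}
\item By $\Dscr'_K=\bigcup_\beta x^\beta\sob^{-\infty}_{b,K}$ you have $u\in x^\beta\sob^{-N}_b$ for some $\beta,N$, and after lowering $\beta$ the line $\Im\lambda=-\beta$ lies in the half-plane.
\item By \cref{lemma:properties of Mellin transform}(2), $u$ is then the inverse transform of $\Mcal u$ on $\Im\lambda=-\beta$.
\item Pair with $\theta\in\Cinf_c((0,\infty)\times\R^{n-1})$, whose Mellin--Fourier transform is entire and rapidly decreasing on strips. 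Using \eqref{eq:paley-wiener bound} on the strip between $\Im\lambda=-\beta$ and $\Im\lambda=c$, you can move the line to $\Im\lambda=c$.
\end{itemize}
Hence $u=v$ in the interior, therefore $u=v$ as extensible distributions, and compact support of $v$ comes for free.
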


\begin{theorem}[Paley--Wiener for $\Ascr'$]\label{theo:paley-wiener for dual-conormal}
	A distribution $u\in\Escr'(\bar{\R^n_+})$ is in $\Ascr'(\bar{\R^n_+})$ if, and only if, $\Mcal u$ extends to a meromorphic function with simple poles at $-\im\N$, which has the property: for every $L>0$ we find $C,r>0$ such that if $|l|\leq L$
	\begin{equation}\label{eq:paley-wiener for dual-conormal}
		\left\|\prod_{j=0}^{\lfloor L\rfloor}(\xi+\im (l+j))\Mcal u(\xi+\im l)\right\|_{\sob^{-r}}\leq  C\braket{\xi}^{r}.
	\end{equation}
\end{theorem}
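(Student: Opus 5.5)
The plan is to reduce both directions to a structural description of extensible dual-conormal distributions near the boundary, and then compute Mellin transforms term by term. Concretely, I would first establish, or extract from \cite{alvarez_lopez_topology_2024}, the characterisation
\begin{equation*}
u\in\Ascr'(\bar{\R^n_+})\cap\Escr'
\iff
\forall N\ \exists\, u_j\in\Dscr'(\R^{n-1}) :\quad
u-\sum_{j<N}\chi(x)\,x^j u_j\in x^{N}\sob^{-\infty}_{b},
\end{equation*}
with $\chi\in\Cinf_c([0,\infty))$ equal to $1$ near $0$. The structural intuition is that $u$ is smooth in $x$ with values in a fixed $\sob^{-r}(\R^{n-1})$, and the $u_j$ are its normal Taylor coefficients. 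The direction ``$\Leftarrow$'' is the more robust one. Pairing against a compactly supported conormal $\phi$ of the form \eqref{eq:conormal distributions} is well defined term by term: on the terms $x^j u_j$ this is a trace pairing. The remainder $x^N\sob^{-r}_b$ pairs with $\phi\in x^{-N}\sob^{r}_b$, using the $\Ascr$-description in the lemma above, once $N$ exceeds the growth order of $\phi$. The direction ``$\Rightarrow$'' follows from duality together with \cref{lemma:conormal+dual-conormal=smooth}: the traces $\gamma_j u$ exist, and one subtracts them successively.

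Necessity then goes as follows. For the model terms, $\Mcal(\chi x^j u_j)(\lambda)=u_j\,\Mcal(\chi x^{j})(\lambda)$. Since
\begin{equation*}
\int_0^1 x^{-\im\lambda-1+j}\,\de x=\frac{1}{j-\im\lambda},
\end{equation*}
this has a single simple pole at $\lambda=-\im j$, plus an entire contribution from the region where $\chi\neq 1$; the latter is bounded on horizontal strips by the compact support of $\chi$. For the remainder, \cref{lemma:properties of Mellin transform}(2) shows that $\Mcal\FT$ of an element of $x^N\sob^{-r}_b$ lies in $\braket{(\Re\lambda,\eta)}^{r}L^2$ on the line $\Im\lambda=-N$. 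By compact support it is also holomorphic for $\Im\lambda>-N$. To pass from $L^2$ bounds on lines to the pointwise $\sob^{-r}$ bound \eqref{eq:paley-wiener for dual-conormal} uniformly in $|l|\le L$, I would apply the same argument to $(xD_x)^k$ of the remainder and use a Sobolev embedding in $\xi$, at the cost of enlarging $r$. Multiplying by $\prod_{j\le\lfloor L\rfloor}(\xi+\im(l+j))$ cancels the poles in the strip, with only polynomial loss. Choosing $N>L$ gives the estimate.

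Sufficiency is proved by contour shifting. Start from the Mellin inversion formula on a line $\Im\lambda=c$ high up, where $u\in\Escr'$ makes $\Mcal u$ holomorphic, as in \cref{theo:paley-wiener for extensible}. Move the contour down to $\Im\lambda=-N-\tfrac12$. The bound \eqref{eq:paley-wiener for dual-conormal} justifies the shift, since the integrand has polynomial growth in $\xi$ in $\sob^{-r}$, and we can pair with test functions while inserting powers of $xD_x$. The residues at $-\im j$, $j\le N$, produce terms $x^j u_j(y)$ with $u_j\in\sob^{-r}$. These agree with $\chi x^ju_j$ up to smooth compactly supported corrections. The integral on the new line is, by \cref{lemma:properties of Mellin transform}, in $x^{N+1/2}\sob^{-r'}_b$. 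Since $N$ is arbitrary, the structural criterion gives $u\in\Ascr'$.

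The main obstacle I expect is the structural characterisation itself, and specifically controlling the uniformity of $r$. For a compactly supported $u$ a single Sobolev order should suffice, but showing that the remainders lie in $x^N\sob^{-r}_b$ with $r$ independent of $N$ (or at least with tame dependence) requires care. I would handle it by working with the family of seminorms defining the weak-$\ast$ dual and invoking the finite-order property of compactly supported distributions. A secondary issue is matching the supported/extensible ambiguity: by \cref{rem:mellin and supported distributions}, $\Mcal$ only sees the class in $\Dscr'(\bar{\R^n_+})$, so uniqueness in the converse must be stated for the extensible class.
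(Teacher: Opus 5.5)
The paper gives no proof of this statement: it is quoted as a standard fact, like the Paley--Wiener theorems for $\Escr'$ and $\Ascr$. So I judge your argument on its own.

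\textbf{Sufficiency.} This half is essentially right. You shift the Mellin inversion contour from a high line down to $\Im\lambda=-N-\tfrac12$, testing against interior test functions so that polynomial growth of $\Mcal u$ suffices. This produces residue terms $x^ju_j$ with $u_j\in\sob^{-r}(\R^{n-1})$, and a remainder in $x^{N+1/2}\sob^{-r'}_b$ by \cref{lemma:properties of Mellin transform}. Two repairs are needed.
\begin{enumerate}
\item The correction $(1-\chi)x^ju_j$ is neither smooth nor compactly supported. What you actually use is that it vanishes near $x=0$. Hence the compactly supported $u-\sum_{j\le N}\chi x^ju_j$ agrees with the contour remainder near the boundary, and away from it is an ordinary compactly supported distribution.
\item In the structural ``$\Leftarrow$'' you must say how the $\delta^{(k)}(x)$-components of a supported conormal $\phi$ are handled. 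Pair $R_N$ only with $\phi|_M\in x^{-N'}\sob^{\infty}_b$, and let the terms $\chi x^ju_j$ see the full supported $\phi$. Then check independence of $N$: for $N$ beyond the order of $\phi$, $x^N\phi$ is the integrable extension of $x^N\phi|_M$. Finally check continuity on each Fr\'echet step of $\dot\Ascr_c$.
\end{enumerate}

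\textbf{Necessity: the gap.} You route this half entirely through the ``$\Rightarrow$'' direction of your structural characterisation, and that direction is not proved.
\begin{itemize}
\item Appealing to duality together with \cref{lemma:conormal+dual-conormal=smooth} is not an argument. That lemma identifies $\Ascr\cap\Ascr'$ with $\Cinf(\bar M)$. It gives no reason why a dual-conormal $u$, with its first $N$ traces subtracted, should lie in $x^N\sob^{-\infty}_b$.
\item Taylor's formula in $x$ cannot supply this either, because your guiding intuition is false for $\Ascr'$. Smoothness in $x$ with values in $\Dscr'(\partial M)$ characterises $\Nscr$ (\cref{theo:smoothness and solvability in N'}). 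The distribution of \cref{example:A' vs N'} lies in $\Ascr'$, is not smooth in $x$, and has all traces zero.
\item Membership in $x^N\sob^{-\infty}_b$ is itself detected by holomorphy of $\Mcal$ in $\Im\lambda>-N$ with polynomial bounds. So the claim you need is essentially the necessity statement itself.
\item The weighted descriptions the paper quotes from \cite{alvarez_lopez_topology_2024} cover $\Dscr'$, $\Ascr$, $\dot\Ascr'$ and $\Cdotinf$, but not $\Ascr'$, so you cannot simply extract it from there.
\end{itemize}

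\textbf{The missing idea.} Use $\Ascr'=(\dot\Ascr_c)'$ directly. Via $x_+^a=\partial_x^kx_+^{a+k}/((a+1)\cdots(a+k))$, the family $\lambda\mapsto\chi(x)x_+^{-\im\lambda-1}\otimes\psi(y)$ is meromorphic with values in a fixed Fr\'echet step of $\dot\Ascr_c$ on each strip. Its poles are simple, at $\lambda=-\im k$, with residues that are multiples of $\delta^{(k)}(x)\otimes\psi$. Pairing with $u$ gives:
\begin{itemize}
\item the continuation of $\Mcal u$;
\item the simple poles at $-\im\N$, with the traces as residues;
\item the bound \eqref{eq:paley-wiener for dual-conormal}. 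Here $x\partial_x$ acts on the family by multiplication by $-\im\lambda-1$, and the seminorms involve only finitely many $y$-derivatives of $\psi$ (compare the estimate in the proof of \cref{prop:properties of b-WF for HS-distributions}(b)).
\end{itemize}
With this, the structural lemma is unnecessary for necessity. The uniformity of $r$ in $N$ that you single out is a non-issue, since $r$ may depend on $L$. Your ``Sobolev embedding in $\xi$'' step also disappears; it would not have worked anyway, since powers of $xD_x$ produce weights in $\lambda$, not $\xi$-derivatives.
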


Table \cref{tab:mellin properties} gives an overview of the dictionary provided by $\Mcal$.
{\renewcommand{\arraystretch}{1.25}
	\setlength{\tabcolsep}{9pt}
\begin{table}[h]
	\centering
	\begin{tabular}{p{5cm}p{7cm}}
		\hline
		Property & Mellin transform \\
		\hline
		Extensible & distribution-valued holomorphic function in a half-plane and \eqref{eq:paley-wiener bound} \\
		\hline
		Conormal to $\{x=0\}$ & values in $\Cinf$+rapid decay \\
		\hline
		Dual-conormal to $\{x=0\}$ & Meromorphic with simple poles at $-\im\N$ + polynomial bound in $\Re\lambda$ \\
		\hline
		$\partial^k_x u$ vanishes at $\{x=0\}$ & removable singularity at $\lambda=-\im k$ \\
		\hline
	\end{tabular}
	\vspace{5pt}
	\caption{Mellin dictionary}
	\label{tab:mellin properties}
\end{table}}

For later purposes, we recall the following generalisation of \cref{theo:paley-wiener for dual-conormal}. In it, we treat more generally distributions on a quadrant $[0,\infty)_x^k\times \R^{n-k}_y$, as we will need to apply the result to a bidistribution in \cref{prop:properties of b-WF for HS-distributions}. The definitions of supported and extensible distributions apply verbatim to this situation, and so does the space $\dot\Ascr([0,\infty)^k\times\R^{n-k})$ of supported distributions, conormal to every hyperplane $x_j=0$, $j=1,\dots, k$. The dual space $\Ascr'([0,\infty)^k\times\R^{n-k})$ also has the same definition.
\begin{theorem}[see \cite{melrose_differential_nodate}, Proposition 4.8.2]\label{theo:mellin transform of A'}
	Assume $u\in\Ascr'([0,\infty)^2\times\R^{n-2})$ has compact support. Then its Mellin transform $\Mcal{u}$ defines a unique meromorphic function
	\begin{equation}\label{eq:dual-conormal has mermorphic Mellin}
		\C^2\setminus (-\im\N\times \C\cup\C\times -\im \N)\to \Escr'(\R^{n-2}),
	\end{equation}
	such that
	\begin{enumerate}
		\item for all $\lambda\in\C^2\setminus(-\im\N\times \C\cup\C\times -\im \N)$, $\supp \Mcal{u}(\lambda,\cdot)\subset B(0,R)\subset\R^{n-2}$ for some $R>0$ not depending on $\lambda$;
		\item for every $\epsilon>0$ there exists an integer $p$ and positive constants $C_1,N,C_2$ such that for all $\lambda, \mu$ with $\Im\lambda,\Im\mu\ge \epsilon >0$ we have
		\begin{equation}\label{eq:subexponential bound of Mellin transform}
			\|\Mcal(u)(\lambda,\mu,\,\cdot\,)\|_{\sob^{-p}}\leq C_1e^{C_2(|\Im \lambda|+|\Im\mu|)}\braket{(\Re\lambda,\Re\mu)}^{N}.
		\end{equation}
	\end{enumerate}
	In this case, on every strip $\{|\Im\lambda|,|\Im\mu|\leq L\}$ and denoting by $\eta$ the Fourier variable of $y$, the Fourier transform of $\prod_{j=0}^{\lfloor L\rfloor}(\lambda+\im j)(\mu+\im j)\Mcal(u)$, as a family of distributions on $\R^{n-2}$, is polynomially bounded in $\braket{(\Re\lambda,\Re\mu,\eta)}$.
\end{theorem}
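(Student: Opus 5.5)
The plan is to reduce \cref{theo:mellin transform of A'} to the one-variable Paley--Wiener theorem for $\Ascr'$ (\cref{theo:paley-wiener for dual-conormal}) by working one corner variable at a time, using the structure of dual-conormal distributions as finite sums of derivatives of weighted $b$-Sobolev functions. I write coordinates $(x_1,x_2,y)$ with Mellin duals $(\lambda,\mu)$ and Fourier dual $\eta$.

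\emph{Step 1: holomorphy and the basic bound in $\Im\lambda,\Im\mu>0$.} Since $u$ has compact support and lies in $\Ascr'$, its order is finite. By the corner analogue of the characterisation $\dot\Ascr'_K=\bigcap_\alpha x^\alpha\sob^{-\infty}_{b,K}$, I will write $u\in x_1^{\alpha}x_2^{\beta}\sob^{-p}_b$ for every $\alpha,\beta>0$ small. The test functions $x_1^{-\im\lambda-1}x_2^{-\im\mu-1}e^{-\im y\eta}$ (cut off in $y$) lie, for $\Im\lambda,\Im\mu\ge\eps$, in the dual space $x_1^{-\alpha}x_2^{-\beta}\sob^{p}_b$ locally, with $\sob^p_b$ norm bounded by $C\braket{(\Re\lambda,\Re\mu)}^{p}$ times an exponential in the imaginary parts coming from the compact support in $x$. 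This gives holomorphy of the pairing, item (1) (support in $y$ inherited from $\supp u$, since the $y$-variable is not transformed but only localised), and the estimate \eqref{eq:subexponential bound of Mellin transform}, exactly as in \cref{lemma:properties of Mellin transform}(2) and \cref{theo:paley-wiener for extensible}.

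\emph{Step 2: meromorphic continuation.} Here I will use the identity of \cref{lemma:properties of Mellin transform}(1) in each variable separately. By the Taylor-type structure of dual-conormal distributions, for every $N$ one can write $u=\sum_{j,k< N}x_1^{j}x_2^{k}\,(\ldots)+R_N$, where the coefficients are distributions in the remaining variables and $R_N\in x_1^{N}\sob^{-p}_b+x_2^N\sob^{-p}_b$ plus mixed terms. Equivalently, I will apply the operators $\prod_{j<N}(x_1D_{x_1}+\im j)$ and $\prod_{k<N}(x_2D_{x_2}+\im k)$, which annihilate the Taylor terms and map $u$ into $x_1^{N}x_2^{N}\sob^{-p-2N}_b$ locally. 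By Step 1 (shifted via $\Mcal[x^\alpha u](\lambda)=\hat u(\lambda+\im\alpha)$), $\prod_j(\lambda+\im j)(\mu+\im j)\Mcal u$ is holomorphic on $\Im\lambda,\Im\mu>-N$, which gives the meromorphic continuation with possible poles only on $-\im\N$ in each variable, and uniqueness by analytic continuation. Letting $N\to\infty$ covers all of $\C^2$.

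\emph{Step 3: the strip estimate.} On $\{|\Im\lambda|,|\Im\mu|\le L\}$, taking $N=\lfloor L\rfloor+1$ in Step 2, the product $\prod_{j\le\lfloor L\rfloor}(\lambda+\im j)(\mu+\im j)\Mcal u$ is the Mellin transform of an element of $x_1^{-L'}x_2^{-L'}\sob^{-q}_b$ with compact support, and \cref{lemma:properties of Mellin transform}(2) together with Cauchy--Schwarz/Sobolev embedding in $(\Re\lambda,\Re\mu,\eta)$ gives the polynomial bound in $\braket{(\Re\lambda,\Re\mu,\eta)}$ of its $y$-Fourier transform. The main obstacle is Step 2: justifying that the mixed corner remainder really lands in the doubly weighted space, i.e.\ that dual-conormality at the corner is a tensor-product-like condition allowing the two Mellin variables to be treated independently. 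I would first try to prove this by iterating the one-dimensional result in \cref{theo:paley-wiener for dual-conormal}, viewing $u$ as a dual-conormal distribution in $x_1$ with values in $\Ascr'$ in $(x_2,y)$, and checking that the seminorm bounds are uniform.
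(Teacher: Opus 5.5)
Your reduction has a genuine gap: the weighted memberships on which Steps 1--3 rest are either misstated or equivalent to the conclusion.

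\textbf{Step 1.} The characterisation $\bigcap_\alpha x^\alpha\sob^{-\infty}_{b}$ cannot describe the class $\Ascr'$ of the statement. With the paper's convention, a weight $x^\alpha$ corresponds to the line $\Im\lambda=-\alpha$. Membership for all $\alpha$ would therefore force $\Mcal u$ to be entire, which is what happens for $\sum_j e^{-j^2}\delta_{1/j}$. That contradicts the poles at $-\im\N$ asserted by the theorem.

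Your weakened claim, $u\in x_1^{\alpha}x_2^{\beta}\sob^{-p}_b$ for small $\alpha,\beta>0$, is still false. It would make $\Mcal u$ holomorphic near $\lambda=0$. But for $u=\chi(x_1)\chi(x_2)\phi(y)$ with $\chi(0)\neq0$ one has $\int_0^1x^{-\im\lambda-1}\de x=\im/\lambda$, a pole at $\lambda=0$. The true statement has weights $x_1^{-\alpha}x_2^{-\beta}$, but nothing you quote yields it, and proving it is essentially item (2) itself.

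\textbf{Step 2.} This step is circular in the same way, and more seriously. Your ``Taylor-type structure'' is the claim
\[
\prod_{j<N}(x_1D_{x_1}+\im j)\prod_{k<N}(x_2D_{x_2}+\im k)\,u\in x_1^{N-\delta}x_2^{N-\delta}\sob^{-q}_b .
\]
By the Mellin dictionary this is equivalent to meromorphic continuation with simple poles at $-\im\N$ in each variable, plus polynomial bounds on strips. That is exactly what is to be shown. Your fallback of iterating \cref{theo:paley-wiener for dual-conormal} would need a vector-valued version of that theorem, uniform seminorm control, and an argument for joint holomorphy; none of these is provided.

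\textbf{The missing idea.} Use the definition $\Ascr'=(\dot\Ascr_{\comps})'$ directly, and let the test functions, not $u$, carry the product structure.

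\begin{itemize}
\item The homogeneous distributions $x_+^a$ (extended by zero) form a meromorphic family of supported conormal distributions on $[0,\infty)$. They have simple poles at $a\in-1-\N$ with residues that are multiples of $\delta^{(k)}$.
\item The identities $(a+1)\cdots(a+k)\,x_+^a=\partial_x^kx_+^{a+k}$ and $xD_x x_+^a=-\im a\,x_+^a$ show that the regularised family has conormal seminorms (of a fixed Sobolev order) that are polynomially bounded on strips.
\item Since $-\im\lambda-1=-1-k$ exactly when $\lambda=-\im k$, and $-\im\lambda+k=-\im(\lambda+\im k)$, the family $\chi\,x_{1,+}^{-\im\lambda-1}x_{2,+}^{-\im\mu-1}e^{-\im y\eta}$ is meromorphic in $(\lambda,\mu)$ with values in $\dot\Ascr_{\comps}$ of the corner.
\item Multiplying by $\prod_{j\le\lfloor L\rfloor}(\lambda+\im j)(\mu+\im j)$ makes this family holomorphic with polynomially bounded seminorms on the strip. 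The growth in $\eta$ comes from $D_y$ acting on $e^{-\im y\eta}$. The factor $e^{C_2(|\Im\lambda|+|\Im\mu|)}$ comes from $x^{\Im\lambda}\le R^{\Im\lambda}$ on $\supp\chi$.
\end{itemize}

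Pairing with $u$ then gives, in one stroke, meromorphy with uniqueness, item (1), item (2), and the strip estimate. The two corner variables decouple trivially because the test family factorises, so the obstacle you identify never arises. (The paper itself gives no proof here; it only cites Melrose.)
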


\subsection*{The $b$-wave-front set}
We may now give
\begin{definition}\label{def:b-wave-front set}
	Let $u\in\Dscr'(\bar{M})$. We say that $(x_0,y_0,\xi_0,\eta_0)\notin\WF_b(u)$ if there exists a cutoff $\chi\in\Cinf_c(\bar{M})$, $\chi\equiv1$ near $(x_0,y_0)$, an open cone $\Gamma\subset \R^{n}\setminus\{0\}$ around $(\xi_0,\eta_0)$, and an $L>0$ such that $\Mcal (\chi u)(\lambda)$ is holomorphic in a neighbourhood of the line $\{\Im  \lambda=L\}$ and, for all $(\xi,\eta)\in \Gamma$ and all $N\in\N$, we find a constant $C_{N\chi\Gamma}$ such that
	\begin{equation}\label{eq:b-wave-front set}
		|\widehat{\chi u}(\xi+\im L,\eta)|\le C_{N\chi\Gamma} \braket{(\xi,\eta)}^{-N}.
	\end{equation}
\end{definition}

That the definition does not depend on the line $\Im\lambda=L$, as long as $\Mcal u$ is holomorphic there, is guaranteed by
\begin{lemma}\label{lemma:independence of the line}
	For some $\eps>0$ let $f\colon \{a-\eps< \Im z< b+\eps\}\to\C$ be a holomorphic function, polynomially bounded in $\Re z$ on the closed substrip $\{a\leq \Im z\leq b\}$. Assume furthermore that $f$ is rapidly decreasing along the line $\Im z=a$. Then $f$ is rapidly decreasing along every line $\Im z=l$ for $l\in (a,b)$.
\end{lemma}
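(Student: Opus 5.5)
The plan is a Phragmén--Lindelöf argument on the strip, applied to $f$ multiplied by polynomial weights. Rapid decrease along $\Im z=a$ means that for every $N$ there is $C_N$ with $|f(x+\im a)|\le C_N\braket{x}^{-N}$. Polynomial boundedness on the closed strip means there are $C,M$ with $|f(z)|\le C\braket{\Re z}^{M}$ for $a\le\Im z\le b$. Fix $l\in(a,b)$ and $N\in\N$; we want a bound $|f(x+\im l)|\le C'\braket{x}^{-N'}$ with $N'$ arbitrary.

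We cannot simply multiply $f$ by $(z-z_0)^N$, since that costs on the lower line. Instead we interpolate via the three-lines theorem in a form adapted to polynomial weights. For $K\in\N$, consider
\begin{equation*}
g_K(z)=f(z)\,(z-\im c)^{K},\qquad c<a-1 \text{ fixed},
\end{equation*}
so that $z-\im c$ has no zeros in the strip and $|z-\im c|\asymp\braket{\Re z}$ uniformly there. Then $g_K$ is holomorphic on the strip and polynomially bounded on the closed strip. On $\Im z=a$ it is bounded by $C_{K}'$, using rapid decay with $N=K$. On $\Im z=b$ it is bounded by $C\braket{x}^{M+K}$, which is not bounded.

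To handle the unbounded upper boundary, we instead use $h(z)=f(z)\,(z-\im c)^{K}\,(z-\im c')^{-\theta(z)}$-type weights. More simply, we apply Hadamard three-lines to $\log$-subharmonicity: the function
\begin{equation*}
\phi(z)=\log|f(z)|+\big(K-(K+M)\tfrac{\Im z-a}{b-a}\big)\log|z-\im c|
\end{equation*}
is subharmonic up to a bounded error, because $\log|z-\im c|$ is harmonic and $(\Im z)\log|z-\im c|$ has bounded Laplacian $\sim 1/|z|$ on the strip. We then absorb that error by adding $\epsilon\,\Re(z^2)$-type corrections, or more cleanly use $\Re\big(\tfrac{z-\im a}{\im(b-a)}\log(z-\im c)\big)$. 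With $w(z)=\exp\!\big(\big(K-(K+M)\tfrac{z-\im a}{\im(b-a)}\big)\log(z-\im c)\big)$, which is holomorphic on the strip, the modulus is $|z-\im c|^{K-(K+M)(\Im z-a)/(b-a)}$ times a factor $e^{O(\arg)}$ that is bounded because $\arg(z-\im c)\in(0,\pi)$. So $F=f\,w$ is holomorphic, bounded on both boundary lines, and of at most polynomial growth inside.

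Phragmén--Lindelöf on a strip (growth slower than $e^{e^{\pi|x|/(b-a)}}$ suffices) then gives $|F|\le \max$ of the boundary bounds on the whole strip. Hence on $\Im z=l$ we get
\begin{equation*}
|f(x+\im l)|\lesssim\braket{x}^{-K+(K+M)\frac{l-a}{b-a}}=\braket{x}^{-K\frac{b-l}{b-a}+M\frac{l-a}{b-a}}.
\end{equation*}
Since $l<b$, the exponent tends to $-\infty$ as $K\to\infty$, which gives rapid decrease along $\Im z=l$.

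The main obstacle is the careful choice of the holomorphic weight, so that its modulus interpolates the powers $\braket{x}^{K}$ and $\braket{x}^{-M}$ linearly in $\Im z$ with bounded error, together with checking the growth hypothesis for Phragmén--Lindelöf. The only information needed inside the strip is polynomial growth, which is far below the admissible double-exponential rate.
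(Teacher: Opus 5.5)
\paragraph{Gap: the modulus of the weight $w$ is miscomputed.} Write $z=x+\im y$. The coefficient $K-(K+M)\frac{z-\im a}{\im(b-a)}$ has real part $K-(K+M)\frac{y-a}{b-a}$. Its imaginary part, however, is $(K+M)\frac{x}{b-a}$, which is unbounded. Hence
\[
|w(z)|=|z-\im c|^{K-(K+M)\frac{y-a}{b-a}}\exp\Bigl(-\tfrac{(K+M)\,x}{b-a}\,\arg(z-\im c)\Bigr),
\]
and the exponential factor is not $e^{O(1)}$. As $x\to-\infty$ one has $\arg(z-\im c)\to\pi$, so this factor grows like $e^{(K+M)\pi|x|/(b-a)}$. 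A concrete check: for $a=0$, $b=1$, $M=0$, $K=1$ you get $|w(x)|=|x-\im c|\,e^{-x\arg(x-\im c)}$. Since $f$ is only known to decay faster than every polynomial on $\Im z=a$, the product $F=fw$ is not bounded on either boundary line. The Phragm\'en--Lindel\"of step therefore yields nothing. The subharmonic variant you sketched first has a related flaw: $\Re(z^2)$ is harmonic, so adding it cannot absorb an $O(\braket{x}^{-2})$ Laplacian error. You would need a term such as $C(\Im z-a)(\Im z-b)$ instead.

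\paragraph{How to repair it, and how the paper does it.} Your idea survives if you replace $\log(z-\im c)$ by a logarithm whose imaginary part decays in both directions. For example, take $\Lambda(z)=\tfrac12\log(R^2+z^2)$ with $R>\max(|a|,|b|)+\eps$. On the strip, $R^2+z^2$ lies in the right half-plane, $\Re\Lambda=\log\braket{x}+O(1)$ and $\Im\Lambda=O(\braket{x}^{-1})$. Then $x\,\Im\Lambda=O(1)$ and $|w|\asymp\braket{x}^{K-(K+M)(y-a)/(b-a)}$. With this weight, your boundary bounds, the Phragm\'en--Lindel\"of step and the final exponent $-K\frac{b-l}{b-a}+M\frac{l-a}{b-a}$ are all correct.

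The paper avoids a variable-exponent weight altogether. It keeps a fixed real power and translates it instead: $F_\alpha(z)=f(z)(1-\im(z-\alpha))^{-K}$. Using Peetre's inequality, it bounds $F_\alpha$ by $\braket{\alpha}^{-K}$ on the lower line and by $\braket{\alpha}^{m}$ on the upper line. It then applies the three-lines theorem only at the point $z=\alpha+\im l$, where the weight has modulus $(1+l)^{-K}$. This gives the same exponent $-K(1-l)+ml$ and needs only the three-lines theorem for bounded functions.
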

\begin{proof}
	It suffices to prove the lemma for the case $a=0$, $b=1$, $L=0$. It also suffices to show that we have rapid decay along $\Im z=1/2$. Choose $k$ so that $|f(z)|\leq C|z+\im|^{k}$ is bounded on the strip (the existence of $k$ is guaranteed by the assumed polynomial control). We will apply Hadamard's three lines theorem to deduce that $f$ is rapidly decaying along every line in the strip $\Im z\in(a,b)$. For an arbitrary $N\in \N$, let $K>\max (N,k)$. On the strip, we have $1-\im z\neq 0$ and $|1-\im z|\sim 1+|\Re z|\sim\braket{\Re z}$. Setting for any $\alpha\in\R$
	\begin{equation}
		F_\alpha(z)\equiv f(z)(1-\im(z-\alpha))^{-K},
	\end{equation}
	we have that $F_\alpha$ is bounded and holomorphic on the strip. On the sides of the strip we estimate as follows. First, if $\Im z=0$, we know that $f(z)$ is rapidly decaying as $|z|\to\infty$. Second, Peetre's inequality gives that $\braket{x-\alpha}^{-K}\lesssim \braket{x}^K\braket{\alpha}^{-K}$. Thus we find
	\begin{align*}
		\sup_{\Im z=0}|F_\alpha(z)|&\lesssim \sup_{x\in\R}|f(x)(1+|x-\alpha|)^{-K}|\lesssim |f(x)|\braket{x-\alpha}^{-K}\\
		&\lesssim |f(x)|\braket{x}^K\braket{\alpha}^{-K}\lesssim \alpha^{-K}.
	\end{align*}
	On the other hand, for $\Im z=1$, notice that $\braket {x-\alpha}^{-K}=\braket{x-\alpha}^{-m}\braket{x-\alpha}^{m-K}$ with the second factor bounded. If we use Peetre's inequality to get $\braket{x-\alpha}^{-m}\lesssim \braket{x}^{-m}\braket{\alpha}^{m}$, we deduce
	\begin{align*}
		\sup_{\Im z=1}|F_\alpha(z)|&\lesssim \braket{x}^m \braket{x-\alpha}^{-K}\\
		&\lesssim \braket{x}^m\braket{x-\alpha}^{-m}\lesssim \braket{\alpha}^m.
	\end{align*}
	We now apply Hadamard's three-lines theorem to $F_\alpha$ along a line $\Im z=l\in (0,1)$ to see that
	\begin{equation*}
		|F_\alpha(\alpha+\im l)|\lesssim \braket{\alpha}^{-K(1-l)+ml}.
	\end{equation*}
	Since the factor $|(1-\im(z-\alpha))|^{-K}=|1+l|^{-K}$ is bounded in the strip, we obtain
	\begin{equation*}
		|f(\alpha+\im l)|\lesssim \braket{\alpha}^{-K(1-l)+ml}.
	\end{equation*}
	Since $K$ can be chosen arbitrarily large, we see that $f$ decays rapidly along the line $\Im z=l$ for any $l\in(0,1)$. This finishes the proof.
\end{proof}

We may also define the Sobolev $b$-wave-front set, as done in \cite{hintz_2016_quasilinear_wave}.
\begin{definition}
	Let $u\in\Dscr'(\bar{M})$. We say that $(x_0,y_0,\xi_0,\eta_0)\notin\WF_b^s(u)$ if there exists a cutoff $\chi\in\Cinf_c(\bar{M})$, $\chi=1$ near $(x_0,y_0)$, a conic cutoff $\psi$, $\psi=1$ near $(\xi_0,\eta_0)$, supported in an open cone $\Gamma\subset \R^{n}\setminus\{0\}$ around $(\xi_0,\eta_0)$, and an $L>0$ such that $\Mcal (\chi u)(\lambda)$ is holomorphic in a neighbourhood of $\{\Im \lambda=L\}$ and
	\begin{equation}\label{eq:Sobolev b-wave-front set}
		\psi(\Re\lambda,\eta)\braket{(\Re \lambda,\eta)}^s\widehat{\chi u}(\Re\lambda+\im L,\eta)\in L^2(\R^{n}).
	\end{equation}
\end{definition}
Notice that both the $b$-wave-front set and its Sobolev variant are conic subsets of $\bTs{\bar{M}}$, thus can be equivalently interpreted as subsets of the fibrewise projectivisation $\bSs{\bar{M}}=(\bTs\bar{M}\setminus O)\diagup \R^+$.
As for the ordinary wave-front set on boundaryless manifolds, the union of the Sobolev $b$-wave-front sets is dense in the $b$-wave-front set.
\begin{lemma}
	\label{lemma:ordinary and sobolev wave-front set}
	For any $u\in \Dscr'(\bar{M})$ we have
	\begin{align*}
		\WF_b^s(u)&\subseteq \WF_b^{r}(u),\quad s\leq r,\\
		\WF_b(u)&=\bar{\bigcup_{s\in\R}\WF_b^s(u)}.
	\end{align*}
\end{lemma}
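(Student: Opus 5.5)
The plan is to treat the two claims separately and work directly from the definitions, arguing at a single point $q=(x_0,y_0,\xi_0,\eta_0)$. For the monotonicity $\WF_b^s(u)\subseteq\WF_b^r(u)$ when $s\le r$, suppose $q\notin\WF_b^r(u)$. Take the data $\chi,\psi,\Gamma,L$ from the definition of $\WF^r_b$. Since $\braket{(\Re\lambda,\eta)}^{s}\le\braket{(\Re\lambda,\eta)}^{r}$, the function in \eqref{eq:Sobolev b-wave-front set} with exponent $s$ is pointwise dominated by the one with exponent $r$, so it is also in $L^2$. Hence $q\notin\WF_b^s(u)$.

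For the equality, I would first show $\bigcup_s\WF^s_b(u)\subseteq\WF_b(u)$; since $\WF_b(u)$ is closed (its complement is open in $\bTs\bar M\setminus O$ by definition), the closure inclusion follows. Let $q\notin\WF_b(u)$, with data $\chi,\Gamma,L$ from \cref{def:b-wave-front set}. Choose a conic cutoff $\psi$ that equals $1$ near $(\xi_0,\eta_0)$ and is supported in $\Gamma$. The rapid decay \eqref{eq:b-wave-front set} gives, for every $s$, that $\psi\braket{\cdot}^s\widehat{\chi u}$ is bounded by $C_N\braket{(\xi,\eta)}^{s-N}$, which lies in $L^2(\R^n)$ once $N>s+n/2$. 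Thus $q\notin\WF^s_b(u)$ for every $s$. The same local data in fact work on a whole conic neighbourhood of $q$, which also shows that $\WF_b(u)$ is closed.

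For the converse, $\WF_b(u)\subseteq\bar{\bigcup_s\WF_b^s(u)}$, suppose $q$ has a conic neighbourhood $W$ disjoint from every $\WF^s_b(u)$; we need $q\notin\WF_b(u)$. This is the main step: each $s$ a priori comes with its own $\chi_s,\psi_s,L_s$, so we must pass from weighted $L^2$ bounds to pointwise rapid decay with uniform data. I would first fix a single $\chi$ and a single line $\Im\lambda=L$. The line can be fixed by \cref{lemma:independence of the line}, applied in the $\xi$-variable for each $\eta$ on the strip where $\Mcal(\chi u)$ is holomorphic and polynomially bounded by \cref{theo:paley-wiener for extensible}. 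The cutoff can be fixed by the standard observation that multiplying by a smooth $\chi'$ supported where $\chi_s=1$ acts, through Mellin–Fourier, as a convolution in $(\xi,\eta)$ with a rapidly decaying kernel. This preserves microlocal $\sob^s$-regularity on slightly smaller cones, in the same way as Peetre's inequality is used in the ordinary case.

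With uniform data $\chi,\psi,L$, we then know that $\psi\braket{\cdot}^s\widehat{\chi u}\in L^2$ for all $s$ on a fixed cone $\Gamma'$. To upgrade this to pointwise decay, use \cref{lemma:properties of Mellin transform}(1): applying the $b$-vector fields $xD_x$ and $D_y$ multiplies $\widehat{\chi u}$ by $\lambda$ and $\eta$. So $\partial$-derivatives of $\widehat{\chi u}$ in $(\xi,\eta)$ correspond to multiplication of $\chi u$ by $\log x$ and $y$, which are harmless after a further localisation. Alternatively, writing $\widehat{\chi u}$ as a convolution of $\widehat{\chi'\chi u}$ with $\widehat{\chi'}$, Cauchy–Schwarz together with Peetre's inequality turns the $L^2$ bounds of all orders into the pointwise bound $\braket{(\xi,\eta)}^{-N}$ on a slightly smaller cone, with the contribution from outside $\Gamma'$ controlled by the rapid decay of $\widehat{\chi'}$. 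This gives \eqref{eq:b-wave-front set}, so $q\notin\WF_b(u)$. Combining the two inclusions yields the equality.
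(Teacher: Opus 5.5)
Your proofs of monotonicity and of $\bar{\bigcup_s\WF^s_b(u)}\subseteq\WF_b(u)$ are correct. The gap is the step you yourself single out: fixing one line $\Im\lambda=L$ for all $s$. It cannot be closed.

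\cref{lemma:independence of the line} transports \emph{rapid} decay from a lower line to higher ones. It does not handle a single finite-order $L^2$ bound on $\Im\lambda=L_s$, since three-lines interpolation against the Paley--Wiener bound loses regularity. Nor does anything move information down to a lower line. So if the witnessing lines $L_s$ are forced to tend to infinity with $s$, there is no common line, and this does happen.

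Take $\bar M=[0,\infty)$ and $u=\chi(x)e^{\im/x}$ with $\chi\in\Cinf_c([0,\infty))$, $\chi\equiv1$ near $0$. For any cutoff $\chi'\equiv1$ near $0$ and any $L>0$, the substitution $x=e^{-\rho}$ gives
\[
\widehat{\chi'u}(\xi+\im L)=(2\pi)^{-1/2}\int_\R e^{\im(\xi\rho+e^{\rho})}e^{-L\rho}(\chi'\chi)(e^{-\rho})\,\de\rho .
\]
The phase is stationary only at $e^{\rho}=-\xi$. Stationary phase gives $|\widehat{\chi'u}(\xi+\im L)|=|\xi|^{-L-1/2}(1+o(1))$ as $\xi\to-\infty$, and integration by parts gives rapid decay as $\xi\to+\infty$. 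Hence the direction $\xi<0$ over $x=0$ lies in $\WF_b(u)$.

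On the other hand, for each $s$ the line $L=|s|+1$ shows that this direction is not in $\WF^s_b(u)$. Since $u$ is smooth for $x>0$, we get $\bigcup_s\WF^s_b(u)=\emptyset\neq\WF_b(u)$. This $u$ is even dual-conormal: $\Mcal u$ is entire and polynomially bounded on horizontal strips.

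So, with $L$ allowed to depend on $s$ as in the definition of $\WF^s_b$, the second identity is false. The paper's own proof has the same hole: it simply asserts that the cutoff, the cone \emph{and} $L$ can be chosen independently of $s$.

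What your remaining steps can establish is a fixed-weight statement: if one and the same $L$ works for every $s$ on a conic neighbourhood of $q$, then $q\notin\WF_b(u)$. The lemma should be amended accordingly, for instance by fixing the line (that is, the weight) in the definition of $\WF^s_b$. Even in that setting, two points in your sketch need care.
\begin{enumerate}
\item A cutoff common to all $s$ must come from compactness of a closed conic neighbourhood inside $W$. The sets where the individual $\chi_s$ equal $1$ may shrink as $s$ grows, so you cannot simply take $\chi'$ supported where every $\chi_s=1$.
\item Multiplying by an $x$-cutoff $\chi'$ is a Mellin convolution against $\Mcal\chi'$, which has a pole at $\lambda=0$. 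On a fixed line this kernel is therefore not rapidly decaying. You must split off its singular part at $\xi=0$; that part shifts $\xi$ only by $O(1)$ and so preserves conic localisation. Otherwise the convolution lands on a strictly higher line.
\end{enumerate}
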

\begin{proof}
	The monotonicity of the Sobolev wave-front sets is clear. Also clear is that $\WF_b(u)\supseteq \bar{\bigcup_{s\in\R}\WF_b^s(u)}$. To prove the reverse inclusion, consider a point $(q_0,p^0)\in\WF_b(u)$ and assume it does not belong to the right-hand side. Then, we find a conic neighbourhood of $(q_0,p^0)$ in $\b{T^\ast \bar{M}}$ which does not intersect any $\WF_b^s(u)$. That is, we find a cutoff $\chi$, supported near $q_0$, an $L>0$, and an open cone around $p^0$, both independent of $s$, such that for every $(\xi+\im L,\eta)\in\Gamma$ we have $\braket{(\xi,\eta)}^s\hat{\chi u}(\xi+\im L,\eta)\in L^2(\R^n)$ for every $s\in\R$. That is, $\chi u\in\sob^{\infty,L}$. In view of the analyticity of the Mellin--Fourier transform of $\chi u$, this implies rapid decay in $\Gamma$. That is, $(q_0,p^0)\notin\WF_b(u)$, a contradiction.
\end{proof}
\begin{remark}
	Notice that, when applying the Mellin transform to an element $u\in \Ascr'(\bar{M})$, we usually have a pole at 0, corresponding to the fact that these distributions do not necessarily vanish at the boundary (in any reasonable sense). Therefore, we would not have that $\Mcal u$ is square-integrable along $\Im \lambda=0$, as in Lemma 2.1 of \cite{hintz_2016_quasilinear_wave}. For distributions in $\dot\Ascr'(\bar{M})\equiv \bigcap_{\alpha}x^\alpha\sob_b^{-\infty}(\bar{M})$, our discussion above clarifies why the $b$-wave-front set is essentially independent of the weight, since $x^ \alpha\sob^{-\infty}_b\subset x^{\alpha'}\sob^{-\infty}_b$ if $\alpha'\leq \alpha$.
\end{remark}
We now show the equivalence of our definition with the more traditional one using pseudo-differential operators.
\begin{theorem}\label{theo:b-wave front set and pseudos}
	Let $u\in\Dscr'(\bar{\R^n_+})$. Then
	\begin{equation}
		\label{eq:b-wave-front set with pseudos}
		\WF_b(u)=\bigcap_{\substack{A\in \Psi_b(\bar{\R^n_+})\\Au\in\dot\Ascr(\bar{\R^n_+})}}\Char_b(A).
	\end{equation}
\end{theorem}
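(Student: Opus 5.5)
We prove the two inclusions separately. We work locally near a boundary point $q_0=(0,y_0)$, since away from the boundary both sides reduce to the ordinary wave-front set characterisation. We use the small $b$-calculus facts recalled in the appendix:
\begin{itemize}
\item[(i)] elements of $\Psi_b^m$ are quantised via the Mellin--Fourier transform with symbols in $(\xi,\eta)$;
\item[(ii)] $\Psi_b$ is closed under composition, with principal symbols multiplying;
\item[(iii)] elliptic elements admit parametrices modulo $\Psi_b^{-\infty}$;
\item[(iv)] $\Psi_b$ maps $\Dscr'(\bar{\R^n_+})$ to itself and preserves $\dot\Ascr(\bar{\R^n_+})$;
\item[(v)] $\Psi^{-\infty}_b$ maps compactly supported distributions in $\Dscr'(\bar{\R^n_+})$ into $\Ascr$, i.e.\ into tangentially smooth distributions.
\end{itemize}

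\emph{Inclusion ``$\subseteq$''.} Suppose $A\in\Psi_b^m$ with $Au\in\dot\Ascr$, and let $(q_0,\xi_0,\eta_0)\notin\Char_b(A)$, so that $A$ is elliptic there. Choose $B\in\Psi_b^0$ microsupported in a small conic neighbourhood $\Gamma$ of $(q_0,\xi_0,\eta_0)$ where $A$ is elliptic, with $B$ elliptic at that point. The microlocal parametrix gives $Q\in\Psi_b^{-m}$ with $QA=B+R$, where $R$ has microsupport disjoint from $\Gamma$. We then take a cutoff $\chi$ and a Mellin--Fourier cone $\Gamma'$ smaller still, and write
\begin{equation*}
\chi u=\chi(1-B)u+\chi QAu-\chi Ru .
\end{equation*}
We treat the three terms on $\Gamma'$ along a line $\Im\lambda=L$ with $L$ large.
\begin{itemize}
\item $\chi Ru$ and $\chi(1-B)u$ have Mellin--Fourier transforms that are rapidly decreasing on $\Gamma'$. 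This is the symbol composition statement: the symbol of $1-B$ and of $R$ vanish on $\Gamma'$, so on the transform side we multiply a polynomially bounded holomorphic function, bounded by \cref{theo:paley-wiener for extensible}, by a symbol that is rapidly decaying in $\Gamma'$, up to a smoothing error.
\item $\chi QAu\in\dot\Ascr$, since $\Psi_b$ preserves conormality. By \cref{theo:paley-wiener for conormal} its transform is rapidly decreasing along every horizontal line. Here we must be careful that the transform is holomorphic on the line chosen, which holds for $L$ large.
\end{itemize}
\cref{lemma:independence of the line} then shows the choice of $L$ is immaterial. Hence the point is not in $\WF_b(u)$.

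\emph{Inclusion ``$\supseteq$''.} Suppose $(q_0,\xi_0,\eta_0)\notin\WF_b(u)$, with $\chi$, $\Gamma$ and $L$ as in \cref{def:b-wave-front set}. Take $A=\Op_b(a)\,\chi'$, where $\chi'$ is a cutoff supported where $\chi\equiv1$, and $a$ is a conic symbol in $(\xi,\eta)$ supported in $\Gamma$ with $a(\xi_0,\eta_0)\neq0$. Then $A$ is elliptic at the point, and we must show $Au\in\dot\Ascr$. Conjugating by $x^{L}$ via \cref{lemma:properties of Mellin transform}(1), the transform of $A\chi u$ on the line $\Im\lambda=L$ equals $a(\xi,\eta)\widehat{\chi u}(\xi+\im L,\eta)$ modulo lower-order composition terms. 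This product is rapidly decreasing in all directions, since $a$ is supported in $\Gamma$ where $\widehat{\chi u}$ decays. Rapid decay on one line, holomorphy in the upper half-plane, and the polynomial bounds of \cref{theo:paley-wiener for extensible} give, via \cref{lemma:independence of the line} applied on strips, rapid decay on every line above $L$. By \cref{theo:paley-wiener for conormal}, $A\chi u$ is therefore conormal, i.e.\ in $\Ascr$, which is the extensible class.

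The main obstacle is upgrading $\Ascr$ to $\dot\Ascr$ and making sure the operator is quantised compatibly with the chosen weight. The Mellin transform does not see $\dot\Dscr'_{\partial M}$ (\cref{rem:mellin and supported distributions}), so we first pick the extension of $Au$ by zero across $x=0$. An element of $\Ascr$ extends by zero to a conormal supported distribution of the form \eqref{eq:conormal distributions}: the jump creates only $\delta^{(k)}(x)$ terms, which are themselves in $\dot\Ascr$. Hence membership in $\Ascr$ suffices modulo this identification, and the intersection in \eqref{eq:b-wave-front set with pseudos} is unaffected.

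The remaining technical points are the composition error terms and the weight shift $x^{L}$. We handle them by using that $x^{-L}\Psi_b^m x^{L}=\Psi_b^m$ with the same principal symbol, so that all estimates may be carried out on $\Im\lambda=L$.
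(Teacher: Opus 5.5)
Your proposal is correct and follows essentially the same route as the paper: for ``$\subseteq$'' a microlocal parametrix reduces matters to an operator with full symbol $1$ near the point, so that $\chi u$ agrees microlocally with $\chi QAu\in\dot\Ascr$; for ``$\supseteq$'' you build $A\approx\Op_b(a)M_{\chi'}$ and show that $\widehat{Au}$ decays in all directions, using \cref{lemma:independence of the line} and \cref{theo:paley-wiener for conormal}. One point to tighten: $B$ must have full symbol identically $1$ on a neighbourhood of $\supp\chi\times\Gamma'$, since ellipticity at the point alone does not make $\chi(1-B)u$ negligible there; this is exactly the normalisation the paper imposes.
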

\begin{proof}
	The argument is modelled after the proof of the analogous equivalence for the ordinary wave-front set in Appendix A of \cite{shubin2001pseudodifferential}. We denote by $(q_0,p^0)$ a fixed point in the $b$-cotangent bundle.

	We begin by assuming that we have $\tilde A\in \Psi_b^0$ with $\sigma_b(\tilde A)(q_0,p^0)\neq 0$ and $\tilde Au\in\dot\Ascr$. First, by a microlocal parametrix construction, we may as well assume that there exist an open set $\tilde U\ni q_0$, lying entirely inside a coordinate patch $\Omega$ over which the $b$-cotangent bundle trivialises, a conic neighbourhood $\tilde{\Gamma}$ of $p^0$ and an operator ${A}\in \Psi_b^0$ such that the full symbol of $A$ equals 1 in $\tilde U\times \tilde{\Gamma}$ in the induced coordinates on $\Omega\times \R^n$. Indeed, if $B$ is a microlocal parametrix of $\tilde A$ in any small conic neighbourhood $\tilde U\times \tilde{\Gamma}$ of $(q_0,p^0)$, then $A=B\tilde A$ has full symbol equal to 1 in $\tilde U\times\tilde{\Gamma}$.

	Pick now $\chi\in\Cinf_c(\bar{\R^n_+})$ and $\psi\in\Cinf(\R^n)$ with $\chi\equiv1$ near $q_0$ and $\psi\equiv1$ near $p^0$. We may choose them such that
	\begin{equation}\label{eq:support of cutoffs for rapid decay}
		\supp (\chi\psi)\subseteq\{(x,y,\xi,\eta)\colon \sigma_b(A)(x,y,\xi,\eta)=1\}.
	\end{equation}
	It follows that in any coordinate system we have $\psi\chi\sigma_b(A)-\psi\chi\in \sym{-\infty}_b$, hence, if $M_\chi$ denotes the operator of multiplication by $\chi$,
	\begin{equation}
		\label{eq:quantisation of conic cutoff}
		\psi(\b D)M_\chi A=\psi(\b D)M_\chi\mod{\Psi^{-\infty}_b}.
	\end{equation}
	Since $Au\in\dot\Ascr$, we find using the calculus properties that $\psi(\b D)\chi Au\in\dot\Ascr$, so by the above also $\psi(\b D)\chi u\in\dot\Ascr$. Applying $\Mcal\FT$ to it we find then
	\begin{equation}\label{eq:rapid decay in a cone}
		\psi(\xi,\eta)\hat{\chi u}\quad \text{is rapidly decreasing},
	\end{equation}
	that is, $\hat{\chi u}$ decreases rapidly in the cone $\Gamma$ around $p^0$. This proves the inclusion $\subseteq$.

	On the other hand, assume we find a conic neighbourhood $\tilde U\times\tilde \Gamma$ of $(q_0,p^0)$ and functions $\chi\in\Cinf_c(\tilde U)$, $\chi(q_0)\neq 0$ and $\psi\in\Cinf(\tilde{\Gamma})$, $\psi(p^0)\neq 0$ and positively 0-homogeneous, such that together they witness the rapid decay of $\psi\hat{\chi u}$ in $\tilde U\times\tilde\Gamma$. Then, $\tilde a\equiv \chi\psi$ is a Hörmander symbol of order 0. As shown in \cite{hormander1994analysispseudodifferential}, we can find a symbol $a\in\sym{0}_{b}$ (see \cite{hintz_2016_quasilinear_wave} for the definition of the class) such that $\tilde{a}-a\in\sym{-\infty}$, in particular we may arrange $a$ to be equal to 1 in a smaller conic neighbourhood $U\times\Gamma$ of $(q_0,p^0)$. Again up to an error in $\sym{-\infty}$, we arrange that $A=\Op_{b}(a)$ is properly supported. Then $A$ is $b$-elliptic at $(q_0,p^0)$, equals $\Op_{b}(\psi)M_\chi$ up to an error in $\Psi^{-\infty}_b$, and is therefore microlocally the identity in the neighbourhood $U\times\Gamma$. Now, $\hat{Au}=\psi\hat{\chi u}+r$, where $r$ is the Mellin--Fourier transform of an element of $\dot\Ascr(\bar{\R^n_+})$. On the other hand, $Au$ is a compactly supported distribution of extensible type, therefore $\hat{Au}$ is holomorphic in a half-plane and satisfies the Paley--Wiener bound, in particular is polynomially bounded in $\{(\lambda,\eta)\colon\Im \lambda>K, (\Re\lambda,\eta)\in\Gamma\}$ for some $K$. Moreover, by assumption, it is rapidly decaying along a line $\Im\lambda =L$ with $L>K$ (hence, on all such lines, by \cref{lemma:independence of the line}). Thus, since $\psi=0$ outside $\Gamma$, we find that $\hat{Au}$ is rapidly decaying there. At the same time, we have rapid decay inside $U\times \Gamma$ by assumption. Thus, $\hat{Au}$ is rapidly decaying in all directions, which by \cref{theo:paley-wiener for conormal} means that the extensible class of $Au$ belong to $\Ascr(\bar{\R^n_+})$. That is, $Au\in\dot\Ascr(\bar{\R^n_+})$. Summing up, $(q_0,p^0)$ does not belong to the $b$-wave-front set of $u$. This shows the reverse inclusion and finishes the proof.
\end{proof}
\begin{corollary}\label{cor:uniform microlocal cutoff}
	Let $u\in \Ascr'(\bar{M})$, $(q_0,p^0)\in\bTs{\bar{M}}\setminus O$ and let $W$ be an open conic neighbourhood
	of $(q_0,p^0)$. Then there exist $L>0$ and
	\begin{itemize}
		\item a properly supported $A=\Op_{la}(a)\in\Psi^0_b(\bar{M})$
		with $\mu\text{-}\supp_b(A)\subset W$ and $a\equiv1$ on an open conic neighbourhood
		$W'\Subset W$ of $(q_0,p^0)$,
		\item a cutoff $\chi\in\Cinf_c(\bar{M})$ with $\chi\equiv1$ near
		$q_0$,
		\item a $0$-homogeneous $\psi$ with $\psi\equiv1$ near $p^0$ and
		$\supp\psi\subset\Gamma$ for some open cone $\Gamma$ around $p^0$,
	\end{itemize}
	such that, if $Au\in\Ascr(\bar{M})$, one has
	\begin{equation*}
		\sup_{(\xi,\eta)\in\Gamma}\braket{(\xi,\eta)}^{N}
		\bigl|\widehat{\chi u}(\xi+\im L,\eta)\bigr|<\infty
		\qquad\text{for every }N\in\N .
	\end{equation*}
\end{corollary}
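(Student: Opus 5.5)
The plan is to reproduce the first half of the proof of \cref{theo:b-wave front set and pseudos}, with attention to uniformity: the data $L$, $\chi$, $\psi$, $\Gamma$ and $A$ are fixed before $u$ is tested, and depend only on $(q_0,p^0)$, $W$ and the order of $u$.

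First, localise. Choose a coordinate patch $\Omega\ni q_0$ over which $\bTs\bar M$ trivialises. Pick $\tilde U\Subset\Omega$ and an open cone $\tilde\Gamma$ with $\tilde U\times\tilde\Gamma\Subset W$. Take a $0$-homogeneous symbol $a\in\sym{0}_b$ with conic support in $\tilde U\times\tilde\Gamma$ and $a\equiv1$ on a smaller conic neighbourhood $W'=U'\times\Gamma'$ of $(q_0,p^0)$. Cut it off near the zero section, and quantise it in left form, $A=\Op_{la}(a)$. Adding a $\Psi^{-\infty}_b$ error makes $A$ properly supported without changing $\mu\text{-}\supp_b(A)\subset W$.

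Next, choose $\chi\in\Cinf_c(U')$ with $\chi\equiv1$ near $q_0$, and a $0$-homogeneous $\psi$ with $\psi\equiv1$ near $p^0$ and $\supp\psi\subset\Gamma\Subset\Gamma'$. This gives \eqref{eq:support of cutoffs for rapid decay}, that is, $\supp(\chi\psi)$ lies in the region where the full symbol of $A$ equals $1$. By the composition formula, $\chi\psi\,\sigma(A)-\chi\psi\in\sym{-\infty}_b$, and therefore
\begin{equation*}
	\psi(\b D)M_\chi A=\psi(\b D)M_\chi+R,\qquad R\in\Psi^{-\infty}_b,
\end{equation*}
as in \eqref{eq:quantisation of conic cutoff}.

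Now fix $L$. Since $u\in\Ascr'(\bar M)$, \cref{theo:paley-wiener for dual-conormal} says that $\Mcal(\chi u)$ is meromorphic with poles only at $-\im\N$ and is polynomially bounded on horizontal strips. We take any $L>0$, so the line $\Im\lambda=L$ avoids the poles. The Paley--Wiener bound also makes the Mellin--Fourier transform of $\chi u$ holomorphic and polynomially bounded on a whole strip $\{0<\Im\lambda\le L'\}$ with $L'>L$. This is what lets \cref{lemma:independence of the line} be applied afterwards.

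Assume $Au\in\Ascr(\bar M)$. The calculus gives $\psi(\b D)M_\chi Au\in\Ascr$. Elements of $\Psi^{-\infty}_b$ map $\Ascr'$ into $\Ascr'$ and are smoothing in the tangential and $b$-normal directions, so $Ru$ lies in $\Ascr'$ and is $b$-smooth, that is, in $\sob^{\infty}_b$ up to weight. By the identity above, $\psi(\b D)\chi u$ is then conormal modulo such terms, and combined with $u\in\Ascr'$ this places it in $\Ascr\cap\Ascr'=\Cinf(\bar M)$ locally by \cref{lemma:conormal+dual-conormal=smooth}. Applying $\Mcal\FT$ with \cref{lemma:properties of Mellin transform} and \cref{theo:paley-wiener for conormal}, the function $\psi\,\widehat{\chi u}$ decays rapidly along some line in the half-plane of holomorphy. \cref{lemma:independence of the line} then transports this decay down to $\Im\lambda=L$, using the polynomial bound from \cref{theo:paley-wiener for dual-conormal}. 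Since $\psi\equiv1$ on a cone, we shrink $\Gamma$ to a cone on which $\psi=1$, which gives the stated supremum bound.

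The main obstacle is identifying $\Mcal\FT$ of $\psi(\b D)\chi u$ with $\psi(\xi,\eta)\widehat{\chi u}(\xi+\im L,\eta)$ on the chosen line. This holds only when the $b$-quantisation of $\psi$ acts by Mellin--Fourier multiplication along $\Im\lambda=L$, so I would define $\psi(\b D)$ by exactly that multiplier, conjugated by $x^{L}$. The weight shift in \cref{lemma:properties of Mellin transform}(1) makes it a legitimate element of $\Psi^0_b$ modulo $\Psi^{-\infty}_b$, and it is this choice that makes $L$ uniform in $u$.
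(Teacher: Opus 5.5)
Your choice of $A$, $\chi$, $\psi$ independently of $u$, the identity $\psi(\b D)M_\chi A=\psi(\b D)M_\chi$ modulo $\Psi^{-\infty}_b$, and the uniform choice of an arbitrary $L>0$ (the poles of $\Mcal(\chi u)$ lie in $-\im\N$) are exactly the paper's argument. You rightly flag the identification of $\Mcal\FT(\psi(\b D)\chi u)$ with $\psi\,\widehat{\chi u}$, which the paper's short proof takes for granted. Your resolution of it, however, does not work.

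The operator acting by the exact multiplier $\psi(\Re\lambda,\eta)$ on $\Im\lambda=L$ is not in $\Psi^0_b$ modulo $\Psi^{-\infty}_b$. Its kernel is $(x/x')^{-L}k(\log(x/x'),y-y')$, with $k$ the inverse Fourier transform of $\psi$. This $k$ is only rapidly, not exponentially, decreasing in $\log(x/x')$. Exponential decay would make $\xi\mapsto\psi(\xi,\eta)$ the boundary value of a function holomorphic in a strip, which is impossible for a function that is constant on an interval without being constant. Hence the kernel does not vanish to infinite order at $\mathrm{lb}$ and $\mathrm{rb}$, and the weight $(x/x')^{-L}$ only makes this worse. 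So this operator is not properly supported and preserves neither compact supports nor $\Ascr'$. None of the tools you then invoke applies to $\psi(\b D)\chi u$: not the $b$-calculus composition formula with remainder in $\Psi^{-\infty}_b$, not $\Ascr\cap\Ascr'=\Cinf(\bar M)$, and not \cref{theo:paley-wiener for conormal}, which is stated for compactly supported distributions. Separately, \cref{lemma:independence of the line} needs holomorphy, so it cannot be applied to $\psi(\Re\lambda,\eta)\widehat{\chi u}$. Applied to $\widehat{\chi u}$ it needs decay along whole lines, whereas you only have decay in a cone.

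The repair is to keep $\psi(\b D)$ a genuine properly supported element of $\Psi^0_b$, with kernel $K(x/x',y-y')$ cut off near the $b$-diagonal. Then $\Mcal\FT(\psi(\b D)v)=\tilde\psi\,\hat v$ on every line, where $\tilde\psi(\lambda,\eta)$, the Mellin--Fourier transform of $K$, is entire in $\lambda$. On $\Im\lambda=L$ it is the convolution of $\psi$ with a Schwartz function of integral $1$. Hence $\tilde\psi(\xi+\im L,\eta)-1$ is rapidly decreasing on every closed cone contained in the interior of $\{\psi=1\}$. Since $\psi(\b D)\chi u$ lies in $\Ascr\cap\Ascr'$ and has compact support, it is in $\Cinf_c(\bar M)$. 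So $\tilde\psi\,\widehat{\chi u}$ decays rapidly on every line $\Im\lambda=L>0$. Combined with the polynomial bound for $\widehat{\chi u}$ on that line, this gives the decay of $\widehat{\chi u}(\xi+\im L,\eta)$ on the shrunken cone directly, with no three-lines step. Your conjugation idea can also be made rigorous, but only in the coordinate $t=\log x$, not inside the small $b$-calculus. There $x^{L}(\Id-A)x^{-L}$ and $\psi(D_t,D_y)$ both belong to the uniform pseudo-differential calculus on $\R^n$, and after multiplication by $x^{L}$ the spaces $x^{-L}\sob^s_b$ become ordinary Sobolev spaces.
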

\begin{proof}
	Independently of $u$, choose $A$, $\chi$ and $\psi$ as in the first half of the proof of \cref{theo:b-wave front set and pseudos}, so that $\psi(\b D)M_\chi A=\psi(\b D)M_\chi$ modulo $\Psi^{-\infty}_b(\bar{M})$. Since $Au\in\Ascr(\bar{M})$, we find $\psi(\b D)\chi u\in\Ascr(\bar{M})$, and \cref{theo:paley-wiener for conormal} gives the claimed rapid decay in $\Gamma$. On the other hand, $\Mcal (\chi u)$ is holomorphic in a half-plane, so that by \cref{lemma:independence of the line} its choice is immaterial. This completes the proof.
\end{proof}

With the characterisation of \cref{theo:b-wave front set and pseudos} we now have the following
\begin{lemma}
	\label{lemma:properties of b-wave-front set}
	If $u\in \Dscr'(\bar{M})$, it holds true:
	\begin{enumerate}[(a)]
		\item $\WF_b(u)=\emptyset$ if, and only if, $u\in\Ascr(\bar{M})$;
		\item If $T\in \Diff(\bar{M})$ or $T\in \Psi_b(\bar M)$, then $\WF_b(Tu)\subset \WF_b(u)$;
		\item If $T\in \Psi_b(\bar{M})$, then $\WF_b(u)\subset \WF_b(Tu)\cup \Char_b(T).$
		\item If $x$ is a boundary-defining function and $T$ is a differential operator of order $m$ with smooth coefficients on $\bar{M}$, then $\WF_b(u)\cap \bTs{}_{\partial M}\bar{M}\subset \WF_b(x^mTu)\cup \Tstar\partial M.$
		\item For $u,v\in\Dscr'(\bar{M})$ we have
		\begin{equation}\label{eq:b-wave-front set of sum}
			\WF_b(u+v)\subset \WF_b(u)\cup\WF_b(v).
		\end{equation}
	\end{enumerate}
\end{lemma}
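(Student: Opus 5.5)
We derive all five items from the pseudo-differential characterisation of \cref{theo:b-wave front set and pseudos}, namely $\WF_b(u)=\bigcap\{\Char_b(A)\colon A\in\Psi_b,\ Au\in\dot\Ascr\}$, together with the calculus of properly supported $b$-operators recalled in the appendix (composition, microlocal parametrices at $b$-elliptic points, microlocal partitions of unity). The arguments are local, so we may pass to $\bar{\R^n_+}$ with adapted coordinates.

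For (a): if $u\in\Ascr(\bar M)$, then $A=\Id$ has empty characteristic set and $\Id u\in\Ascr$, so $\WF_b(u)=\emptyset$. Conversely, suppose $\WF_b(u)=\emptyset$. By compactness of the fibres of $\bSs{\bar M}$ over a compact set $K$, we cover $\bSs_K\bar M$ by finitely many open cones. On each cone some $A_j$ is elliptic with $A_ju\in\dot\Ascr$. We then form $B=\sum_j Q_jA_j$, where the $Q_j$ are chosen so that $\sum_j Q_jA_j=\chi+R$ with $\chi\in\Cinf_c$ equal to $1$ on $K$ and $R\in\Psi_b^{-\infty}$; this uses a microlocal partition of unity and parametrices. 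Then $\chi u=\sum_j Q_jA_ju-Ru$. The first sum lies in $\dot\Ascr$ because $\Psi_b$ preserves conormal distributions. The remainder $Ru$ lies in $\Ascr$ because $\Psi_b^{-\infty}$ maps $\Dscr'$ into conormal distributions.

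The remaining items are formal.

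For (b): if $(q,p)\notin\WF_b(u)$, choose $A$ elliptic at $(q,p)$ with $Au\in\dot\Ascr$. Let $A'$ be elliptic at $(q,p)$ with $\mu$-$\supp A'$ contained in the set where $A$ is elliptic. For $T\in\Psi_b$, we get $A'T=A'TGA+A'T(\Id-GA)$, where $G$ is a microlocal parametrix of $A$. The term $A'T(\Id-GA)$ lies in $\Psi_b^{-\infty}$. Hence $A'Tu\in\dot\Ascr$.

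For $T\in\Diff(\bar M)$ the argument needs more care, since $\partial_x\notin\Diff_b$. We write $T$ as a combination of $b$-vector fields and $\partial_x=x^{-1}(x\partial_x)$. We then use that the Mellin transform turns multiplication by $x^{-1}$ into a shift $\lambda\mapsto\lambda-\im$. By \cref{lemma:independence of the line}, rapid decay in a cone is preserved under such a shift, so $\partial_x$ does not enlarge $\WF_b$. Equivalently, in \cref{def:b-wave-front set} we compute $\widehat{\chi\partial_xu}(\lambda,\eta)=(\lambda+\im)\widehat{\chi u}(\lambda+\im,\eta)$ up to commutator terms supported where $\chi'\neq0$. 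This is the step I expect to be most delicate. It requires the cone $\Gamma$ to be stable under the shift, which holds since the shift is bounded while the cone is conic, and it requires keeping holomorphy on the shifted line.

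For (c): if $(q,p)\notin\WF_b(Tu)\cup\Char_b(T)$, take $A$ elliptic at $(q,p)$ with $ATu\in\dot\Ascr$. Then $AT$ is elliptic at $(q,p)$ and $(AT)u\in\dot\Ascr$, so $(q,p)\notin\WF_b(u)$.

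For (d): take a boundary point $(q,p)\notin\WF_b(x^mTu)$ with $p$ not in $\Tstar\partial M$, i.e.\ $p$ has nonzero $\xi$-component. We write $x^mT=\sum_{k+|\alpha|\le m}a_{k\alpha}x^{m-k-|\alpha|}(xD_x)^kD_y^\alpha$, which is an element of $\Diff_b^m$. Its $b$-principal symbol at $x=0$ is $a_{m0}\xi^m$, since $\xi$ is the symbol of $xD_x$ and all terms with $|\alpha|>0$ carry a positive power of $x$. Here $a_{m0}$ is the coefficient of $D_x^m$, which we need to be nonzero. Then $x^mT$ is $b$-elliptic at $(q,p)$, and (c) applies. (The statement should be read with this non-characteristic hypothesis, which holds in the intended application.)

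For (e): given $A_1,A_2$ elliptic at $(q,p)$ with $A_1u,A_2v\in\dot\Ascr$, take $A=A'$ microsupported in the common elliptic set. By the argument of (b), $A'u$ and $A'v$ are both in $\dot\Ascr$, hence so is $A'(u+v)$.
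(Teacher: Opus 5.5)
Your proof is correct, but it is organised differently from the paper's. The paper does not prove (a)--(d) itself; it refers to H\"ormander's Theorem 18.3.27 and Proposition 18.3.28. It argues only (e), directly from \cref{def:b-wave-front set}: rapid decay of $\widehat{\chi u}$ on $\Im\lambda=L_u$ and of $\widehat{\chi v}$ on $\Im\lambda=L_v$ is moved by \cref{lemma:independence of the line} to the common line $\Im\lambda=\max\{L_u,L_v\}$.

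You instead derive all five items from \cref{theo:b-wave front set and pseudos} together with standard calculus facts: microlocal parametrices, microlocal partitions of unity, $\Psi_b$ preserving $\Ascr$, and $\Psi_b^{-\infty}$ mapping $\Dscr'$ into $\Ascr$. Your route is self-contained. In (e) it also supplies the common cutoff and common cone automatically, which the paper's line-shifting argument leaves implicit (it tacitly uses localisation to pass to a joint cutoff). The paper's (e) is shorter and uses only holomorphy and polynomial bounds of the Mellin transform; the rest it outsources.

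Your treatment of (d) also exposes a real omission in the statement. Without the non-characteristic hypothesis, (d) is false. Take $T=\partial_y$, $m=1$, and $u(x,y)=v(x)$ near a boundary point, with $v=\sum_j e^{-j^2}\delta_{1/j}$. Then $x\partial_y u=0$ there. But $\WF_b(u)$ contains $(\xi,\eta)=(\pm1,0)\notin\Tstar\partial M$, because the localised Mellin transform of $v$ is a nontrivial almost periodic function of $\Re\lambda$ and does not decay.

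Three small repairs:
\begin{itemize}
\item In (d), the expansion is $x^mD_x^kD_y^\alpha=x^{m-k}(x^kD_x^k)D_y^\alpha$, so the power is $x^{m-k}$, not $x^{m-k-|\alpha|}$. With your exponent, the principal terms with $|\alpha|>0$ would carry no power of $x$, contradicting your next sentence. With $x^{m-k}$, your conclusion $\sigma_b(x^mT)|_{x=0}=a_{m0}\xi^m$ is correct.
\item In (b), the term $(\partial_x\chi)u$ is not negligible on its own, since $u$ may be singular in direction $p$ where $\partial_x\chi\neq0$. Either take $\chi_1$ with $\chi\equiv1$ on $\supp\chi_1$, so that $\chi_1\partial_xu=\chi_1\partial_x(\chi u)$, and use localisation; or stay in the calculus and write $A'\partial_x=x^{-1}(xA'x^{-1})(x\partial_x)$ with $xA'x^{-1}\in\Psi_b$.
\item With the paper's normalisation, $\Mcal[x^{-1}w](\lambda)=\hat w(\lambda-\im)$. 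The shift therefore moves the line upward, into the half-plane of holomorphy, and leaves $(\Re\lambda,\eta)$ unchanged. Neither of the difficulties you anticipated actually arises.
\end{itemize}
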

\begin{proof}
	Except for the last part, this is part of Theorem 18.3.27 and Proposition 18.3.28 in \cite{hormander1994analysispseudodifferential}. For (e), notice first that $\Mcal\FT(u+v)$ is holomorphic on a half-plane and that rapid decay on one line propagates to every other line above it by \cref{lemma:independence of the line}. Then, if $\Im \lambda=L_u$ and $\Im\lambda=L_v$ are two lines along which $\Mcal\FT(u)$ and $\Mcal\FT(v)$ are respectively rapidly decreasing, we certainly have that both are rapidly decreasing on $\Im\lambda =\max\{L_u,L_v\}$. Thus, if a certain covector does not belong to either $\WF_b(u)$ or $\WF_b(v)$, then it also does not belong to $\WF_b(u+v)$. This finishes the proof.
\end{proof}

\begin{remark}
	\begin{enumerate}
		\item In view of \cref{lemma:conormal+dual-conormal=smooth}, (a) above can be improved if we know $u\in\Ascr'(\bar{M})$: it suffices to look at the characteristic sets of $A\in \Psi_b(\bar{M})$ such that $Au\in\Cinf(\bar{M})$, and therefore $\WF_b(u)=\emptyset$ if, and only if, $u\in\Cinf(\bar{M})$. This is the case, for example, if $Pu=0$ for some non-characteristic differential operator on $\bar{M}$, see \cref{cor:kernel of NHO is normally regular} below.
		\item Point (d) in \cref{lemma:properties of b-wave-front set} appears discouraging at first sight, since it seems that we cannot control the direction of singularities of solutions of differential equations along $\partial M$. However, notice that it does not take any boundary conditions into account. We will give a vast improvement over it in a moment.
	\end{enumerate}
\end{remark}

A reader well versed in distribution theory may have noticed that the original definition of $\WF_b$ in \cite{melrose1982transformationofboundaryproblems} and \cite{hormander1994analysispseudodifferential} is given for $u\in\dot\Dscr'(\bar{M})$, while our discussion is limited to extensible distributions. According to \cref{rem:mellin and supported distributions}, the Mellin transform does not detect $\dot\Dscr_{\partial M}'(\bar{M})$ and we would not expect to be able to control these singularities through it. However, for distributions in $\Ascr'(\bar{M})$ (and therefore also in the class $\Nscr(\bar{M})$ of the next subsection), this distinction is inessential:
\begin{lemma}
	\label{lemma:extensible=supported for dual-conormal}
	We have
	\begin{equation}\label{eq:dual-conormal meets no boundary distribution}
		\dot\Ascr'(\bar{M})\cap \dot\Dscr'_{\partial M}(\bar{M})=\{0\}.
	\end{equation}
	Consequently, the restriction map of
	\eqref{eq:exact sequence for supported/extensible distributions} restricts to a
	bijection $\dot\Ascr'(\bar{M})\xrightarrow{\ \cdot|_{M}\ }\Ascr'(\bar{M})$ (cf. \cite{alvarez_lopez_topology_2024}). Finally, if $u\in\Ascr'(\bar{M})$ and $U\in\dot\Ascr'(\bar{M})$ denotes the corresponding	supported representative, then
	\begin{equation*}
		\WF_b(u)=\WF_b(U).
	\end{equation*}
\end{lemma}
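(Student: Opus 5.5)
The plan has three steps, taken in the order they appear in the statement: first the trivial intersection \eqref{eq:dual-conormal meets no boundary distribution}, then the bijection, and finally the equality of $b$-wave-front sets.

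\emph{Step 1: the intersection is trivial.} Let $w\in\dot\Ascr'(\bar{M})\cap\dot\Dscr'_{\partial M}(\bar{M})$. The question is local, so we work near a boundary point in adapted coordinates $(x,y)$. By the structure theorem for distributions supported on a hypersurface, recalled in \cref{rem:mellin and supported distributions}, we can write
\[
w=\sum_{k=0}^N a_k(y)\,\partial_x^k\delta(x)
\]
with $a_k\in\Dscr'(\partial M)$. Now use $\dot\Ascr'(\bar M)=\bigcap_\alpha x^\alpha\sob_b^{-\infty}$, so that $x^{-\alpha}w\in\sob^{-\infty}_b$ for every $\alpha$. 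On the other hand, $x^{N+1}$ annihilates $w$ in the supported sense, which is incompatible with the weights. To make this precise, pair $w$ against $\phi(y)\chi(x)x^{k}$ with $\chi\equiv1$ near $0$. These test functions are conormal, i.e.\ they lie in $\dot\Ascr_\comps$, and they can be approximated by elements of $\Cinf_c(M)$ in the topology dual to $\dot\Ascr'$. Indeed, cutting off with $\chi(x/\eps)$ converges in every $x^{\alpha}\sob^{\infty}_b$ for $\alpha<k$, and this is where the weight bookkeeping enters. Since $w$ vanishes on $\Cinf_c(M)$, we get $\braket{w,\phi x^k}=0$, which forces $a_k=0$ for all $k$. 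An equivalent route is the Mellin transform: $\Mcal w\equiv0$, while the Mellin transform is injective on $\dot\Ascr'$ by \cref{theo:paley-wiener for dual-conormal}. I expect this step to be the main obstacle: one has to check carefully that these test functions lie in the closure of $\Cinf_c(M)$ for the relevant pairing, or equivalently that the Mellin transform determines dual-conormal distributions.

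\emph{Step 2: the bijection.} Injectivity of the restriction map on $\dot\Ascr'(\bar M)$ follows from Step 1 together with the exactness of \eqref{eq:exact sequence for supported/extensible distributions}. For surjectivity, I would take the definitions symmetrically, with $\Ascr'(\bar M)=(\dot\Ascr_\comps)'$. Every $u\in\Ascr'(\bar M)$ is then a continuous functional on $\dot\Ascr_\comps\supset\Cinf_c(\bar M)$, so it defines a supported distribution $U$ whose restriction to $M$ is $u$. It remains to show $U\in\dot\Ascr'$. Using the $b$-Sobolev characterisation, $u\in\Ascr'$ lies in some local $x^{\alpha}\sob_b^{-\infty}$ for every weight $\alpha$, and the supported extension inherits these bounds. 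Alternatively, one can cite \cite{alvarez_lopez_topology_2024}.

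\emph{Step 3: equality of wave-front sets.} Definition \ref{def:b-wave-front set} uses only the Mellin--Fourier transform of $\chi u$. By \cref{rem:mellin and supported distributions}, $\Mcal(\chi U)$ depends only on the class of $\chi U$ modulo $\dot\Dscr'_{\partial M}$, and that class is $\chi u$. Hence $\widehat{\chi U}=\widehat{\chi u}$ on a common half-plane of holomorphy, so the decay conditions coincide and $\WF_b(u)=\WF_b(U)$ holds pointwise. This is consistent with the pseudodifferential characterisation in \cref{theo:b-wave front set and pseudos}, since $A\in\Psi_b$ preserves the correspondence $U\leftrightarrow u$.
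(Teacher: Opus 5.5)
Your injectivity argument and your Step 3 are essentially the paper's. The paper pairs $u$ and $U$ with the conormal function $x^{-\im\lambda-1}e^{-\im y\eta}$, which is the same observation as your use of \cref{rem:mellin and supported distributions}. For the space $\bigcap_\alpha x^\alpha\sob_b^{-\infty}$ that you work with, your density argument in Step 1 is sound; the paper simply cites Melrose there.

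The gap is the end of Step 2. It is not true that every $u\in\Ascr'(\bar M)$ lies in $x^\alpha\sob_b^{-\infty}$ for all $\alpha$. Take $u=\chi(x)\phi(y)$ with $\chi\equiv1$ near $0$. It is smooth on $\bar M$, hence in $\Ascr'(\bar M)$. But $\Mcal\chi(\lambda)=(2\pi)^{-1/2}(\im\lambda)^{-1}\int_0^\infty x^{-\im\lambda}\chi'(x)\de x$ has a simple pole at $\lambda=0$, precisely the pole permitted by \cref{theo:paley-wiener for dual-conormal}. A compactly supported element of $x^\alpha\sob_b^{-\infty}$, by contrast, has Mellin transform holomorphic in $\Im\lambda>-\alpha$ and weighted square-integrable on $\Im\lambda=-\alpha$. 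Hence $u\in x^\alpha\sob_b^{-\infty}$ only for $\alpha<0$.

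Worse, if $\dot\Ascr'$ means $\bigcap_\alpha x^\alpha\sob_b^{-\infty}$, as in your Step 1, then the surjectivity you are trying to prove is false for this $u$. Membership in that space is a condition on the restriction to $M$, so a preimage $U$ would force $u\in\bigcap_\alpha x^\alpha\sob_b^{-\infty}$. No weighted estimate can close Step 2. Also, ``the supported extension inherits these bounds'' has no content, since the spaces $x^\alpha\sob_b^s$ are defined through the interior.

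The lemma, and the paper's proof, require reading ``supported dual-conormal'' as follows: a supported distribution whose pairing with $\Cinf_c(\bar M)$ is continuous for the topology of $\dot\Ascr_{\comps}(\bar M)$, into which $\Cinf_c(\bar M)$ embeds continuously by extension by zero. Then the $U$ built in the first half of your Step 2 is dual-conormal by construction. This is the paper's surjectivity argument, and nothing remains to be shown.

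With this reading, however, Step 1 must be run in the topology of $\dot\Ascr_{\comps}$, not of $x^\alpha\sob_b^{\infty}$. The latter is the topology of the extensible space $\Ascr_{\comps}$, whose dual is exactly $\bigcap_\alpha x^\alpha\sob_b^{-\infty}$; your test functions belong there, not in $\dot\Ascr_{\comps}$ as you wrote. The argument still works. For small $\eps$ the cut-off error $\chi_0(x/\eps)x^k\phi(y)$, extended by zero, has conormal symbol equal up to a constant to $\eps^{k+1}\hat h(\eps\zeta)\phi(y)$, where $h(s)=\chi_0(s)s^k$ for $s\ge0$ and $h(s)=0$ for $s<0$. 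Since $\eps\braket{\eps\zeta}^{-1}\le\braket{\zeta}^{-1}$, this family is bounded in $S^{-k-1}$ for $\eps\in(0,1]$. It tends to $0$ pointwise, hence in $S^{-k-1+\delta}$ for every $\delta>0$. So $\Cinf_c(M)$ is dense in $\dot\Ascr_{\comps}$, every element of $\dot\Dscr'_{\partial M}$ continuous for that topology vanishes, and the structure theorem $w=\sum_k a_k\partial_x^k\delta$ is not needed.

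Finally, drop the Mellin alternative in Step 1. The kernel of $\Mcal$ on supported distributions is $\dot\Dscr'_{\partial M}$, so injectivity of $\Mcal$ on $\dot\Ascr'$ is equivalent to the claim being proved. Moreover, \cref{theo:paley-wiener for dual-conormal} only concerns extensible distributions.
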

\begin{proof}
	The first part is shown in Chapter I, Section 3 of \cite{melrose1982transformationofboundaryproblems}, in the discussion preceding Proposition 3.28.

	Consider the restriction map in \eqref{eq:exact sequence for supported/extensible distributions}. After \cite{alvarez_lopez_topology_2024}, we show that it restricts to a bijection between $\dot\Ascr'(\bar{M})$ and $\Ascr'(\bar{M})$. The injectivity is clear: if $U_1, U_2\in\dot\Ascr'(\bar{M})$ are supported distributions restricting to the same $u\in\Ascr'(\bar{M})$, then $U_1-U_2\in \dot\Dscr'_{\partial M}(\bar{M})$, so that, by \eqref{eq:dual-conormal meets no boundary distribution}, it has to vanish. For the surjectivity, notice that any test function on $\bar{M}$ is automatically supported conormal to $\partial M$, with continuous embedding. Indeed, in adapted coordinates, the seminorms of $\dot\Ascr$ are the $L^\infty$-norms of $(x\partial_x)^k\partial^\alpha_y \phi$, which are clearly controlled by a finite number of $\Cinf_c$-seminorms. By definition, the pairing $\braket{u,\varphi}$ of $u\in\Ascr'(\bar{M})$ with $\varphi\in\Cinf_c(\bar{M})$ is then well-defined. It follows that every $u\in\Ascr'(\bar{M})$ has a \textit{canonical} supported representative which is itself dual-conormal.

	Finally, notice that $x^{-\im\lambda-1}e^{-\im y\eta}$ is a conormal distribution. Therefore, both $u\in\Ascr'(\bar{M})$ and its canonical representative $U\in\dot\Ascr'(\bar{M})$ can be paired with it. Clearly, $\hat{\chi U}=\hat{\chi u}$ then, which concludes the proof.
\end{proof}

\subsection*{Non-characteristic boundary problems}
Recall that a differential operator $P$ with smooth coefficients on a smooth manifold with boundary $\bar{M}$ is called \textit{non-characteristic} if  $\sigma(P)_{pr}|_{\nu^\ast\partial M}\neq 0$, where $\nu^\ast\partial M$ is the conormal bundle to the boundary.

Recall from \cite{hormander1994analysispseudodifferential} and \cite{melrose1982transformationofboundaryproblems} the following refinement of dual-conormality
\begin{definition}\label{def:normally regular distributions}
	A distribution $u\in\Ascr'(\bar{M})$ is \textit{normally regular}, written $u\in\Nscr(\bar{M})$, if $\WF_b(u)\subset T^\ast M\cup \Tstar\partial{M}$.
\end{definition}

The example below exemplifies the relation between $\Ascr'$ and $\Nscr$.
\begin{example}\label{example:A' vs N'}
	Consider the supported distribution $u=\sum_{j\geq 1}e^{-j^2}\delta_{1/j}$ on $[0,\infty)$. That this pairs continuously with $\phi\in\Cinf_c([0,\infty))$ can be seen directly: clearly the sum converges since we can estimate it by $\|\phi\|_{L^\infty}\sum e^{-j^2}$, moreover the pairing can be controlled by the $L^\infty$ norm of any finite number of derivatives (after Taylor expansion). Then, $\supp u=\singsupp u=\{0\}\cup\{1/j,j\in\N\}$, so $u$ is not a smooth function of $x$. On the other hand, after Mellin transforming $u$ we obtain
	\begin{equation*}
		|\braket{u,x^{-\im\lambda-1}}|\leq \sum_{j\geq 1}e^{-j^2}e^{(1-\Im\lambda)\log j},
	\end{equation*}
	which is bounded on any strip $|\Im\lambda|\leq L$. Therefore, according to \cref{theo:paley-wiener for dual-conormal}, $u\in\Ascr'([0,\infty))$.
\end{example}

The importance of $\Nscr(\bar{M})$ lies in the smoothness properties it has in conjunction with non-characteristic operators, exemplified in
\begin{theorem}[see \cite{melrose1982transformationofboundaryproblems}]
	\label{theo:smoothness and solvability in N'}
	Let $u\in\Dscr'(\bar{M})$. Then
	\begin{enumerate}
		\item we have $u\in\Nscr(\bar{M})$ if, and only if, there exists an open set $U\subset\bar{M}$, an $\eps>0$ and a boundary-defining function $x$ with $U\simeq\partial M\times[0,\eps)_x$, such that in local coordinates we have \[u\in\Cinf([0,\eps);\Dscr'(U\cap\partial M)),\]
		the latter meaning by definition that $\braket{u,\phi}$ is a smooth function of $x$ for every $\phi\in\Cinf_c(\partial M)$;
		\item if $u\in\Nscr(\bar{M})$ then for all $A\in \Psi_b(\bar{M})$ or $A\in\Diff(\bar{M})$ we have $Au\in\Nscr(\bar{M})$;
		\item if $Pu=f\in\Nscr(\bar{M})$ and $\partial M$ is non-characteristic for $P$, then $u\in\Nscr(\bar{M})$.
	\end{enumerate}
\end{theorem}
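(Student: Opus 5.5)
For part (1), I would pass between the two descriptions through the Mellin transform. Assume first that $u\in\Nscr(\bar M)$. Away from $\partial M$ there is nothing to prove, so I localise with a cutoff $\chi$ near a boundary point and work in adapted coordinates $(x,y)$. Normal regularity gives $\WF_b(\chi u)\cap\b\Tstar_{\partial M}\bar M\subset\Tstar\partial M$, the set where the $b$-dual variable of $x\partial_x$ vanishes. By compactness of the fibre sphere over a compact set of boundary points, I pick a conic neighbourhood $\{|\xi|>c|\eta|\}$ of the normal direction that does not meet $\WF_b$. In that cone, \cref{def:b-wave-front set} and \cref{lemma:independence of the line} give rapid decay of $\hat{\chi u}(\xi+\im L,\eta)$. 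Combined with the polynomial Paley--Wiener bound from \cref{theo:paley-wiener for dual-conormal}, this shows that for each $\phi\in\Cinf_c(\partial M)$ the function $\lambda\mapsto\braket{\Mcal(\chi u)(\lambda),\phi}$ is meromorphic with simple poles at $-\im\N$ and rapidly decreasing along horizontal lines. Here I use that $\braket{\Mcal(\chi u)(\lambda),\phi}$ is obtained by pairing $\hat{\chi u}(\lambda,\cdot)$ with the Schwartz function $\hat\phi$. In the region $|\eta|\gtrsim|\xi|$ only polynomial growth in $|\xi|$ is available, but $\hat\phi$ decays rapidly in $\eta$, so the pairing still decays rapidly in $\Re\lambda$. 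Inverting the Mellin transform and shifting the contour downwards past the poles then produces an asymptotic expansion $\sum_k a_k(\phi)x^k$ with remainder $O(x^K)$ for every $K$, together with its $x\partial_x$-derivatives. This gives $\braket{u,\phi}\in\Cinf([0,\eps))$.

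For the converse, I would assume $u\in\Cinf([0,\eps);\Dscr'(\partial M))$ and Taylor expand in $x$ with values in $\Dscr'(\partial M)$. The uniform boundedness principle makes the seminorms locally uniform, so the Mellin transform against $\phi$ is meromorphic with simple poles at $-\im\N$ and rapidly decaying in $\Re\lambda$. A standard wave-front set argument then shows that $\hat{\chi u}(\xi+\im L,\eta)$ decays rapidly in every cone $|\xi|>c|\eta|$. For this I would estimate $\hat{\chi u}$ by $\braket{\xi}^{-N}$ times a fixed negative-order Sobolev norm in $y$, obtained by integrating by parts in $x$ (i.e. applying powers of $xD_x$). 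This gives $u\in\Ascr'$ and $\WF_b(u)\subset T^\ast M\cup\Tstar\partial M$. The step I expect to be the main obstacle is this uniformity in $\eta$ inside the cone, since $\hat{\chi u}$ is a Fourier transform of a distribution in $y$ rather than a function. I would first try to handle it via the $\Psi_b$ characterisation of \cref{theo:b-wave front set and pseudos}: choose $A=\Op_b(a)$ microsupported in the normal cone, write it as $(xD_x)^N$ composed with an operator of order $-N$, and use that $(xD_x)^Nu$ is still smooth in $x$ with values in $\Dscr'$.

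For part (2), $\Psi_b$ operators do not enlarge $\WF_b$ by \cref{lemma:properties of b-wave-front set}(b), and dual-conormality is preserved, so $\Psi_b$ maps $\Nscr$ to itself. For differential operators, I would use the characterisation in (1): the $b$-vector fields $x\partial_x,\partial_y$ clearly preserve smoothness in $x$ with values in $\Dscr'(\partial M)$. The normal derivative $\partial_x$ also preserves this space directly, which is the point where (1) is genuinely needed rather than $\WF_b$ alone.

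For part (3), I would use non-characteristicity to write $P=a(x,y)\partial_x^m+\sum_{j<m}P_j(x,y,D_y)\partial_x^j$ with $a$ invertible. This turns $Pu=f$ into a first-order system $\partial_xU=\mathcal{B}(x,y,D_y)U+F$ in $x$, with tangential differential operator coefficients acting on $\Dscr'(\partial M)$. Knowing a priori that $u\in\Dscr'(\bar M)$ is extensible, I would first show via the Mellin characterisation that $u\in\Ascr'$; here $f\in\Nscr$ and the invertibility of the indicial polynomial off $-\im\N$ give meromorphy. Then I integrate the ODE in $x$ with values in $\Dscr'$ by a bootstrap: continuity of $U$ in $x$, then $\partial_xU$ continuous, and so on, to obtain $U\in\Cinf([0,\eps);\Dscr'(\partial M))$. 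Part (1) then gives $u\in\Nscr$.
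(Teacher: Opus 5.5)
The paper gives no argument beyond the citation of Melrose, so your sketch has to stand on its own. The forward half of (1) is sound: decay on one line, the $\Ascr'$ bounds, and \cref{lemma:independence of the line} applied to $\prod_j(\lambda+\im j)\Mcal(\chi u)$ give decay on every pole-free line. The $\Psi_b$ half of (2) is also fine.

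\textbf{The converse of (1) fails.} The uniformity in $\eta$ that you single out is a genuine obstruction, and with $\Nscr$ defined through $\WF_b$ the implication $u\in\Cinf([0,\eps);\Dscr'(\partial M))\Rightarrow u\in\Nscr$ is false. Take $\bar M=[0,1)_x\times\mathbb{T}_y$ and $u=\sum_{k\in\Z}e^{\im(k^2x+ky)}$.
\begin{itemize}
\item It solves $(\partial_x+\im\partial_y^2)u=0$. This operator is characteristic for $\{x=0\}$, so there is no conflict with (3).
\item $x\mapsto\braket{u(x),\phi}=\sum_k e^{\im k^2x}\int e^{\im ky}\phi(y)\,dy$ is smooth on $[0,1)$, but $\partial_x^Nu(x)$ lies in $H^s(\mathbb{T})$ only for $s<-2N-\tfrac12$.
\item Localising with $\psi(x)\varphi(y)\ge0$ near an interior point gives $\widehat{\psi\varphi u}(\zeta,\eta)=\sum_k\hat\psi(\zeta-k^2)\hat\varphi(\eta-k)$. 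At $(\zeta,\eta)=(k_0^2,k_0)$ this equals $\hat\psi(0)\hat\varphi(0)+O(\braket{k_0}^{-\infty})$, and these directions tend to $(1,0)$. Hence $((x_1,y_1),(1,0))\in\WF(u)$ for every $x_1>0$.
\item In $b$-coordinates this is the direction $[x_1:0]=[1:0]$. Closedness of $\WF_b$ then puts $(0,y_1;[1:0])$, which has $\xi\neq0$, into $\WF_b(u)$, so $u\notin\Nscr$.
\end{itemize}
So no fixed negative Sobolev norm in $y$ controls all $(xD_x)^Nu$. The factorisation $A=B\,(xD_x)^N$ with $B\in\Psi_b^{-N}$ cannot rescue this either, since $B$ gains $N$ derivatives while $(xD_x)^Nu$ may have lost $2N$.

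\textbf{Consequences for (2) and (3).} Your treatment of $\partial_x$ in (2) and the last sentence of (3) both go through this converse, so they fail as written. Both conclusions are nevertheless true, and only the forward half of (1) is used elsewhere in the paper.

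For (2), $\WF_b$ alone suffices. Write $\partial_x=x^{-1}(x\partial_x)$, so that $\Mcal(\partial_xv)(\lambda)=\im(\lambda-\im)\Mcal v(\lambda-\im)$. The factor $(\lambda-\im)$ cancels the pole the shift would create at $\lambda=\im$, so the poles stay in $-\im\N$ and $\partial_x u\in\Ascr'$. By \cref{lemma:independence of the line}, decay on $\Im\lambda=L-1$ is equivalent to decay on $\Im\lambda=L$, which gives $\WF_b(\partial_xu)\subset\WF_b(u)$.

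For (3), use $b$-ellipticity instead of (1). We have $x^mP\in\Diff^m_b(\bar M)$, and at $x=0$ its $b$-principal symbol is $a(0,y)(\im\xi)^m$. This is nonzero for $\xi\neq0$ precisely because $\partial M$ is non-characteristic. Then \cref{lemma:properties of b-wave-front set}(c) gives $\WF_b(u)\cap\b\Tstar_{\partial M}\bar M\subset\WF_b(x^mf)\cup\Tstar\partial M\subset\Tstar\partial M$. Your ODE bootstrap (equivalently H\"ormander's Theorem B.2.9) is still the right tool for the remaining requirement $u\in\Ascr'(\bar M)$. It works through the inclusion $\Cinf([0,\eps);\Dscr'(\partial M))\subset\Ascr'(\bar M)$, which does hold, since the Paley--Wiener bounds for $\Ascr'$ allow the tangential order to depend on the strip.
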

\begin{corollary}\label{cor:kernel of NHO is normally regular}
	For $P\in\Diff(\bar{M})$ with non-characteristic $\partial M$ we have $\ker(P\colon \Dscr'(\bar{M})\to\Dscr'(\bar{M}))\subset\Nscr(\bar{M})$.
\end{corollary}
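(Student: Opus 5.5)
This follows directly from \cref{theo:smoothness and solvability in N'}(3), applied with right-hand side $f=0$. Let $u\in\Dscr'(\bar M)$ with $Pu=0$. The zero distribution lies in $\Nscr(\bar M)$. Indeed, $0\in\Ascr'(\bar M)$ trivially, and its Mellin--Fourier transform vanishes identically. Hence $\WF_b(0)=\emptyset$, which is contained in $\Tstar M\cup\Tstar\partial M$, so \cref{def:normally regular distributions} is satisfied. Since $\partial M$ is non-characteristic for $P$ by hypothesis, part (3) of the theorem then gives $u\in\Nscr(\bar M)$, which is the claim.

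The only point to check is that the hypotheses of \cref{theo:smoothness and solvability in N'}(3) are met exactly as stated. The equation $Pu=0$ is understood in $\Dscr'(\bar M)$, i.e.\ as an identity of extensible distributions, since $P$ acts on extensible distributions via their restriction to $M$. This is the same setting as in the theorem, so no issue of supported versus extensible representatives arises.

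For a self-contained justification independent of the cited theorem, I would argue as follows. In adapted coordinates, non-characteristicity lets one write
\begin{equation*}
P=a(x,y)\partial_x^m+\sum_{j<m}P_j(x,y,D_y)\partial_x^j,
\end{equation*}
with $a$ nonvanishing near the boundary. One then uses the equation to express $\partial_x^m u$ in terms of lower normal derivatives, which yields $u\in\Cinf([0,\eps);\Dscr'(\partial M))$ by the standard partial hypoellipticity argument. The criterion in \cref{theo:smoothness and solvability in N'}(1) then gives $u\in\Nscr(\bar M)$.

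I expect no real obstacle in this argument. The substantive content is contained in the cited theorem of Melrose.
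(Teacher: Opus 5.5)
Your proposal is correct and matches the paper: the corollary is stated there as an immediate consequence of \cref{theo:smoothness and solvability in N'}(3) applied with $f=0\in\Nscr(\bar M)$, which is exactly your argument. Your supplementary sketch via partial hypoellipticity is not needed.
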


If we set $\partial\WF(u)=\WF_b(u)\cap \b\Tstar_{\partial M}\bar{M}$, we thus have, for a $u\in\Nscr(\bar{M})$, a canonical decomposition
\begin{equation*}\label{eq:b- and boundary wave-front set}
	\WF_b(u)=\WF(u)|_{M}\dot\cup \partial\WF(u)\subset \Tstar M\dot\cup \Tstar\partial M.
\end{equation*}
The object $\partial\WF(u)$ is, for $u\in\Nscr(\bar{M})$, the \textit{boundary wave-front set} of \cite{chazarain_reflection_1976} and \cite{de_gosson_microlocal_1982}, and it measures the lack of \textit{strong tangential regularity} of the distribution $u$. Recall that for a distribution on $\bar{\R^n_+}$, we have that $u$ is smooth near $p$ if, and only if, $u$ is simultaneously normally smooth (i.e. $u\in\Nscr(\bar{\R^n_+})$) and \textit{strongly tangentially smooth} near $p$, where this concept means the following: there exists $\chi\in\Cinf_c(\bar{\R^n_+})$ with $\chi(p)\neq 0$, such that for every $N\in\N$ we find constants $C_{N,\chi}, \mu_{\chi}$ (the latter independent of $N$!) such that for every $(\xi,\eta)\in \R^n$ ($\xi$ the Fourier variable of $x$, $\eta$ the Fourier variables of $y$)
\begin{equation*}\label{eq:decay for strong tangential regularity}
	|\hat{\chi u}(\xi,\eta)|\leq C_{N,\chi}(1+|\xi|)^{\mu_\chi}(1+|\eta|)^{-N}.
\end{equation*}
With the notation above we also have
\begin{prop}[cf. Theorem 18.3.27 part (iv) of \cite{hormander1994analysispseudodifferential}]\label{prop:restriction and boundary WF set}
	Any $u\in\Nscr(\bar{M})$ has traces of all orders at $\partial M$, i.e. for every $k\in\N$, $\partial^k_{\nu} u$ exists as an element of $\Nscr(\bar{M})$ and restricts to $\partial M$ to give an element of $\Dcal'(\bar{M})$. Moreover, $\WF(\gamma_0 \partial^k_\nu u)\subset\bWF(u)$.
\end{prop}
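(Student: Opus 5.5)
We work in adapted coordinates $(x,y)$ near a boundary point $q_0=(0,y_0)$, using the characterisation in \cref{theo:smoothness and solvability in N'}~(1): $u\in\Nscr(\bar M)$ means $u\in\Cinf([0,\eps);\Dscr'(\partial M))$ near $q_0$.

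\textbf{Existence of traces.} Since $u$ depends smoothly on $x$ with values in $\Dscr'(\partial M)$, so does $\partial_x^k u$. Hence $\partial_x^k u\in\Cinf([0,\eps);\Dscr'(\partial M))$, and by part (1) again (or directly by part (2), since $\partial_\nu^k$ is a differential operator) $\partial_\nu^k u\in\Nscr(\bar M)$. Its trace is then defined by evaluation at $x=0$, namely $\braket{\gamma_0\partial^k_\nu u,\phi}=\lim_{x\to0}\braket{\partial_x^k u(x,\cdot),\phi}$ for $\phi\in\Cinf_c(\partial M)$. This is a distribution on $\partial M$ (the "$\Dcal'(\bar M)$" in the statement should be read as $\Dscr'(\partial M)$). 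Patching with a partition of unity and checking that a change of adapted coordinates only mixes lower normal derivatives with smooth coefficients gives well-definedness. Because $\partial_\nu^ku$ is again normally regular and $\WF_b(\partial_\nu^k u)\subset\WF_b(u)$ by \cref{lemma:properties of b-wave-front set}(b), it suffices to prove the wave-front inclusion for $k=0$, i.e. $\WF(\gamma_0 v)\subset\bWF(v)$ for every $v\in\Nscr(\bar M)$.

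\textbf{Wave-front inclusion.} Take $(y_0,\eta_0)$ with $(0,y_0;0,\eta_0)\notin\bWF(v)$; recall that on $\Tstar\partial M$ the $b$-covector has vanishing $\xi$-component. By \cref{theo:b-wave front set and pseudos} there is $A\in\Psi^0_b$, elliptic at that point, with $Av\in\dot\Ascr$. We choose $A$ with symbol depending only on $(y,\eta)$, namely $A=\chi(x)\Op(a(y,\eta))$ acting tangentially, where $a$ is elliptic near $(y_0,\eta_0)$ and conically supported in a small cone $\Gamma\ni\eta_0$ with $|\xi|\le c|\eta|$.

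The step I expect to be the main obstacle is justifying this choice: the neighbourhood witnessing $(q_0,(0,\eta_0))\notin\WF_b(v)$ is a cone in $(\xi,\eta)$ around $(0,\eta_0)$, so it controls only $|\xi|\lesssim c|\eta|$, while a purely tangential operator also sees large $\xi$. Those large-$\xi$ directions with $\xi\neq0$ lie over the boundary outside $\Tstar\partial M$, so they are absent from $\WF_b(v)$ by normal regularity. I would therefore split $\Op(a)$ into a piece microlocalised near $\{\xi=0\}$, controlled by the hypothesis, and a piece microlocalised in $|\xi|\ge c|\eta|/2$, controlled by $v\in\Nscr$; together they give $\WF_b(\Op(a)v)=\emptyset$ near $q_0$. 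Since $\Op(a)v$ is also normally regular, the Remark after \cref{lemma:properties of b-wave-front set} upgrades this to $\Op(a)v\in\Cinf$ near $q_0$. (This is the essence of H\"ormander's Theorem 18.3.27(iv).)

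Finally, tangential operators commute with restriction to $x=0$: $\gamma_0\Op(a)v=\Op(a)\gamma_0 v$. The left side is smooth, so $\Op(a)\gamma_0v\in\Cinf(\partial M)$ near $y_0$, and ellipticity of $a$ at $(y_0,\eta_0)$ gives $(y_0,\eta_0)\notin\WF(\gamma_0 v)$.
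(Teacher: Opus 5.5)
The paper gives no argument of its own here; it only cites H\"ormander, Theorem 18.3.27(iv). Your scheme is the standard route to such a statement: traces from $v\in\Cinf([0,\eps);\Dscr'(\partial M))$, reduction to $k=0$ via $\WF_b(\partial^k_\nu u)\subset\WF_b(u)$, then a tangential operator $\Op(a)$ elliptic at $(y_0,\eta_0)$ that commutes with $\gamma_0$. The trace part, the reduction and your final paragraph are fine, reading $\Dscr'(\partial M)$ for the misprint.

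\textbf{The gap is in the step you flag yourself.}
\begin{itemize}
\item \Cref{theo:b-wave front set and pseudos} gives you some $A\in\Psi^0_b$, not a tangential one.
\item A symbol $a(y,\eta)$ independent of $\xi$ is not a $b$-symbol. It cannot be supported in $|\xi|\le c|\eta|$. On $|\xi|\ge c|\eta|/2$ one only has $|\partial_\eta a|\sim|\eta|^{-1}$ near the edge of $\Gamma$ (take $|\eta|=1$, $|\xi|\to\infty$), rather than $O(\braket{(\xi,\eta)}^{-1})$.
\item So neither $\Op(a)$ nor your large-$\xi$ piece $\Op(a(1-\psi))$ lies in $\Psi_b$, and \cref{lemma:properties of b-wave-front set} says nothing about the $b$-wave-front set of its output.
\end{itemize}
This is not pedantic. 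The kernel $\delta(x-x')K_a(y,y')$ of a tangential operator is singular on $\{x=x',\ y\neq y'\}$. It therefore spreads singularities in the normal codirection $\pm\de x$ across the $y$-variables, and these are exactly the directions you want to dispose of by normal regularity. As written, ``$\WF_b(\Op(a)v)=\emptyset$ near $q_0$'' does not follow.

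\textbf{The fix is to split in the other order.} Take $\psi=\psi(\xi,\eta)$ homogeneous of degree $0$, supported in $|\xi|\le c|\eta|$, with $\psi\equiv1$ on $|\xi|\le c|\eta|/2$. Let $\chi$ be a cutoff near $q_0$ and write
\[
\Op(a)(\chi v)=\Op(a)\,\psi(\b D)(\chi v)+\Op(a)\bigl((1-\psi)(\b D)(\chi v)\bigr).
\]

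\emph{First term.} In the Mellin--Fourier quantisation, $\Op(a)\psi(\b D)$ has full symbol exactly $a(y,\eta)\psi(\xi,\eta)$. This is a genuine classical symbol of order $0$, because $|\eta|\gtrsim|(\xi,\eta)|$ on $\supp\psi$. Since $\WF_b(v)$ is closed, its microsupport misses $\WF_b(\chi v)$ once $\supp\chi$, the $y$-support of $a$, $\Gamma$ and $c$ are small enough. This term is therefore in $\Ascr\cap\Nscr=\Cinf$.

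\emph{Second term.} Here $(1-\psi)(\b D)\in\Psi^0_b$ (after the usual modification to make it properly supported) is microsupported in $|\xi|\ge c|\eta|/2$.
\begin{itemize}
\item Normal regularity excludes these directions from $\WF_b(v)$ over $x=0$.
\item You also need them excluded over small $x>0$; your sentence only covers $x=0$. This follows from closedness of $\WF_b(v)$ together with compactness of this part of $\bSs{\bar M}$ over $\supp\chi$.
\item Hence $(1-\psi)(\b D)(\chi v)\in\Cinf$, and the tangential operator $\Op(a)$, which commutes with $\partial_x$, keeps it in $\Cinf$.
\end{itemize}

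With this, $\Op(a)(\chi v)$ is smooth near $q_0$, and your last paragraph goes through unchanged. In the paper's Mellin language, for constant-coefficient $a(D_y)$ this amounts to checking rapid decay of $a(\eta)\widehat{\chi v}(\xi+\im L,\eta)$ separately on the two cones.
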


As shown in \cite{hormander1994analysispseudodifferential}, Section 18.3, and \cite{de_gosson_microlocal_1982}, $\bWF(u)$ can be tested with partial Fourier transforms or with pseudo-differential operators along $\partial M$. In \cref{sect:ultrastatic} we shall use the definition of $\WF_b(u)$ instead, but it will be clear that a tangential Fourier transform would be sufficient (as it should, since our distributions will be of $\Nscr$ type).

In the class $\Nscr(\bar{M})$ we can take advantage of more specialised results concerning singularities. To phrase the next \cref{theo:b-WF with elliptic boundary conditions}, we recall the Lopatinskij--Shapiro condition associated with the boundary-value problem $(P,\gamma_0 B)u=0$, where $P$ and $B$ are differential operators on $\bar M$ with principal symbols $p$ and $b$. For a boundary point $(y,\zeta)\in T^\ast_{\partial M} \bar M$, denote by $X^+_{y,\zeta}$ the span of the set of smooth solutions of the equation $p(0,y,\zeta+D_s\nu_{(0,y)})u(s)=0$ which are bounded on $\R^+$ (in Definition 20.1 in \cite{hormander1994analysispseudodifferential}, this is denoted $M^+_{x,\zeta}$). In the terminology of \cite{benzoni-gavage_multi-dimensional_2007}, $X^+_{y,\zeta}$ is the union of the stable and central subspaces at the point $(y,\zeta)$ of the boundary value problem $(P,\gamma_0 B)$.
\begin{definition}\label{def:LS-condition and joint characteristic set}
	Let $(P,\gamma_0 B)$ be a boundary-value problem on the manifold with boundary $\bar{M}$ (that is, $P, B$ are differential operators with smooth coefficients on $\bar{M}$ and $\gamma_0$ is the restriction map to the boundary). Let $(y,\zeta)\in T^\ast_{\partial M}\bar{M}$ with $\zeta$ not proportional to the interior conormal $\nu_{(0,y)}$. We say that the boundary-value problem satisfies the Lopatinskij--Shapiro condition at $(y,\zeta)$ (or that the boundary-value problem is \textit{elliptic} at $(y,\zeta)$) if the map
	\begin{equation}\label{eq:Lopatinskij--Shapiro map}
		\LS_{y,\zeta}\colon X^+_{y,\zeta}\ni u\mapsto b(y,\zeta+D_s\nu_{(0,y)})u|_{s=0}\in \C
	\end{equation}
	is bijective. We introduce then the \textit{boundary characteristic set} of the pair $(P,\gamma_0B)$ as the subset $\Char(P;B)$ of $T^\ast \partial M$ where either \eqref{eq:Lopatinskij--Shapiro map} is not bijective or the point $(y,\zeta)$ has a preimage under $\varrho^t$ that is characteristic for $P$. Explicitly,
	\begin{equation}\label{eq:joint characteristic set}
		\Char(P;B):=\{(y,\eta)\in T^\ast \partial M\colon (\exists \zeta\in T^\ast_y \bar M, \varrho^t(\zeta)=\eta, p(y,\zeta)=0)\text{ or }(\ker\LS_{y,\zeta}\neq \{0\})\}.
	\end{equation}
\end{definition}

\begin{theorem}[cf.\ \cite{hormander1994analysispseudodifferential}, Theorem 20.1.14]
	\label{theo:b-WF with elliptic boundary conditions}
	If $P$ is a differential operator on the manifold $\bar{M}$ with non-characteristic boundary $\partial M$, $\gamma_0B$ is a local boundary condition for $P$, and $u\in \Nscr(\bar{M})$ is a solution of the homogeneous boundary problem $(P, \gamma_0B)u=0$, then we have
	\begin{equation}\label{eq:localisation of singularities with boundary conditions}
		\WF_b(u)\subset\Char (P)\dot\cup \Char(P;B).
	\end{equation}
\end{theorem}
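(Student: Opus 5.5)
The plan is to split the claim into the interior part and the boundary part. In the interior, $\WF_b(u)\cap\Tstar M=\WF(u)|_M$, and since $Pu=0$ there, ordinary microlocal elliptic regularity gives $\WF(u)|_M\subset\Char(P)$. The substance of the statement is therefore the boundary wave-front set $\bWF(u)=\WF_b(u)\cap\b\Tstar_{\partial M}\bar M$. Because $u\in\Nscr(\bar M)$, \cref{theo:smoothness and solvability in N'} and the decomposition following \cref{cor:kernel of NHO is normally regular} give $\bWF(u)\subset\Tstar\partial M$. It then suffices to show that if $(y_0,\eta_0)\in\Tstar\partial M\setminus O$ lies outside $\Char(P;B)$, then $u$ is strongly tangentially smooth microlocally near $(y_0,\eta_0)$.

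Fix such a point and work in adapted coordinates $(x,y)$. Using non-characteristicity, I would divide by the coefficient of $D_x^m$ and write $P=D_x^m+\sum_{j<m}P_j(x,y,D_y)D_x^{m-j}$. By \cref{theo:smoothness and solvability in N'}(1), $u\in\Cinf([0,\eps);\Dscr'(\partial M))$. I would then rewrite $Pu=0$ as a first-order system $\partial_x U=\im\mathcal A(x,y,D_y)U$ for the Cauchy-data vector $U=(D_x^ku)_{k<m}$, with tangential symbol $\mathcal A$ of order one after the usual weighting by $\braket{D_y}$. Since $(y_0,\eta_0)\notin\Char(P;B)$, no $\zeta$ with $\varrho^t(\zeta)=\eta_0$ is characteristic, so $\mathcal A(0,y_0,\eta_0)$ has no real eigenvalues. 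Its eigenvalues therefore split into those with positive and those with negative imaginary part, and the bounded solutions in $X^+_{y_0,\eta_0}$ correspond exactly to the stable generalized eigenspace. Microlocally near $(y_0,\eta_0)$, I would block-diagonalise $\mathcal A$ by a tangential pseudo-differential conjugation into a decaying block $\mathcal A_+$ and a growing block $\mathcal A_-$, modulo smoothing errors.

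This splits the components as $U=(U_+,U_-)$. The growing component satisfies a backward-parabolic-type tangential equation. Since $u$ is compactly supported in $x$ after cutoff, integrating from $x=\eps$ downwards with energy estimates in tangential Sobolev spaces shows that $U_-$ is microlocally smooth near $(y_0,\eta_0)$, uniformly in $x$. For the decaying component $U_+$, the trace $U_+(0)$ is determined through the boundary condition. The Lopatinskij--Shapiro bijectivity \eqref{eq:Lopatinskij--Shapiro map} means that $b$ restricted to the stable space is invertible, so $\gamma_0Bu=0$ yields $U_+(0)=$ (microlocally elliptic operator) applied to $U_-(0)$, modulo smoothing. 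Hence $U_+(0)$ is microlocally smooth, and forward parabolic estimates for $\partial_x U_+=\im\mathcal A_+U_+$ propagate this smoothness into $x>0$. This gives decay $|\hat{\chi u}(\xi,\eta)|\lesssim(1+|\xi|)^{\mu}(1+|\eta|)^{-N}$ in a cone around $\eta_0$, which is strong tangential regularity. Normal regularity then turns this into absence of $(y_0,\eta_0)$ from $\WF_b(u)$, via the Mellin characterisation of \cref{def:b-wave-front set} together with \cref{theo:paley-wiener for conormal} applied after a tangential microlocalisation.

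I expect the main obstacle to be the microlocal block-diagonalisation of $\mathcal A$ with symbols depending on $x$. The parabolic estimates must hold uniformly down to $x=0$, with errors that are genuinely smoothing in $y$ but only continuous in $x$. I would first try a standard Calderón symmetrizer-type construction in the class $\Cinf([0,\eps);\Psi^1(\partial M))$, as in Hörmander's treatment underlying Theorem 20.1.14, and argue first for Sobolev orders $\WF_b^s$. I would then pass to $\WF_b$ using \cref{lemma:ordinary and sobolev wave-front set}.
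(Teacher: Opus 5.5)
The paper gives no proof of this statement. It is quoted from Hörmander's Theorem 20.1.14 and is only ever applied, in \cref{theo:DBC satisfies LS} and in \cref{sect:ultrastatic}, to normally hyperbolic $P$ with $B=\Id$. Your proposal therefore does not compete with an argument in the paper; it reconstructs the standard one behind the reference, and it is correct in outline. The steps are:
\begin{enumerate}
\item a first-order tangential system for the Cauchy-data vector;
\item separation of the normal roots of $p$ into the two open half-planes away from the characteristic set;
\item microlocal block-diagonalisation;
\item backward estimates for the growing block;
\item the Lopatinskij--Shapiro condition to recover the trace of the decaying block, followed by forward estimates for it;
\item conversion of uniform tangential smoothness plus normal regularity into $(y_0,\eta_0)\notin\WF_b(u)$.
\end{enumerate}
Compared with the citation, your route stays inside the paper's own framework: it uses only the $\Cinf([0,\eps);\Dscr'(\partial M))$ description of $\Nscr$ and the Mellin characterisation of $\WF_b$. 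It also yields the $\WF_b^s$ version along the way. The citation buys brevity.

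Two steps need to be made explicit.

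\emph{The backward integration of $U_-$ from $x=\eps$.} The normal cutoff and the tangential microlocaliser create source terms supported in $\{\eps/2\le x\le\eps\}$, plus commutator terms. In general $u$ is not smooth there, so compact support in $x$ alone does not give you a smooth starting point. The missing input is interior ellipticity. By non-characteristicity of $\partial M$, homogeneity of $p$ and compactness, you can shrink $\eps$, a neighbourhood of $y_0$ and a conic neighbourhood $\Gamma$ of $\eta_0$ so that $p(x,y,\zeta_x,\eta)\neq0$ for all real $\zeta_x$, all $x\in[0,\eps]$, all $y$ near $y_0$ and all $\eta\in\Gamma\cup\{0\}$ with $(\zeta_x,\eta)\neq0$. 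Consequently $\WF(u)$ over $\{0<x\le\eps\}$ contains no covector with tangential part in $\Gamma\cup\{0\}$, so the source terms are microlocally smooth. This is also exactly the neighbourhood on which your block-diagonalisation is uniform down to $x=0$.

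\emph{The strength of the Lopatinskij--Shapiro condition you may assume.} Off $\Char(P;B)$ as displayed in \eqref{eq:joint characteristic set}, you are only guaranteed $\ker\LS=\{0\}$. That is injectivity of the boundary symbol on the stable space, not bijectivity. This suffices: in your formula for $U_+(0)$, replace the inverse by a microlocal left parametrix of the tangential operator through which $U_+(0)$ enters $\gamma_0Bu$.
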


We finish by showing that the Dirichlet boundary condition $B=\Id$ is, for the ordinary wave operator on the half-space $\R_{t}\times [0,\infty)_x\times\R^{n-1}_y$, elliptic in the directions $(\tau,0,\eta)$ with $\tau^2<|\eta|^2$ along the boundary.
\begin{theorem}\label{theo:DBC satisfies LS}
	Let $u\in \Nscr(\bar{\R^n_+})$ satisfy $\Box u\equiv(\partial^2_t+\Delta_{x,y}) u=0$ and $u|_{x=0}=0$, where $\Delta_{x,y}$ is $-\partial^2_{xx}-\sum_{j=1}^{n-1}\partial^2_{y^j y^j}$. We have $\WF_b(u)|_{x=0}=\partial\WF(u)\subset \{(t,0,y,\tau,0,\eta)\colon \tau^2\geq |\eta|^2\}$.
\end{theorem}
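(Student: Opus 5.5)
The plan is to apply \cref{theo:b-WF with elliptic boundary conditions} with $P=\Box$ and $B=\Id$. Then $\partial\WF(u)\subset \Char(P;\Id)$. It therefore suffices to compute $\Char(P;\Id)\subset T^\ast\partial M$ explicitly and show that it is contained in $\{\tau^2\ge|\eta|^2\}$. The interior part $\Char(P)$ lies over $M$, so it does not contribute at $x=0$.

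Throughout, boundary covectors are written $(t,y,\tau,\eta)\in T^\ast\partial M$, with $\varrho^t$ forgetting the normal component $\xi$. The principal symbol of $\Box=\partial_t^2+\Delta_{x,y}$ is $p=-\tau^2+\xi^2+|\eta|^2$, up to the sign convention.

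\textbf{Step 1: the first alternative.} A preimage $(\tau,\xi,\eta)$ of $(\tau,\eta)$ is characteristic iff $\xi^2=\tau^2-|\eta|^2$. A real solution $\xi$ exists exactly when $\tau^2\ge|\eta|^2$. Hence this part of $\Char(P;\Id)$ is precisely the closed causal cone $\{\tau^2\ge|\eta|^2\}$.

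\textbf{Step 2: the Lopatinskij--Shapiro alternative at spacelike points.} Now fix $(\tau,\eta)$ with $\tau^2<|\eta|^2$, and set $\kappa=\sqrt{|\eta|^2-\tau^2}>0$. The model ODE $p(0,y,(\tau,0,\eta)+D_s\nu)v(s)=0$ becomes $-v''+\kappa^2v=0$, whose solutions are spanned by $e^{\pm\kappa s}$. The bounded solutions on $\R^+$ form $X^+=\mathrm{span}\{e^{-\kappa s}\}$, which is one-dimensional; there is no central part, since $\kappa\ne0$. With $b=1$, the map $\LS$ is $v\mapsto v(0)$, and $e^{-\kappa s}\mapsto1$ shows it is bijective. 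Thus these points are not in $\Char(P;\Id)$, and $\Char(P;\Id)=\{\tau^2\ge|\eta|^2\}$.

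\textbf{Step 3: conclusion and identification.} Combining the two steps gives $\partial\WF(u)\subset\{(t,0,y,\tau,0,\eta)\colon\tau^2\ge|\eta|^2\}$. The identification $\WF_b(u)|_{x=0}=\partial\WF(u)$ is the definition of $\partial\WF$. Since $u\in\Nscr$, this set sits in $\Tstar\partial M$, i.e.\ its normal $b$-component vanishes, which matches the displayed form with $0$ in the $\xi$ slot.

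The only delicate point is checking that the hypotheses of the cited theorem (Hörmander 20.1.14) really hold. The boundary is non-characteristic, because $p(\nu)=1\neq0$ on the conormal $\nu=\de x$. One should also confirm that the normalisation of $X^+$ (bounded solutions, as opposed to decaying ones) does not enlarge $X^+$ at $\tau^2<|\eta|^2$. This is fine, since the roots $\pm\kappa$ are real and nonzero there. All genuine difficulty is absorbed into the cited theorem; the remaining work is the elementary symbol computation above.
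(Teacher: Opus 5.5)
Your proposal is correct and follows essentially the same route as the paper: you apply \cref{theo:b-WF with elliptic boundary conditions} with $B=\Id$ and compute $\Char(\Box;\Id)=\{\tau^2\ge|\eta|^2\}$ from the model ODE. The only difference is minor. The paper solves the ODE for a general normal component $\xi$ and treats all three regimes, whereas you set $\xi=0$ and check the Lopatinskij--Shapiro map only in the spacelike regime. This is harmless: shifting $\zeta$ by $c\nu$ merely conjugates solutions by $e^{-\im cs}$, which preserves boundedness and the value $v(0)$. The null and timelike regimes already lie in $\Char(\Box;\Id)$ through the characteristic-preimage alternative.
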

\begin{proof}
	By \cref{theo:b-WF with elliptic boundary conditions} we know $\WF_b(u)\subset\Char(\Box;\Id)$, so that we need to compute the joint characteristic set. To this end, we analyse in which directions the boundary-value problem $(\Box,\gamma_0)$ satisfies the Lopatinskij--Shapiro condition at a generic point $q\in \partial \bar{\R^n_+}=\{(t,0,y)\}$.

	The space $X^+_{q,(\tau,\xi,\eta)}$ is determined by finding the bounded solutions to the ODE $(\tau^2-(\xi+D_s)^2-|\eta|^2)u=0$, that is,
	\begin{equation}\label{eq:ODE for LS condition}
		\partial_s^2 u+2\im \xi \partial_su-(\xi^2+|\eta|^2-\tau^2)u=0.
	\end{equation}
	One calculates the discriminant to be $4(|\eta|^2-\tau^2)$, and analyses separately the cases:
	\begin{enumerate}
		\item $\tau^2< |\eta|^2$, in which case we have the solutions $e^{-s(\im\xi\pm\sqrt{|\eta|^2-\tau^2})}$;
		\item $\tau^2=|\eta|^2$, in which case the solutions are $e^{-\im s\xi}$ and $s e^{-\im s\xi}$;
		\item $\tau^2>|\eta|^2$, in which case we have the solutions $e^{-\im s (\xi+\sqrt{\tau^2-|\eta|^2})}$ and $e^{-\im s(\xi-\sqrt{\tau^2-|\eta|^2})}$.
	\end{enumerate}
	Therefore, the space $X^+_{q,(\tau,0,\eta)}$ is, in the three cases above, the span of
	\begin{enumerate}
		\item $e^{-s(\im \xi+\sqrt{|\eta|^2-\tau^2})}$ if $\tau^2<|\eta|^2$;
		\item $e^{-\im\xi s}$ if $\tau^2=|\eta|^2$;
		\item $e^{-\im s(\xi\pm\sqrt{\tau^2-|\eta|^2})}$ if $\tau^2>|\eta|^2$.
	\end{enumerate}
	The map $\LS_{q,(\tau,\xi,\eta)}$ is, for $B=\Id$, just evaluation at $s=0$, so that, if $(\tau,\eta)\neq 0$ (that is, the covector $(\tau,\xi,\eta)$ is not proportional to $\nu_q$), $\LS_{q,(\tau,\xi,\eta)}$ is a bijection in the first two cases. In the third case, we have a nontrivial kernel of $\LS$. However, the third region is also the image via the transpose of the anchor map $\varrho^t$ of $\{(q,(\tau,\xi,\eta))\colon -\tau^2+\xi^2+|\eta|^2=0\}$, the characteristic set of $\Box$. That is, the covectors in the third region are tangent covectors to the boundary having a characteristic preimage for $P$, lying by definition in $\Char(\Box;\Id)$. Of course, the covectors with $\tau^2=|\eta|^2$ are characteristic for $P$ too, when viewed as elements of $T^\ast \bar M$. Therefore, we have determined the joint characteristic set to be the region
	\begin{equation}
		\Char(\Box;\Id)=\{(q,(\tau,\eta))\in T^\ast \partial M\colon \tau^2\geq |\eta|^2 \},
	\end{equation}
	which concludes the proof.
\end{proof}
\begin{remark}
	After we have introduced the necessary geometric preliminaries in \cref{sect:GHSTB and IBVP}, \cref{theo:DBC satisfies LS} will say that the singularities of elements of the kernel of a normally hyperbolic operator are, at boundary points, constrained to the \textit{compressed causal cone} (see \cref{def:causal compressed b-covectors} for the definition).
\end{remark}

\section{Hilbert-space-valued distributions}
\label{sect:HS-valued distributions}

We generalise here the discussion of Hilbert-space-valued distributions and their wave-front sets of \cite{strohmaier_reeh-schlieder_2002} to the setting of manifolds with boundary and $b$-wave-front sets (notice also the related discussion of operator wave-front sets of \cite{wrochna_holographic_2017}). Let $\Hcal$ denote a complex Hilbert space with scalar product $(\cdot,\cdot)$, conjugate-linear in the first slot, and norm $\|\cdot\|$.
\begin{definition}\label{def:b-HS-valued distributions}
	Let $\bar{M}$ be a smooth manifold with boundary. A \textit{supported $\Hcal$-valued distribution}, $u\in\dot\Dscr'(\bar M;\Hcal)$ is a weakly continuous linear form $u\colon\Cinf_c(\bar{M})\to\Hcal$, that is, for every $h\in \Hcal$ the linear form $(h,u(\cdot))$ is in $\dot\Dscr'(\bar{M})$. An \textit{extensible $\Hcal$-valued distribution}, $u\in\Dscr'(\bar M;\Hcal)$, is a weakly continuous linear map $u\colon\Cdotinf_c(\bar{M})\to\Hcal$. The notation $\Fscr(\bar{M};\Hcal)$ shall be used for any other space of distributions introduced in \cref{sect:distributions on manifolds with boundary} to denote the obvious generalisation.
\end{definition}

In fact, by nuclearity of $\dot\Dscr'(\bar{M})$ and $\Dscr'(\bar{M})$, we can identify any closed subspace $\Fscr(\bar{M};\Hcal)$ of distributions (such as $\dot\Ascr(\bar{M})$ and $\Ascr'(\bar{M})$) with the completed tensor product $\Fscr(\bar{M})\hat\tens\Hcal$. This allows for a direct generalisation of many concepts from the complex to the $\Hcal$-valued case by just ``replacing absolute values with the Hilbert space norm''. See \cite{treves_topological_2006}, Chapter 50 for more details.

If $X$ is an open subset of $\bar{\R^n_+}$, Definition \cref{def:b-HS-valued distributions} reduces to a known form: $u\in\dot\Dscr'(X;\Hcal)$ (resp.\ $u\in\Dscr'(X;\Hcal)$) if, and only if, for every compact set $K\subset X$ there are constants $C>0$ and $k\in\N$ such that for all $f\in\Cinf_c(K)$ (resp.\ for all $f\in{\Cdotinf}_c(K)$) we have
\begin{equation}\label{eq:hilbert-space-valued local distributions}
	\|u(f)\|\le C\sum_{|\alpha|\le k}\sup_{q\in K}|\partial^\alpha f(q)|.
\end{equation}
As before, we use the notation $\Escr$ in place of $\Dscr$ to denote elements of compact support. Thus, $u\in\dot\Escr'(X;\Hcal)$ (resp.\ $u\in\Escr'(X;\Hcal)$) can be extended to a map on $\Cinf$ (resp.\ $\Cdotinf$), and will then satisfy \eqref{eq:hilbert-space-valued local distributions} for every compact $K$ containing the support of $u$. Since, for a given compact set $K$, any linear map satisfying the inequality above on $K$ is in fact a distribution with support in $K$, we have that the space $\dot{\Escr}'(X;\Hcal)$ (resp.\ $\Escr'(X;\Hcal)$) consists of all the continuous linear maps $\Cinf(X)\to\Hcal$ (resp.\ $\Cdotinf(X)\to\Hcal$). The topology of $\Escr'(X;\Hcal)$ is such that, if a set $B\subset\Escr'(X;\Hcal)$ is bounded, then the estimate \eqref{eq:hilbert-space-valued local distributions} holds true uniformly in $B$, that is, with constants $C,k$ not depending on $u$.

The action of $A\in \Psi_b(\bar{M})$ or $A\in\Diff(\bar{M})$ on $u\in\dot\Dscr'(\bar{M};\Hcal)$ is defined by duality: $Au$ is the $\Hcal$-valued distribution which satisfies, for all $\phi\in\Cinf_c(\bar{M})$ and all $v\in\Hcal$,
\begin{equation}\label{eq:pseudos acting on HS-distributions}
	\left(v,\braket{Au,\phi}\right)=\left(v,\braket{u,A^\ast \phi}\right).
\end{equation}
In view of \cref{lemma:continuity of b-operators on distributions}, the same definition makes sense for $u\in\Dscr'(\bar{M};\Hcal)$, provided we test $u$ only against $\phi\in\Cdotinf_c(\bar{M})$. More generally, using the identification as tensor product, we can transport all results for the action of differential or $b$-pseudo-differential operators from the complex to the $\Hcal$-valued case, letting an operator $A$ act on elements of the form $u\tens v$ by $A(u\tens v)=(Au)\tens v$ for $u\in\Dscr'(\bar{M})$, $v\in\Hcal$, and then extending by continuity. Thus, differential and $b$-pseudo-differential operators act continuously on $\dot{\Ascr}(\bar{M};\Hcal)$ and $\Ascr'(\bar{M};\Hcal)$ (cf. Appendix \cref{app:b-calculus}).

Similarly, we extend the definition of the Mellin transform for scalar $\Escr'(\bar{M})$-distributions to the $\Hcal$-valued case by setting, for all $h\in\Hcal$ and all $u\in\Escr'(\bar{M};\Hcal)$,
\begin{equation*}
	\label{eq:Mellin transform of H-valued distribution}
	(h,\Mcal(u)(\lambda))=(h,(2\pi)^{-1/2}\braket{u,x^{-\im\lambda-1}}).
\end{equation*}

We now introduce the $b$-wave-front set of distributions in $\Ascr'(\bar{M};\Hcal)$ in analogy to the scalar case. Here, as before, $\hat{u}$ stands for the combined Mellin--Fourier transform (locally, the first in the variables $x$ and the second in the variables $y$).

\begin{definition}\label{def:b-wave-front set of hilbert-valued distribution}
	Let $u\in\Ascr'(\bar M;\Hcal)$. We say that $(x_0,y_0,\xi_0,\eta_0)\in\b\Tstar\bar{M}$ does not belong to $\WF_b(u)$ (or that it is $b$-regular directed) if there exist a chart $\varkappa\colon U\to \bar{\R^n_+}$ near $(x_0,y_0)$, a cutoff $\chi\in\Cinf_c(\bar{M})$ with $\chi\equiv1$ near $(x_0,y_0)$, a conic neighbourhood $\Gamma\subset\R^{n}$ of $(\xi_0,\eta_0)$ and $L>0$ such that $\Mcal (\chi u)(\lambda)$ is holomorphic in a neighbourhood of the line $\{\Im\lambda=L \}$ and, for all $(\Re\lambda,\eta)\in \Gamma$ and all $N\in\N$,
	\begin{equation}\label{eq:rapid decay in a cone for HS-valued distribution}
		\|\widehat{\varkappa_\ast(\chi u)}(\xi+\im L,\eta)\|\lesssim \braket{(\xi,\eta)}^{-N}.
	\end{equation}
\end{definition}
We can then establish the following properties, analogous to Proposition 2.2 in \cite{strohmaier_reeh-schlieder_2002}.
\begin{prop}
	\label{prop:properties of b-WF for HS-distributions}
	Let $X$ be a smooth domain in $\bar{\R^n_+}$ and let $u\in\Ascr'( X;\Hcal)$. Then:
	\begin{enumerate}[(a)]
		\item If $u$ satisfies \eqref{eq:rapid decay in a cone for HS-valued distribution} for some $\chi\in\Cinf_c({X})$, then it also satisfies it for $f\chi$ for all $f\in\Cinf({X})$. In particular, $\WF_b(fu)\subset\WF_b(u)$.
		\item If $\supp u$ is compact, then $\hat{u}$ is smooth and polynomially bounded in the norm: there exist constants $C>0,m\in\Z$ such that
		\begin{equation}
			\label{eq:cptly suppd HS-valued has polynomially bdd FM transform}
			\|\hat{u}(\xi,\eta)\|\leq C\braket{(\xi,\eta)}^m.
		\end{equation}
		\item If $\WF_b(u)=\emptyset$, then $u\in\Cinf({X};\Hcal)$.
		\item Let $w\in \Ascr'( X\times X)$ be defined by $w(f,g)=\left({u(\bar f)}, u(g)\right)$. Then, denoting by $z=(x,y)$, $\zeta=(\xi,\eta)$ the canonical coordinates on $\b\Tstar X$,
		\begin{equation}\label{eq:wave-front set of bidistribution vs distribution}
			(z,\zeta)\in\WF_b(u)\iff ((z,-\zeta),(z,\zeta))\in \WF_b(w),
		\end{equation}
		and moreover if $\zeta_0\neq 0$ and $(z_0,\zeta_0)$ does not belong to $\WF_b(u)$, then for any $(z,\zeta)$ neither $((z_0,-\zeta_0),(z,\zeta))$ nor $((z,\zeta),(z_0,\zeta_0))$ belong to $\WF_b(w)$.
		\item $\WF_b(u)$ transforms as a subset of the $b$-cotangent bundle.
	\end{enumerate}
\end{prop}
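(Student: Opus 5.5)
The plan follows Proposition 2.2 of \cite{strohmaier_reeh-schlieder_2002} closely. Throughout, the $\Hcal$-valued objects are handled through the identification $\Ascr'(X;\Hcal)\cong\Ascr'(X)\hat\tens\Hcal$, and absolute values are replaced by $\|\cdot\|$.

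For (a), write $\widehat{f\chi u}$ as a Mellin--Fourier convolution of $\widehat{\chi u}$ with the transform of $\tilde f=f\tilde\chi$, where $\tilde\chi\equiv1$ on $\supp\chi$. On the Fourier side this is the usual convolution. On the Mellin side it is a convolution along the line $\Im\lambda=L$, because the Mellin transform of a smooth compactly supported function on $[0,\infty)$ is meromorphic with poles only at $-\im\N$ and is rapidly decaying on lines $\Im\lambda=l>0$. Split the integral into a shrunken cone $\Gamma'\Subset\Gamma$ and its complement. In the first region, rapid decay of $\widehat{\chi u}$ gives the estimate. In the second, the polynomial bound from (b) combines with rapid decay of $\hat{\tilde f}$ at distances comparable to $\braket{(\xi,\eta)}$ (Peetre's inequality). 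The proof of (b) will not depend on (a). In (b), smoothness and the polynomial bound follow from \eqref{eq:hilbert-space-valued local distributions} applied to $x^{-\im\lambda-1}e^{-\im y\eta}$ on the holomorphy line, with $\lambda$-derivatives handled the same way. For (c), take a finite cover of $\supp\chi$ by cones with rapid decay. Then $\widehat{\chi u}$ decays rapidly in norm in every direction, and the $\Hcal$-valued version of \cref{theo:paley-wiener for conormal} (apply it to $(h,u)$ uniformly in $\|h\|\le1$, or argue via the tensor product) gives $\chi u\in\Ascr(X;\Hcal)$. Since $u\in\Ascr'$, the $\Hcal$-valued analogue of \cref{lemma:conormal+dual-conormal=smooth} then gives smoothness.

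For (d), first check that $w\in\Ascr'(X\times X)$, using the tensor-product description and the fact that $\bar u$ is dual-conormal. Then compute
\[\widehat{(\chi\tens\chi)w}(\lambda,\zeta;\lambda',\zeta')=\bigl(\widehat{\chi u}(-\bar\lambda,-\zeta),\widehat{\chi u}(\lambda',\zeta')\bigr).\]
Conjugation of $x^{-\im\lambda-1}$ moves the line $\Im\lambda=L$ to itself while reversing $\Re\lambda$; the exact sign bookkeeping has to be checked against the paper's conventions. Holomorphy and meromorphic continuation in two variables are supplied by \cref{theo:mellin transform of A'}, so the bidistribution's $b$-wave-front set can be tested on product lines. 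Cauchy--Schwarz together with (b) then gives the ``moreover'' part: the product is bounded by a rapidly decaying factor times a polynomially bounded factor. For a conic neighbourhood of $((z_0,-\zeta_0),(z,\zeta))$ we need joint decay in $(\zeta,\zeta')$. This comes from the cone around $\zeta_0$, since on it $|\zeta'|$ is either comparable to $|\zeta|$ or dominated; that is exactly how the product-cone argument of \cite{strohmaier_reeh-schlieder_2002} works.

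For $\Leftarrow$ in (d), evaluate on the diagonal $\lambda'=-\bar\lambda$, $\zeta'=\zeta$ to get $\|\widehat{\chi u}(\lambda',\zeta)\|^2$. Decay of $\widehat{(\chi\tens\chi)w}$ near $((z,-\zeta_0),(z,\zeta_0))$ then gives decay of $\|\widehat{\chi u}\|$ in a cone around $\zeta_0$. Here we must check that the line $\Im\lambda'=L$ used for $w$ is compatible with $\Im(-\bar\lambda)$; \cref{lemma:independence of the line} together with the polynomial bounds lets us move lines freely. The direction $\Rightarrow$ is the contrapositive of the moreover part.

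For (e), work weakly: $(h,u)\in\Ascr'(X)$ for each $h$, and the scalar $b$-wave-front set is invariant. What remains is uniformity in $h$, which we obtain through the pseudo-differential characterisation \cref{theo:b-wave front set and pseudos} extended to $\Hcal$-valued distributions. The argument of that theorem carries over verbatim, because $\Psi_b$ acts on $\Ascr'\hat\tens\Hcal$ and \cref{cor:uniform microlocal cutoff} holds with norms. Invariance of $\Char_b$ under $b$-diffeomorphisms then finishes.

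I expect the main obstacle to be the Mellin line bookkeeping in (d): conjugation reflects the line, poles on $-\im\N$ constrain which lines are admissible, and \cref{theo:mellin transform of A'} is needed to legitimately shift to a common line.
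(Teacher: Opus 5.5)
Your proposal is correct. For (a)--(d) it follows the paper's proof. In (a) you use the convolution argument, which the paper only cites as the boundaryless one. In (b) you pair with $x^{-\im\lambda-1}e^{-\im y\eta}$, as the paper does. In (c) you use compactness of the fibre together with \cref{theo:paley-wiener for conormal} and \cref{lemma:conormal+dual-conormal=smooth}. In (d) you evaluate $\widehat{(f_1\tens f_1)w}$ at $(-\xi+\im L,-\eta,\xi+\im L,\eta)$ with a real cutoff, and use the Cauchy--Schwarz bound with one rapidly decaying and one polynomially bounded factor. There are two slips in (d). To obtain $\|\widehat{\chi u}\|^2$ you need $\lambda'=-\bar\lambda$ \emph{and} $\zeta'=-\zeta$, not $\zeta'=\zeta$. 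Your labels $\Rightarrow$ and $\Leftarrow$ are also interchanged relative to the statement, although both implications are in fact covered.

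The genuine divergence is (e). The paper obtains it in one line from (d). A chart change $\kappa$ on $X$ induces $\kappa\times\kappa$ on $X\times X$, the scalar bidistribution $w$ has an invariant $b$-wave-front set, and the fibrewise linear lift preserves $\{((z,-\zeta),(z,\zeta))\}$. You instead stay on $X$ and reduce to scalar invariance. You do this either through an $\Hcal$-valued version of \cref{theo:b-wave front set and pseudos}, or through uniformity in $h$. The latter is exactly the Banach--Steinhaus argument the paper gives later in \cref{theo:weak and strong b-wave-front set}; that theorem does not use (e), so there is no circularity. It yields $\WF_b(u)=\overline{\bigcup_v\WF_b((v,u(\cdot)))}$, which is manifestly invariant.

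The paper's route is shorter, but it tacitly needs invariance of $\WF_b$ on the cornered space $X\times X$. The paper never proves this; \cref{theo:b-wave front set and pseudos} is a codimension-one statement. Your route uses only the boundary theory, which fits the paper's stated aim of avoiding corners. The cost is an $\Hcal$-valued Paley--Wiener or Banach--Steinhaus step, of the same kind you already need in (c).
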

\begin{proof}
	\begin{enumerate}[(a)]
		\item This is proven in the same way as for the ordinary wave-front set of ordinary distributions.

		\item That $\hat{u}$ is smooth follows by integration by parts using the standard argument for compactly supported distributions. For the bound, notice that, by definition, $u$ can be paired with $x^{-\im\lambda-1}e^{-\im y\eta}$. By continuity of $u$ on $\dot\Ascr(\bar{X};\Hcal)$, we find that for all compact $K\supset\supp u$ and all $s\in\R$ there exist constants $C>0$, $k\in\N$ such that
		\begin{align*}
			\|u(x^{-\im\lambda-1}e^{-\im y\eta})\|&\leq C\sum_{j=0}^k\left\|(x\partial_x)^j\sum_{|\alpha|\leq k-j}\partial^\alpha_y (x^{-\im\lambda-1}e^{-\im y\eta})\right\|_{\sob^s}\\
			&\leq C\sum_{j=0}^k\left\|(-\im\lambda-1)^j\sum_{|\alpha|\leq k-j}(-\im\eta)^\alpha e^{-\im y\eta}x^{-\im\lambda-1}\right\|_{\sob^s}\\
			&\leq \tilde C(1+|(\lambda,\eta)|)^k,
		\end{align*}
		where $\tilde C$ is a sufficiently large positive constant.

		\item If $\WF_b(u)=\emptyset$, then for every point $(x,y)\in{X}$ and every covector $(\xi,\eta)\in\bTs_{(x,y)}X$ we find $f\in\Cinf_c({X})$, $f(x,y)=1$, such that $\|\widehat{fu}(\lambda,\eta)\|$ is rapidly decreasing in a cone around $(\xi,\eta)$ in $\braket{(\Re\lambda,\eta)}$. By compactness of the $b$-cosphere bundle we find a single cutoff $\chi$, depending on $(x,y)$ but not on $(\xi,\eta)$, such that $\widehat{\chi u}$ is rapidly decreasing in all directions. Since $\chi u\in\Escr'(X)$, the Mellin--Fourier transform is holomorphic in a half-plane and satisfies there the Paley--Wiener bound
		\begin{equation}\label{eq:paley-wiener bound for HS-valued}
			\|\widehat{\chi u}(\lambda, \eta)\|\leq Ce^{A|\Im\lambda|}(1+|\Re\lambda|)^s.
		\end{equation}
		This means that $\chi u$ is actually a conormal distribution to $X\cap\partial \R^n_+$, see \cref{theo:paley-wiener for conormal}. Since $u\in\Ascr'({X};\Hcal)$ by assumption, we have $u\in\Cinf({X};\Hcal)$ in view of \cref{lemma:conormal+dual-conormal=smooth}.

		\item It is no restriction to assume that $w$ has compact support, since we can always localise $u$. That $w\in\Ascr'(X\times X)$ follows then by Mellin transform: we have that $\Mcal w(\bar{\lambda_1},\lambda_2)=\left(u(x_1^{\im\bar{\lambda_1}-1}),u(x_2^{-\im\lambda_2-1})\right)$, with values in $\Dscr'(\R^{2(n-1)})$, is holomorphic in the product $\{\Im\lambda_1>0\}\times\{\Im\lambda_2>0\}$ (recall that $(\cdot,\cdot)$ is conjugate-linear in the first slot). After being regularised by $\prod_{j=0}^{\lfloor L\rfloor}(\lambda_1+\im j)(\lambda_2+\im j)$, $w$ is polynomially bounded in $|\Im \lambda_1|,|\Im \lambda_2|\leq L$, since this is true for $u$. By \cref{theo:mellin transform of A'} applied to distributions in $X\times X$ we find $w\in\Ascr'(X\times X)$.

		Now assume $\zeta =(\xi,\eta)$ and $(z,-\zeta,z,\zeta)\notin\WF_b(w)$, so that we find $L>0$ and a function $f\in\Cinf_c({X}\times X)$ with $\widehat{fw}(-\xi+\im L,-\eta,\xi+\im L,\eta)$ rapidly decreasing in a conic neighbourhood $\Gamma\subset\R^{2n}_0$ of $(z,-\zeta,z,\zeta)$. We may choose $f=f_1\tens f_1$ for some positive function $f_1\in\Cinf_c(X)$, from which it follows
		\begin{equation*}\label{eq:rapid decay of M-F transform of u from decay of w}
			\|\widehat{f_1 u}(\xi+\im L,\eta)\|^2=|\widehat{(f_1\!\tens\! f_1)w}(-\xi+\im L,-\eta,\xi+\im L,\eta)|\lesssim \braket{\zeta}^{-N}.
		\end{equation*}
		That is, $(z,\zeta)$ is not in $\WF_b(u)$. On the other hand, suppose $(z_0,\zeta_0)\notin\WF_b(u)$, so that for an $L>0$ and an $f\in\Cinf_c({X})$ we have rapid decay of $\widehat{fu}(\xi+\im L,\eta)$ for $(\xi,\eta)$ in a conic neighbourhood $\Gamma$ of $(z_0,\zeta_0)$. By Cauchy--Schwarz
		\begin{equation*}\label{eq:rapid decay of M-F transform of w given that of u}
			|\widehat{(g\tens f)w}(\xi_1+\im L,\eta_1,\xi_2+\im L,\eta_2)|\leq \|u(\bar{g}x_1^{+\im\xi_1-1+L}e^{\im y\eta_1})\|\cdot\|u(fx_2^{-\im\xi_2-1+L}e^{-\im y\eta_2})\|,
		\end{equation*}
		where the second factor is rapidly decreasing for $(\xi_2,\eta_2)\in \Gamma$. Since, for every $g\in\Cinf_c(\bar{X})$, the first factor is polynomially bounded in the region where it is holomorphic, it follows that $(z,\zeta,z_0,\zeta_0)\notin\WF_b(w)$ for any $(z,\zeta)\in\b\Tstar {X}$. The same proof, exchanging the roles of $f$ and $g$, gives that $(z_0,-\zeta_0,z,\zeta)\notin\WF_b(w)$.

		\item This follows from the previous point.
	\end{enumerate}
\end{proof}
\begin{remark}
	After we introduce the setup of quantum field theory on curved spacetimes in \cref{sect:qft}, part (d) of the previous result will relate the $b$-wave-front set of the two-point function $\omega_2$ ($w$ from above) to that of the one-particle distribution $\Phi$ ($u$ in the Proposition).
\end{remark}
For the purpose of computation of $\WF_b$ for an $\Hcal$-valued distribution, we can sometimes use
\begin{theorem}\label{theo:weak and strong b-wave-front set}
	For any $u\in\Ascr'(\bar{M};\Hcal)$, let $u_v\in\Ascr'(\bar{M})$ be defined by $\braket{u_v,\phi}\equiv (v,\braket{u,\phi})$ for all $\phi\in\Cinf_c(\bar{M})$. Then
	\begin{equation}
		\WF_b(u)=\overline{\bigcup_{v\in \Hcal}\WF_b(u_v)}.
	\end{equation}
\end{theorem}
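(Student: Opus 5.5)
The inclusion $\supseteq$ is the easy half. Since $\WF_b(u)$ is closed by definition, it is enough to show $\WF_b(u_v)\subset\WF_b(u)$ for each $v\in\Hcal$. Fix $(q_0,p^0)\notin\WF_b(u)$, with witnesses $\chi$, $\Gamma$ and $L$ as in \cref{def:b-wave-front set of hilbert-valued distribution}. Then $\widehat{\chi u_v}(\lambda,\eta)=(v,\widehat{\chi u}(\lambda,\eta))$ holds wherever both sides are defined. This identity has to be read with the paper's convention that the scalar product is conjugate-linear in the first slot; a conjugation of $v$ at most. Cauchy--Schwarz then gives
\[
|\widehat{\chi u_v}(\xi+\im L,\eta)|\le\|v\|\,\|\widehat{\chi u}(\xi+\im L,\eta)\|
\]
on $\Gamma$. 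Holomorphy of $\Mcal(\chi u_v)$ near the line $\Im\lambda=L$ is inherited from that of $\Mcal(\chi u)$. Hence $(q_0,p^0)\notin\WF_b(u_v)$.

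For $\subseteq$, let $(q_0,p^0)\notin\overline{\bigcup_v\WF_b(u_v)}$. Pick a closed conic neighbourhood $W=K\times\bar\Gamma$ of $(q_0,p^0)$, with $K$ compact, that meets no $\WF_b(u_v)$. I would then proceed in three steps.

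\begin{enumerate}
\item \emph{Uniform witnesses for every $v$.} By \cref{cor:uniform microlocal cutoff}, applied to the open conic neighbourhood $W^\circ$, there are $\chi$, $\psi$, $\Gamma'$, $L$ and $A$, chosen independently of $v$. For each $v$, the condition $\WF_b(u_v)\cap W=\emptyset$ together with \cref{theo:b-wave front set and pseudos} (compactness of $W$ modulo dilation, and a microlocal partition) gives $Au_v\in\Ascr(\bar M)$. The corollary then shows that $\widehat{\chi u_v}(\xi+\im L,\eta)$ decays rapidly on $\Gamma'$. The key point is that $\chi$ and $L$ are the same for all $v$. Note also that $Au_v=(Au)_v$.

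\item \emph{Weak rapid decay.} For each $N$, define the family of vectors
\[
T_N(\xi,\eta)=\braket{(\xi,\eta)}^N\,\widehat{\chi u}(\xi+\im L,\eta)\in\Hcal,\qquad (\xi,\eta)\in\Gamma'.
\]
Step 1 says exactly that for every $v$ the scalars $(v,T_N(\xi,\eta))$ are bounded uniformly in $(\xi,\eta)$.

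\item \emph{From weak to strong.} By the uniform boundedness principle, $\sup_{\Gamma'}\|T_N\|<\infty$ for every $N$. This is precisely \eqref{eq:rapid decay in a cone for HS-valued distribution}, with holomorphy near $\Im\lambda=L$ holding because the same $L$ works for $u$. Hence $(q_0,p^0)\notin\WF_b(u)$.
\end{enumerate}

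The main obstacle is the uniformity in Step 1. \cref{def:b-wave-front set} lets the cutoff, the cone and the line depend on $v$, and Banach--Steinhaus needs a single family of vectors. I expect \cref{cor:uniform microlocal cutoff} is designed for exactly this, with \cref{lemma:independence of the line} absorbing the choice of $L$ since $\Mcal(\chi u)$ is holomorphic in one half-plane independent of $v$. One point needs care: showing that $Au_v\in\Ascr$ follows from the pointwise absence of $\WF_b(u_v)$ over the whole microsupport of $A$. This uses $\mu\text{-}\supp_b A\subset W$ together with \cref{lemma:properties of b-wave-front set}(b),(c) and a finite microlocal partition of unity.
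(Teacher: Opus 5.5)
Your proof is correct and follows essentially the same route as the paper. For $\overline{\bigcup_v\WF_b(u_v)}\subseteq\WF_b(u)$ both arguments use Cauchy--Schwarz. For the reverse inclusion both use the $v$-independent cutoff, cone and line supplied by \cref{cor:uniform microlocal cutoff}, followed by Banach--Steinhaus applied to $\braket{(\xi,\eta)}^N\widehat{\chi u}(\xi+\im L,\eta)$.

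You also check explicitly that $\mu\text{-}\supp_b(A)\subset W$ together with $W\cap\WF_b(u_v)=\emptyset$ forces $Au_v\in\Ascr(\bar M)$. That is the hypothesis needed to invoke the corollary, and the paper leaves this step implicit.
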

\begin{proof}
	It suffices to prove the statement locally. Also recall that, according to \cref{lemma:properties of b-wave-front set}, if $u\in\Ascr'(\bar{M})$ we have $\WF_b(u)=\emptyset$ if, and only if, $u$ is $\Cinf$ in the norm topology, and that $\Nscr(\bar{M})\subset\Ascr'(\bar{M})$. By nuclearity, the same is true for $\Hcal$-valued objects.

	Thus, if $(q_0,p^0)\notin\WF_b(u)$, meaning that we find an open cone $\Gamma$ and a cutoff $\chi$ with $\chi(q_0)\neq 0$ such that $\hat{\chi u}$ is rapidly decreasing in $\Gamma$, then, by Cauchy--Schwarz, each $\hat{\chi u_v}$ decays rapidly in the same cone $\Gamma$. That is, $(q_0,p^0)\notin\WF_b(u_v)$. Since both sets are closed, we find $(q_0,p^0)\notin \overline{\bigcup_{v\in \Hcal}\WF_b(u_v)}$.

	Conversely, let $(q_0,p^0)\notin \bar{\bigcup_{v\in\Hcal}\WF_b(u_v)}$. Then, there exists an open conic neighbourhood $W\subset\bTs{\bar{M}}\setminus O$ with
	\begin{equation}\label{eq:common regular region}
		W\cap\WF_b(u_v)=\emptyset\quad\text{for every }v\in\Hcal.
	\end{equation}
	Let $\chi$, $\psi$, $\Gamma$ and $L$ be as given by \cref{cor:uniform microlocal cutoff} and thus independent of $v$. Then we have
	\begin{equation}\label{eq:uniform rapid decay}
		\sup_{(\xi,\eta)\in\Gamma}\braket{(\xi,\eta)}^{N}
		\bigl|\widehat{\chi u_v}(\xi+\im L,\eta)\bigr|<\infty
		\quad \forall N\in\N,\, \forall v\in\Hcal.
	\end{equation}

	Fix then $N\in\N$. The Mellin--Fourier transform $\widehat{\chi u}(\xi+\im L,\eta)$ is an element of $\Hcal$ by definition, and for $(\xi,\eta)\in\Gamma$ we consider the conjugate-linear functionals
	\begin{equation*}
		T_{\xi,\eta}\colon\Hcal\to\C,\qquad
		T_{\xi,\eta}(v)\equiv\braket{(\xi,\eta)}^{N}\,\widehat{\chi u_v}(\xi+\im L,\eta)
		=\left(v,\braket{(\xi,\eta)}^{N}\widehat{\chi u}(\xi+\im L,\eta)\right).
	\end{equation*}
	Each $T_{\xi,\eta}$ is bounded, with $\|T_{\xi,\eta}\|=\braket{(\xi,\eta)}^{N} \|\widehat{\chi u}(\xi+\im L,\eta)\|$ as can be easily seen with the Cauchy--Schwarz inequality. That is, the family $\{T_{\xi,\eta}\}_{(\xi,\eta)\in\Gamma}$ is pointwise bounded. By Banach--Steinhaus, it is uniformly bounded as well, that is, we find $C_N>0$ with
	\begin{equation*}
		\bigl\|\widehat{\chi u}(\xi+\im L,\eta)\bigr\|\leq C_N\braket{(\xi,\eta)}^{-N},
		\qquad(\xi,\eta)\in\Gamma.
	\end{equation*}
	As $N\in\N$ was arbitrary, $(q_0,p^0)\notin\WF_b(u)$, which is the required inclusion.
\end{proof}
\begin{corollary}\label{cor:HS-valued N' is preserved}
	If $u\in\Nscr(\bar{M};\Hcal)$ and either $A\in \Psi_b(\bar{M})$ or $A\in \Diff(\bar{M})$, then $Au\in\Nscr(\bar{M};\Hcal)$. Similarly, if $\partial M$ is non-characteristic for $P\in\Diff(\bar{M})$ and $u\in \Dscr'(\bar{M};\Hcal)$ satisfies $Pu\in\Nscr(\bar{M};\Hcal)$, then $u\in\Nscr(\bar{M};\Hcal)$.
\end{corollary}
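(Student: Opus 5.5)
The plan is to reduce both statements to their scalar counterparts in \cref{theo:smoothness and solvability in N'} by testing against vectors $v\in\Hcal$, and then to pass back to the $\Hcal$-valued object using \cref{theo:weak and strong b-wave-front set}. For $u\in\Dscr'(\bar M;\Hcal)$ and $v\in\Hcal$, write $u_v=(v,u(\cdot))$. Two elementary identities drive everything. First, from the definition \eqref{eq:pseudos acting on HS-distributions}, for $A\in\Psi_b(\bar M)$ or $A\in\Diff(\bar M)$ we have $(Au)_v=A(u_v)$, since both sides pair with $\phi$ to give $(v,\braket{u,A^\ast\phi})$. Second, by definition, $u\in\Nscr(\bar M;\Hcal)$ means $u\in\Ascr'(\bar M;\Hcal)$ together with $\WF_b(u)\subset \Tstar M\cup\Tstar\partial M$.

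For the first claim, take $u\in\Nscr(\bar M;\Hcal)$. Continuity of $A$ on $\Ascr'(\bar M;\Hcal)$, noted before \cref{def:b-wave-front set of hilbert-valued distribution}, gives $Au\in\Ascr'(\bar M;\Hcal)$. One could bound $\WF_b(Au)\subset\WF_b(u)$ directly, but it is cleaner to argue weakly. By Cauchy--Schwarz each $u_v$ lies in $\Nscr(\bar M)$: the $\Ascr'$ property is preserved under pairing with $v$, and $\WF_b(u_v)\subset\WF_b(u)$ (this is the easy half of the proof of \cref{theo:weak and strong b-wave-front set}). The scalar result \cref{theo:smoothness and solvability in N'}(2) then gives $A(u_v)=(Au)_v\in\Nscr(\bar M)$. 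Hence \cref{theo:weak and strong b-wave-front set} yields
\[
\WF_b(Au)=\overline{\textstyle\bigcup_v\WF_b((Au)_v)}\subset \Tstar M\cup\Tstar\partial M,
\]
because this set is closed in $\bTs\bar M\setminus O$. Closedness holds since $\Tstar\partial M$ is the closed image of $\varrho^t$ over the boundary and $\Tstar M$ is the part over the interior.

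For the second claim, take $Pu\in\Nscr(\bar M;\Hcal)$. As above, $(Pu)_v=P(u_v)\in\Nscr(\bar M)$ for every $v$, so \cref{theo:smoothness and solvability in N'}(3) gives $u_v\in\Nscr(\bar M)$ for every $v$. If we already knew $u\in\Ascr'(\bar M;\Hcal)$, the wave-front inclusion would follow exactly as in the first claim from \cref{theo:weak and strong b-wave-front set}.

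The main obstacle is precisely the step $u\in\Ascr'(\bar M;\Hcal)$, i.e.\ upgrading \emph{weak} dual-conormality to the $\Hcal$-valued notion. My approach is to view $u$ as a linear map $\Cinf_c(\bar M)\supset\dot\Ascr_\comps(\bar M)\to\Hcal$. Here I use that $u_v\in\Nscr\subset\Ascr'$, each $u_v$ having its canonical extension by \cref{lemma:extensible=supported for dual-conormal}. The resulting map $\dot\Ascr_{\comps}(\bar M)\to\Hcal$ is weakly defined: for each $\phi$, the functional $v\mapsto\overline{u_v(\phi)}$ is bounded, which follows from the uniform boundedness principle applied to an approximating sequence of $\phi$ by $\Cdotinf_c$ functions. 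It is also weakly continuous in $\phi$. Since $\dot\Ascr_{\comps}$ is an LF-space of Fréchet spaces, a closed graph (or Banach--Steinhaus) argument on each Fréchet step then shows the map is continuous into $\Hcal$ with the norm topology. Alternatively, I would invoke nuclearity and the identification $\Ascr'(\bar M;\Hcal)=\Ascr'(\bar M)\hat\tens\Hcal$ of \cite{treves_topological_2006}, under which weakly and strongly $\Ascr'$ objects coincide.

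Once $u\in\Ascr'(\bar M;\Hcal)$ is established, \cref{theo:weak and strong b-wave-front set} applies and gives $\WF_b(u)\subset\Tstar M\cup\Tstar\partial M$, so $u\in\Nscr(\bar M;\Hcal)$.
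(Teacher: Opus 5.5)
Your route is the paper's: test against $v\in\Hcal$, apply \cref{theo:smoothness and solvability in N'} to $u_v$, and return to $u$ via \cref{theo:weak and strong b-wave-front set}. In the second claim you also argue that $u\in\Ascr'(\bar M;\Hcal)$. That theorem requires this, the paper does not spell it out, and your argument is fine (note that the inclusion is $\Cinf_c(\bar M)\subset\dot\Ascr_{\comps}(\bar M)$, not the reverse).

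The gap is the return step. You assert that $\Tstar M\cup\Tstar\partial M$ is closed in $\bTs{\bar M}\setminus O$, and it is not. Over the interior it is the whole $b$-cotangent bundle. So in adapted coordinates every $(0,y,\xi,\eta)$ with $\xi\neq0$ is the limit of $(x_k,y,\xi,\eta)$ as $x_k\downarrow0$, and the set is in fact dense. Normal regularity of each $u_v$ keeps the closed set $\WF_b(u_v)$ off $\{x=0,\ \xi\neq0\}$ only on a neighbourhood that may depend on $v$. Nothing in your argument stops the closure $\overline{\bigcup_v\WF_b(u_v)}$ from reaching such points, so as written neither claim is established. (The inference ``all $u_v$ normally regular $\Rightarrow$ $u$ normally regular'' can be rescued in general by a Baire-category argument showing that $\bigcup_v\WF_b(u_v)$ is already closed, but that is a genuinely additional step.)

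Here a simpler bound, uniform in $v$, is available.

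For the first claim, it is the content of the opening sentence of the paper's proof. By \cref{lemma:properties of b-wave-front set}~(b) and Cauchy--Schwarz, $\WF_b((Au)_v)=\WF_b(Au_v)\subset\WF_b(u_v)\subset\WF_b(u)$. Hence $\WF_b(Au)\subset\WF_b(u)$, which is a closed set lying in $\Tstar M\cup\Tstar\partial M$ by hypothesis.

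For the second claim, use non-characteristicity in a $v$-independent form. Let $a$ be the coefficient of $D_x^m$ in $P$. Then $x^mP\in\Diff_b(\bar M)$ has $b$-principal symbol $a(0,y)\xi^m$ over $x=0$, with $a(0,y)\neq0$. So $x^mP$ is $b$-elliptic on a fixed open conic neighbourhood $U$ of $\{x=0,\ \xi\neq0\}$. By \cref{lemma:properties of b-wave-front set}~(b),~(c), $\WF_b(u_v)\cap U\subset\WF_b((Pu)_v)\subset\WF_b(Pu)$ for every $v$. Since $U$ is open, $\WF_b(u)\cap U\subset\overline{\bigcup_v(\WF_b(u_v)\cap U)}\subset\WF_b(Pu)$. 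This set misses $\{x=0,\ \xi\neq0\}$ because $Pu$ is normally regular. Equivalently, you can run the elliptic parametrix argument directly on the $\Hcal$-valued $u$.

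The paper's ``the proof is the same'' for the second claim tacitly relies on such uniformity. With these two replacements your argument is complete.
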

\begin{proof}
	Neither $\Psi_b(\bar{M})$ nor $\Diff(\bar{M})$ enlarge $\WF_b(u)$ for any scalar extensible distribution $u$ on $\bar{M}$. For $u\in\Nscr(\bar{M};\Hcal)$ and any $v\in\Hcal$ we have the scalar distribution $u_v=(v,u(\cdot))$, which is in $\Nscr(\bar{M})$ by definition and has then $\bWF(u_v)\subset T^\ast\partial M$. Then $Au_v=(Au)_v\in\Nscr(\bar{M})$ by \cref{theo:smoothness and solvability in N'} (2), so that by \cref{theo:weak and strong b-wave-front set} we find $\bWF(Au)\subset T^\ast\partial M$, i.e. $Au\in\Nscr(\bar{M};\Hcal)$. For the second statement, the proof is the same using \cref{theo:smoothness and solvability in N'} (3).
\end{proof}

\section{Timelike boundaries and normally hyperbolic operators}
\label{sect:GHSTB and IBVP}

\subsection*{Spacetimes with timelike boundaries} We summarise here the relevant results of \cite{ake_hau_structure_boundary_2020} on spacetimes with timelike boundary and on the conditions under which such a Lorentzian manifold can be called globally hyperbolic. Subsequently we discuss boundary value problems for normally hyperbolic operators on them.

\begin{definition}\label{def:GHSTB}
	\begin{enumerate}
		\item A \textit{spacetime with timelike boundary} is an oriented and time-oriented Lorentzian manifold $\bar{M}$, having a boundary $\partial M$, embedded into $\bar{M}$ as a timelike hypersurface (that is, the normal vector field to $\partial M$ is everywhere spacelike). In particular, the boundary is itself an oriented time-oriented Lorentzian manifold.
		\item If $\bar M$ is a spacetime with timelike boundary and $p, q\in \bar{M}$, one says that $q$ belongs to the \textit{chronological} (resp.\ \textit{causal}) future of $p$, $I^+(p)$ (resp.\ $J^+(p)$), if $p$ can be joined to $q$ by a future-directed timelike (resp.\ causal) $\sob^1$-curve (see \cref{rem:causality and H1-curves}). One denotes by $I^+(K)$ (resp.\ $J^+(K)$) the union of the chronological (resp.\ causal) futures of all points in the set $K\subset \bar{M}$. The same definitions with roles reversed define the chronological and causal past of $q$, namely the sets $I^-(q)$ and $J^-(q)$.
		\item One calls a spacetime with timelike boundary \textit{causal} if there are no closed future-directed causal curves and \textit{globally hyperbolic} if it is causal and for any $p,q\in \bar M$ the sets $J^+(p)\cap J^-(q)$ are compact.
	\end{enumerate}
\end{definition}

\begin{remark}\label{rem:causality and H1-curves}
	\begin{enumerate}
		\item The standard definitions of timelike/lightlike/spacelike/causal vectors and submanifolds carry over to this setting without change.
		\item The definition of $J^{\pm} (p)$ uses the notion of $\sob^1$-curves, namely curves whose local parametrisations are in the Sobolev space $\sob^1_{\locs}(\R)$. On the one hand, this ensures that some desirable generalisations of properties of the boundaryless case, like the existence of Cauchy surfaces, remain true. On the other hand, a maximally extended causal curve on $\bar{M}$ will not be smooth in general, since it can hit the timelike boundary and be reflected. To define $I^+(p)$, we can as well use smooth curves.
	\end{enumerate}
\end{remark}

\begin{prop}[cf. \cite{ake_hau_structure_boundary_2020}]\label{prop:properties of ghstb}
	Let $\bar M$ be a globally hyperbolic spacetime with timelike boundary. Then
	\begin{enumerate}
		\item $\bar{M}$ admits a \textit{Cauchy temporal function}, namely a function $t\in\Cinf(\bar{M})$ with future-directed timelike gradient whose level sets are spacelike hypersurfaces which are intersected exactly once by any inextensible causal $\sob^1$-curve (i.e. Cauchy surfaces). In addition, $t$ can be chosen such that $\nabla t$ is tangent to $\partial M$.
		\item $\partial M$ is a globally hyperbolic spacetime (without boundary), and we can choose the restriction of $t$ as a Cauchy temporal function on it.
		\item $M$ is a smooth causal spacetime, and we can choose the restriction of $t$ as a temporal function on it.
		\item $\bar M$ is isometric to $\R\times \bar\Sigma$ with a metric $g=\beta\de t^2-g_t$, where $\bar \Sigma$ is a Riemannian manifold with boundary, $\beta\colon\R\times\bar \Sigma\rightarrow\R$ is a positive function and $g_t$ is a smooth family of Riemannian metrics on $\bar \Sigma$. Any slice $\bar \Sigma_a=t^{-1}(a)$ is a Cauchy surface. Correspondingly, $\partial M\cong \R\times\partial \Sigma$ and $M\cong\R\times\Sigma$ with the restricted metrics.
		\item The previous property and the existence of a Cauchy surface (with boundary) are both equivalent to global hyperbolicity.
	\end{enumerate}
\end{prop}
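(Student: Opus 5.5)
This proposition is the boundary analogue of the Bernal--Sánchez splitting theorem, and the paper cites \cite{ake_hau_structure_boundary_2020} for it. The plan is to reproduce the structure of that argument, indicating only where the boundary changes the proof. We first reduce everything to the existence of a Cauchy temporal function $t$ whose gradient is tangent to $\partial M$; items (2)--(5) then follow fairly directly.

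\emph{Step 1: a Cauchy surface.} From compactness of the causal diamonds $J^+(p)\cap J^-(q)$ and causality, we build a Geroch-type time function. Fix a finite measure $\mu$ on $\bar M$ with smooth positive density. Set $t_\pm(p)=\mp\log\mu(J^\pm(p))$ and $\tau=t_-+t_+$.

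To show that $\tau$ is continuous and strictly increasing along causal curves, we need $J^\pm(p)$ to be closed and to vary continuously (outer and inner continuity). The only new ingredient compared with the boundaryless case is the limit curve theorem for $\sob^1$ causal curves, which may touch or slide along $\partial M$. For this we use the $\sob^1$ framework of \cref{rem:causality and H1-curves}: causal curves are locally uniformly Lipschitz with respect to an auxiliary Riemannian metric, so Arzelà--Ascoli applies. A uniform limit of such curves stays in the closed set $\bar M$ and remains causal, because causality is a closed condition on a.e.\ velocities.

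With this in hand, global hyperbolicity gives that every level set of $\tau$ is met exactly once by each inextensible causal curve, since $\tau\to\pm\infty$ along such curves.

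\emph{Step 2: smoothing.} We smooth $\tau$ into a Cauchy temporal $t$ following Bernal--Sánchez: take local timelike-gradient "bump" functions built from the causal structure, sum them with a partition of unity, and make the result steep. The boundary condition $\nabla t\in T\partial M$ is arranged in a collar $\partial M\times[0,\eps)_x$ with $x$ a boundary-defining function. First construct a Cauchy temporal function $t_\partial$ on the globally hyperbolic boundaryless spacetime $\partial M$. Extend it as $x$-independent in normal exponential coordinates, where $g=-\de x^2+h_x$. Then glue this extension to an interior temporal function using a cutoff in $x$. Because $\partial_x$ is $g$-orthogonal to $\partial M$, the $x$-independent extension has gradient tangent to $\partial M$ at $x=0$. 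Timelike convex combinations of future timelike gradients remain future timelike.

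I expect the main obstacle to be keeping the Cauchy property during the gluing. I would handle it by a steepness condition, $|\de t|\ge$ an auxiliary Riemannian norm, which forces $t\to\pm\infty$ along inextensible causal curves.

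\emph{Step 3: consequences.} For (2): $\partial M$ is timelike, so it is a Lorentzian manifold; $t|_{\partial M}$ has timelike gradient because the gradient is tangent and timelike; inextensible causal curves of $\partial M$ are inextensible causal curves of $\bar M$, hence hit each level exactly once. For (3): restrict $t$ to $M$. Causality of $M$ is inherited from $\bar M$.

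For (4): flow along $X=\nabla t/g(\nabla t,\nabla t)$. Since $X$ is tangent to $\partial M$, its flow preserves both $\partial M$ and $\bar M$, and $X t=1$. The flow is complete by the Cauchy property. Therefore $(s,p)\mapsto\Phi_s(p)$ gives a diffeomorphism $\R\times\bar\Sigma_0\to\bar M$ in which $g=\beta\de t^2-g_t$, with $\beta=1/g(\nabla t,\nabla t)$. The cross terms vanish because $X\perp \ker\de t$.

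For (5): the splitting gives a Cauchy surface directly. Conversely, a Cauchy surface yields compact diamonds via the limit curve argument of Step 1, and causality holds because a closed causal curve could be extended periodically to an inextensible one meeting $\bar\Sigma$ infinitely often.
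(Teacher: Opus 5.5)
The paper gives no proof of this proposition; it quotes it from the cited work, so your outline has to stand on its own. Steps 1 and 3 are acceptable as sketches. Step 2, however, has a genuine gap, and it sits exactly where the boundary-specific claim lives: that $\nabla t$ is tangent to $\partial M$.

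\textbf{The gluing in Step 2 does not produce a temporal function.} In Gaussian normal coordinates $g=-\de x^2+h_x$, your glued function is $t=\phi(x)T+(1-\phi(x))\tau$ with $T(x,y)=t_\partial(y)$. Its differential is
\[
\de t=\phi\,\de T+(1-\phi)\,\de\tau+(T-\tau)\,\phi'(x)\,\de x .
\]
Convexity of the future cone controls only the first two terms. The third is a multiple of the spacelike covector $\de x$. Since $t_\partial$ is an arbitrary Cauchy temporal function on $\partial M$, unrelated to $\tau$, the difference $T-\tau$ is of order one in the transition layer while $|\phi'|\sim 1/\eps$. Hence $g^{-1}(\de t,\de t)=h_x^{-1}(\alpha,\alpha)-\alpha_x^2$ becomes negative there, where $\alpha$ is the $\de y$-part of $\de t$ and $\alpha_x$ its $\de x$-coefficient. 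So $t$ need not be temporal at all.

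The problem persists even with matching boundary values $T=\tau|_{\partial M}$. Then $\alpha_x=\partial_x\tau(0,y)\,\frac{d}{dx}\bigl[(1-\phi)x\bigr]+O(\eps)$. The factor $\frac{d}{dx}[(1-\phi)x]$ vanishes near $x=0$ and averages to $1$ on $[0,\eps]$, so it exceeds $1$ somewhere. You therefore survive only through the strict timelike margin of $\de\tau$ and a carefully tuned profile.

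The obstacle you flag, the Cauchy property, is not the real one. Your steepness condition is also mis-stated: $|\de t|\ge$ a Riemannian norm does not force $t\to\pm\infty$, since a nearly null $\de t$ can be large yet small on causal vectors. What works is $\de t(v)\ge|v|_h$ for future causal $v$ with $h$ complete, or simply keeping $|t-\tau|$ bounded.

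\textbf{Circularity, and a fix for both problems.} Step 2 also uses global hyperbolicity of $\partial M$, which Step 3 derives only from the finished $t$. Both defects disappear if you take $t_\partial=\tau|_{\partial M}$.
\begin{enumerate}
\item Write $\nabla\tau=v+c\nu$ with $g(\nu,\nu)=-1$. The $\partial M$-gradient $v$ satisfies $g(v,v)=g(\nabla\tau,\nabla\tau)+c^2>0$.
\item Inextensible causal curves of $\partial M$ are inextensible causal curves of $\bar M$, so $\tau|_{\partial M}$ is Cauchy temporal on $\partial M$. This proves (2) directly.
\item Replace the cutoff by a normal reparametrisation $t(x,y)=\tau(\sigma(x),y)$, with $0\le\sigma'\le1$, $\sigma'=0$ near $x=0$ and $\sigma(x)=x$ for $x\ge\eps$. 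This is consistent because $\sigma(0)>0$.
\item Near $\partial M$ one has $\partial_x t=0$, so $\nabla t$ is tangent there, and
\[
g^{-1}_{(x,y)}(\de t,\de t)\ge g^{-1}_{(\sigma(x),y)}(\de\tau,\de\tau)-\bigl|(h_x^{-1}-h_{\sigma(x)}^{-1})(\de_y\tau,\de_y\tau)\bigr|>0
\]
for $\eps$ small. In the noncompact case take $\eps$ to be a slowly varying positive function on $\partial M$.
\item Finally $|t-\tau|\le\eps\sup|\partial_x\tau|$, which preserves the Cauchy property.
\end{enumerate}
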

\begin{definition}
	\label{def:past and future compact}
	Let $\bar{M}$ be a globally hyperbolic spacetime with timelike boundary. For $K\subset\bar{M}$ we write $J(K):=J^+(K)\cup J^-(K)$.
	A subset $A\subset\bar{M}$ is called
	\begin{enumerate}
		\item \textit{spatially compact} if there exists a compact set $K\subset\bar{M}$
		such that $A\subset J(K)$;
		\item \textit{past-compact} resp.\ \textit{future-compact} if $A\cap J^-(K)$ resp.\
		$A\cap J^+(K)$ is compact for every compact set $K\subset\bar{M}$;
		\item \textit{strictly past-compact} resp.\ \textit{strictly future-compact} if there
		exists a compact set $K\subset\bar{M}$ such that $A\subset J^+(K)$ resp.\
		$A\subset J^-(K)$.
	\end{enumerate}
	A distribution $u\in\dot{\Dscr}'(\bar{M})$ is called spatially compact, (strictly)
	past-compact, or (strictly) future-compact if its support $\supp u$ has the
	corresponding property. If $\Fscr$ is a space of distributions on $\bar{M}$, we
	denote by $\Fscr_{\scs}$, $\Fscr_{\pc/\fc}$ and
	$\Fscr_{\spc/\sfc}$ the subspaces of those elements of $\Fscr$ which are
	spatially compact, past-/future-compact, and strictly past-/future-compact.
\end{definition}

\subsection*{The Klein--Gordon equation}
Let $P$ be a scalar, formally self-adjoint, normally hyperbolic operator on $\bar{M}$, i.e. $P=\Box_g+V$ for $V\in\Cinf(\bar{M},\R)$, with principal symbol $p(z,\zeta)=g_z(\zeta,\zeta)$ for $z\in\bar{M}$ and $\zeta\in T^\ast_z\bar{M}$. With the regularities to be specified shortly and denoting by $\n$ the unit normal vector field to $\bar\Sigma$, we analyse the Cauchy--Dirichlet problem
\begin{equation}\label{eq:IBVP}
	\left\{\begin{aligned}
		Pu&=f\quad \text{on }M,\\
		u&=h\quad \text{on }\partial M,\\
		u&=u_1,\quad \partial_\n u=u_2\quad\text{on }\bar\Sigma,
	\end{aligned}\right.
\end{equation}
and its retarded variant (i.e. $f,v$ are past-compact and we look for a past-compact solution $u$)
\begin{equation}\label{eq:retarded BVP}
	\left\{\begin{aligned}
		Pu&=f\quad \text{on }M,\\
		u&=v\quad \text{on }\partial M.
	\end{aligned}\right.
\end{equation}
We will thus, in what follows, analyse the boundary-value problem $(P,\gamma_0)$ as defined by $\Cinf(\bar{M})\to\Cinf(\bar{M})\oplus\Cinf(\partial M)$, $u\mapsto (Pu,u|_{\partial M})$ and its extensions.

The retarded problem, in the case of spatially compact $\bar{M}$, is the subject of Theorem 24.1.1 in \cite{hormander1994analysispseudodifferential}, which we quote:
\begin{theorem}
	\label{theo:existence and uniqueness}
	Assume $\bar{M}$ is spatially compact. If $f\in \sob^s_{\locs,\pc}(\bar{M})$ and $v\in\sob^{s+1}_{\locs,\pc}(\partial M)$, there exists a unique $u\in\sob^{s+1}_{\locs,\pc}(\bar{M})$ that solves \eqref{eq:retarded BVP}.
\end{theorem}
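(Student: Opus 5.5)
This statement is Theorem 24.1.1 of Hörmander, quoted verbatim, so the plan is to recall the standard energy-method proof in the present geometry rather than develop new machinery.

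\textbf{Reduction to zero boundary datum.} Since $\partial M$ is timelike and non-characteristic, I would extend the boundary datum. Given $v\in\sob^{s+1}_{\locs,\pc}(\partial M)$, I take a right inverse of the trace $\gamma_0$. Concretely, in a collar $[0,\eps)_x\times\partial M$, set $w=\chi(x)\,E v$, where $E$ is a standard extension operator with $\sob^{s+1}(\partial M)\to\sob^{s+3/2}(\bar M)$. This loses nothing relative to the target $\sob^{s+1}$, and it preserves past-compactness because $E$ acts tangentially and the collar respects the splitting $\bar M\cong\R\times\bar\Sigma$ of \cref{prop:properties of ghstb}. Replacing $u$ by $u-w$ and $f$ by $f-Pw\in\sob^{s}_{\locs,\pc}$ (up to the half-derivative bookkeeping) reduces the problem to $v=0$.

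\textbf{The energy estimate.} With $v=0$, I would use the Cauchy temporal function $t$ with $\nabla t$ tangent to $\partial M$ and the metric form $\beta\de t^2-g_t$. The first step is the basic estimate. Multiply $Pu$ by $\partial_t\bar u$ (more precisely by the flow of $\beta^{-1/2}\nabla t$) and integrate over $\{t\le T\}$. The boundary term on $\partial M$ vanishes because $u|_{\partial M}=0$ forces $\partial_t u|_{\partial M}=0$, and $\nabla t$ is tangent to $\partial M$. Grönwall then gives
\[
\sup_{t\le T}\|u(t)\|_{\sob^1(\Sigma_t)}\lesssim \int_{-\infty}^T\|Pu(t)\|_{L^2(\Sigma_t)}\de t
\]
for past-compact $u$, with spatial compactness keeping all norms finite. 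Next, I would obtain higher $s\ge0$ by commuting with tangential fields $\partial_t$ and $\partial_y$, which preserve the Dirichlet condition. The normal derivatives are recovered from the equation, since $\partial_x^2$ appears with nonvanishing coefficient by the non-characteristic condition.

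\textbf{Existence, uniqueness, and negative $s$.} Existence follows by duality. The same estimate holds for the adjoint problem, which is the advanced problem for $P^\ast=P$. Applying Hahn--Banach to the functional $P^\ast\phi\mapsto\langle f,\phi\rangle$ on future-compact $\phi$ with Dirichlet data yields a weak solution. Regularity is then upgraded via the tangential estimates, and uniqueness follows directly from the estimate. For negative $s$, I would dualize the positive-$s$ result for the advanced problem.

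\textbf{Main obstacle.} The hardest step is regularity up to the boundary at the exact index $s+1$, especially for non-integer and negative $s$. At those indices the compatibility conditions between $f$, $v$ and the vanishing past data, together with the tangential/normal splitting, must be handled by interpolation. I would rely on Hörmander's tangential-Sobolev argument (the spaces $\sob_{(m,s)}$) for this step rather than reproving it.
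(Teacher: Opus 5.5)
The paper gives no proof of its own here; it quotes H\"ormander's Theorem 24.1.1. So the question is whether your sketch of that proof holds up. It fails at the reduction to $v=0$, and that is exactly where the sharp content of the statement lies.

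With $w=\chi(x)Ev$ and $E\colon\sob^{s+1}(\partial M)\to\sob^{s+3/2}(\bar M)$, you only get $Pw\in\sob^{s-1/2}$, not $\sob^{s}$. In the flat model take $\hat w(x,\zeta')=\chi(x\braket{\zeta'})\hat v(\zeta')$, where $\zeta'$ are the tangential frequencies. One computes $\|\partial_x^2w\|_{L^2}\simeq\|v\|_{\sob^{3/2}}$, and $Pw$ is no better. The zero-datum problem with source $f-Pw\in\sob^{s-1/2}$ then yields only $u\in\sob^{s+1/2}$, half a derivative short of the claim.

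This is not bookkeeping. An $\sob^{s+1}$ function only has an $\sob^{s+1/2}$ trace, so the assertion that $\sob^{s+1}$ Dirichlet data produce an $\sob^{s+1}$ solution is a genuinely hyperbolic gain (``hidden regularity''). An extension $w\in\sob^{s+1}$ with $Pw\in\sob^s$ does exist, namely the solution itself, but producing one is as hard as the theorem.

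For the same reason, your energy estimate with $L=\nabla t$ tangent to $\partial M$ closes only because $v=0$. For $v\neq0$ the boundary flux contains a term proportional to $\Re\bigl(\partial_\nu u\,\overline{Lv}\bigr)$. It has no sign, and you have no a priori control of $\partial_\nu u$ on $\partial M$.

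The missing idea is to keep $v$ in the energy identity and use a multiplier $L$ that is future timelike and points strictly into $M$ along $\partial M$, for instance $L=\nabla t+\eps N$ with $N$ the inward unit normal. Write $\de u=\sigma\nu+\theta$ on $\partial M$, with $\theta$ determined by $\de v$. The computation in \cref{lemma:stress-energy tensor for conormal direction} shows that the $|\sigma|^2$-part of the flux is $|\sigma|^2p(\nu,\nu)\braket{\nu,L}$, and here $p(\nu,\nu)\braket{\nu,L}>0$. Cauchy--Schwarz absorbs the cross terms into $C|\theta|^2$, and Gronwall gives
\[
\sup_{t\le T}\|u(t)\|_{\sob^1(\Sigma_t)}+\|\partial_\nu u\|_{L^2(\partial M\cap\{t\le T\})}\lesssim\|f\|_{L^2(\{t\le T\})}+\|v\|_{\sob^1(\partial M\cap\{t\le T\})}.
\]
This is the correct base case. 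From it, three steps then proceed essentially as you describe:
\begin{enumerate}
\item commute with tangential fields, which now also act on $v$;
\item recover normal derivatives from the equation;
\item run the duality argument, whose dual estimate must now control $\partial_\nu\phi|_{\partial M}$, since that is what pairs with $v$.
\end{enumerate}

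Two smaller points. Acting tangentially does not by itself preserve past-compact support; a Fourier-multiplier extension in $(t,y)$ does not, and you would need e.g.\ a convolution extension with compactly supported kernel dilated by $x$. Also, no compatibility conditions arise, since all data vanish in the past.

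Finally, your lossy argument still gives $u\in\sob^{s+1/2}$ for every $s$. That suffices for \cref{cor:smooth existence}, which is the only consequence the paper actually uses, but it does not prove the theorem as stated.
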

\begin{corollary}\label{cor:smooth existence}
	If $f\in \Cinf_{\pc}(\bar{M})$ and $v\in \Cinf_{\pc}(\partial M)$, then there exists a unique $u\in\Cinf_{\pc}(\bar{M})$ satisfying $Pu=f$ in ${M}$ and $u|_{\partial M}=v$. Moreover, the assignment $(f,v)\mapsto u$ is continuous.
\end{corollary}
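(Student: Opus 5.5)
The plan is to deduce the corollary from \cref{theo:existence and uniqueness} by intersecting over all Sobolev orders, then get continuity from the closed graph theorem.

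\emph{Existence.} Fix $f\in\Cinf_{\pc}(\bar M)$ and $v\in\Cinf_{\pc}(\partial M)$. Then $f\in\sob^s_{\locs,\pc}(\bar M)$ and $v\in\sob^{s+1}_{\locs,\pc}(\partial M)$ for every $s\in\R$, so for each $s$ the theorem gives a solution $u_s\in\sob^{s+1}_{\locs,\pc}(\bar M)$.

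\emph{Independence of $s$ and smoothness.} For $s\le s'$ we have $u_{s'}\in\sob^{s'+1}_{\locs,\pc}\subset\sob^{s+1}_{\locs,\pc}$, and it solves the same problem. Uniqueness at level $s$ then forces $u_{s'}=u_s$. Hence there is a single $u$ lying in $\bigcap_s\sob^{s+1}_{\locs}(\bar M)=\Cinf(\bar M)$. This intersection identity holds up to the boundary, since the spaces are restrictions of Sobolev spaces on an enlargement and Sobolev embedding applies locally there. The support of $u$ is past-compact because $u\in\sob^{s+1}_{\locs,\pc}$.

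\emph{Uniqueness.} Any past-compact smooth solution lies in every $\sob^{s+1}_{\locs,\pc}$, so uniqueness is inherited from the theorem at any fixed $s$.

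\emph{Continuity.} The space $\Cinf_{\pc}(\bar M)$ needs a topology, so I would take the inductive limit over $t_0$ of the Fréchet spaces $\Cinf_{J^+(\bar\Sigma_{t_0})}$, and similarly on $\partial M$.
\begin{itemize}
\item By finite propagation speed, the solution operator $(f,v)\mapsto u$ maps data supported in $\{t\ge t_0\}$ to solutions supported in $J^+(\{t\ge t_0\})$. For concrete estimates this follows from the uniqueness part of the theorem applied on time slabs.
\item It therefore suffices to show continuity between these Fréchet spaces.
\item The graph of the map is closed. Indeed, if $(f_n,v_n)\to(f,v)$ and $u_n\to w$ in $\Cinf$, then $Pw=f$ and $w|_{\partial M}=v$, with supports in the fixed past-compact set. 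Uniqueness gives $w=u$.
\item The closed graph theorem for Fréchet spaces then yields continuity, and continuity on the inductive limit follows.
\end{itemize}

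The main obstacle is the topological bookkeeping. One has to confirm that the supports stay in a fixed set $J^+$ of a level set, and that the intersection over $s$ of the local extensible Sobolev spaces is exactly $\Cinf(\bar M)$. Neither requires new analysis; both follow from standard facts and global hyperbolicity, via the time function of \cref{prop:properties of ghstb}.
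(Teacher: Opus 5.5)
Your proposal is correct. Existence and uniqueness follow the paper: intersect over $s$ in \cref{theo:existence and uniqueness} and use $\bigcap_s\sob^s_{\locs}(\bar M)=\Cinf(\bar M)$. You also make explicit the consistency $u_{s'}=u_s$ via uniqueness, which the paper leaves implicit.

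The routes differ in the continuity argument. The paper argues that the topology of $\Cinf(\bar M)$ is generated by $\sob^s$-norms. This tacitly uses a quantitative bound $\sob^s\oplus\sob^{s+1}\to\sob^{s+1}$ for the solution operator, i.e.\ the energy estimates behind H\"ormander's theorem, which the statement as quoted does not record. You use only the qualitative existence and uniqueness. You fix a topology on $\Cinf_{\pc}$ as the strict inductive limit of the Fr\'echet spaces of functions supported in $\{t\ge t_0\}$, check that the graph is closed, and invoke the closed graph theorem on each step. This is more self-contained relative to the quoted theorem. The paper's route, by contrast, carries Sobolev-norm control of $u$ in terms of the data, which yours does not produce.

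Your route needs one fact that the quoted theorem does not literally give: data vanishing for $t<t_0$ yield a solution vanishing for $t<t_0$. Your appeal to ``uniqueness on time slabs'' is too vague as written. Justify it by one of two routes:
\begin{itemize}
\item Apply uniqueness on the causally convex sub-spacetime $\{t<t_0\}$, which is again spatially compact and globally hyperbolic with timelike boundary.
\item Cite the finite-propagation-speed result \cref{theo:support of solutions}. Its energy-estimate proof does not use this corollary, so there is no circularity once smoothness of $u$ is established.
\end{itemize}

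Also make explicit that identifying past-compact sets with subsets of some $\{t\ge t_0\}$ relies on spatial compactness. This is the standing assumption for this corollary, since the paper later uses it precisely to cover the spatially compact case.
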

\begin{proof}
	The existence is an immediate consequence of \cref{theo:existence and uniqueness}, using that $\bigcap_s\sob^s_{\locs}(\bar{M})=\Cinf(\bar{M})$ and $\bigcap_s\sob^s_{\locs}(\partial M)=\Cinf(\partial M)$. The continuity follows since the topology of $\Cinf(\bar{M})$ can be defined using $\sob^s$-norms for large $s$.
\end{proof}
We now discuss an extension of this to the non-spatially compact case. The result we aim at is
\begin{theorem}\label{theo:propagators}
	Let $\bar{M}$ be a globally hyperbolic spacetime with timelike boundary. For every $f\in\Cinf_\pc(\bar{M}), v\in\Cinf_\pc(\partial M)$ we find a unique $u\in\Cinf_{\pc}(\bar{M})$ such that $Pu=f$ and $u|_{\partial M}=v$. Moreover, we have
	\begin{equation}\label{eq:support of retarded solution}
		\supp u\subset J^+(\supp f\cup \supp v).
	\end{equation}
\end{theorem}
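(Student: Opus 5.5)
The plan is to reduce to the spatially compact case of \cref{cor:smooth existence} through a localisation argument based on finite propagation speed. The support property \eqref{eq:support of retarded solution} will carry uniqueness and consistency between the local solutions. Accordingly, the first step is to prove a domain-of-dependence statement on spatially compact pieces. Let $u\in\Cinf_\pc(\bar M)$ satisfy $Pu=f$ and $u|_{\partial M}=v$, and let $q\notin J^+(\supp f\cup\supp v)$. We want $u=0$ near $q$. For this we use the standard energy estimate for $P$ on the region $J^-(q)\cap J^+(\bar\Sigma_a)$, where $a$ is chosen below the support of $u$, which is possible by past-compactness. The estimate is obtained by integrating $\diverg$ of the stress–energy current contracted with the timelike field $\nabla t$. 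The boundary contributions split into three parts. The piece of the boundary on $\partial M$ vanishes because $u|_{\partial M}=0$ there, and hence $\partial_t u|_{\partial M}=0$ as well; since $\nabla t$ is tangent to $\partial M$ by \cref{prop:properties of ghstb}(1), the flux through $\partial M$ is $\partial_\nu u\,\nabla t(u)=0$. The piece on the null or achronal boundary of $J^-(q)$ is nonnegative. The piece on $\bar\Sigma_a$ vanishes. Grönwall then gives $u\equiv0$ on $J^-(q)$. This yields both uniqueness and the support inclusion once existence is established.

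For existence we exhaust $\bar M$. Fix $f,v$ and pick $a$ with $\supp f\cup\supp v\subset J^+(\bar\Sigma_a)$. For a compact $K\subset\bar M$, the set $J^-(K)\cap J^+(\bar\Sigma_a)$ is compact by global hyperbolicity; this is where past-compactness of the data is used. We then modify the spacetime outside a neighbourhood of this compact set to obtain a spatially compact globally hyperbolic spacetime with timelike boundary $\bar M_K$ that contains a neighbourhood of $J^-(K)\cap J^+(\bar\Sigma_a)$ isometrically, with the same causal structure inside. One way to do this is via the splitting $\bar M\cong\R\times\bar\Sigma$ of \cref{prop:properties of ghstb}(4): truncate $\bar\Sigma$ to a compact smooth domain $\bar\Sigma_K\supset$ the projection of the relevant compact set. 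Doing so introduces a new boundary, which is artificial. To keep $P$ and the causal structure unchanged near the compact set, one instead shrinks the metric's light cones (by multiplying $\beta$ by a cutoff) away from the relevant region. Alternatively, one takes $\bar\Sigma_K$ large and uses the domain-of-dependence argument above to see that the artificial boundary does not influence $J^-(K)$. Next, multiply $f,v$ by cutoffs equal to $1$ near $J^-(K)\cap J^+(\bar\Sigma_a)$ and solve on $\bar M_K$ using \cref{cor:smooth existence}. Denote the solution by $u_K$.

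The final step is consistency. For $K\subset K'$, the difference $u_K-u_{K'}$ solves the homogeneous problem with zero data on the common region. It is past-compact, so by the first step it vanishes on the interior of $J^-(K)$. Hence the $u_K$ patch together to a smooth past-compact $u$ on $\bar M$ with $Pu=f$ and $u|_{\partial M}=v$. The support property follows from the first step applied to $u$ itself.

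The main obstacle I expect is the construction of $\bar M_K$. It must be genuinely spatially compact, so that H\"ormander's Theorem 24.1.1 applies, while its causal structure must agree with that of $\bar M$ on $J^-(K)\cap J^+(\bar\Sigma_a)$. The boundary $\partial\Sigma$ can be noncompact, so cutting off $\bar\Sigma$ requires a corner-free compact domain whose new boundary is timelike and lies outside the causal shadow. My first attempt will be the light-cone-narrowing trick: replace $g$ by $\beta\,\de t^2-\rho^{-1}g_t$ with $\rho\ge1$ growing far out. Then $J^-(K)$ stays in a fixed compact spatial region, and one can cut along a level set of a spatial exhaustion function made timelike by construction. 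Once that is in place, the energy estimate with tangential $\nabla t$ and the gluing are routine.
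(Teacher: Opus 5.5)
Your overall route is the paper's. It has the same three ingredients: the energy estimate on time slabs of $J^-(q)$, with the flux through $\partial M$ vanishing because $\nabla t$ is tangent to $\partial M$; truncation to a spatially compact $\R\times\bar U$ whose artificial boundary is kept out of the relevant region by finite propagation speed (your option (c)); and gluing along an exhaustion. Two steps, however, do not work as written.

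\emph{Past-compactness.} Past-compactness does not give an $a$ with $\supp f\cup\supp v\subset J^+(\bar\Sigma_a)$. That is strict past-compactness, which fails in general: in Minkowski space $\{t\ge-|x|/2\}$ is past-compact but unbounded below in $t$. What you do have is that $(\supp f\cup\supp v)\cap J^-(K)$ is compact for each compact $K$. So $a=a_K$ must depend on $K$, and the cutoff $\chi_K$ should be compactly supported and equal to one near $J^-(K)\cap\{t\ge a_K\}$. Then $\chi_K f=f$ and $\chi_K v=v$ on all of $J^-(K)$, which is exactly the paper's Step 2. Likewise, in your energy estimate choose $a'$ below the compact set $\supp u\cap J^-(q)$, not below $\supp u$. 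With a $K$-dependent $a_K$, your consistency step needs one more observation. By the support property, $u_K$ and $u_{K'}$ both vanish on $J^-(K)\cap\{t<a_K\}$. Hence the energy estimate for $u_K-u_{K'}$ can be started at time $a_K$, in a region that avoids the artificial boundary of $\bar\Sigma_K$, where the difference has no reason to vanish.

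\emph{Construction of $\bar M_K$.} Drop the light-cone-narrowing plan for what you call the main obstacle.
\begin{itemize}
\item As written, $\beta\,\de t^2-\rho^{-1}g_t$ with $\rho\ge1$ actually widens the cones.
\item More importantly, no change of metric makes $\bar\Sigma$ compact, so you must cut anyway.
\item Cutting along a level set of an exhaustion function generically meets $\partial\Sigma$ transversally. This leaves a corner in $\R\times\bar\Sigma_K$, to which H\"ormander's Theorem 24.1.1 does not apply.
\item Timelikeness of the new boundary is automatic: $\partial_t$ is timelike and tangent to $\R\times\partial\bar U$. So the restricted metric already makes $\R\times\bar U$ a spatially compact globally hyperbolic spacetime with timelike boundary, with no modification.
\end{itemize}
The one real point is smoothness of $\partial\bar U$. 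The paper takes an $\eps$-neighbourhood of the compact projection of the relevant set and keeps $\partial\bar U=\partial\Sigma$ near that projection. It then rounds off the corners where the artificial part meets $\partial\Sigma$, away from the projection. The Dirichlet datum is $v$ on the genuine part and $0$ on the artificial part; this is smooth because the cut-off datum vanishes near the junction.

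There is also a difference in placement. The paper puts the artificial boundary outside the \emph{future} of the (compactly supported) data up to time $T+1$. The truncated solution then vanishes near it and extends by zero to an honest solution on $(-\infty,T]\times\bar\Sigma$, so consistency is plain uniqueness on $\bar M$. Your placement outside $J^-(K)\cap\{t\ge a_K\}$ also works, but only through the localised energy argument described above.
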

The local existence of a solution is by now classical, by using energy estimates. The global statement will follow from finite propagation speed by piecing together local solutions given by \cref{theo:existence and uniqueness}. We will follow the main idea in Chapter XXIV of \textcite{hormander1994analysispseudodifferential}, with a small difference concerning the domain in which we integrate, to establish an energy estimate that will provide the support properties of solutions. A more general statement is given by \textcite{ginoux_cauchy_friedrichs_2022}, but their notion of weak solution (Definition~4.1 in the reference) does not depend on the Cauchy datum, since admissible test sections are required to vanish at both $\bar\Sigma_0$ and $\bar{\Sigma}_T$; we give an independent proof below.

We translate first the discussion in Chapter XXIV in \cite{hormander1994analysispseudodifferential} into an invariant framework and give a few supplementary facts. For a point $z\in\bar{M}$, a complex covector $\zeta\in T^\ast_z\bar{M}$ and real covectors $\alpha,\beta\in T^\ast_z\bar{M}$, define the quadratic form in $\zeta$ (the \textit{stress-energy tensor})
\begin{equation}\label{eq:stress-energy tensor}
	T_{\zeta}(\alpha,\beta)\equiv 2\Re\left(p(\zeta,\alpha)\bar{p(\zeta,\beta)} \right)-p(\bar \zeta,\zeta)p(\alpha,\beta).
\end{equation}
Notice that $T$ only depends on the principal symbol $p$ and not on lower order terms. Moreover, by polarisation we have $T_{a+\im b}=T_a+T_b$, so it mostly suffices to consider the case of real $\zeta$.
\begin{lemma}[cf. \cite{hormander1994analysispseudodifferential}, Lemma 24.1.2]
	We have
	\begin{enumerate}
		\item If $\beta$ is future-directed timelike and $\alpha$ is future-directed causal, then $T_\zeta(\alpha,\beta)$ is positive semidefinite, and positive definite exactly if $\alpha$ is timelike.
		\item Same as (1) but with $\alpha,\beta$ past-directed.
		\item If $T_\cdot(\alpha,\beta)$ is positive definite, then $\alpha, \beta$ are necessarily timelike and in the same connected component of the causal cone.
	\end{enumerate}
\end{lemma}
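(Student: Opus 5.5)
We argue pointwise at a fixed $z$, using only that $p=g^{-1}_z$ is a Lorentzian quadratic form of signature $(+,-,\dots,-)$. Since $T_{a+\im b}=T_a+T_b$, it suffices to handle real $\zeta$. For real $\zeta$ the form reads
\begin{equation*}
T_\zeta(\alpha,\beta)=2\,p(\zeta,\alpha)\,p(\zeta,\beta)-p(\zeta,\zeta)\,p(\alpha,\beta).
\end{equation*}
It is bilinear and symmetric in $(\alpha,\beta)$ and quadratic in $\zeta$. So we may rescale $\zeta$ freely and choose a $p$-orthonormal frame adapted to $\zeta$, writing $\alpha=(\alpha_0,\alpha')$ and $\beta=(\beta_0,\beta')$. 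For $\alpha$ future-directed causal and $\beta$ future-directed timelike we have $\alpha_0\ge|\alpha'|$ and $\beta_0>|\beta'|$, hence $\alpha_0\beta_0>|\alpha'||\beta'|\ge0$.

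For (1) we split into three cases according to the causal character of $\zeta\neq0$.
\begin{itemize}
\item If $\zeta$ is timelike, take $\zeta=(s,0,\dots,0)$. Then $T_\zeta(\alpha,\beta)=s^2(\alpha_0\beta_0+\alpha'\cdot\beta')\ge s^2(\alpha_0\beta_0-|\alpha'||\beta'|)>0$.
\item If $\zeta$ is spacelike, take $\zeta=(0,s,0,\dots,0)$. Then $T_\zeta(\alpha,\beta)=s^2(\alpha_0\beta_0+\alpha_1\beta_1-\sum_{j\ge2}\alpha_j\beta_j)$, which is again $\ge s^2(\alpha_0\beta_0-|\alpha'||\beta'|)>0$.
\item If $\zeta$ is null, take $\zeta=(s,s,0,\dots,0)$. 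Then $T_\zeta(\alpha,\beta)=2s^2(\alpha_0-\alpha_1)(\beta_0-\beta_1)$. This is $\ge0$, it is $>0$ whenever $\alpha$ is timelike (since then $\alpha_0>|\alpha_1|$), and it vanishes only if $\alpha_0=\alpha_1$, i.e.\ only if $\alpha$ is null.
\end{itemize}
Hence $T_\zeta(\alpha,\beta)\ge0$ for every real $\zeta$, and so also for complex $\zeta=a+\im b$, since $T_\zeta=T_a+T_b$.

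It remains to settle definiteness in (1). If $\alpha$ is timelike, every real $\zeta\neq0$ gives a strictly positive value by the three cases above. For complex $\zeta=a+\im b\neq0$, at least one of $a,b$ is nonzero, so $T_a+T_b>0$. If instead $\alpha$ is null, take $\zeta=\alpha$: then $p(\zeta,\alpha)=p(\zeta,\zeta)=0$, so $T_\zeta(\alpha,\beta)=0$ and the form is not definite.

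For (2), note that $T_\zeta(\alpha,\beta)$ is unchanged under $(\alpha,\beta)\mapsto(-\alpha,-\beta)$, by bilinearity. This maps the past-directed case onto the future-directed one, so (2) follows from (1).

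For (3), suppose $T_\cdot(\alpha,\beta)$ is positive definite. First, $\alpha$ must be timelike. Otherwise its $p$-orthogonal complement is not spacelike (this uses $\dim\ge2$), so it contains a real null $\zeta\neq0$ with $p(\zeta,\alpha)=p(\zeta,\zeta)=0$, giving $T_\zeta(\alpha,\beta)=0$, a contradiction. By symmetry of $T$ in $(\alpha,\beta)$, $\beta$ is timelike as well. Second, $\alpha$ and $\beta$ lie in the same component of the timelike cone. If not, then $\alpha$ and $-\beta$ lie in the same component. Applying (1) or (2) to the pair $(\alpha,-\beta)$ gives $T_\zeta(\alpha,-\beta)\ge0$, i.e.\ $T_\zeta(\alpha,\beta)\le0$ for all $\zeta$, contradicting definiteness.

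The only step that needs care is choosing the adapted frame in each causal case of $\zeta$ and checking the elementary inequality $\alpha_0\beta_0>|\alpha'||\beta'|$.
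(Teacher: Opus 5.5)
Your argument is correct, but it takes a different route from the paper. The paper recomputes nothing. It takes from the cited Lemma 24.1.2 of H\"ormander both the positive definiteness for $\alpha,\beta$ timelike in the same component and the converse (3). It then adds only the semidefinite case, by a closure argument: $\alpha\mapsto T_\zeta(\alpha,\beta)$ is linear, so $\{\alpha\colon T_\zeta(\alpha,\beta)\ge0\}$ is closed, contains the future timelike covectors, and therefore contains their closure, the future causal cone.

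You instead prove (1) from scratch. You work in a time-oriented orthonormal frame adapted to $\zeta$, split into cases by its causal type, and reduce everything to the elementary inequality $\alpha_0\beta_0>|\alpha'||\beta'|$. You then get (2) from the symmetry $(\alpha,\beta)\mapsto(-\alpha,-\beta)$, and (3) from (1)--(2) via a null covector in $\alpha^\perp$. This gives a self-contained proof. It also settles the ``exactly if'' clause of (1) directly, using the witness $\zeta=\alpha$ for null $\alpha$, which works in every dimension; in the paper that clause rests on the converse taken from the reference. The price is the case analysis.

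One correction: in (3) your parenthetical should read $\dim\ge3$, not $\dim\ge2$. For spacelike $\alpha$, the complement $\alpha^\perp$ is a Lorentzian space of dimension $n-1$. It contains a nonzero null covector only if $n-1\ge2$; for $n=2$ it is a timelike line. This is not a defect of your method, because in $1+1$ dimensions statement (3) is actually false. For example, $\alpha=\beta=\de x$ gives $T_\zeta(\alpha,\beta)=|\zeta_0|^2+|\zeta_1|^2$, which is positive definite although both covectors are spacelike. So the statement tacitly requires $n\ge3$, and the paper's proof, which defers (3) to the reference, does not remark on this either.
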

\begin{proof}
	The only addition with respect to the reference is the statement about the positive semidefinite case. This is a quick consequence: since $T_\zeta(\cdot,\beta)$ is linear, the set
	\begin{equation*}
		\{\alpha\in T^\ast_z\bar{M}\colon T_\zeta(\alpha,\beta)\ge 0\}
	\end{equation*}
	is a closed cone, containing the future-directed timelike directions in view of the statement for future-directed timelike covectors. Since the future causal cone is the closure of the future chronological cone, the claim follows.
\end{proof}
\begin{lemma}\label{lemma:stress-energy tensor for conormal direction}
	Let $z\in\partial M$, $U$ a neighbourhood of $z$ in $\bar{M}$ and $\nu_z$a (spacelike) exterior conormal covector to $\partial M$. Let $u\in C^1(U)$ be such that $u|_{\partial M\cap U}=0$ so that $\de u_z=\sigma\nu_z$ for some $\sigma\in\C$. Then, if $L=\beta^\sharp$ (with respect to $g$),
	\begin{equation}\label{eq:stress-energy tensor for conormal direction}
		T_{\de u}(\nu_z,\beta)=|\sigma|^2p(\nu_z,\nu_z)\braket{\nu_z,L}.
	\end{equation}
	In particular, if $\sigma\neq 0$, then $T_{\de u}(\nu_z,\beta)\ge 0$ if, and only if, $L$ does not point outward from $\bar{M}$, with equality precisely for $L$ tangent to $\partial M$.
\end{lemma}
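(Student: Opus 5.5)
The plan is to verify \eqref{eq:stress-energy tensor for conormal direction} by direct substitution into the definition \eqref{eq:stress-energy tensor}, and then to read off the sign statement from the causal character of $\nu_z$.

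First I justify $\de u_z=\sigma\nu_z$. Since $u|_{\partial M\cap U}=0$ and $u\in C^1$, every derivative of $u$ along a vector tangent to $\partial M$ vanishes at $z$. Hence $\de u_z$ annihilates $T_z\partial M$. That annihilator is the one-dimensional complexified conormal line spanned by $\nu_z$, so $\de u_z=\sigma\nu_z$ for some $\sigma\in\C$.

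Next I put $\zeta=\sigma\nu_z$ and $\alpha=\nu_z$ into \eqref{eq:stress-energy tensor}. Here $p(\cdot,\cdot)$ is the real symmetric bilinear form $g^{-1}_z$, extended complex-bilinearly. The three terms are
\begin{equation*}
p(\zeta,\nu_z)=\sigma\,p(\nu_z,\nu_z),\qquad p(\zeta,\beta)=\sigma\,p(\nu_z,\beta),\qquad p(\bar\zeta,\zeta)=|\sigma|^2p(\nu_z,\nu_z).
\end{equation*}
The first summand is therefore $2\Re\bigl(\sigma\bar\sigma\,p(\nu_z,\nu_z)p(\nu_z,\beta)\bigr)=2|\sigma|^2p(\nu_z,\nu_z)p(\nu_z,\beta)$, because everything inside the real part is real. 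Subtracting $|\sigma|^2p(\nu_z,\nu_z)p(\nu_z,\beta)$ gives
\begin{equation*}
T_{\de u}(\nu_z,\beta)=|\sigma|^2p(\nu_z,\nu_z)p(\nu_z,\beta).
\end{equation*}
Finally, $p(\nu_z,\beta)=g^{-1}(\nu_z,\beta)=\braket{\nu_z,\beta^\sharp}=\braket{\nu_z,L}$, which is \eqref{eq:stress-energy tensor for conormal direction}.

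For the sign statement, assume $\sigma\neq0$. The boundary is timelike, so $\nu_z$ is spacelike, and with our signature convention $p(\nu_z,\nu_z)<0$. Thus $T_{\de u}(\nu_z,\beta)\ge0$ exactly when $\braket{\nu_z,L}\le0$.

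Since $\nu_z$ is the exterior conormal, $\braket{\nu_z,L}>0$ means that $L$ points outward from $\bar M$. Also, $\braket{\nu_z,L}=0$ holds exactly when $L\in T_z\partial M$. This gives both the nonnegativity criterion and the characterisation of equality.

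There is no real obstacle here. The only points needing care are the sign conventions: the Lorentzian signature makes $p(\nu_z,\nu_z)$ negative, and "exterior" fixes which sign of $\braket{\nu_z,L}$ means outward.
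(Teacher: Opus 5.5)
Your proposal is correct and follows the paper's proof. You substitute $\zeta=\sigma\nu_z$ into the definition of $T$, identify $p(\nu_z,\beta)=\braket{\nu_z,L}$, and read off the sign from $p(\nu_z,\nu_z)<0$. Your extra justification that $\de u_z$ is a multiple of $\nu_z$, which the paper simply takes as part of the hypothesis, is a correct and harmless addition.
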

\begin{proof}
	For any $\sigma\in\C$, a direct computation with $\zeta=\sigma \nu_z$ shows that
	\begin{equation*}
		T_{\sigma\nu_z}(\nu_z,\beta)=|\sigma|^2p(\nu_z,\nu_z)p(\nu_z,\beta)=|\sigma|^2p(\nu_z,\nu_z)\braket{\nu_z,L},
	\end{equation*}
	where $L=\beta^\sharp$. With this formula, the rest of the proof is quick: if $L$ is tangent to $\partial M$, then $\braket{\nu_z,L}=0$, and since $p(\nu_z,\nu_z)<0$ we have that the sign of $T$ is positive exactly when $L$ points inward. This finishes the proof.
\end{proof}
For $u\in C^1(\bar{M})$ and $L$ a tangent vector with $L=\beta^\sharp$ we define the current $\Jcal[u]$ associated with $T$ and $L$ by
\begin{equation}\label{eq:current of SE tensor}
	\Jcal[u]\equiv 2\Re\left(\bar{\braket{\de u,L}}\nabla u\right)-p(\de \bar u,\de u)L,
\end{equation}
or equivalently, for every real covector $\alpha$, by $\braket{\alpha,\Jcal[u]}=T_{\de u}(\alpha,\beta)$. The next lemma is proven in \cite{hormander1994analysispseudodifferential} in the discussion preceding Lemma 24.1.2.
\begin{lemma}\label{lemma:divergence of SE current}
	For every $u\in C^2(\bar{M})$ there exists a complex quadratic form $R$ in $(u,\de u)$, with coefficients depending only on $\nabla L$ and on the potential $V$, such that
	\begin{equation}\label{eq:divergence of SE tensor}
		\diverg \Jcal[u]=2\Re\left(\bar{\braket{\de u,L}}Pu\right)+R(u,\de u).
	\end{equation}
	Moreover, $R$ is continuous and for any compact $K\subset\bar{M}$ there is $C_K>0$ such that
	\begin{equation}\label{eq:bounded quadratic remainder}
		|R(u,\de u)|\leq C_K(|u|^2+|\de u|^2).
	\end{equation}
\end{lemma}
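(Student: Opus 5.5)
The identity \eqref{eq:divergence of SE tensor} is pointwise and coordinate-free, so I would prove it by a direct computation with the Levi-Civita connection in abstract index notation, valid equally at interior and boundary points of $\bar M$. Write $L=L^a\partial_a$, $Lu=\braket{\de u,L}=L^b\nabla_b u$, and $p(\de\bar u,\de u)=\nabla_c\bar u\,\nabla^c u$. Then
\begin{equation*}
\Jcal[u]^a=2\Re\bigl(\bar{Lu}\,\nabla^a u\bigr)-\nabla_c\bar u\,\nabla^c u\,L^a .
\end{equation*}
With the paper's conventions $\Box_g=g^{ab}\nabla_a\nabla_b$, which is consistent with $\Box_g=\partial_{tt}^2+\Delta$ in the ultrastatic case, the principal part of $P$ is $\nabla_a\nabla^a$ and $\Box_g u=Pu-Vu$.

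I would apply $\nabla_a$ to each term of $\Jcal[u]^a$ using the Leibniz rule. Only $u\in C^2$ is needed, since at most second derivatives of $u$ appear. For the first term,
\begin{equation*}
\nabla_a\bigl(\bar{Lu}\,\nabla^a u\bigr)=\bar{(\nabla_aL^b)\nabla_b u}\,\nabla^a u+\bar{L^b\nabla_a\nabla_b u}\,\nabla^a u+\bar{Lu}\,\Box_g u .
\end{equation*}
For the second term,
\begin{equation*}
\nabla_a\bigl(\nabla_c\bar u\nabla^c u\,L^a\bigr)=2\Re\bigl(L^a\nabla_a\nabla_c\bar u\,\nabla^c u\bigr)+(\diverg L)\,\nabla_c\bar u\nabla^c u .
\end{equation*}

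The key step is the cancellation of all second derivatives of $u$ other than $\Box_g u$. Two facts give this: the connection is torsion-free, so the Hessian is symmetric, $\nabla_a\nabla_b u=\nabla_b\nabla_a u$; and $\Re\bar z=\Re z$. Together they show that $2\Re\bigl(\bar{L^b\nabla_a\nabla_bu}\,\nabla^au\bigr)$ coincides with $2\Re\bigl(L^a\nabla_a\nabla_c\bar u\,\nabla^cu\bigr)$, so these two terms cancel. Substituting $\Box_g u=Pu-Vu$ then yields \eqref{eq:divergence of SE tensor} with
\begin{equation*}
R(u,\de u)=2\Re\bigl(\bar{(\nabla_aL^b)\nabla_bu}\,\nabla^au\bigr)-(\diverg L)\,p(\de\bar u,\de u)-2V\,\Re\bigl(\bar{Lu}\,u\bigr).
\end{equation*}
This is a quadratic form in $(u,\de u)$. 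Its coefficients involve only $\nabla L$ (including $\diverg L=\operatorname{tr}\nabla L$), the potential $V$, and the metric, together with the undifferentiated $L$ in the last term. I would state this last dependence honestly, since it sits alongside the potential.

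For the bound \eqref{eq:bounded quadratic remainder}, I would fix an auxiliary Riemannian metric $h$ and measure $|u|$ and $|\de u|$ with respect to it. On a compact $K$, the quantities $|\nabla L|$, $|L|$, $|V|$, and the $h$-norms of $g$ and $g^{-1}$ are all bounded, by continuity and compactness. Applying Cauchy--Schwarz to each term and $2|ab|\le|a|^2+|b|^2$ to the mixed term $\bar{Lu}\,u$ gives the estimate with $C_K$ depending only on these suprema. The same expression also shows that $R$ is continuous.

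The only real obstacle I expect is sign and convention bookkeeping. In particular, one must check that the Hessian cancellation works with the conjugations as placed, so that real parts absorb them, and that the signature convention makes $g^{ab}\nabla_a\nabla_b$ the principal part of $P$ with the stated sign. Everything else is routine.
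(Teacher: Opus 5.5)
Your proposal is correct and is essentially the argument the paper relies on: the paper gives no proof of its own and cites Hörmander's discussion preceding Lemma 24.1.2, which is the same direct computation (Leibniz rule, then cancellation of the Hessian terms by symmetry of second derivatives), and you carry it out in the invariant form the paper intends. Your observation that the term $-2V\Re(\overline{Lu}\,u)$ also involves the undifferentiated $L$ and the metric is accurate; it only corrects the slightly loose phrase ``depending only on $\nabla L$ and on $V$'' in the statement and does not affect the compact-set bound used in the energy estimate.
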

In the next \cref{lemma:slab-cone is a Lipschitz domain}, a quick consequence of Proposition 3.12 and Corollary 3.13 of \cite{ake_hau_structure_boundary_2020}, we recall that the intersection of a past cone with a time slab is (the closure of) a Lipschitz domain.
\begin{lemma}\label{lemma:slab-cone is a Lipschitz domain}
	Consider the metric splitting $\bar{M}\simeq\R_t\times\bar{\Sigma}$, let $z\in\bar{M}$ and let $a<b$ be real numbers. Then $J^-(z)\cap t^{-1}[a,b]$ is a Lipschitz domain in $\bar{M}$, irrespective of whether it intersects $\partial M$ and of the value of $t(z)$.
\end{lemma}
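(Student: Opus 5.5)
The plan is to reduce the statement to the two cited results, Proposition 3.12 and Corollary 3.13 of \cite{ake_hau_structure_boundary_2020}, and then check the Lipschitz property near the finitely many types of ``bad'' points. By \cref{prop:properties of ghstb}, write $\bar M\simeq\R_t\times\bar\Sigma$ with $\nabla t$ tangent to $\partial M$. Then $D:=J^-(z)\cap t^{-1}[a,b]$ is compact by global hyperbolicity, since it is contained in $J^-(z)\cap J^+(\bar\Sigma_a)$. Its topological boundary in the ambient (boundaryless) enlargement $N\supset\bar M$ decomposes into four pieces:
\begin{itemize}
\item the two slab faces $\bar\Sigma_a\cap J^-(z)$ and $\bar\Sigma_b\cap J^-(z)$, which are smooth spacelike hypersurfaces;
\item the achronal boundary $\partial J^-(z)\cap t^{-1}[a,b]$;
\item the portion $\partial M\cap D$ of the timelike boundary.
\end{itemize}
Each piece will be shown to be locally a Lipschitz graph, and then the transversality of the pieces where they meet will be verified.

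For the achronal piece, I will invoke the cited results. There $\partial J^-(z)=\partial I^-(z)$ is an achronal topological hypersurface, locally the graph of a $1$-Lipschitz function over $\bar\Sigma$ in the time coordinate. Near $\partial M$ this is still a Lipschitz graph over $\bar\Sigma$ up to its boundary. Concretely, $\partial J^-(z)=\{t=\tau(\sigma)\}$ with $\tau$ Lipschitz on $\bar\Sigma$ (where finite), since the $\sob^1$-causal curves give a Lipschitz time-separation-type bound. Hence $J^-(z)=\{t\le\tau\}$, and
\[
D=\{(t,\sigma)\colon a\le t\le\min(b,\tau(\sigma))\}.
\]

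This reduces the problem to showing that a region between two Lipschitz graphs over the Lipschitz (indeed smooth-boundary) domain $\bar\Sigma$ is Lipschitz. I would proceed through a case analysis:
\begin{itemize}
\item at points with $a<t<\tau$, away from $\partial M$, it is trivial;
\item at the top $t=\min(b,\tau)$ in the interior, the minimum of Lipschitz functions is Lipschitz, so the boundary is a graph;
\item at $\partial M$, the cylinder structure $\R\times\partial\Sigma$ intersects the graph region in a product-like corner, which is bi-Lipschitz to a quadrant and hence Lipschitz.
\end{itemize}

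The main obstacle is the degenerate set where $\tau=a$, i.e.\ where the cone meets the bottom slab face tangentially, so that top and bottom collapse and $D$ may be thin. To handle it, I would use the fact that $\tau$ is strictly $1$-Lipschitz-decreasing away from $z$ in a uniform cone: in local coordinates the cone has a uniform opening angle. The set $\{\tau\ge a\}$ is then a compact region in $\bar\Sigma$ whose boundary is the level set $\{\tau=a\}$. That level set is Lipschitz because $\tau$ has a uniform one-sided directional derivative bound along outward radial directions, via an implicit-function argument for Lipschitz functions (Clarke). This is exactly what Corollary 3.13 of the reference provides, so I would cite it there rather than reprove it.
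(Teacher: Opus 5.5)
The paper gives no argument beyond citing Proposition~3.12 and Corollary~3.13 of \cite{ake_hau_structure_boundary_2020}, and your unpacking of that citation is in line with it. The graph description $J^-(z)=\{t\le\tau(\sigma)\}$ with $\tau$ locally Lipschitz is fine, and so is your remark that the top face is harmless because $\min(b,\tau)$ is Lipschitz. The gap is exactly at the step you flag as the main obstacle. The achronal structure of $\partial J^-(z)$ gives you only this: through every point other than $z$ passes a null generator running to $z$, so at each $\sigma$ there is \emph{some} direction in which $\tau$ increases at the null rate. Clarke's implicit function theorem needs much more, namely a \emph{single} direction in which $\tau$ increases uniformly at all nearby base points, i.e.\ $0\notin\partial_C\tau(\sigma_0)$. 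This fails at a cut point of $\partial J^-(z)$ where generators arrive from opposite sides. There neither $\{\tau=a\}$ nor $D$ need be Lipschitz, and no citation can supply the step, because the conclusion is false there.

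Here is a concrete case. Take the flat ultrastatic $\bar M=\R_t\times(\R/\Z)_x\times[0,2]_y$, with $z=(t_0,0,1)$ and $a=t_0-\tfrac12<b$. Then $\tau(x,y)=t_0-\min_{k\in\Z}\sqrt{(x-k)^2+(y-1)^2}$. In the variables $s=t-a$, $x'=x-\tfrac12$, $y'=y-1$, one has near $(a,\tfrac12,1)$
\[
D=\Bigl\{0\le s\le \tfrac12-\sqrt{(\tfrac12-|x'|)^2+y'^2}\Bigr\},
\]
which to leading order is $\{0\le s\le |x'|-y'^2\}$; at $x'=y'=0$ one has $\partial_C\tau=[-1,1]\times\{0\}$. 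This set meets $\{x'=0\}$ only at the origin, yet every neighbourhood of the origin contains interior points with $x'>0$ and with $x'<0$. So $D$ is two wedges touching at a point, not a Lipschitz domain. It also stays away from $\partial M$, so the boundary plays no role in the failure. There are also the trivial cases $t(z)\le a$, where $D$ is empty or $\{z\}$. Together these show that the statement needs a nondegeneracy hypothesis at $\bar\Sigma_a$, which your case analysis would have to impose rather than derive.

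For the only use in the paper, the energy estimate in the finite propagation speed theorem, this is harmless. The current vanishes near $\bar\Sigma_{a'}$ inside $J^-(z)$, so you may discard the bottom face and apply the divergence theorem on the subgraph $\{t\le\min(b,\tau)\}$. That region is Lipschitz up to $\R\times\partial\Sigma$ once the graph direction $-\partial_t$ is tilted slightly into $\Sigma$.
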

We now prove a more precise support estimate for solutions of the retarded Dirichlet problem for $P$.
\begin{theorem}[Finite propagation speed]\label{theo:support of solutions}
	Let $f\in\Cinf_\pc(\bar{M})$, $v\in\Cinf_\pc(\partial M)$. Assume that $u\in\Cinf_\pc(\bar{M})$ solves $Pu=f$ on $M$ and $\gamma_0u=v$ on $\partial M$. Then,
	\begin{equation}
		\supp u\subset J^+(\supp f\cup\supp v).
	\end{equation}
\end{theorem}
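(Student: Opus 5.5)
We argue by the standard energy method on past cones, in the form of Chapter XXIV of \cite{hormander1994analysispseudodifferential}, using the current $\Jcal[u]$ of \eqref{eq:current of SE tensor} and its divergence identity \cref{lemma:divergence of SE current}. Fix $z\in\bar M\setminus J^+(\supp f\cup\supp v)$; we must show $u$ vanishes near $z$. Since $J^+$ of a closed set is closed in a globally hyperbolic spacetime with timelike boundary (cf.\ \cite{ake_hau_structure_boundary_2020}), we can pick $z'$ slightly to the future of $z$ with $z\in I^-(z')$ and $J^-(z')\cap(\supp f\cup\supp v)=\emptyset$. It then suffices to prove $u\equiv0$ on $J^-(z')$, since this set contains a neighbourhood of $z$.

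By past-compactness of $\supp u$, there is $a$ with $u=0$ on $t^{-1}(-\infty,a]\cap J^-(z')$. For $s\in[a,t(z')]$ let $D_s=J^-(z')\cap t^{-1}[a,s]$, which is a Lipschitz domain by \cref{lemma:slab-cone is a Lipschitz domain}, so the divergence theorem applies. We take $L=\nabla t$, which is future timelike and, by \cref{prop:properties of ghstb}, tangent to $\partial M$. Integrating \eqref{eq:divergence of SE tensor} over $D_s$ gives
\begin{equation*}
\int_{D_s}\bigl(2\Re(\bar{\braket{\de u,L}}Pu)+R(u,\de u)\bigr)=\int_{\partial D_s}\braket{\nu,\Jcal[u]} .
\end{equation*}
Here $Pu=f=0$ on $D_s$. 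The boundary $\partial D_s$ splits into four pieces. The bottom $t=a$ contributes zero. The top $t=s$ contributes $-E(s)$, where $E(s)=\int_{\Sigma_s\cap J^-(z')}T_{\de u}(\de t,\de t)$ is comparable to $\int|\de u|^2$ by positivity of $T$ for timelike $\alpha,\beta$. The lateral null part of $\partial J^-(z')$ has future causal conormal, so by the lemma on positivity of $T$ its contribution has the favourable sign. On $\partial M\cap D_s$ we have $u=\gamma_0u=v=0$, so $\de u=\sigma\nu$, and \cref{lemma:stress-energy tensor for conormal direction} with $L$ tangent gives a zero contribution. Consequently $E(s)\le C\int_a^s(E(r)+\int_{\Sigma_r\cap J^-(z')}|u|^2)\,dr$. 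Controlling $\int|u|^2$ by integrating $\partial_tu$ from $t=a$, Gronwall's inequality yields $E\equiv0$ and hence $u=0$ on $J^-(z')$.

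The main obstacle is the justification of the divergence theorem and the sign analysis on the non-smooth boundary of $D_s$. This includes the corner where the null boundary of $J^-(z')$ meets $\partial M$, and the lateral boundary of $J^-(z')$, which may be generated by reflected, non-smooth $\sob^1$ causal curves. First I would handle these using only Lipschitz regularity and the fact that the outward conormal, where it exists a.e., is past causal on the null part. If this is problematic, I would instead replace $J^-(z')$ by a slightly shrunk domain with spacelike lateral boundary built from a smoothed time function, as H\"ormander does, and then take a limit.
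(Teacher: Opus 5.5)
Your proposal is correct and follows essentially the same route as the paper. Both run an energy estimate for the current $\Jcal[u]$ with $L$ tangent to $\partial M$ over the Lipschitz domains $J^-(z)\cap t^{-1}[a',\tau]$. In both, the bottom piece and the timelike-boundary piece vanish (the latter by \cref{lemma:stress-energy tensor for conormal direction}), the null lateral piece has a good sign, and Gronwall closes the argument.

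Two points in your write-up are small improvements on the paper's. You attribute the lateral sign to the positivity of $T_\zeta(\alpha,\beta)$ for future causal $\alpha$ and timelike $\beta$, rather than to the conormal lemma as the paper does. You also pass to a point $z'$ slightly to the future of $z$; this needs $J^+(\supp f\cup\supp v)$ to be closed, which holds here because that set is closed and past-compact, not merely because it is closed.
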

\begin{proof}	
	Let $K\equiv \supp f\cup \supp v$ and fix $z\notin J^+(K)$, i.e. $J^-(z)\cap K=\emptyset$. The claim is that $u(z)=0$. If $J^-(z)\cap \supp u=\emptyset$, then $u(z)=0$ so there is nothing to prove. Assume therefore that the intersection is nonempty. Choose a time function $t$ as in \cref{prop:properties of ghstb}, with $\supp f, \supp v\subset t^{-1}([a,\infty))$ for some $a\in\R$ and assume without loss of generality that $t(z)\ge a$, otherwise the claim is trivial.

	Since $\supp u\cap J^-(z)$ is compact, we may choose $a'<\min_{q\in\supp u\cap J^-(z)}t(q)$. Thus, the source and boundary datum vanish in $J^-(z)$ and $u$ vanishes with all its derivatives near $t=a'$ (inside $J^-(z)$). Then, for every $\tau\in[a',t(z)]$ consider the compact set
	\begin{equation}\label{eq:Lipschitz domain for energy estimate}
		\Omega_\tau\equiv J^-(z)\cap t^{-1}[a',\tau],
	\end{equation}
	and notice as well that, by \cref{lemma:slab-cone is a Lipschitz domain}, $\Omega_\tau$ is a Lipschitz region. Its boundary is given by the union of the four pieces
	\begin{equation}\label{eq:boundary of Lipschitz domain}
		\begin{aligned}
			T&=J^-(z)\cap \bar\Sigma_\tau,\quad B=J^-(z)\cap \bar\Sigma_{a'},\\
			W&=\Omega_\tau\cap \partial M,\quad C=(\partial J^-(z)\cap t^{-1}[a',\tau])\setminus W.
		\end{aligned}
	\end{equation}
	Depending on the relative position of the objects involved, it can happen that $W$ is a set of zero measure in $\partial \Omega_\tau$. In this case, the boundary condition is inessential and the result holds by the theory on boundaryless spacetimes. Otherwise, since $f$ and $v$ vanish on $J^-(z)$, we know that $u$ satisfies $Pu=0$ on $\Omega_\tau$ and $\gamma_0 u=0$ on $W$. Moreover, since $f, v$ and $u$ vanish on $t^{-1}(-\infty,a)$, we find an open neighbourhood $U$ of $\bar\Sigma_{a'}$ on which also $\de u$ vanishes.

	Now we integrate \eqref{eq:current of SE tensor} over $\Omega_\tau$ and apply the divergence theorem (see \cite{mclean_strongly_2000}, Theorem 3.34, for a version for Lipschitz domains). Since $Pu=0$ in $\Omega_\tau$ we have (with $\de S$ denoting the surface measure)
	\begin{equation}\label{eq:divergence theorem for SE current}
		\begin{aligned}
			\int_{\Omega_\tau} R(u,\de u)\de\vol_g&=\int_{\Omega_\tau}\diverg \Jcal[u]\de\vol_g=\int_{\partial\Omega_\tau}\braket{\nu,\Jcal[u]}\de S\\
			&=\int_{T}\braket{\nu,\Jcal[u]}\de S+\int_B\braket{\nu,\Jcal[u]}\de S+\int_C\braket{\nu,\Jcal[u]}\de S+\int_W\braket{\nu,\Jcal[u]}\de S.
		\end{aligned}
	\end{equation}
	Since $u$ and $\de u$ vanish in a neighbourhood of $\bar\Sigma_{a'}$, the contribution from $B$ is zero. Similarly, by \cref{lemma:stress-energy tensor for conormal direction}, the integral over $W$ is also zero. The very same result guarantees that the integral over $C$ is nonnegative, so we can neglect it to obtain, for the energy $E(\tau)=\int_T \Jcal[u]\de S$, the inequality
	\begin{equation}\label{eq:energy inequality 1}
		0\leq E(\tau)\leq \int_{\Omega_\tau} R(u,\de u)\de\vol_g\leq c\int_{\Omega_\tau}|u|^2+|\de u|^2\de\vol_g,
	\end{equation}
	where we also used the continuity of $R$ to find the constant $c$.

	Now, since the exterior conormal to $T$ is simply $\de t$, we have $E(\tau)=\int_T T_{\de u}(\de t,L^\flat)$ with $T_{\de u}(\de t,L^\flat)$ positive-definite. Then we find a uniform constant $c'$ such that $|T_{\de u}(\de t,L^\flat)|\ge c'|\de u|^2$. Consider then
	\begin{equation}\label{eq:energy at time tau}
		e(\tau)=\int_{\bar\Sigma_\tau\cap J^-(z)}|u|^2+|\de u|^2\de S,
	\end{equation}
	which is an integrable function in view of Fubini. For a constant $c''$ we then have
	\begin{equation}\label{eq:estimate for du in the energy}
		\int_{\bar\Sigma_\tau\cap J^-(z)}|\de u|^2\de S\leq c''\int_{a'}^\tau e(s)\de s.
	\end{equation}
	A similar argument controls $\int_{\Omega_\tau}|u|^2\de\vol_g$: since $u$ vanishes on $\bar\Sigma_{a'}$, the fundamental theorem of calculus applied along the interval $[a',\tau]$ along an integral curve of $\nabla t$ gives, for every $x\in \bar{\Sigma}_\tau\cap J^-(z)$,
	\begin{equation}
		u(\tau,x)=\int_{a'}^\tau\partial_s u(s,x)\de s,
	\end{equation}
	whence by Cauchy--Schwarz we deduce that, for a constant $c^{(3)}$ independent of $x\in\bar\Sigma_\tau\cap J^-(z)$,
	\begin{equation}\label{eq:pointwise estimate of u on Sigma_tau}
		|u(\tau,x)|^2\le c^{(3)}\int_{a'}^\tau|\de u_{(s,x)}|^2\de s.
	\end{equation}
	Integrating this estimate with respect to the Riemannian volume over $\bar\Sigma_\tau\cap J^-(z)$ leads to
	\begin{equation}
		\int_{\bar\Sigma_\tau\cap J^-(z)}|u|^2\de S_{\tau}\le c^{(4)}\int_{a'}^\tau \int_{\Sigma_s\cap J^-(z)}|\de u_{(s,x)}|^2\de x\de s,
	\end{equation}
	which is again controlled by the integral of $e$. Thus, with a new constant $\tilde c$, we have
	\begin{equation}
		e(\tau)\le \tilde{c}\int_{a'}^\tau e(s)\de s,
	\end{equation}
	with $e(a')=0$. By Gronwall's lemma (applicable since $e$ is integrable and nonnegative) we deduce $e=0$, which in turn means that $u=0$ on $\bar\Sigma_\tau\cap J^-(z)$. Since $\tau$ was arbitrary in $[a',t(z)]$, this implies $u(z)=0$ and completes the proof.
\end{proof}
\begin{proof}[Proof of \cref{theo:propagators}]
	Since the spatially compact case is covered by \cref{cor:smooth existence}, assume that $\bar{M}$ is spatially noncompact.

	\textit{Step 1:} We begin by showing that to every $f\in\Cinf_c(\bar{M})$ and every $v\in\Cinf_c(\partial{M})$ there is a retarded solution $u$ of $Pu=f$ on $M$ and $\gamma_0 u=v$. Denote $C\equiv \supp f\cup\supp v$, choose a splitting $\bar{M}=\R_t\times\bar{\Sigma}$ with noncompact Cauchy surface $\bar\Sigma$ and with a time function $t$ such that $C\subset\{t>a\}$ for some $a$. For any $T>a$ consider the subset (compact by global hyperbolicity)
	\begin{equation*}
		K_T\equiv J^+(C)\cap t^{-1}[a,T+1]
	\end{equation*}
	and denote by $S_T$ the projection of $K_T$ onto $\bar\Sigma$, also compact. Now we choose a compact smooth submanifold with boundary $\bar{U}_T$ of $\bar{\Sigma}$ such that
	\begin{enumerate}[i)]
		\item $S_T\subset\bar U_T$;
		\item $\partial \bar U_T$ is smooth and there is a neighbourhood of $S_T\cap\partial \Sigma$ in $\partial \Sigma$ on which $\partial U_T$ coincides with $\partial \Sigma$;
		\item the remaining portion of $\partial \bar U_T$ has positive distance from $S_T$.
	\end{enumerate}
	That is, $\partial\bar U_T$ is the union of a ``genuine'' part $\partial U^{gen}_T\subset \partial \Sigma$ and of an ``artificial'' part $\partial U^{art}_T\subset \Sigma$. Such a submanifold can be constructed by first taking an $\epsilon$-neighbourhood of $S_T$ in $\bar{\Sigma}$ and then smoothing out the corners that this creates away from $S_T\cap\partial\Sigma$.

	Let now $\bar{M}_T\equiv \R\times \bar{U}_T$ equipped with the restriction of the metric $g$. This is a spatially compact spacetime with timelike boundary (the normal to the boundary is clearly spacelike), with the same Cauchy time function $t$. For $v_T\equiv v$ on $\partial \bar U^{gen}_T$ and zero elsewhere on $\partial\bar U_T$, consider then the auxiliary problem
	\begin{equation}
		\left\{\begin{aligned}
			Pu&=f \quad \text{on }M,\\
			\gamma_0 u&=v_T.
		\end{aligned}\right.
	\end{equation}
	Being set up on a spatially compact globally hyperbolic spacetime with timelike boundary, this problem has a unique retarded solution $u_T\in\Cinf_{\pc}(\bar{M}_T)$, with support controlled by
	\begin{equation}\label{eq:support control in auxiliary spacetime}
	\supp u_T\subset J^+_{\bar M_T}(C)\subset J^+_{\bar{M}}(C).
	\end{equation}
	Indeed, every causal curve in $\bar{M}_T$ is also causal in $\bar{M}$. Moreover, by construction, $\partial\bar U_T^{art}$ is disjoint from $J^+_{\bar{M}}(C)\cap t^{-1}(-\infty,T+1]$, so that \eqref{eq:support control in auxiliary spacetime} gives that $u_T$ vanishes in a neighbourhood of $(-\infty,T]\times \partial U^{art}_T$, so it can be extended by zero across it. This shows that $u_T$ is a solution of the original problem on $(-\infty,T]\times \bar{\Sigma}$.

	If we were to choose a different neighbourhood $\bar{U}_T'$ or a different $T'>T$, the constructed solution would not change on the smaller domains. Indeed, for example (the case of a different $T'$ is completely analogous), if $\bar{U}'_T$ is a (say larger) submanifold of $\bar{\Sigma}$ satisfying the same properties as $\bar{U}_T$, and $u_T'$ is the solution constructed by the above procedure on it, then the difference $w=u_T-u_T'$ satisfies $Pw=0$ and $\gamma_0 w=0$ on the smaller domain $\bar U_T$. By \cref{theo:support of solutions}, $w=0$ on $\bar U_T$ which shows that the solution is independent of the choice of $\bar{U}_T$. Then, choose a monotone sequence $(T_j)\to\infty$ and define
	\begin{equation}\label{eq:solution to noncompact problem}
		u(q)\equiv u_{T_j}(q) \quad \text{for any }j\text{ with }T_j>t(q).
	\end{equation}
	By construction, $u$ is a well-defined smooth function, solving $Pu=f$ and $\gamma_0 u=v$ on $\bar{M}$. By the support estimate of \cref{theo:support of solutions} we have $\supp u\subset J^+(C)$, so that $u$ is past-compact (in fact strictly so in this case).

	\textit{Step 2:} We now let $f\in\Cinf_{\pc}(\bar{M})$ and $v\in\Cinf_\pc(\partial M)$ be general. Denote again by $C$ the union of the supports of $f$ and $v$. If $K$ is any compact set in $\bar{M}$, consider the compact set $C_K\equiv C\cap J^-(K)$ and choose a cutoff $\chi_K\in\Cinf_c(\bar{M})$, equal to one in a neighbourhood of $C_K$. Then the problem with compactly supported data
	\begin{equation}
		\left\{\begin{aligned}
			Pu&=\chi_K f,\\
			\gamma_0 u&=(\gamma_0 \chi_K)v,
		\end{aligned}\right.
	\end{equation}
	has, by Step 1, a unique retarded solution $u_K$, whose restriction to $K$ is independent of the choice of $\chi_K$. Indeed, if $\chi_K'$ were another cutoff with the same properties, the difference of data would vanish on $J^-(K)$, so that by uniqueness/finite propagation speed the solutions would coincide on $K$.

	Take now a compact exhaustion $\bigcup_{j}K_j=\bar{M}$ and define
	\begin{equation}
		u(q)\equiv u_{K_j}(q),\quad \text{if }q\in K_j.
	\end{equation}
	The function $u$ is well-defined by the argument above, solves the required problem on $\bar{M}$, and has past-compact support by construction. This finishes the proof.
\end{proof}
\begin{remark}
	The proof of \cref{theo:propagators} above works as well for compactly supported distributional sources $f\in\dot\Escr'(\bar{M})$ and produces distributional solutions $u\in\dot{\Dscr}'_{\pc}(\bar{M})$. Indeed, local existence and uniqueness are obtained by duality from \cref{theo:existence and uniqueness}, using that $P$ is formally self-adjoint. For the global statement, notice that a distribution can be restricted to open subsets of $\bar{M}$ and that this is all that is required to apply the exhaustion argument.
\end{remark}
Clearly, the time-orientation can be reversed in all arguments above, so that Theorems \cref{theo:existence and uniqueness} and \cref{theo:propagators} have analogues for future-compact data and produce future-compact solutions. Therefore, we may assert
\begin{corollary}
	\label{cor:fundamental solution for retarded BVP}
	There exist bounded operators $\tilde G^\pm_D\colon \Cinf_{\pc/\fc}(\bar{M})\oplus\Cinf_{\pc/\fc}(\partial M)\to\Cinf_{\pc/\fc}(\bar{M})$ that are inverses to $(P,\gamma_0)$. More precisely,
	\begin{align*}
		(P,\gamma_0)\tilde G^+_D(f,v)&=(f,v),\qquad \tilde G^+_D(P,\gamma_0)u=u,\qquad u,f,v\in\Cinf_{\pc},\\
		(P,\gamma_0)\tilde G^-_D(f,v)&=(f,v),\qquad\tilde G^-_D(P,\gamma_0)u=u,\qquad u,f,v\in\Cinf_{\fc}.
	\end{align*}
\end{corollary}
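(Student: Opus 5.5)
The plan is to define $\tilde G^+_D(f,v)$ as the unique past-compact solution supplied by \cref{theo:propagators}, and $\tilde G^-_D$ as the analogous future-compact solution obtained by reversing the time orientation. Reversing the time orientation leaves all hypotheses intact: $\bar M$ with the opposite time orientation is again a globally hyperbolic spacetime with timelike boundary, and $P$ is unchanged. It therefore suffices to treat $\tilde G^+_D$.

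\emph{Linearity and the identities.} Linearity follows from uniqueness, since a linear combination of past-compact solutions is a past-compact solution with the combined data. The identity $(P,\gamma_0)\tilde G^+_D(f,v)=(f,v)$ holds by construction. For the other identity, take $u\in\Cinf_\pc(\bar M)$ and set $(f,v)=(Pu,\gamma_0u)$. The datum $f$ is past-compact because $\supp Pu\subset\supp u$. The datum $v$ is past-compact because $\supp\gamma_0u\subset\supp u\cap\partial M$, and for compact $K\subset\partial M$ we have $J^-_{\partial M}(K)\subset J^-_{\bar M}(K)$, as causal curves in $\partial M$ are causal in $\bar M$. So $u$ and $\tilde G^+_D(Pu,\gamma_0u)$ are two past-compact solutions with the same data, and uniqueness in \cref{theo:propagators} gives $\tilde G^+_D(P,\gamma_0)u=u$.

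\emph{Continuity.} I interpret ``bounded'' as continuity for the natural LF-type topologies on $\Cinf_\pc$. I will prove it first on the subspaces of data supported in $t^{-1}[a,\infty)$, with the Fr\'echet $\Cinf$ topology. Fix a compact $K\subset\bar M$. By finite propagation speed (\cref{theo:support of solutions}), $u|_K$ depends only on the data in $J^-(K)\cap t^{-1}[a,\infty)$, which is compact by global hyperbolicity. As in the proof of \cref{theo:propagators}, we can then replace $\bar M$ by a spatially compact spacetime $\bar M_T$ containing this compact region. On $\bar M_T$, \cref{cor:smooth existence} gives continuity of $(f,v)\mapsto u$ in $\Cinf$. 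Hence every $C^k(K)$ seminorm of $u$ is bounded by finitely many seminorms of $(f,v)$, and continuity on the inductive limit over $a$ follows.

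The main obstacle is this continuity step. It requires checking that the localisation to $\bar M_T$, including the modification $v_T$ of the boundary datum near the artificial boundary, depends continuously on $(f,v)$. I would handle this by multiplying by fixed cutoffs, which are continuous operations, and then invoking \cref{cor:smooth existence}. Alternatively, the closed graph theorem for LF-spaces applies, because the graph of $\tilde G^+_D$ is closed: if $(f_j,v_j)\to(f,v)$ and $u_j\to u$ in $\Cinf$, then $(P,\gamma_0)u=(f,v)$ with $u$ past-compact, so uniqueness gives $u=\tilde G^+_D(f,v)$.
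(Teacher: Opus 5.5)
Your proposal is correct and essentially matches the paper. The paper obtains the corollary directly from \cref{theo:propagators} (existence, uniqueness and the support property) together with reversal of the time orientation. You additionally make explicit two things the paper only asserts: the left-inverse identity (including past-compactness of $\gamma_0u$ in $\partial M$), and continuity via the same localisation to a spatially compact $\bar M_T$ and \cref{cor:smooth existence}.

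One small caveat: for a fixed time function, not every past-compact support lies in some $t^{-1}[a,\infty)$. Your continuity argument only uses that $J^-(K)\cap\supp(f,v)$ is compact, so it runs verbatim for the inductive limit over all closed past-compact supports.
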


Notice that, by finite propagation speed and the theory of normally hyperbolic operators on spacetimes, we have that $\tilde G^+_D$ actually maps $\Cinf_{c}(\bar{M})\oplus \Cinf_c(\partial M)$ to $\Cinf_{\spc}(\bar{M})$, and similarly $\tilde G^-_D\colon\Cinf_c(\bar{M})\oplus \Cinf_c(\partial M)\to\Cinf_{\sfc}(\bar{M})$. We call $\tilde G^+_D$ and $\tilde G^-_D$ the retarded and advanced inverses to $(P,\gamma_0)$. We write
\begin{equation*}
	\tilde G_D\equiv \tilde G^+_D-\tilde G^-_D
\end{equation*}
for the difference of the two inverses. If we let $\Cinf_{D,\scs}(\bar{M})$ denote the set of smooth, spatially compact functions satisfying the homogeneous Dirichlet boundary condition, we can interpret $\tilde G_D$ as a map $\Cinf_c(\bar{M})\oplus \Cinf_c(\partial M)\to \Cinf_{D,\scs}(\bar{M})$:
\begin{itemize}
	\item the support of $\tilde G_D(f,h)$ is spatially compact by the argument in \cref{theo:propagators};
	\item $u=\tilde G_D(f,h)$ satisfies $\gamma_0u=0$ automatically and independently of $h$, indeed
	\begin{align*}
		\gamma_0u&=\gamma_0\left(\tilde G^+_D(f,h)- \tilde G^-_D(f,h)\right)=\gamma_0\tilde G^{+}_D(f,h)-\gamma_0\tilde G^-_D(f,h)\\
		&=h-h=0.
	\end{align*}
\end{itemize}
With these preliminaries, we have (cf. \cite{dappiaggi_maxwell_timelike_2020} for the analogous statement for differential forms, and \cite{baer_wave_lorentzian_2007} for the classical boundaryless version)

\begin{lemma}\label{lemma:exact sequence}
	The following sequence is exact:
	\begin{equation}
		\label{eq:exact sequence}
		0\to \Cinf_c(\bar{M})\xrightarrow{(P,\gamma_0)}
		\begin{array}{c}
			\Cinf_c(\bar{M}) \\
			\oplus \\
			\Cinf_c(\partial M)
		\end{array}
		\xrightarrow{\tilde G_D}\Cinf_{D,\scs}(\bar{M})\xrightarrow{P}\Cinf_{\scs}(\bar{M}).
	\end{equation}
\end{lemma}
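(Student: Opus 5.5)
We check exactness at each of the four positions, following the boundaryless argument of \cite{baer_wave_lorentzian_2007} and using throughout the uniqueness statements of \cref{cor:fundamental solution for retarded BVP} together with the support control of \cref{theo:support of solutions}.

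\emph{Exactness at the first two slots.} Injectivity of $(P,\gamma_0)$ on $\Cinf_c(\bar M)$ is immediate: a compactly supported $u$ is past-compact, so $(P,\gamma_0)u=0$ gives $u=\tilde G^+_D(0,0)=0$. For $\tilde G_D\circ(P,\gamma_0)=0$, note that a compactly supported $u$ is both past- and future-compact. Hence
\begin{equation*}
\tilde G^+_D(P,\gamma_0)u=u=\tilde G^-_D(P,\gamma_0)u ,
\end{equation*}
and the difference vanishes.

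\emph{Exactness at $\Cinf_c(\bar M)\oplus\Cinf_c(\partial M)$.} Suppose $\tilde G_D(f,h)=0$, and set $u:=\tilde G^+_D(f,h)=\tilde G^-_D(f,h)$. Then $\supp u$ lies in $J^+(C)\cap J^-(C)$ with $C=\supp f\cup\supp h$ compact. By global hyperbolicity this set is compact, since $J^+(K_1)\cap J^-(K_2)$ is compact for compact $K_1,K_2$ (a consequence of \cref{def:GHSTB} and the Cauchy splitting of \cref{prop:properties of ghstb}). Hence $u\in\Cinf_c(\bar M)$ and $(P,\gamma_0)u=(f,h)$.

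\emph{Exactness at $\Cinf_{D,\scs}(\bar M)$.} We need $\ker P=\operatorname{ran}\tilde G_D$ there. The inclusion ``$\supseteq$'' holds because $P\tilde G^\pm_D(f,h)=f$, and the boundary condition was already checked above. For ``$\subseteq$'', take $u\in\Cinf_{D,\scs}(\bar M)$ with $Pu=0$ and $\gamma_0u=0$. Choose two Cauchy temporal levels $t_-<t_+$ and a smooth $\chi=\chi(t)$ with $\chi=0$ for $t<t_-$ and $\chi=1$ for $t>t_+$. Then $\chi u$ is past-compact: indeed $\supp u\subset J(K)$ with $K$ compact, and $J(K)\cap\{t\ge t_-\}\cap J^-(K')$ is compact for every compact $K'$. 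Similarly $(1-\chi)u$ is future-compact.

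Set $f:=P(\chi u)=[P,\chi]u=-P((1-\chi)u)$. Its support lies in $J(K)\cap t^{-1}[t_-,t_+]$, which is compact. Moreover $\gamma_0(\chi u)=\chi\gamma_0 u=0$, because the cutoff $\chi$ is a function of $t$ only. Uniqueness in \cref{cor:fundamental solution for retarded BVP} then gives
\begin{equation*}
\chi u=\tilde G^+_D(f,0),\qquad -(1-\chi)u=\tilde G^-_D(f,0).
\end{equation*}
Subtracting, $u=\tilde G_D(f,0)$. (Boundary datum $0$ suffices for this step.)

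\emph{Main obstacle.} The delicate point is the compactness claims, namely compactness of $J^+(C)\cap J^-(C)$ and of $J(K)\cap t^{-1}[a,b]$ in the presence of the boundary, where causal curves are only $\sob^1$. I would deduce both from global hyperbolicity via the Cauchy temporal function of \cref{prop:properties of ghstb}, exactly as in the boundaryless case: causal curves meet each Cauchy slice once, and \cite{ake_hau_structure_boundary_2020} supplies the needed compactness. The remaining steps are formal and only require the uniqueness of past- and future-compact solutions.
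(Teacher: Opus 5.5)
Your proof is correct and follows the paper's argument step by step: injectivity comes from $\tilde G^+_D$ being a left inverse on past-compact functions, compact support of $\tilde G^+_D(f,h)=\tilde G^-_D(f,h)$ comes from strict past/future compactness, and the last slot is handled by the time-cutoff splitting $u=\chi u+(1-\chi)u$. You are slightly more explicit than the paper in checking, via spatial compactness of $u$, that $f=[P,\chi]u$ is compactly supported; note only that $\gamma_0(\chi u)=(\chi|_{\partial M})\gamma_0 u=0$ holds for any smooth $\chi$, not just one depending on $t$.
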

\begin{proof}
	The proof is a quick adaptation of the proof for the boundaryless case by employing the propagator $\tilde G_D$.
	\begin{enumerate}
		\item Exactness at $\Cinf_c(\bar{M})$ amounts to injectivity of $(P,\gamma_0)$ on this domain. However, $(P,\gamma_0)$ is invertible on the larger domain $\Cinf_{\pc}(\bar{M})$ with inverse $\tilde G^+_D$, therefore its restriction to $\Cinf_c(\bar{M})$ is certainly injective.

		\item Exactness at $\Cinf_c(\bar{M})\oplus\Cinf_c(\partial M)$ corresponds to having $\ran((P,\gamma_0)|_{\Cinf_c(\bar{M})})=\ker \tilde G_D$.
		To see this, first let $u\in\Cinf_c(\bar{M})$ and consider $(f,v)=(Pu,\gamma_0u)\in\Cinf_c(\bar{M})\oplus\Cinf_c(\partial M)$. Then $\tilde G_D(f,v)=\tilde G_D^+(f,v)-\tilde G^-_D(f,v)=u-u=0$, having used that $\Cinf_c(\bar{M})\subset\Cinf_{\pc}\cap\Cinf_{\fc}$.
		On the other hand, assume that for some $(f,v)\in\Cinf_c(\bar{M})\oplus\Cinf_c(\partial M)$ we have $\tilde G_D(f,v)=0$, that is $\tilde G_D^+(f,v)=\tilde G_D^-(f,v)$. The right-hand side being strictly future-compact and the left-hand side strictly past-compact, we have that $u\equiv \tilde G^+_D(f,v)$ is compactly supported, and by definition it satisfies $Pu=f,\gamma_0u=v$. This proves the claim.

		\item We need to show $\ran(\tilde G_D)=\ker P|_{\Cinf_{D,\scs}}$. First, if $u=\tilde G_D(f,v)$, we already remarked that $(P,\gamma_0)\tilde G_D^\pm(f,v)=(f,v)$, so that the difference of the two vanishes. For the opposite inclusion, let $u\in\Cinf_{D,\scs}(\bar{M})$ satisfy $Pu=0$. We fix a past cutoff function $\chi\in\Cinf_{\pc}(\bar{M})$, with $1-\chi\in\Cinf_{\fc}(\bar{M})$, and consider $Pu=P(\chi u)+P((1-\chi)u)=f^++f^-$, where $f^+\in\Cinf_{\pc}$, $f^-\in\Cinf_{\fc}$. Since $Pu=0$, we have $f^+=-f^-$, moreover $\chi u=\tilde G^+_D f^+$ and $(1-\chi)u=\tilde G^-_Df^-$. Then, $f\equiv f^+$ satisfies
		\begin{align*}
			\tilde G_Df&=\tilde G^+_Df^+-\tilde G^-_Df^+=\tilde G^+_Df^++\tilde G^-_Df^-\\
			&=\chi u+(1-\chi)u=u,
		\end{align*}
		proving the claim.
	\end{enumerate}
\end{proof}

\begin{remark}\label{rem:supports and exact sequence}
	In \eqref{eq:exact sequence} it is important to consider functions whose compact supports \textit{can} intersect the boundary. Indeed, the formulation above must be invariant under the time evolution, so that the function spaces themselves have to be. However, as remarked in the Introduction, $\Cinf_c(M)$ is not invariant, as under time evolution the supports propagate and eventually reach the boundary. Notice as well that exactness at $\Cinf_c(\bar{M})$ corresponds to the classical fact that the only solution to the wave equation with vanishing source and compact support is the zero function. Here, of course, the boundary condition plays the role of restoring this injectivity.
\end{remark}

In what follows, we fix the notation $G^\pm_D(f)\equiv \tilde{G}^\pm_D(f,0)$ and refer to $G_D\equiv G_D^+- G_D^-$, seen as a map $\Cinf_c(\bar{M})\to\Cinf_{D,\scs}(\bar{M})$, as the \textit{Dirichlet Pauli--Jordan propagator} or simply the \textit{Pauli--Jordan propagator} of $P$.

We can reformulate the exact sequence as follows
\begin{equation}
	\label{eq:homogeneous exact sequence}
	0\to \Cinf_{D,c}(\bar{M})\xrightarrow{P}
	\Cinf_c(\bar M)
	\xrightarrow{G_D}\Cinf_{D,\scs}(\bar{M})\xrightarrow{P}\Cinf_{\scs}(\bar{M}).
\end{equation}
As a corollary, we find the usual quotient characterisation of the solution space (with the notation $\ker_D P=\ker(P|_{\Cinf_{D,\scs}})$)
\begin{align*}
	\label{eq:solution space as a quotient}
	\ker_D P&=\ran G_D\cong \setquotient{\Cinf_c(\bar{M})}{\ker  G_D}\\
	&=\setquotient{\Cinf_c(\bar{M})}{P\Cinf_{D,c}(\bar{M})}
\end{align*}
For the construction of states we also need
\begin{lemma}
	\label{lemma:G_D is a symplectic form}
	For $[u],[v]\in \Cinf_c(\bar M)/P\Cinf_{D,c}(\bar M)$ define the bilinear form
	\begin{equation}\label{eq:symplectic form on solutions}
		\omega([u],[v])=\int_{\bar{M}}u G_Dv \de\vol_g.
	\end{equation}
	Then $\omega$ is symplectic, i.e. skew-symmetric and nondegenerate. It induces therefore a symplectic form on $\ker_D P$.
\end{lemma}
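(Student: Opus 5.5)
The plan is to prove three things: that $\omega$ is well defined on the quotient, that it is skew-symmetric, and that it is nondegenerate. The central tool is the Green identity for the Dirichlet problem. For $\phi,\psi\in\Cinf(\bar M)$ with $\gamma_0\phi=\gamma_0\psi=0$ and $\supp\phi\cap\supp\psi$ compact,
\begin{equation*}
\int_{\bar M}\bigl(\phi\,P\psi-(P\phi)\,\psi\bigr)\de\vol_g=0 .
\end{equation*}
To see this, integrate by parts. Since $P=\Box_g+V$ is formally self-adjoint, the only contribution is a boundary integral over $\partial M$ of $\phi\,\partial_\nu\psi-(\partial_\nu\phi)\,\psi$. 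This vanishes by the Dirichlet condition.

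\emph{Well-definedness.} By \cref{lemma:exact sequence}, or equivalently \eqref{eq:homogeneous exact sequence}, one has $G_D P w=0$ for every $w\in\Cinf_{D,c}(\bar M)$. Hence replacing $v$ by $v+Pw$ does not change $G_Dv$. For the first slot I need $\int (Pw)\,G_Dv=0$. Apply the Green identity with $\phi=w$, which is compactly supported and satisfies the Dirichlet condition, and $\psi=G_Dv$, which lies in $\Cinf_{D,\scs}$. This gives $\int (Pw)G_Dv=\int w\,PG_Dv=0$, because $PG_Dv=0$.

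\emph{Skew-symmetry.} The key step is the adjoint relation
\begin{equation*}
\int u\,G^\pm_D v=\int (G^\mp_D u)\,v ,\qquad u,v\in\Cinf_c(\bar M).
\end{equation*}
Write $u=PG^-_Du$ and $v=PG^+_Dv$. Both $G^-_Du$ and $G^+_Dv$ vanish on $\partial M$, and the intersection of a strictly future-compact set with a strictly past-compact set is compact by global hyperbolicity. The Green identity with $\phi=G^-_Du$ and $\psi=G^+_Dv$ therefore yields
\begin{equation*}
\int (PG^-_Du)\,G^+_Dv=\int G^-_Du\,(PG^+_Dv),
\end{equation*}
which is exactly $\int u\,G^+_Dv=\int (G^-_Du)\,v$. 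The other sign is analogous. Combining the two relations gives $\int uG_Dv=-\int (G_Du)v$, so $\omega$ is skew.

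\emph{Nondegeneracy.} Suppose $\omega([u],[v])=0$ for all $[v]$. By skew-symmetry this means $\int (G_Du)\,v=0$ for all $v\in\Cinf_c(\bar M)$, so $G_Du=0$. By exactness in \eqref{eq:homogeneous exact sequence}, $u\in P\Cinf_{D,c}(\bar M)$, that is, $[u]=0$.

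\emph{Transfer to the solution space.} The isomorphism $\ker_DP\cong\Cinf_c(\bar M)/P\Cinf_{D,c}(\bar M)$, given by $[u]\mapsto G_Du$, carries $\omega$ to a symplectic form on $\ker_D P$.

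The main obstacle is justifying the Green identity up to the boundary. One needs the divergence theorem on the noncompact region with boundary, applied to the current $\phi\nabla\psi-\psi\nabla\phi$. Since the integrand has compact support in $\bar M$, a cutoff argument reduces this to a compact manifold with boundary, where it is standard. The only care needed is to check that the support of the cutoff's contribution is disjoint from the support of the integrand.
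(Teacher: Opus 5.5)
Your proof is correct and follows the same route as the paper: formal self-adjointness of $P$ together with the Dirichlet condition gives $(G^\pm_D)^t=G^\mp_D$, hence skew-symmetry of $G_D$, and nondegeneracy follows from $G_Du=0$ via exactness of \eqref{eq:homogeneous exact sequence}. You also make explicit two points the paper leaves implicit: the Green identity under the hypothesis that the supports meet compactly (using that strictly past- and strictly future-compact sets intersect in a compact set), and the well-definedness of $\omega$ on the quotient.
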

\begin{proof}
	Since the operator $P$ is formally self-adjoint and solutions satisfy the homogeneous Dirichlet boundary condition, the inverses of $P$ satisfy $(G^\pm_D)^t= G^\mp_D$ on $\Cinf_c(\bar{M})$. Hence, on its domain, $G_D$ is skew-symmetric, $G_D^t=-G_D$. On the other hand, let $[u]\in\Cinf_{c}(\bar{M})/P\Cinf_{D,c}(\bar{M})$ satisfy $\omega([u],[v])=(u,G_D v)=0$ for all $v\in \Cinf_{c}(\bar{M})$. Then, after transposing, we must have $G_Du=0$ by nondegeneracy of the inner product of $L^2(M)$. By \cref{lemma:exact sequence} we find $w\in \Cinf_{D,c}(\bar{M})$ with $u=Pw$, so $[u]=0$. This finishes the proof.
\end{proof}

\subsection*{Singularities}
The hypothesis that $\partial M$ is a timelike hypersurface implies that it is non-characteristic for any normally hyperbolic operator $P$ on $\bar{M}$. Therefore, the analysis of \cref{sect:distributions on manifolds with boundary} applies. We begin however with some remarks.

In the specific case of globally hyperbolic spacetimes with timelike boundaries, notice that it does not seem possible to canonically define the causal relation on the whole of $\b\Tstar\bar{M}$. However (as already remarked by \textcite{vasy_propagation_singularities_corners_2008} and exploited by \textcite{gannot_propagation_2022} on asymptotically AdS spacetimes) there is a natural definition of causality on the compressed $b$-cotangent bundle ${\tilde T^\ast \bar{M}}\equiv\varrho^t(T^\ast \bar{M})\subset \b\Tstar \bar{M}$. Indeed, the Lorentzian metric on $\bar{M}$ and the induced one on $\partial M$ allow us to give the following
\begin{definition}
	\label{def:causal compressed b-covectors}
	Fix $q\in\partial M$. A covector $\alpha\in{\tilde T_q^\ast}\bar M$ is called causal if it is the image of a causal covector $\beta\in T_q^\ast \bar{M}$ under $\varrho^t$. A covector $\alpha\in\tilde T^\ast_q\bar{M}$ is called spacelike if it is not causal. Equivalently, $\alpha$ is spacelike if all its preimages via $\varrho^t$ are spacelike covectors in $T^\ast_q\bar{M}$.

	For $\alpha$ a causal covector in ${\tilde T_q^\ast}\bar{M}$, we say that it is \textit{future-directed} (resp.\ past-directed) if it is the image of a future-directed (resp.\ past-directed) causal covector in $\Tstar_q\bar{M}$ under $\varrho^t$. Equivalently, for some Cauchy temporal function $t$ chosen near $q$ and some causal covector $\beta\in T^\ast_q\bar{M}$ with $\varrho^t(\beta)=\alpha$, we have $g^{-1}_q(\beta,\de t)>0$. The set of compressed causal covectors over all points of $\bar{M}$, $\tilde{V}\bar{M}$, is a cone bundle, which splits, after removing the zero section, into two disjoint connected components $\tilde V^+\bar{M}$ and $\tilde V^-\bar{M}$, comprising respectively all future-directed and past-directed compressed causal covectors over points of $\bar{M}$.

	The above definitions are invariant along rays in the cotangent directions in $\b\Tstar \bar{M}$, thus the sets of future- and past-directed causal covectors are conic. The same terminology will therefore be used to speak about oriented directions in the \textit{compressed cosphere bundle} $\compcosphere{\bar{M}}$, i.e. the quotient by the scaling action.
\end{definition}
\begin{remark}\label{rem:compressed causal cone}
	The reason why we do not distinguish between time- and lightlike covectors in ${\tilde T^\ast}\bar{M}$ is that, under the anchor map, both some lightlike and some timelike covectors have the same image in $\b\Tstar\bar{M}$. This will however not cause any difficulty, since we are only interested in the fact that the singularities of a solution $u$ are constrained to keep their time-orientation. In fact, over a boundary point $q$, the image via $\varrho^t$ of the light cone $\{(\tau,\xi,\eta)\in T^ \ast_q\bar{M}\colon \tau^2=\xi^2+|\eta|^2\}$ is exactly the compressed causal cone $\tilde{V}_q\bar{M}=\{(\tau,0,\eta)\in\tilde T_q^\ast \bar{M}\colon \tau^2\geq |\eta|^2\}$.
\end{remark}

If it exists, a solution of the boundary-value problem $(P,\gamma_0)$ is unique in the class $\Nscr_{\pc}(\bar{M})$ (and in $\Nscr_{\fc}(\bar{M})$ too, with a completely analogous argument).
\begin{lemma}\label{lemma:distributional uniqueness for the retarded BVP}
	Let $u\in\Nscr(\bar M)$ be past-compact and satisfy $Pu=0$, $\gamma_0u=0$.
	Then $u=0$.
\end{lemma}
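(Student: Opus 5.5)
Uniqueness will follow by duality from smooth solvability of the advanced problem (\cref{theo:propagators} with reversed time orientation), in the style of Holmgren. Fix an arbitrary test function $\phi\in\Cinf_c(M)$; since $u\in\Nscr(\bar M)\subset\Ascr'(\bar M)$ has a canonical supported representative (\cref{lemma:extensible=supported for dual-conormal}), it suffices to prove $\braket{u,\phi}=0$ for $\phi\in\Cinf_c(\bar M)$, and in particular for $\phi\in\Cinf_c(M)$, which already determines $u$ as an extensible distribution. Let $w=G^-_D\phi\in\Cinf_{\fc}(\bar M)$ be the advanced solution, so that $Pw=\phi$ and $\gamma_0 w=0$. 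Because $\supp u$ is past-compact and $\supp w\subset J^-(\supp\phi)$ is strictly future-compact, the intersection $\supp u\cap\supp w$ is compact. Choose a cutoff $\chi\in\Cinf_c(\bar M)$ equal to $1$ near this intersection, and write $\braket{u,\phi}=\braket{u,P(\chi w)}$ up to terms supported where $u$ or $w$ vanishes, i.e. $\braket{u,\phi}=\braket{u,P(\chi w)}$ exactly.

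The core step is a Green formula for $u\in\Nscr$ against a smooth compactly supported $\psi=\chi w$ with $\gamma_0\psi=0$:
\begin{equation*}
\braket{u,P\psi}-\braket{Pu,\psi}=\int_{\partial M}\bigl(\gamma_0(\partial_\nu u)\,\gamma_0\psi-\gamma_0 u\,\gamma_0(\partial_\nu\psi)\bigr)\de S .
\end{equation*}
Here I use \cref{theo:smoothness and solvability in N'}(1): near $\partial M$, $u\in\Cinf([0,\eps);\Dscr'(\partial M))$, so all traces exist (\cref{prop:restriction and boundary WF set}) and the formula can be derived by writing $P$ in adapted coordinates, integrating by parts in the tangential variables distributionally and in $x$ classically (cutting off at $x=\delta$ and letting $\delta\to0$, the boundary terms at $x=\delta$ converge to the traces by the smoothness in $x$). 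Alternatively, one approximates $u$ by $u_\delta(x,y)=u(x+\delta,y)$-type tangential mollifications in the collar, which converge in $\Nscr$. Since $Pu=0$, $\gamma_0u=0$ and $\gamma_0\psi=0$, all terms vanish and $\braket{u,\phi}=0$.

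The main obstacle is justifying this Green formula rigorously for normally regular distributions, in particular that the pairing $\braket{u,P\psi}$ for the supported representative coincides with the interior computation and that no hidden boundary-supported terms in $\dot\Dscr'_{\partial M}(\bar M)$ appear; this is precisely controlled by \cref{lemma:extensible=supported for dual-conormal} together with the $\Cinf$-in-$x$ structure, and I would carry it out carefully in a collar with the $\delta$-cutoff limit. Once $\braket{u,\phi}=0$ for all $\phi\in\Cinf_c(M)$, $u|_M=0$, and since $u\in\Ascr'$ its supported representative then lies in $\dot\Ascr'\cap\dot\Dscr'_{\partial M}=\{0\}$, so $u=0$.
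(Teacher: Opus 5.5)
Your proposal is correct and is essentially the paper's argument. The paper also pairs $u$ with a cut-off advanced solution $\theta\,G^-_D\chi$, uses compactness of $\supp u\cap\supp G^-_D\chi$, and justifies Green's identity by integrating over $\{x\ge\delta\}$ and letting $\delta\downarrow0$ via the $\Cinf$-in-$x$ structure of $u\in\Nscr(\bar M)$. The only cosmetic difference is that you test against interior $\phi\in\Cinf_c(M)$ and then invoke $\dot\Ascr'(\bar M)\cap\dot\Dscr'_{\partial M}(\bar M)=\{0\}$, whereas the paper tests directly against all of $\Cinf_c(\bar M)$ through the canonical supported representative.
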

\begin{proof}
	By \cref{theo:smoothness and solvability in N'} (1), $u$ has well-defined traces
	$\gamma_ju=(\partial_x^ju)|_{x=0}\in\Dscr'(\partial M)$, and by
	\cref{lemma:extensible=supported for dual-conormal} it has a unique supported
	representative, so $\braket{u,\phi}$ is defined for all $\phi\in\Cinf_c(\bar M)$.

	Let $\chi\in\Cinf_c(\bar M)$ and set $w=G^-_D\chi\in\Cinf_{\sfc}(\bar M)$, so that
	$Pw=\chi$, $\gamma_0w=0$, $\supp w\subset J^-(\supp\chi)$. As $\supp u$ is
	past-compact and $\supp w$ strictly future-compact, $\supp u\cap\supp w$ is
	compact; pick $\theta\in\Cinf_c(\bar M)$ with $\theta\equiv1$ near it and put
	$W=\theta w\in\Cinf_c(\bar M)$, which still satisfies $\gamma_0W=0$. Since
	$PW=\theta\chi+[P,\theta]w$ and $\supp([P,\theta]w)\cap\supp u=\emptyset$, we get
	$\braket{u,\chi}=\braket{u,PW}$. Green's identity for the formally self-adjoint $P$,
	\[
	\braket{u,PW}-\braket{Pu,W}
	=\braket{\gamma_1u,\gamma_0W}_{\partial M}-\braket{\gamma_0u,\gamma_1W}_{\partial M},
	\]
	holds for $u\in\Nscr(\bar M)$ and $W\in\Cinf_c(\bar M)$: integrate by parts on
	$\{x\ge\delta\}$ and let $\delta\downarrow0$, the boundary terms converging because
	$u$ is a smooth function of $x$ with values in $\Dscr'(\partial M)$. Both right-hand
	terms vanish and $Pu=0$, so $\braket{u,\chi}=0$ for all $\chi$, i.e. $u=0$.
\end{proof}

The singularities of solutions of the boundary-value problem for $P$ are constrained to the compressed light cone bundle:
\begin{theorem}\label{theo:singularities of the BVP}
	Let $P$ be scalar and normally hyperbolic, and assume we have $u\in \Nscr(\bar{M})$ solving $Pu=0, \gamma_0 u=0$. Then $\WF_b(u)\subset\tilde{V}\bar M$.
\end{theorem}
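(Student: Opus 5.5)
We split $\WF_b(u)$ into its interior and boundary parts, using the canonical decomposition available for $u\in\Nscr(\bar M)$:
\begin{equation*}
\WF_b(u)=\WF(u)|_{M}\,\dot\cup\,\partial\WF(u).
\end{equation*}
We then treat the two pieces separately.

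\emph{Interior points.} Over $M$ the $b$-cotangent bundle is canonically $\Tstar M$, and $\WF_b(u)$ coincides with the ordinary wave-front set there. Since $Pu=0$ in $M$, microlocal elliptic regularity (Hörmander) gives $\WF(u)|_M\subset\Char(P)$. Here $\Char(P)=\{g^{-1}(\zeta,\zeta)=0\}$ is the light cone, and the compressed cone over interior points is just the ordinary causal cone, since $\varrho^t$ is the identity away from $\partial M$. This case is immediate.

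\emph{Boundary points.} Fix $q\in\partial M$. By \cref{theo:b-WF with elliptic boundary conditions} applied with $B=\Id$ (the boundary is non-characteristic because it is timelike), we get $\partial\WF(u)\subset\Char(P;\Id)$, so it suffices to compute this boundary characteristic set at $q$. Both conditions in \eqref{eq:joint characteristic set} depend only on principal symbols frozen at $q$, so I would pass to adapted coordinates near $q$. In these coordinates $t$ is a Cauchy temporal function with $\nabla t$ tangent to $\partial M$ (\cref{prop:properties of ghstb}), $x$ is a boundary-defining function with $\de x$ $g$-orthogonal to $T\partial M$, and at $q$ the metric is Minkowskian:
\begin{equation*}
g_q^{-1}=\partial_t^2-\partial_x^2-\textstyle\sum_j\partial_{y^j}^2 .
\end{equation*}
This normalisation is achieved by a linear change of the tangential frame together with choosing $x$ as the normal-geodesic distance. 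The frozen problem is then exactly the model computation of \cref{theo:DBC satisfies LS}, which yields
\begin{equation*}
\Char(P;\Id)_q=\{(\tau,\eta)\colon \tau^2\ge|\eta|^2\}.
\end{equation*}
By \cref{rem:compressed causal cone}, this set is the compressed causal cone $\tilde V_q\bar M$. Lower-order terms, including $V$, do not enter, since $X^+$ and $\LS$ depend only on principal symbols.

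\emph{Main obstacle.} The delicate point is making the reduction to the model invariant. One must check that $\Char(P;\Id)$ as defined in \eqref{eq:joint characteristic set} is intrinsic, that is, independent of the choice of $\nu$ and of the coordinate representation of $\zeta+D_s\nu$. One must also check that at $q$ it matches the image under $\varrho^t$ of the causal cone of $g_q$. I would handle this by noting two facts. First, the existence of a characteristic preimage under $\varrho^t$ is manifestly invariant. Second, the Lopatinskij--Shapiro failure set, where $X^+$ is not mapped injectively by evaluation, occurs precisely when the quadratic $p(\zeta+s\nu)$ in $s$ has two real roots. This is again an invariant condition, and it is equivalent to the existence of a causal covector in $(\varrho^t)^{-1}(\eta)$. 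In the glancing case the root is double and real, which is still causal, so the boundary of the cone is included. Combining the two cases gives $\WF_b(u)\subset\tilde V\bar M$.
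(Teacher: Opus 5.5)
Your proposal is correct and follows essentially the same route as the paper. Interior singularities are handled via the characteristic set of $P$. Boundary singularities are handled via \cref{theo:b-WF with elliptic boundary conditions} with $B=\Id$, reduced by freezing principal symbols at $q$ to the model computation of \cref{theo:DBC satisfies LS}, and identified with $\tilde V_q\bar M$ through \cref{rem:compressed causal cone}.

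There is one small slip in your invariance discussion. The Lopatinskij--Shapiro condition fails only when the roots are real and \emph{distinct}, i.e.\ when there is a timelike preimage. Glancing covectors lie in $\Char(P;\Id)$ through the characteristic-preimage clause instead. That clause alone already produces the whole closed cone $\tau^2\ge|\eta|^2$, so the LS computation is needed only to confirm ellipticity where $\tau^2<|\eta|^2$.
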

\begin{proof}
	In the interior, this is part of the standard theory. At the boundary, it suffices to argue locally. Moreover, the conditions only depend on the principal symbols. Therefore we are, after localisation, exactly in the situation of \cref{theo:DBC satisfies LS}. The proof is finished since the set $\Char(P;B)$ computed there is, in local adapted coordinates, precisely the compressed causal cone bundle, see \cref{rem:compressed causal cone}.
\end{proof}

The final result of this section concerns the preservation of positive energy for solutions, and will be key in establishing the existence of Hadamard states.
\begin{theorem}
	\label{theo:propagation of future singularities}
	Let $\bar{M}$ be a globally hyperbolic spacetime with timelike boundary, $\bar\Sigma$ any Cauchy surface in $\bar{M}$. Assume that $u\in\Nscr(\bar{M})$ is a solution to
	\begin{equation}
		\label{eq:vanishing mixed problem}
		\left\{\begin{aligned}
			P u&=0\\
			u|_{\partial M}&=0.
		\end{aligned}\right.
	\end{equation}
	Assume furthermore that there is an open neighbourhood $\mathcal U\simeq (-1,1)\times \bar{\Sigma}$ of $\bar\Sigma$ such that $\WF_b(u|_{\mathcal U})\subset \tilde V^+{\mathcal U}$. Then, $\WF_b(u)\subset \tilde V^+\bar{M}$. Moreover, the same is true in the Hilbert-space-valued case.
\end{theorem}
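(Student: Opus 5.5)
We follow the strategy announced in the introduction: a microlocal splitting of $u$ by the small $b$-calculus, followed by a global argument that uses only well-posedness of the Dirichlet problem.

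\emph{Step 1: splitting.} By \cref{theo:singularities of the BVP}, $\WF_b(u)\subset\tilde V\bar M=\tilde V^+\bar M\cup\tilde V^-\bar M$. These two closed conic sets are disjoint away from the zero section. We choose properly supported $Q_\pm\in\Psi^0_b(\bar M)$ with the following properties:
\begin{itemize}
\item $Q_++Q_-\equiv\Id$ microlocally near $\tilde V\bar M$, so that the remainder $Q_s=\Id-Q_+-Q_-$ has $b$-wave-front set disjoint from $\tilde V\bar M$;
\item $\mu\text{-}\supp_b(Q_\pm)$ lies in a small conic neighbourhood of $\tilde V^\pm\bar M$.
\end{itemize}
Such operators can be built by quantising homogeneous cutoffs on $\bSs{\bar M}$ and using a locally finite partition of unity on $\bar M$. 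Put $u_\pm=Q_\pm u$ and $u_s=Q_s u$. Then $\WF_b(u_s)\subset\WF_b(u)\cap\mu\text{-}\supp_b(Q_s)=\emptyset$, and since $u_s\in\Nscr$ by \cref{theo:smoothness and solvability in N'}(2), \cref{lemma:conormal+dual-conormal=smooth} gives $u_s\in\Cinf(\bar M)$. It therefore remains to show $u_-\in\Cinf(\bar M)$.

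\emph{Step 2: the past piece near $\bar\Sigma$ and its equation.} On $\mathcal U$ we have $\WF_b(u_-)\subset\WF_b(u)\cap\mu\text{-}\supp_b(Q_-)\subset\tilde V^+\cap(\text{nbhd of }\tilde V^-)=\emptyset$. Hence $u_-$ is smooth on $\mathcal U$.

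The piece $u_-$ is not an exact solution, since $P$ does not commute with $Q_-$. We have $f_-:=Pu_-=[P,Q_-]u$. The commutator is only $b$-pseudodifferential after multiplying by $x^2$, so this step requires care. We aim to show $\WF_b(f_-)\subset\WF_b(u)\cap\mu\text{-}\supp_b(\text{symbol of }Q_-\text{ non-constant})$. Choosing the symbol of $Q_-$ to be identically $1$ near $\tilde V^-$ and $0$ near $\tilde V^+$, this set is empty, so $f_-$ is smooth. Similarly $h_-:=\gamma_0u_-$ satisfies $\WF(h_-)\subset\bWF(u_-)$ by \cref{prop:restriction and boundary WF set}; we control it in the same way by exploiting $\gamma_0u=0$ and the fact that $\gamma_0Q_-$ acts tangentially.

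If the commutator is only known to be smooth modulo terms microsupported where $u$ is regular, we instead use ellipticity: away from $\tilde V\bar M$ the Dirichlet problem is elliptic in the sense of \cref{theo:b-WF with elliptic boundary conditions}, so the errors can be absorbed into smooth terms.

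\emph{Step 3: global smoothness.} Choose a time cutoff $\chi$ equal to $1$ to the future of $\{t>1/2\}$ and $0$ for $t<-1/2$. Then
\[
P(\chi u_-)=\chi f_-+[P,\chi]u_-,\qquad \gamma_0(\chi u_-)=\chi h_-.
\]
The data on the right are smooth and past-compact, because $[P,\chi]$ is supported in $\mathcal U$, where $u_-$ is smooth. Hence \cref{theo:propagators} produces a smooth past-compact solution with the same data. The difference between this solution and $\chi u_-$ lies in $\Nscr_{\pc}$ and solves the homogeneous problem, so it vanishes by \cref{lemma:distributional uniqueness for the retarded BVP}. Thus $\chi u_-$ is smooth. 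The past direction is handled symmetrically with the advanced problem and $1-\chi$. Consequently $u\equiv u_+$ modulo $\Cinf(\bar M)$, and $\WF_b(u)\subset\WF_b(u_+)\subset$ a small neighbourhood of $\tilde V^+$. Intersecting with $\tilde V\bar M$ and letting the neighbourhood shrink yields $\WF_b(u)\subset\tilde V^+\bar M$.

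\emph{Hilbert-space-valued case.} Apply the scalar result to each $u_v$ and use \cref{theo:weak and strong b-wave-front set}. Since $\tilde V^+\bar M$ is closed, the closure of the union is still contained in it.

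\emph{Main obstacle.} The hardest step is Step 2, namely showing that $f_-=[P,Q_-]u$ and $\gamma_0Q_-u$ are smooth. The difficulty is that $P\notin\Psi_b$, so one must work with $x^2P$ or use the normal regularity from \cref{theo:smoothness and solvability in N'} to reduce $P$ to tangential operators. Handling the boundary trace of $Q_-u$ also needs care. My first attempt would be to take $Q_-$ with symbol independent of the $b$-normal variable $\xi$ near $\partial M$, i.e. essentially a tangential operator. This is permissible because $\bWF(u)\subset\Tstar\partial M$ for $u\in\Nscr$. Such a $Q_-$ commutes with $\gamma_0$ up to smoothing, and its commutator with $\partial_x^2$ involves only tangential operators and $\partial_x$.
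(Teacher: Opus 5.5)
Your overall architecture is the paper's: microlocal splitting, smoothness of $u_s$ and of $u_-$ on $\mathcal U$, a time cutoff, the retarded/advanced smooth solution together with \cref{lemma:distributional uniqueness for the retarded BVP}, and \cref{theo:weak and strong b-wave-front set} for the $\Hcal$-valued case. The gap is the step you yourself call the main obstacle, and Step~3 cannot start without it, because it needs $f_-=Pu_-\in\Cinf(\bar M)$ and $h_-=\gamma_0u_-\in\Cinf(\partial M)$. For $f_-$ you only state the \emph{aim} $\WF_b([P,Q_-]u)\subset\WF_b(u)\cap\{q_-\ \text{non-constant}\}$. For $h_-$ you only say it is controlled ``in the same way''. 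The ellipticity fallback just restates that $u$ is regular away from $\tilde V\bar M$, which you had already used.

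The suggested first attempt is also not sound as stated. A symbol independent of $\xi$ near $\partial M$ has $(\tau,\eta)$-derivatives that do not decay as $|\xi|\to\infty$ unless it is constant. So it violates the $b$-symbol estimates in the full fibre variables, and such a $Q_-$ is not in $\Psi^0_b(\bar M)$. The claim that it commutes with $\gamma_0$ up to smoothing is likewise unsupported.

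The missing idea is that no commutator analysis is needed. Since $Pu=0$ and $u-u_+-u_-=u_s\in\Cinf(\bar M)$, you have $Pu_-\equiv-Pu_+$ modulo $\Cinf(\bar M)$. Part (b) of \cref{lemma:properties of b-wave-front set} holds for $T\in\Diff(\bar M)$, so $P\notin\Psi_b$ is no obstruction. It gives $\WF_b(Pu_\pm)\subset\WF_b(u_\pm)\subset\tilde V^\pm\bar M$. The two sides of the congruence thus have $b$-wave-front sets in disjoint sets, so both are empty, and since $Pu_\pm\in\Nscr(\bar M)$ they are smooth.

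The trace is handled the same way. From $\gamma_0u=0$ you get $\gamma_0u_-\equiv-\gamma_0u_+$ modulo $\Cinf(\partial M)$. \cref{prop:restriction and boundary WF set} gives $\WF(\gamma_0u_\pm)\subset\bWF(u_\pm)\subset\tilde V^\pm\bar M$, so both traces are smooth. You already wrote down the inclusion for $h_-$; what is missing is the comparison with $u_+$.

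Your commutator route could also be closed. One would write $x^2Pu_-=[x^2P,Q_-]u$, where $[x^2P,Q_-]\in\Psi^1_b$ is microsupported where $q_-$ is non-constant, and note that multiplying by $x^2$ only shifts the Mellin line. That is extra work the paper's argument avoids, and it is not carried out in your proposal. With Step~2 replaced as above, the rest of your argument goes through as in the paper.
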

\begin{proof}
	We divide the proof in several steps.
	
	\emph{Step 1: Microlocal splitting.} We decompose $u$ according to the time-orientation of its singularities. To this end, we cover $\tilde\Sstar{\bar{M}}\subset\bSs\bar{M}$ with open sets $U_+,U_-,U_s$, as follows:
	\begin{enumerate}
		\item $U_+$ is a small open neighbourhood (in $\bSs\bar{M}$) of the set $K_+=\{[\varsigma]\in\compcosphere{\bar{M}}\colon \varsigma\in \tilde V^+\bar{M}\}$ with $U_+\cap K_-=\emptyset$;
		\item $U_-$ is an open neighbourhood of $K_-$, which is the same as $K_+$ but with $\varsigma\in \tilde V^-\bar{M}$, and $U_-$ does not intersect $U_+$;
		\item $U_s$ is any open set in $\bSs{\bar{M}}$, not intersecting $K_+\cup K_-$ and such that $\{U_+,U_-,U_s\}$ covers $\bSs{\bar{M}}$.
	\end{enumerate}
	Sets as above can be constructed by choosing a Riemannian metric on $\bTs\bar{M}$ and taking $\epsilon$-neighbourhoods of $K_\pm$. Notice as well that $K_+$ and $K_-$ are closed and disjoint subsets of $\bSs{\bar{M}}$. Figure \cref{fig:microlocal partition} depicts, for $q\in\partial M$, the compressed causal cone in $\bTs_q\bar{M}$ and the neighbourhoods $U_\bullet$ that we use.
	
	\begin{figure}[ht]\label{fig:microlocal partition}
		\centering
		\begin{tikzpicture}[
			line cap=round, line join=round,
			bproj/.style={x={(-1.1589cm,-0.3873cm)},   
				y={( 0.9524cm,-0.4713cm)},   
				z={( 0cm      , 1.3703cm)}}, 
			hidden/.style={dash pattern=on 1.5pt off 1.5pt},
			fatdash/.style={dash pattern=on 2.6pt off 2.2pt},
			axs/.style={-{Stealth[length=2.1mm,width=1.6mm]},cax,line width=.45pt},
			axsh/.style={cax,line width=.45pt,hidden},
			eq/.style={cax,line width=.4pt},
			lbl/.style={font=\footnotesize,inner sep=1.5pt},
			slbl/.style={font=\scriptsize,inner sep=1.5pt},
			lead/.style={cs,line width=.35pt},
			dot/.style={circle,inner sep=0pt,minimum size=2.8pt,fill=csph},
			]
			
			\colorlet{cwall}{black!7}
			\colorlet{cwalle}{black!42}
			\colorlet{csph}{black!72}
			\colorlet{cax}{black!62}
			\colorlet{cfut}{blue!58!black}
			\colorlet{cfutf}{blue!42}
			\colorlet{cpast}{orange!85!black}
			\colorlet{cpastf}{orange!62}
			\colorlet{cs}{black!50}
			
			\begin{scope}[bproj]
				\fill[cwall] (0,-1.25,-1.25) -- (0,1.25,-1.25) -- (0,1.25,1.25) -- (0,-1.25,1.25) -- cycle;
				\draw[cwalle,line width=.4pt]
				(0,-1.25,-1.25) -- (0,1.25,-1.25) -- (0,1.25,1.25) -- (0,-1.25,1.25) -- cycle;
				
				\fill[cfutf,opacity=.55]  (0,0,0) -- (0,-1.25,1.25) -- (0,1.25,1.25) -- cycle;
				\fill[cpastf,opacity=.55] (0,0,0) -- (0,-1.25,-1.25) -- (0,1.25,-1.25) -- cycle;
				\draw[cfut,line width=.6pt]  (0,-1.25,1.25) -- (0,0,0) -- (0,1.25,1.25);
				\draw[cpast,line width=.6pt] (0,-1.25,-1.25) -- (0,0,0) -- (0,1.25,-1.25);
				
				\draw[eq,hidden]
				plot[variable=\th,domain=150:315,samples=90,smooth] (0,{sin(\th)},{cos(\th)});
				\draw[cfut,line width=1.4pt,fatdash]
				plot[variable=\th,domain=-45:-30,samples=20,smooth] (0,{sin(\th)},{cos(\th)});
				\draw[cpast,line width=1.4pt,fatdash]
				plot[variable=\th,domain=150:225,samples=50,smooth] (0,{sin(\th)},{cos(\th)});
				\draw[axsh] (0,0,0) -- (1,0,0);
				\draw[axsh] (0,0,0) -- (-1.228,0,0);
				\draw[axsh] (0,0,0) -- (0,1,0);
				\draw[axsh] (0,0,0) -- (0,0,1);
			\end{scope}
			
			\shade[ball color=black!35,opacity=.16] (0,0) circle[radius=1.5cm];
			\draw[csph,line width=.85pt]            (0,0) circle[radius=1.5cm];
			
			\begin{scope}[bproj]
				\draw[eq]
				plot[variable=\th,domain=-30:150,samples=95,smooth] (0,{sin(\th)},{cos(\th)});
				\draw[cfut,line width=1.4pt]
				plot[variable=\th,domain=-30:45,samples=45,smooth] (0,{sin(\th)},{cos(\th)});
				\draw[cpast,line width=1.4pt]
				plot[variable=\th,domain=135:150,samples=20,smooth] (0,{sin(\th)},{cos(\th)});
				
				\draw[axs] (1,0,0) -- (1.66,0,0) node[lbl,pos=1,below=1pt] {$\xi$};
				\draw[axs] (0,1,0) -- (0,1.62,0) node[lbl,pos=1,below right=-2pt] {$\eta$};
				\draw[axs] (0,0,1) -- (0,0,1.62) node[lbl,pos=1,left=1pt] {$\tau$};
				
				\node[circle,inner sep=0pt,minimum size=3pt,draw=csph,line width=.45pt,
				fill=white] at (0,0,0) {};
			\end{scope}
			
			\node[lbl,text=cfut]  at (-0.62, 1.75) {$\widetilde V^+$};
			\node[lbl,text=cpast] at ( 0.62,-1.75) {$\widetilde V^-$};
			\node[slbl,text=cfut,anchor=east,fill=white,fill opacity=.85,text opacity=1,
			inner sep=1.2pt] at (-0.74, 1.33) {$K_+$};
			\node[slbl,text=cpast,anchor=west,fill=white,fill opacity=.85,text opacity=1,
			inner sep=1.2pt] at ( 0.74,-1.33) {$K_-$};
			\node[slbl,text=cs,anchor=west,fill=white,fill opacity=.85,text opacity=1,
			inner sep=1.2pt] at ( 0.10,-0.12) {$0$};
			\node[lbl,text=csph,anchor=north east,fill=white,fill opacity=.85,
			text opacity=1] at (-1.22,-1.10) {${}^{b}\mathbb{S}^{*}_{q}\overline{M}$};
			\node[lbl,anchor=north] at (0.35,-2.36) {$T^{*}_{q}\partial M=\{\xi=0\}$};
			\node[lbl,anchor=north] at (0,-2.86)
			{(a)\ \ $\widetilde V_q\overline{M}=\widetilde V^+\,\dot\cup\,\widetilde V^-$};
			
			\begin{scope}[shift={(6.05cm,0)}]
				\begin{scope}[bproj]
					\draw[eq,hidden]
					plot[variable=\th,domain=150:315,samples=90,smooth] (0,{sin(\th)},{cos(\th)});
					\draw[cfut,line width=1.4pt,fatdash]
					plot[variable=\th,domain=-45:-30,samples=20,smooth] (0,{sin(\th)},{cos(\th)});
					\draw[cpast,line width=1.4pt,fatdash]
					plot[variable=\th,domain=150:225,samples=50,smooth] (0,{sin(\th)},{cos(\th)});
				\end{scope}
				
				\shade[ball color=black!35,opacity=.16] (0,0) circle[radius=1.5cm];
				\draw[csph,line width=.85pt]            (0,0) circle[radius=1.5cm];
				
				\begin{scope}
					\clip (0,0) circle[radius=1.5cm];
					\begin{scope}[bproj]
						\draw[cfutf,opacity=.45,line width=12pt,line cap=round]
						plot[variable=\th,domain=-45:45,samples=55,smooth] (0,{sin(\th)},{cos(\th)});
						\draw[cpastf,opacity=.45,line width=12pt,line cap=round]
						plot[variable=\th,domain=135:225,samples=55,smooth] (0,{sin(\th)},{cos(\th)});
					\end{scope}
				\end{scope}
				
				\begin{scope}[bproj]
					\draw[eq]
					plot[variable=\th,domain=-30:150,samples=95,smooth] (0,{sin(\th)},{cos(\th)});
					\draw[cfut,line width=1.4pt]
					plot[variable=\th,domain=-30:45,samples=45,smooth] (0,{sin(\th)},{cos(\th)});
					\draw[cpast,line width=1.4pt]
					plot[variable=\th,domain=135:150,samples=20,smooth] (0,{sin(\th)},{cos(\th)});
					
					\node[dot] at ( 1,0,0) {};
					\node[dot] at (-1,0,0) {};
				\end{scope}
				
				\node[lbl,text=cfut,anchor=south]  at (0, 1.66) {$U_+\cap \bSs_q\bar{M}$};
				\node[lbl,text=cpast,anchor=north] at (0,-1.66) {$U_-\cap \bSs_q\bar{M}$};
				\node[slbl,text=cfut,anchor=north]  at (0, 1.16) {$K_+\cap \bSs_q\bar{M}$};
				\node[slbl,text=cpast,anchor=south] at (0,-1.16) {$K_-\cap \bSs_q\bar{M}$};
				
				\draw[lead] (-2.16,-0.85) -- (-1.05,-0.85);
				\node[lbl,text=cs,anchor=east] at (-2.14,-0.85) {$U_s\cap \bSs_q\bar{M}$};
				\draw[lead] ( 2.16, 0.85) -- ( 1.05, 0.85);
				\node[lbl,text=cs,anchor=west] at ( 2.14, 0.85) {$U_s\cap \bSs_q\bar{M}$};
				
				\node[slbl,anchor=south,fill=white,fill opacity=.85,text opacity=1,
				inner sep=1.2pt] at (-1.16,-0.30) {$\tfrac{\mathrm{d}x}{x}$};
				\node[slbl,anchor=north,fill=white,fill opacity=.85,text opacity=1,
				inner sep=1.2pt] at ( 1.16, 0.30) {$-\tfrac{\mathrm{d}x}{x}$};
				
				\node[lbl,anchor=north] at (0,-2.86)
				{(b)\ \ the sets $U_\bullet$ in the fibre ${}^{b}\mathbb{S}^{*}_{q}\overline{M}$};
			\end{scope}
		\end{tikzpicture}
	\end{figure}

	Pick a $b$-microlocal partition of unity subordinate to this cover, $\{B_+,B_-,B_s\}$, and write $I=B_++B_-+B_s$ up to an operator in $\Psi_b^{-\infty}(\bar{M})$. Since each $B_\bullet$ preserves $\WF_b(u)$, we obtain distributions $u_\bullet=B_\bullet u\in\Nscr(\bar{M})$ for $\bullet\in\{+,-,s\}$, microlocalised in the corresponding set and satisfying $u=u_++u_-+u_s\mod\Ascr(\bar{M})$.

	\emph{Step 2: Smoothness of the spacelike part.} We now claim that $u_s$ is smooth: on the one hand, by definition of $B_s$, the $b$-wave-front set of $u_s$ does not intersect $\tilde V^\pm\bar{M}$, on the other we have $\WF_b(u_s)\subset\WF_b(u)$ by \cref{lemma:properties of b-wave-front set}, so that $\WF_b(u_s)\subset \tilde{V}\bar{M}$. Therefore $\WF_b(u_s)$ is empty, so that $u_s\in\Ascr(\bar{M})$.

	\emph{Step 3: Uniqueness of the splitting.} The splitting $u=u_++u_-$ is then unique up to $\Cinf(\bar{M})$. Indeed, if $\tilde{u}_++\tilde{u}_-$ is another such splitting, then
	\[
	u_+-\tilde{u}_+=\tilde{u}_--u_-\mod \Ascr(\bar{M}).
	\]
	Since the distributions $u_+-\tilde u_+$ and $u_--\tilde{u}_-$ are microlocalised, respectively, in $\tilde V^+\bar{M}$ and $\tilde V^-\bar{M}$, and $\tilde V^-\bar{M}\cap \tilde V^+\bar{M}$ is empty, the $b$-wave-front sets of the two sides both have to be empty. Thus,
	\[
	\tilde{u}_+-u_+\in{\Ascr}(\bar{M})\cap\Nscr(\bar{M})=\Cinf(\bar{M}),
	\]
	proving essential uniqueness of the splitting.

	\emph{Step 4: Regularity of $Pu_\pm$.} The equation $Pu=0$ implies that $Pu_+=-P u_-\mod\Cinf(\bar{M})$ and $\WF_b(Pu_\bullet)\subset \WF_b(u_\bullet)$ by \cref{lemma:properties of b-wave-front set}, so that the $b$-wave-front set of either side has to be empty. Thus, by construction, $P u_{\pm}\in\Cinf(\bar{M})$.

	\emph{Step 5: Smoothness of $u_-$ in the future.} Choose now a time function $t\colon \bar{M}\rightarrow\R$ for which $\mathcal{U}\supset t^{-1}(-1,1)$ ($\mathcal U$ being the neighbourhood in the assumption) and a cutoff $\psi\in\Cinf(\bar{M},[0,1])$ such that
	\[\begin{aligned}
		\psi(q)&\equiv 0 \quad\text{if}\quad t(q)<-1, \\
		\psi(q)&\equiv 1 \quad\text{if}\quad t(q)>1.
	\end{aligned}\]
	Then $v_-\equiv\psi u_-\in\Nscr(\bar{M})$ satisfies $v_-\in\Cinf(\mathcal{U})$ and $v_-\equiv0$ on $t^{-1}(-\infty,-1)$. Moreover, $P v_-=f_-\in\Cinf_{\pc}(\bar{M})$. Indeed,
	\begin{equation*}
		Pv_-=\psi P u_-+[P,\psi]u_-,
	\end{equation*}
	where the first term is smooth by the previous paragraph, while $[P,\psi]$ has nonzero coefficients only in $\{-1\le t\le1\}\subset\mathcal U$, where $u_-$ is smooth.

	An analogous argument shows that $\gamma_0u_\pm\in\Cinf(\partial M)$. Recall first that since $u_\bullet\in\Nscr(\bar{M})$, the
	restrictions $\gamma_0u_\bullet\in\Dscr'(\partial M)$ are well defined by
	\cref{prop:restriction and boundary WF set}. Applying $\gamma_0$ to
	$u=u_++u_-\mod\Cinf(\bar{M})$ and using $\gamma_0u=0$ gives
	\begin{equation}\label{eq:traces of the two halves}
		\gamma_0u_+=-\gamma_0u_-\mod\Cinf(\partial M).
	\end{equation}
	On the other hand $\WF_b(u_\pm)\subset\tilde V^\pm\bar{M}$ by definition of the microlocal partition of unity, so that
	\cref{prop:restriction and boundary WF set} yields
	\[
	\WF(\gamma_0u_\pm)\subset\partial\WF(u_\pm)
	\subset\tilde V^\pm\bar{M}.
	\]
	Since the sets $\tilde{V}^+\bar{M}$ and $\tilde{V}^-\bar{M}$ are disjoint we must have $\WF(\gamma_0u_+)=\WF(\gamma_0u_-)$, so both are empty and
	$\gamma_0u_\pm\in\Cinf(\partial M)$.

	Consequently $\gamma_0v_-=(\psi|_{\partial M})\,\gamma_0u_-$ is smooth and vanishes for $t<-1$, that is, $\gamma_0v_-\in\Cinf_{\pc}(\partial M)$. Let $w\equiv\tilde G^+_D(f_-,\gamma_0v_-)\in\Cinf_{\pc}(\bar{M})$ be the smooth solution given by \cref{cor:fundamental solution for retarded BVP}. Then $w-v_-$ is past-compact, lies in $\Nscr(\bar{M})$ and satisfies $P(w-v_-)=0$, $\gamma_0(w-v_-)=0$, so that by \cref{lemma:distributional uniqueness for the retarded BVP} it vanishes. That is, $v_-=w\in\Cinf_{\pc}(\bar{M})$, which means in particular that $u_-$ is smooth for $t>1$, since $\psi=1$ there.

	\emph{Step 6: End of the proof.} The analogous argument for the advanced problem applies to $(1-\psi)u_-$ and shows that $u_-\in\Cinf(t^{-1}(-\infty,-1))$, and thus everywhere, away from $t^{-1}(\{-1,1\})$. However, we may always choose another cutoff $\tilde{\psi}$ with $\tilde{\psi}=0$ on $t^{-1}(-\infty,-1+\eps)$ and $\tilde{\psi}=1$ on $t^{-1}(1-\eps,\infty)$, and the same argument shows that $u_-$ is smooth away from $\{t=-1+\eps,1-\eps\}$. Thus, $u_-$ is in fact smooth everywhere. As a consequence, we have $u=u_+\mod\Ascr(\bar{M})$ and $\WF_b(u)=\WF_b(u_+)\subset\tilde{V}^+\bar{M}$, which is the claim for the scalar case. If $u$ takes values in a Hilbert space $\Hcal$, the same computation shows that, for any $v\in\Hcal$, $\WF_b((v,u))\subset \tilde{V}^+\bar{M}$. By \cref{theo:weak and strong b-wave-front set}, $\WF_b(u)=\bar{\cup_{v\in\Hcal}\WF_b((v,u))}$, and since $\tilde{V}^+\bar{M}$ is closed we find that $u$ only has future-directed singularities. This concludes the proof.
\end{proof}

\section{Quantum field theory for Dirichlet fields}
\label{sect:qft}

In this section we recall the formalism of quantum field theory on curved spacetimes, specialised to the Klein--Gordon operator with homogeneous Dirichlet boundary conditions. 

The field algebra $\mathfrak{U}(\bar{M})$ with Dirichlet boundary conditions is the abstract unital $\ast$-algebra, with unit $\Id$, generated by the symbols $\Phi(f)$ for all $f\in\Cinf_{c}(\bar{M})$, under the following relations
\begin{align*}
	f\mapsto\Phi(f)&\quad\text{is complex linear,}\\
	\Phi(f)\Phi(h)-\Phi(h)\Phi(f)&=-\im \braket{ f,G_D(h)}\mathrm{Id},\\
	\Phi(Pf)&=0,\quad f\in\Cinf_{D,c}(\bar{M}),\\
	\Phi(f)^\ast&=\Phi(\bar{f}).
\end{align*}
We discuss briefly our choice of test function space. A fundamental principle of quantum field theory is the so-called \textit{time-slice axiom}. This is the principle that asserts that the whole content of a QFT can be recovered by testing with functions supported near a Cauchy surface. More precisely, assume $\bar\Sigma_1$ and $\bar\Sigma_2$ are disjoint Cauchy surfaces in $\bar M$ and assume we have open neighbourhoods $U_j$ of $\bar\Sigma_j$, with $U_2$ lying entirely to the future of $U_1$. We may do so by choosing a global time function $t$ and considering $U_1=t^{-1}((-1,0))$ and $U_2=t^{-1}((1,2))$. If $f\in\Cinf_c(U_2)$, the time-slice axiom is the requirement that we are able to find an $\tilde f\in\Cinf_c(U_1)$ such that $f-\tilde{f}=Pu$ for some $u\in\Cinf_{D,c}(\bar{M})$. This is constructed by taking a past-cutoff $\psi$, identically equal to one in $[0,\infty)$ and identically vanishing in $(-\infty,-1]$, and defining $\tilde{f}=f-P(\psi G^-_D f)$. By definition, $\tilde{f}=0$ outside of $t^{-1}(-1,0)$, moreover the support of $\tilde{f}$ is contained in $J^-(\supp f)$. Thus, $\tilde{f}\in\Cinf_c(U_1)$ by global hyperbolicity. Furthermore, $G^-_Df$ satisfies homogeneous Dirichlet boundary conditions, which proves the time-slice axiom.

Given the above discussion, the setup of quantum field theory on curved spacetime, as presented for example in Section 4 of \cite{strohmaier_analytic_2026} (notice as well the discussion of the algebras of observables in \cite{dappiaggi_maxwell_timelike_2020}), can be implemented. Thus, the causal propagator identifies the algebra with the abstract CCR algebra of the symplectic vector space $(\ker_DP,\omega)$. The passage to the $C^\ast$-algebra version can then be obtained via exponentiation, since $\Phi(f)$ is represented by essentially self-adjoint operators on the (soon to be defined) Fock space.

The construction of an irreducible representation of this algebra can be achieved using a \textit{one-particle structure}. Explicitly, this is an embedding (i.e. an injection, continuous with respect to the quotient topology induced by $G_D$) $\kappa$ of $\ker_DP$ into a real Hilbert space $(\Hcal_\kappa,\left(\cdot,\cdot\right))$, endowed with a compatible complex structure $J\colon\Hcal_\kappa\to\Hcal_\kappa$ (that is, the tuple $(\left(\cdot,\cdot\right),\omega, J)$ is a Kähler structure), such that
\begin{itemize}
	\item $\kappa(\ker_D P)$ is dense;
	\item $\omega(f,h)=\left(\kappa(f),J\kappa(h)\right)$.
\end{itemize}
Thus, $\Hcal_\kappa$ is the completion of $\ker_DP$ with respect to the real inner product $\left(\cdot,\cdot\right)$. Via $J$, we can also view $\Hcal_\kappa$ as a complex Hilbert space $\Hcal_\kappa^\C$, where the inner product is given by
\begin{equation}\label{eq:complex inner product}
	\left({v_1,v_2}\right)_\C=\left({v_1,v_2}\right)+\im\left(J v_1,v_2\right).
\end{equation}
More concretely, we can identify $\Hcal^\C_\kappa$ with a subspace of the complexification $\Hcal_\kappa\tens\C$ as the $+\im$-eigenspace $H_+$ of the $\C$-linear extension of $J$. More precisely, $J\colon\Hcal_\kappa\tens\C\to\Hcal_\kappa\tens \C$ is given by $J(v\tens z)=(Jv)\tens z$ for any $z\in\C$ and induces a splitting $\Hcal_\kappa\tens \C=H_+\oplus H_-$, $H_-=\bar{H_+}$. The projection is $\frac{1}{2}(\Id-\im J)$, so $H_+=\ran(\Id-\im J)$ and the subspace $\Hcal_\kappa\tens 1$ of $\Hcal_\kappa\tens\C$ consists of all vectors of the form $v+\bar{v}$ in this identification. Due to our choice of normalisation for \eqref{eq:complex inner product} we have that $\|\frac{1}{2}(\Id-\im J)(v\tens 1)\|=\frac{1}{\sqrt{2}}\|v\|$.

The projection gives then a real-linear map $v\mapsto \frac{1}{2}(\Id-\im J)(v\tens 1)$, defined on the kernel $\ker_D P$. We let $p\colon\ker_D P\tens\C\to H_+$ be the complex-linear extension of this map, and see that the composition
\begin{equation}\label{eq:quantum field as HS-valued distribution}
	\Phi(\cdot)\equiv p\circ G_D(\cdot)
\end{equation}
is an $H_+$-valued supported distribution on $\bar{M}$ (this is the \textit{classical field} or \textit{one-particle distribution}).
\begin{theorem}\label{theo:propagator is a solution}
	$\Phi$ is a distributional solution of
	\begin{equation}\label{eq:BVP for causal propagator}
		\left\{\begin{aligned}
			Pu&=0\quad\text{on }M\\
			\gamma_0 u&=0.
		\end{aligned}\right.
	\end{equation}
	That is, it is an element of $\Nscr(\bar{M})$, it satisfies $\Phi(P\phi)=0$ for all $\phi\in\Cinf_{D,c}(\bar{M})$ and $\gamma_0\Phi(\psi)=0$ for all $\psi\in\Cinf_c(\partial M)$, both equations holding between elements of $H_+$.
\end{theorem}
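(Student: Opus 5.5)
The plan is to prove the three assertions of \cref{theo:propagator is a solution} separately, using that $p$ is a complex-linear bounded map and that $G_D$ maps $\Cinf_c(\bar M)$ continuously into $\Cinf_{D,\scs}(\bar M)$.

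\emph{The equation $\Phi(P\phi)=0$.} For $\phi\in\Cinf_{D,c}(\bar M)$ we have $(P,\gamma_0)\phi=(P\phi,0)$. Exactness at the middle of \eqref{eq:homogeneous exact sequence}, i.e.\ \cref{lemma:exact sequence}, gives $G_D P\phi=\tilde G_D(P\phi,\gamma_0\phi)=0$. Hence $\Phi(P\phi)=p(0)=0$.

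\emph{Distributional status and normal regularity.} First check that $\Phi$ is a weakly continuous map $\Cinf_c(\bar M)\to H_+$. Continuity of $\kappa$ for the quotient topology induced by $G_D$, together with continuity of $G_D$ from \cref{cor:fundamental solution for retarded BVP}, gives this. Hence $\Phi\in\dot\Dscr'(\bar M;H_+)$.

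For $v\in H_+$, consider the scalar distribution $\Phi_v=(v,\Phi(\cdot))$. It satisfies $P^t\Phi_v=0$ when tested on $\Cinf_c(M)$, since $P$ is formally self-adjoint and $\Cinf_c(M)\subset\Cinf_{D,c}(\bar M)$. So its restriction to $M$ lies in $\ker P$ on $\Dscr'(\bar M)$. By \cref{cor:kernel of NHO is normally regular} this restriction lies in $\Nscr(\bar M)$, because $\partial M$ is timelike and hence non-characteristic.

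Normal regularity passes to the $H_+$-valued object via \cref{cor:HS-valued N' is preserved}, or equivalently via \cref{theo:weak and strong b-wave-front set}. One subtle point must be checked: that the supported distribution $\Phi$ coincides with the canonical supported representative of its extensible restriction given by \cref{lemma:extensible=supported for dual-conormal}. I expect this to be the main obstacle. The difference is a supported distribution concentrated on $\partial M$, of the form $\sum_k a_k\tens\delta^{(k)}(x)$ with $a_k$ taking values in $H_+$.

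I would rule this out by using the Dirichlet structure. Write $\Phi(f)=\int f\,(p\circ G_D)$, pairing against the smooth, $H_+$-valued kernel obtained by transposition: $G_D^t=-G_D$, and $G_D f$ vanishes on $\partial M$. The boundary Green terms appearing in the proof of \cref{lemma:distributional uniqueness for the retarded BVP} then show that no $\delta$-type layers are present. Equivalently, $\Phi(f)$ depends on $f$ only through an integral over $\bar M$ and is therefore continuous in $f$ for the $L^1$-type topology. That rules out $\delta^{(k)}$ terms. I would try this continuity argument first.

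\emph{The boundary condition.} For $\psi\in\Cinf_c(\partial M)$, interpret $\gamma_0\Phi(\psi)$ as $\Phi$ paired with $\psi\tens\delta(x)$, which is meaningful by \cref{prop:restriction and boundary WF set}. Via the smooth representation $\Phi_v(f)=\int f\,w_v$, where $w_v=(v,p\,G_D\cdot)$ is a solution in $\Cinf_{D,\scs}$ by the previous step, this trace equals $\int_{\partial M}\psi\,\gamma_0 w_v$. That vanishes because the range of $G_D$ consists of functions with $\gamma_0 u=0$. This holds for all $v$, so $\gamma_0\Phi(\psi)=0$ in $H_+$.
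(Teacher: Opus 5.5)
Your first step ($\Phi(P\phi)=0$ from \cref{lemma:exact sequence}) is fine, and so is the normal regularity of $\Phi_v|_M$ via \cref{cor:kernel of NHO is normally regular}. You are also right that possible boundary layers $\sum_k a_k\tens\delta^{(k)}(x)$ in the supported distribution $\Phi$ must be excluded; the paper leaves this implicit.

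The gap is that both your treatment of these layers and your proof of $\gamma_0\Phi=0$ rest on the representation $\Phi_v(f)=\int f\,w_v$ with $w_v\in\Cinf_{D,\scs}(\bar M)$. This representation is false. If it held for every $v\in H_+$, then $\WF_b(\Phi_v)=\emptyset$ for all $v$, so $\WF_b(\Phi)=\emptyset$ by \cref{theo:weak and strong b-wave-front set}. Then $\Phi\in\Cinf(\bar M;H_+)$ and $\omega_2$ would have a smooth kernel. So would $G_D$, because $\braket{f,G_Dh}=\im(\omega_2(f,h)-\omega_2(h,f))$, which is absurd.

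Density does not rescue the argument. For $v=pG_Dg$ one has $\Phi_v=\omega_2(\bar g,\cdot)$, and its smoothness is a Hadamard-type property not available for a general one-particle structure. Nor is $\Phi$ continuous for an $L^1$-type topology. That would give $\|\Phi(f)\|_{H_+}^2=\omega_2(\bar f,f)\le C\|f\|_{L^1}^2$, i.e.\ a locally bounded two-point function, which fails already for the ultrastatic state of \cref{sect:ultrastatic}. The kernel of $p\circ G_D$ is genuinely distributional. Moreover, ``the range of $G_D$ is Dirichlet'' concerns the output variable, whereas $\gamma_0\Phi$ is a trace in the test-function variable.

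The missing step is a transposition at the distributional level. The paper uses the Schwartz kernel $K_D$. The range of $G_D$ gives $\gamma_0^{(1)}K_D=0$, and skew-symmetry $G_D^t=-G_D$ then gives $\gamma_0^{(2)}K_D=0$. Pairing with $f\tens\gamma_0^\ast\psi$ yields $\gamma_0\Phi(\psi)=pG_D(\psi\tens\delta_{\partial M})=0$.

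Alternatively, in the spirit of your remark on Green's terms, the identity $\Phi_v(P\phi)=0$ for all $\phi\in\Cinf_{D,c}(\bar M)$ (whose supports may meet $\partial M$) already contains everything. Locally write $\Phi_v=U+\sum_{k\le K}a_k\tens\delta^{(k)}(x)$, with $U$ the canonical representative of $u=\Phi_v|_M\in\Nscr(\bar M)$. Green's identity as in the proof of \cref{lemma:distributional uniqueness for the retarded BVP}, together with $Pu=0$ and $\gamma_0\phi=0$, turns this into
\[
\braket{\gamma_0u,c\,\gamma_1\phi}_{\partial M}+\sum_{k\le K}(-1)^k\braket{a_k,\gamma_k(P\phi)}_{\partial M}=0 .
\]
Here $c$ is nowhere vanishing, and $\gamma_k(P\phi)$ is a nowhere vanishing multiple of $\gamma_{k+2}\phi$ plus tangential expressions in $\gamma_j\phi$, $j\le k+1$.

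Now prescribe the normal jets of $\phi$ by Borel's lemma: take $\gamma_1\phi=\dots=\gamma_{K+1}\phi=0$ and $\gamma_{K+2}\phi$ arbitrary, which gives $a_K=0$. Descending in $k$ gives all $a_k=0$. Finally, arbitrary $\gamma_1\phi$ gives $\gamma_0u=0$. This removes the layers and proves the boundary condition simultaneously, and $(v,\gamma_0\Phi(\psi))=0$ for all $v$ gives the $H_+$-valued statement.
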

\begin{proof}
	That $\Phi(P\phi)=0$ for all $\phi\in\Cinf_{D,c}(\bar{M})$ is a direct consequence of the above discussion. Then, by \cref{cor:kernel of NHO is normally regular}, $\Phi\in \Nscr(\bar{M};H_+)$, so that, for any choice of boundary defining function $x$, $\Phi$ is a smooth family of distributions on $\partial M$, parametrised by $x$. Thus, it restricts strongly to $x=0$ to give a distribution $u_0\in\Dscr'(\partial M;H_+)$. More precisely, $\Phi$ has a Taylor expansion in $x$ with coefficients $u_k=\frac{1}{k!}\partial^k_x (p G_D)|_{x=0}\in\Dscr'(\partial M;H_+)$:
	\begin{equation}
		\Phi=u_0+xu_1+\dots.
	\end{equation}
	The restriction to $\partial M$ is simply the $H_+$-valued distribution $u_0$. For $\psi\in\Cinf_c(\partial M)$, the pairing $\braket{u_0,\psi}$ is then $pG_D(\psi\tens \delta)$ (notice that this is well-defined since $\psi\tens\delta_{\partial M}$ is conormal to $\partial M$ and $\Nscr\subset\Ascr'$), and the claim is that this vanishes. Notice that the operator $G_D$ has a distributional kernel $K_D\in\dot\Dscr'(\bar{M}\times\bar{M})$, which satisfies the Dirichlet boundary condition in the first component, i.e. $\gamma_0^{(1)}K_D=0$ (the restriction being well-defined in view of normal regularity). Indeed, the range of $G_D$ consist precisely of functions solving the homogeneous Klein--Gordon equation with vanishing Dirichlet trace. Since $G_D$ is skew-adjoint as in \cref{lemma:G_D is a symplectic form}, $K_D$ satisfies the Dirichlet condition in the second component as well, i.e. $\gamma_0^{(2)}K_D=0$.

	For any $\psi\in\Cinf_c(\partial M)$, we have thus
	\begin{equation*}
		\braket{pG_D(\psi\tens\delta_{\partial M}),f}=\braket{K_D,f\tens \gamma_0^\ast\psi}=(\gamma_0^{(2)}K_D)(f,\psi)=0.
	\end{equation*}
	Since $p$ is a linear and continuous map it follows that $\gamma_0\Phi$ itself must vanish. This proves the assertion.
\end{proof}

Now, the field algebra acts on the symmetric Fock space $\Fock(H_+)$, defined to be the Hilbert space completion of $\bigoplus_{k=0}^\infty\odot^k H_+$, $\odot$ being the symmetrised tensor product. One calls the algebraic direct sum of the completed tensor products, $\bigoplus_{k=0}^\infty\hat\odot^k H_+$, the \textit{finite particle subspace} and denotes it by $\Fock_{\mathsf{fp}}(H_+)$.

Recall now (for example from \cite{reed_methods_II_1975}) the creation and annihilation operators associated with a vector $v\in H_+$, $a^\ast(v)$ and $a(v)$, defined on $\Fock_{\mathsf{fp}}(H_+)$. They satisfy the canonical commutation relations and give rise to the Segal field operator
\begin{equation}\label{eq:segal field operator}
	\Phi_S(f)=a(\bar{(\Id-p)G_Df})+a^\ast(pG_D f),\quad f\in\Cinf_c(\bar{M}).
\end{equation}
For real-valued test functions, this is just $\Phi_S(f)=a(p G_D(f))+a^\ast (p G_D(f))$. The operators $\Phi_S(f)$ then satisfy, for $f,h\in \Cinf_c(\bar{M})$,
\begin{equation}
	[\Phi_S(f),\Phi_S(h)]=-\im \braket{f,G_Dh}\Id,
\end{equation}
so that they define a representation of the CCR algebra on $\Fock(H_+)$ by unbounded, densely defined operators on the invariant domain $\Fock_{\mathrm{fp}}(H_+)$.
The Fock vacuum $\Omega$ defines a pure centred ($\omega_1=0$)quasi-free state
whose two-point function is
\begin{equation}\label{eq:two point function}
	\omega_2(f,h)
	=
	\left(\Omega,\Phi_S(f)\Phi_S(h)\Omega\right)
	=
	\left(p\circ G_D(\bar f),p\circ G_D(h)\right),
	\qquad f,h\in\Cinf_c(\bar M).
\end{equation}
Recall that a quasi-free state $\omega\colon \mathfrak{U}(P;\bar{M})\to\C$ is determined by its two-point function.
We will not investigate the properties of $\omega_2$ directly. Instead, we rely on \cref{prop:properties of b-WF for HS-distributions} (d) to define the Hadamard condition. 

For a general centred quasi-free state $\omega$   whose two-point function is a distribution, let
$(\Hcal_\omega,\pi_\omega,\Omega_\omega)$ be its GNS representation and
define its one-particle Hilbert space by
\[
	H_\omega
	\equiv
	\overline{\operatorname{span}_{\C}
	\left\{
		\pi_\omega(\Phi(f))\Omega_\omega:
		f\in\Cinf_c(\bar M)
	\right\}}
	\subset\Hcal_\omega .
\]
The associated one-particle distribution is
\[
	\Phi_\omega(f)
	\equiv
	\pi_\omega(\Phi(f))\Omega_\omega
	\in H_\omega .
\]
It is an $H_\omega$-valued distribution and, with our convention that
the Hilbert-space scalar product is conjugate-linear in its first
argument, satisfies
\[
	\omega_2(f,h)
	=
	\left(\Phi_\omega(\bar f),\Phi_\omega(h)\right)_{H_\omega}.
\]
For the pure Fock states constructed above, this distribution agrees,
up to the canonical unitary identification of the one-particle
spaces, with $p\circ G_D$.

\begin{definition}\label{def:hadamard states for Dirichlet}
	We say that a centred quasi-free state $\omega$ on
	$\mathfrak{U}(P,\bar M)$ is a \textit{Dirichlet--Hadamard state}
	if $\Phi_\omega\in\Nscr(\bar M;H_\omega)$ and we have
	\begin{equation}\label{eq:hadamard states}
		\WF_b(\Phi_\omega)\subset\tilde V^+\bar M .
	\end{equation}
\end{definition}

Notice that for states coming from the Dirichlet field algebra of the Klein--Gordon equation, the condition of normal regularity of $\Phi$ is automatically fulfilled. Moreover, the explicit state that we will construct in \cref{sect:hadamard} comes from a one-particle structure, hence is automatically pure.
\begin{remark}\label{rem:hadamard condition}
	In view of the aforementioned \cref{prop:properties of b-WF for HS-distributions}, the proposed Dirichlet--Hadamard condition is the natural analogue of the microlocal Hadamard condition of \textcite{radzikowski_micro-local_1996}, as reformulated in \cite{strohmaier_reeh-schlieder_2002}. Importantly, this reformulation allows us to do away with considerations of corners. 
\end{remark}

We finish by proving that our condition the Dirichlet--Hadamard condition completely characterises the singularities of the two-point function. In keeping with the philosophy of the paper, the argument is carried out on the one-particle distribution: two states are compared by assembling their one-particle distributions into a single Hilbert-space-valued distribution on $\bar{M}$, and the whole of the microlocal input is the salience of $\tilde{V}^+\bar{M}$.

\begin{theorem}
	\label{theo:uniqueness of hadamard states}
	Let $\omega$ and $\omega'$ be quasi-free Dirichlet--Hadamard states on
	$\mathfrak{U}(\bar{M})$, with one-particle distributions $\Phi$, $\Phi'$ and
	two-point functions $\omega_2$, $\omega_2'$. Then
	\begin{equation}
		\label{eq:uniqueness of hadamard states}
		\omega_2-\omega_2'\in\Cinf(\bar{M}\times\bar{M}).
	\end{equation}
\end{theorem}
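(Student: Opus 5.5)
The plan is to compare the two states through a single Hilbert-space-valued distribution on $\bar M$, so that all microlocal work happens on $\bar M$ and only the final regularity statement lives on the cornered product $\bar M\times\bar M$.

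\emph{Assembling the two one-particle distributions.} Set $\Hcal\equiv H_\omega\oplus H_{\omega'}$ and $\Psi\equiv(\Phi,\Phi')\in\Ascr'(\bar M;\Hcal)$. Normal regularity of $\Psi$ follows from that of its components, as in \cref{cor:HS-valued N' is preserved}. Using \cref{theo:weak and strong b-wave-front set}, every weak component $(v\oplus v',\Psi)=\Phi_v+\Phi'_{v'}$ has $b$-wave-front set contained in $\tilde V^+\bar M$ by \cref{lemma:properties of b-wave-front set}(e). Since $\tilde V^+\bar M$ is closed, this gives $\WF_b(\Psi)\subset\tilde V^+\bar M$. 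Let $S=\Id\oplus(-\Id)$ be the bounded self-adjoint involution on $\Hcal$. Then
\[
d\equiv\omega_2-\omega_2',\qquad d(f,h)=\bigl(\Psi(\bar f),S\Psi(h)\bigr).
\]

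\emph{Bounding $\WF_b(d)$ by one-sided regularity.} I would repeat the proof of \cref{prop:properties of b-WF for HS-distributions}(d) with the indefinite form $(\cdot,S\,\cdot)$ in place of the inner product. Since $\|S\|=1$, Cauchy--Schwarz still gives
\[
|\widehat{(g\tens f)d}|\le\|\widehat{g\Psi}\|\,\|\widehat{f\Psi}\|,
\]
and that proof already shows $d\in\Ascr'(\bar M\times\bar M)$. As there, if $(z_2,\zeta_2)\notin\WF_b(\Psi)$, then no point $((z_1,\zeta_1),(z_2,\zeta_2))$ lies in $\WF_b(d)$, and symmetrically in the first slot. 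Some care is needed for directions in which one covector vanishes, for example $(0,\zeta_2)$. There I would use the standard Hörmander trick: in a thin conic neighbourhood one has $|\zeta_1|\le\eps|\zeta_2|$, so the polynomial bound \eqref{eq:cptly suppd HS-valued has polynomially bdd FM transform} on the first factor is absorbed by the rapid decay of the second. The outcome is
\[
\WF_b(d)\subset\{((z_1,-\zeta_1),(z_2,\zeta_2))\colon(z_j,\zeta_j)\in\tilde V^+\bar M\}=\tilde V^-\bar M\times\tilde V^+\bar M,
\]
where both covectors are nonzero; directions with a vanishing component are excluded by the argument above.

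\emph{Using the commutator to get symmetry.} Both states represent the same CCR algebra, so
\[
\omega_2(f,h)-\omega_2(h,f)=-\im\braket{f,G_Dh}=\omega_2'(f,h)-\omega_2'(h,f).
\]
Hence $d$ is symmetric, $d(f,h)=d(h,f)$. The flip $\bar M\times\bar M\to\bar M\times\bar M$ acts on $b$-wave-front sets by exchanging the factors. Therefore $\WF_b(d)$ is also contained in $\tilde V^+\bar M\times\tilde V^-\bar M$. This is the only place the states interact, and it is where salience enters: $\tilde V^+\bar M\cap\tilde V^-\bar M=\emptyset$, so the two inclusions together force $\WF_b(d)=\emptyset$.

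\emph{From empty $\WF_b$ to smoothness.} It remains to show that $d\in\Ascr'(\bar M\times\bar M)$ with empty $b$-wave-front set is smooth up to the corner. This requires the corner analogue of \cref{lemma:properties of b-wave-front set}(a) together with \cref{lemma:conormal+dual-conormal=smooth}: empty $\WF_b$ yields conormality to both boundary hypersurfaces via the two-variable Mellin--Paley--Wiener description of \cref{theo:mellin transform of A'}, and conormal plus dual-conormal is $\Cinf$. I expect this last step, together with the careful treatment of the mixed directions with a vanishing covector, to be the main obstacle. If the corner calculus becomes awkward, the fallback is to work locally in the product chart $[0,\infty)^2\times\R^{2n-2}$. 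There one applies the one-boundary results separately in $x_1$ and $x_2$, using that $d$ is normally regular in each variable because $\Psi\in\Nscr$. This yields a smooth family in each normal variable with values in tangentially smooth distributions, which gives joint smoothness.
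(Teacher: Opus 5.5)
Your argument is essentially the paper's proof. It uses the same doubled distribution $\Psi=\Phi\oplus\Phi'$ twisted by the norm-one involution $\Id\oplus(-\Id)$, and the same Cauchy--Schwarz bound inherited from \cref{prop:properties of b-WF for HS-distributions}(d). It also uses the symmetry of $\omega_2-\omega_2'$ coming from the common commutator $-\im\braket{f,G_Dh}$, the salience of $\tilde V^+\bar M$, and the same passage from $\WF_b=\emptyset$ plus dual-conormality to smoothness on $\bar M\times\bar M$. The only difference is bookkeeping. The paper multiplies the original and the flipped Cauchy--Schwarz estimates, bounding the transform by the products $\|\widehat{\chi\Psi}(\zeta)\|\,\|\widehat{\chi\Psi}(-\zeta)\|$ in each slot, whereas you intersect the two resulting set inclusions.

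One claim needs correcting. The absorption trick removes a direction $((z_1,0),(z_2,\zeta_2))$ only when $(z_2,\zeta_2)\notin\WF_b(\Psi)$. So the one-sided bound is really
\[
\WF_b(d)\subset\bigl(\tilde V^-\bar M\times\tilde V^+\bar M\bigr)\cup\bigl(O\times\tilde V^+\bar M\bigr)\cup\bigl(\tilde V^-\bar M\times O\bigr),
\]
with $O$ the zero section, not the set where both covectors are nonzero. This does not hurt you: intersecting with the flipped inclusion still gives the empty set because $\tilde V^+\bar M\cap\tilde V^-\bar M=\emptyset$. That is exactly how the paper disposes of the case $\zeta_1=0$, via the product of the $\pm\zeta_2$ factors, so your conclusion $\WF_b(d)=\emptyset$ stands.
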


\begin{proof}
	Recall, in the proof, the convention that we use coordinates $\zeta=(\lambda,\eta)$ with $\lambda=\xi+\im L$, and analyse the behaviour of the Mellin--Fourier transform for fixed $L$ and $|(\xi,\eta)|\to\infty$.
	
	Denote by $H_+$ and $H_+'$ the two one-particle Hilbert spaces and set
	\begin{equation}\label{eq:doubled one-particle distribution}
		\Hcal=H_+\oplus H_+',\qquad
		\Psi=\Phi\oplus\Phi',\qquad
		K=\Id_{H_+}\oplus\left(-\Id_{H_+'}\right).
	\end{equation}
	Then $\Psi$ is an $\Hcal$-valued distribution on $\bar{M}$, $K$ is a bounded self-adjoint involution on $\Hcal$ with $\|K\|=1$, and the difference of $\omega_2$ and $\omega_2'$ is the sesquilinear kernel of $\Psi$ twisted by $K$:
	\begin{equation}\label{eq:difference as a K-kernel}
		w\equiv\omega_2-\omega_2'
		=\left(\Psi(\bar{\,\cdot\,}),K\Psi(\,\cdot\,)\right).
	\end{equation}
	Clearly $\Psi$ is dual-conormal, hence $w$ is as well. Similar to Corollary 3.11 in \cite{vasy_propagation_singularities_corners_2008}, where an empty Sobolev $b$-wave-front set means that the distribution is in $\sob^1_\locs$, and thanks to \cref{lemma:ordinary and sobolev wave-front set}, it suffices to show that $\WF_b(w)$ is empty, for then $w$ is conormal to the full boundary of $\bar{M}\times\bar M$, and by dual-conormality it follows that $w$ is smooth on $\bar{M}\times \bar M$ (cf. Theorem 4.6.1 in \cite{melrose_differential_nodate} too). Indeed, if Vasy's $\WF_b^{m,\infty}(u)=\emptyset$ (with the obvious extended definition from the cases $m=\pm 1$) for all $m$, this means that the distributions $u$ is $H^m$-conormal to all boundary faces (i.e. in $\sob^{m,\infty}_b$ for all $m$), for all $m$. This means in particular that $u$ is in $\Ascr(\bar{M}\times\bar M)$, so $\WF_b(u)=\emptyset$ in view of the relation between the Sobolev and ordinary $b$-wave-front set we established.
	
	For any $\chi\in\Cinf_c(\bar{M})$ we have $\|\widehat{\chi\Psi}\|^2= \|\widehat{\chi\Phi}\|^2+\|\widehat{\chi\Phi'}\|^2$, which implies that $\widehat{\chi\Psi}$ is rapidly decreasing in a cone precisely when both	summands are. In particular, thanks to \cref{lemma:properties of b-wave-front set} (e),
	\begin{equation}\label{eq:b-WF of doubled distribution}
		\WF_b(\Psi)=\WF_b(\Phi)\cup\WF_b(\Phi')\subset\tilde{V}^+\bar{M}
	\end{equation}
	since both two-point functions are Dirichlet--Hadamard. Therefore, for any given covector $z,\zeta$, at most one of $(z,\zeta)$ and $(z,-\zeta)$ is in $\WF_b(\Psi)$.
	
	On the other hand, $w$ is symmetric. Indeed, since $\omega_2,\omega_2'$ are two-point functions on the same field algebra, we have by definition
	\begin{equation*}
		\omega_2(f,h)-\omega_2(h,f)=-\im\braket{f,G_Dh}=\omega_2'(f,h)-\omega_2'(h,f),
	\end{equation*}
	the middle term depending on the Dirichlet propagator alone and not on the
	choice of one-particle structure. Hence, denoting by $\check w$ the flipped distribution $\check{w}(f,h)=w(h,f)$, we obtain $w=\check{w}$ after rearranging.
	
	Notice now that the proof of \cref{prop:properties of b-WF for HS-distributions} (d) works just as well to describe the $b$-wave-front set of $\Psi$. Indeed, the Cauchy--Schwarz upper bound for the kernel easily extends to cover the case at hand for a bounded operator $K$ (the only difference is that an extra $\|K\|$ appears in the estimate, and in our case we even have $\|K\|=1$). Thus, with a completely analogous argument we find, for any real-valued cutoffs $\chi_1,\chi_2\in\Cinf_c(\bar{M})$ and any covectors $\zeta_1,\zeta_2$, an $L>0$ giving us the estimate
	\begin{equation}\label{eq:CS bound for w}
		\left|\widehat{(\chi_1\tens \chi_2)w}(\xi_1+\im L,\eta_1,\xi_2+\im L,\eta_2)\right|
		\le\left\|\widehat{\chi_1\Psi}(-\xi_1+\im L,-\eta_1)\right\|\cdot
		\left\|\widehat{\chi_2\Psi}(\xi_2+\im L,\eta_2)\right\|.
	\end{equation}
	
	We now show that the $b$-wave-front set of $w$ is empty by employing the characterisation of \cref{prop:properties of b-WF for HS-distributions} (c). First, for any $z_0\in\bar M$ and any $\zeta_0=(\xi_0,\eta_0)\neq 0$ we find $L>0$ such that for $\lambda=\xi+\im L$ and all $(\xi,\eta)$ in a cone $\Gamma$ around $\zeta_0$ with
	\begin{equation}\label{eq:opposite pair bracket}
		\left\|\widehat{\chi\Psi}(\xi+\im L,\eta)\right\|\cdot
		\left\|\widehat{\chi\Psi}(-\xi+\im L,-\eta)\right\|\le C_N\braket{(\xi,\eta)}^{-N},
		\qquad\zeta\in\Gamma,\quad N\in\N.
	\end{equation}
	Indeed, we know that at least one of $(z_0,\zeta_0)$ or $(z_0,-\zeta_0)$ does not belong to $\WF_b(\Psi)$, so that at least one Mellin--Fourier transform is rapidly decaying while the other stays polynomially bounded.
	
	Consider now the case of different points $z_1,z_2\in\bar{M}$ and covectors $(\zeta_1,\zeta_2)$ with $(\zeta_1,\zeta_2)\neq(0,0)$. Since $\check w=w$, \eqref{eq:CS bound for w} gives also the same estimate with exchanged $\chi_1$ and $\chi_2$ and arguments $\zeta_1$ and $\zeta_2$. Thus, if both $\zeta_1$ and $\zeta_2$ do not vanish, we find cutoffs $\chi_1$ and $\chi_2$, a complex line (using \cref{lemma:independence of the line}) $\Im\lambda_1=\Im\lambda_2=L$ and open cones $\Gamma_1$ and $\Gamma_2$ around $\zeta_1$ and $\zeta_2$ such that, multiplying these two versions of \eqref{eq:CS bound for w} gives for $\zeta_1'\in\Gamma_1$, $\zeta_2'\in\Gamma_2$
	\begin{equation}\label{eq:product bound}
		\begin{aligned}
			&\left|\Mcal\FT[(\chi_1\tens \chi_2)w](\xi_1'+\im L,\eta_1',\xi_2'+\im L,\eta_2')\right|^2\\ &\le\left[\left\|\widehat{\chi_1\Psi}(\xi_1'+\im L,\eta_1')\right\|\left\|\widehat{\chi_1\Psi}(-\xi_1'+\im L,-\eta_1')\right\|\right]\cdot\left[\left\|\widehat{\chi_2\Psi}(\xi_2'+\im L,\eta_2')\right\|\left\|\widehat{\chi_2\Psi}(-\xi_2'+\im L,-\eta_2')\right\|\right]\\
			&\le C_N\braket{(\xi_1',\eta_1')}^{-2N}\braket{\xi_2',\eta_2'}^{-2N}.
		\end{aligned}
	\end{equation}
	Since $\braket{(\xi_1',\eta_1'),(\xi_2',\eta_2')} \le\braket{(\xi_1',\eta_1')}\braket{(\xi_2',\eta_2')}$ and $N$ is arbitrary, we find that $\Mcal\FT[(\chi_1\tens\chi_2)w]$ is rapidly decreasing in $\Gamma_1\times\Gamma_2$. The same estimate \eqref{eq:product bound} gives also the rapid decay if either covector vanishes, the other being necessarily nonzero. In fact, if for example $\zeta_1=0$ and $\zeta_2\neq 0$, then we find a line $\Im\lambda_1=L$ and an open cone $\Gamma_2$ around $\zeta_2$ on which either $\hat{\chi_2\Psi}(\xi_2'+\im L,\eta_2')$ or $\hat{\chi_2\Psi}(-\xi_2'+\im L,-\eta_2')$ (hence their product) is rapidly decreasing. We now choose a cutoff $\chi_1$ equal to one near $z_1$, the very same line $\Im \lambda_1=L$ and an open cone $\Gamma$ around $(0,\zeta_2)$, small enough so that $(\zeta_1',\zeta_2')\in\Gamma$ implies $\zeta_2'\in\Gamma_2$ and $|(\xi_2',\eta_2')|\ge c|(\xi_1',\eta_1',\xi_2',\eta_2')|$ for some positive constant $c$. Then the second bracket in \eqref{eq:product bound} is rapidly decreasing on $\Im\lambda_2=L$, with the first staying polynomially bounded there in view of \cref{prop:properties of b-WF for HS-distributions}. Thus, the Mellin--Fourier transform is rapidly decaying in $\Gamma$, and the case $\zeta_2=0$ is obtained by swapping the roles of $\zeta_1$ and $\zeta_2$. It follows that $\WF_b(w)=\emptyset$, and the proof is complete.
\end{proof}

\section{The ultrastatic case}
\label{sect:ultrastatic}

In this section we assume $\bar{M}=\R\times\bar\Sigma$ with the product metric $g=\de t^2-g_{\bar{\Sigma}}$, where $g_{\bar{\Sigma}}$ is a time-independent Riemannian metric on $\bar\Sigma$. We analyse the operator $P=\Box_g+m^2$ on $\bar{M}$, which splits then as
\begin{equation}\label{eq:wave op in ultrastatic}
	P=\partial^2_{tt}+\Delta_D+m^2,
\end{equation}
where $\Delta_D$ is the Dirichlet extension of the Laplace--Beltrami operator on $\bar{\Sigma}$, with domain $\Dcal_2$. We fix the notation $A=\Delta_D+m^2$ and recall that, by Kato--Rellich, the powers $A^s$ in the range $s\in[0,1/2]$ are then defined on $\Dcal_{2s}$.

\subsection*{Preliminaries} We will consider the following problem for $u\colon\R\to \Dscr'(\bar{\Sigma})$:
\begin{equation}\label{eq:mixed problem in ultrastatic setting}
	\left\{\begin{aligned}
		&\ddot{u}(t)=-(\Delta_D+m^2) u(t)\quad \text{in }M=\R\times\Sigma,\\
		&u(t)|_{\partial \Sigma}=0,\\
		&u(0)=f\in\Dscr'(\bar\Sigma),\quad \dot u(0)=h\in\Dscr'(\bar\Sigma),
	\end{aligned}\right.
\end{equation}
with the regularity assumptions in $t$ to be specified later. Notice that we are only asking that $u(t)$ is a solution of the wave equation in the interior $M$ of $\bar{M}$, and that we have to be more precise with regard to the meaning of the boundary condition for $u$.

For the initial-boundary-value problem \eqref{eq:mixed problem in ultrastatic setting} (and in general for the nonhomogeneous versions), a necessary condition for the solvability in $\Cinf(\bar{M})$ is that the initial data satisfy some compatibility conditions. They are determined as follows: let us assume that we have a $u\in\Cinf(\R\times\bar{\Sigma})$ such that $\partial^2_{tt} u=-(\Delta+m^2)u$ and $u|_{\R\times\partial\Sigma}=0$. Then, denoting $u_k\equiv \partial^k_t u|_{t=0}\in\Cinf(\bar{\Sigma})$, we must have that $u_k|_{\partial \Sigma}=0$, since the solution $u$ vanishes identically on $\R\times\partial\Sigma$. On the other hand, $\partial^2_{tt} u=-(\Delta+m^2)u$, implying that $u_{k+2}=-(\Delta+m^2)u_k$. Thus necessarily we have:
\begin{equation}\label{eq:compatibility conditions for homogeneous Dirichlet}
	(\Delta+m^2)^k u_0|_{\partial \Sigma}=0,\quad (\Delta+m^2)^k u_1|_{\partial \Sigma}=0, \quad k\geq 0.
\end{equation}

Denote by $E\subset\Cinf_c(\bar{\Sigma})\oplus\Cinf_c(\bar{\Sigma})$ the (closed) subspace consisting of all pairs $(f,h)$ satisfying \eqref{eq:compatibility conditions for homogeneous Dirichlet}. It is a known fact (see Ivrii's exposition in \cite{egorov_encyclopedia_4_1993}) that, provided a weak solution $u$ exists and the initial data are smooth, these conditions are also sufficient to ensure smoothness of $u$ on $\R\times\bar\Sigma$ as well as vanishing at $\R\times\partial\Sigma$.
\begin{prop}\label{prop:flat initial data}
	$E$ is dense in $\Dcal_{1/2}\oplus\Dcal_{-1/2}$.
\end{prop}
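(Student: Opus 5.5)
The plan is to reduce density of $E$ to density of a single space $E_0\subset\Cinf_c(\bar\Sigma)$ in each factor. Define
\[
E_0=\{f\in\Cinf_c(\bar\Sigma)\colon (\Delta+m^2)^kf|_{\partial\Sigma}=0\ \text{for all }k\ge0\}.
\]
Then $E=E_0\oplus E_0$, since the compatibility conditions \eqref{eq:compatibility conditions for homogeneous Dirichlet} decouple in $f$ and $h$. It therefore suffices to show that $E_0$ is dense in $\Dcal_{1/2}$ and dense in $\Dcal_{-1/2}$.

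The key observation is that $E_0\supset\Cinf_c(\Sigma)$, i.e. $E_0$ contains the test functions supported away from the boundary, for which all the traces trivially vanish. So I will instead prove that $\Cinf_c(\Sigma)$ is dense in both spaces. This avoids any analysis of the compatibility conditions themselves.

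For $\Dcal_{1/2}$, I will use \cref{lemma:domains of powers of Laplacian} (in its local/noncompact form) to identify the space with $\dot\sob^{1/2}$, together with its comp version, which contains $\Cinf_c(\Sigma)$. The needed fact is that $\Cinf_c(\Sigma)$ is dense in $\dot\sob^{1/2}(\bar\Sigma)=\tilde\sob^{1/2}$. This is standard for smooth boundary, and also follows from the Lions--Magenes description in \cref{remark:sobolev spaces}. In the noncompact case I first truncate by a cutoff $\chi_R\in\Cinf_c(\bar\Sigma)$, using that $\chi_R$ acts boundedly on $\Dcal_{1/2}$. By interpolation between $L^2$ and $\Dcal_1$, it is enough to check boundedness on $\Dcal_1$, where it follows from the quadratic form $\|\nabla u\|^2+\|u\|^2$. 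Then $\chi_Ru\to u$, since $\Dcal_{1/2}$ is the form-domain interpolation space.

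An alternative, cleaner route uses the spectral side: $\Cinf_c(\Sigma)$ is a core for the form of $A$, because the Friedrichs extension is defined as the closure on $\Cinf_c(\Sigma)$. Hence $\Cinf_c(\Sigma)$ is dense in $\Dcal_1$. Since $\Dcal_1$ is dense in $\Dcal_{1/2}$ (by spectral calculus, e.g. taking spectral projections $1_{[0,n]}(A)$), and the embedding $\Dcal_1\hookrightarrow\Dcal_{1/2}$ is continuous, density in $\Dcal_{1/2}$ follows. I will take this route, as it needs no explicit Sobolev description.

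For $\Dcal_{-1/2}$, the same spectral argument applies. $L^2$ is dense in $\Dcal_{-1/2}$ via $A^{-1/4}$-isometry and spectral truncation, and $\Cinf_c(\Sigma)$ is dense in $L^2$ with continuous inclusion $L^2\hookrightarrow\Dcal_{-1/2}$. The main subtlety, which I expect to be the only real obstacle, is bookkeeping: checking that the pairing used to define $\Dcal_{-1/2}$ agrees with the $L^2$ pairing on $\Cinf_c$, so that the inclusion $E_0\subset\Dcal_{-1/2}$ is the intended one, and making sure that in the noncompact case $\Dcal_1$ is the Friedrichs form domain and not something larger. Both follow from the definitions via the spectral theorem.
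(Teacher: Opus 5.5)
Your proposal is correct and follows essentially the same route as the paper: both observe that $\Cinf_c(\Sigma)\oplus\Cinf_c(\Sigma)\subset E$, use that $\Cinf_c(\Sigma)$ is dense in the form domain $\Dcal_1$ of the Dirichlet (Friedrichs) extension, and then pass to $\Dcal_{1/2}$ and $\Dcal_{-1/2}$ through the continuous dense inclusions $\Dcal_1\hookrightarrow\Dcal_{1/2}$ and $\Dcal_1\subset L^2\hookrightarrow\Dcal_{-1/2}$. The splitting $E=E_0\oplus E_0$ and the spectral-truncation justification of these dense inclusions only spell out what the paper's short argument leaves implicit.
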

\begin{proof}
	The space $\Cinf_c(\Sigma)$ is dense in $\Dcal_1$ since this is the domain of the Dirichlet extension. Since $\Dcal_s$ is dense in $\Dcal_r$ for $r<s$, we have that $\Cinf_c(\Sigma)$ is dense in $\Dcal_{1/2}$ and $\Dcal_{-1/2}$, too. On the other hand, $\Cinf_c(\Sigma)\oplus\Cinf_c(\Sigma)\subset E$, since a compactly supported function in the interior certainly satisfies the compatibility conditions \eqref{eq:compatibility conditions for homogeneous Dirichlet}. Since $E$ is a subspace of $\Dcal_1\oplus L^2(\Sigma)$, we find the chain of inclusions
	\begin{equation}\label{eq:inclusions of spaces with compatibility conditions}
		\Cinf_c(\Sigma)\oplus\Cinf_c(\Sigma)\hookrightarrow E\hookrightarrow \Dcal_{1/2}\oplus \Dcal_{-1/2}.
	\end{equation}
	Since $\Cinf_c(\Sigma)$ is dense in $\Dcal_{1/2}$ and in $\Dcal_{-1/2}$, we conclude that the closure of $E$ is the whole space. This finishes the proof.
\end{proof}

In order to study the properties of solutions with rough data, consider the explicit formula obtained via the spectral theorem: for $f\in\Dcal_1$ and $h\in \Dcal_0=L^2(\Sigma)$ we have
\begin{equation}\label{eq:solution to ultrastatic}
	u(t)=\left(\cos (t\sqrt{A})f+\frac{\sin(t\sqrt{A})}{\sqrt{A}}h\right),
\end{equation}
which is defined by functional calculus and is an element of $C(\R;\Dcal_1)\cap C^1(\R;\Dcal_0)$. It solves \eqref{eq:mixed problem in ultrastatic setting} by spectral calculus.

By interpolation and duality as in \cref{sect:distributions on manifolds with boundary}, the same formula is true for initial data in $\Dcal_s\oplus \Dcal_{s-1}$ for any $s\in[0,1]$ (in fact for $s\in\R$). Indeed for every fixed $t\in\R$ the operators $\cos(t\sqrt{A})$ and $A^{-1/2}\sin(t\sqrt{A})$ are bounded on $L^2(\Sigma)$, and in this case $u(t)$ is an element of $C(\R;\Dcal_s)\cap C^1(\R;\Dcal_{s-1})$. In the special case $s=1/2$, corresponding to the one-particle Hilbert space $\Dcal_{1/2}\oplus\Dcal_{-1/2}$, we have a natural symplectic structure, see \cref{cor:symplectic domain of Laplacian}. This allows us to give a direct argument.
\begin{lemma}
	\label{lemma:H^1/2-initial data gives H^1/2 solutions}
	Consider $(f,h)\in \Dcal_{1/2}\oplus \Dcal_{-1/2}$. There is a unique  $\R\ni t\mapsto u(t)\in \Dcal_{1/2}$ solving $\partial^2_{t} u(t)=-Au(t)$ on $M$, $u(t)|_{\partial\Sigma}=0$ for all $t$, and $u(0)=f$, $\dot u(0)=h$. Moreover, $u\in\Nscr(\R\times\bar{\Sigma})$.
\end{lemma}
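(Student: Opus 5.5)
Existence will come from the spectral formula \eqref{eq:solution to ultrastatic}, uniqueness from energy conservation in the $A$-scale, and normal regularity from \cref{cor:kernel of NHO is normally regular} once $u$ is known to be a distribution on $\bar M$.

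\emph{Existence.} I take $u(t)=\cos(t\sqrt A)f+A^{-1/2}\sin(t\sqrt A)h$. For $(f,h)\in\Dcal_{1/2}\oplus\Dcal_{-1/2}$, the maps $\cos(t\sqrt A)$ and $A^{-1/2}\sin(t\sqrt A)$ are uniformly bounded in $t$ from $\Dcal_{1/2}$ to $\Dcal_{1/2}$ and from $\Dcal_{-1/2}$ to $\Dcal_{1/2}$, respectively. This is because they commute with $A^{1/4}$, and $A^{-1/2}$ is an isometry $\Dcal_{-1/2}\to\Dcal_{1/2}$ in the graph norms. Hence $u\in C(\R;\Dcal_{1/2})\cap C^1(\R;\Dcal_{-1/2})\cap C^2(\R;\Dcal_{-3/2})$, and spectral calculus gives $\ddot u=-Au$ in $\Dcal_{-3/2}$. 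Testing against $\phi\in\Cinf_c(\R\times\Sigma)$, for which $A\phi(t)=(\Delta+m^2)\phi(t)$, shows that $Pu=0$ in $\Dscr'(M)$.

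\emph{Boundary condition.} By \cref{lemma:domains of powers of Laplacian}, $\Dcal_{1/2}=\dot\sob^{1/2}(\bar\Sigma)=\sob^{1/2}_{00}$, locally in the noncompact case. This is the correct weak form of ``$u(t)|_{\partial\Sigma}=0$''. For the distributional trace, I would approximate $(f,h)$ by data in $E$ using \cref{prop:flat initial data}. The corresponding solutions $u_j$ are smooth on $\R\times\bar\Sigma$ with $\gamma_0u_j=0$, and they converge to $u$ in $C(\R;\Dcal_{1/2})$, hence in $\Dscr'(\bar M)$. Since $Pu_j=0$, normal regularity holds uniformly, and $\gamma_0$ is continuous on the solution space. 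Therefore $\gamma_0u=\lim\gamma_0u_j=0$.

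\emph{Uniqueness.} Suppose $u$ is such a solution with zero data and values in $\Dcal_{1/2}$. Pairing with the spectral resolution, I write $u_\mu(t)=(e_\mu,u(t))$ for spectral components. Each component satisfies $\ddot u_\mu=-\mu u_\mu$ with zero data, hence vanishes. Equivalently, the energy $\|A^{1/4}u(t)\|^2+\|A^{-1/4}\dot u(t)\|^2$ is conserved. A cleaner route is duality: pair $u$ with $\cos((t-s)\sqrt A)g$ for $g\in\Dcal_{1/2}$ and differentiate in $s$.

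\emph{Normal regularity.} The map $t\mapsto u(t)\in\Dcal_{1/2}\subset L^2_{\locs}$ is continuous, so $u$ defines an element of $\Dscr'(\bar M)$, and in fact of $L^2_{\locs}(\bar M)$. Since $Pu=0$ in $M$ and $\partial M=\R\times\partial\Sigma$ is timelike, hence non-characteristic, \cref{cor:kernel of NHO is normally regular} (i.e. \cref{theo:smoothness and solvability in N'}(3)) gives $u\in\Nscr(\R\times\bar\Sigma)$.

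\textbf{Main obstacle.} I expect the hardest step to be making rigorous sense of the Dirichlet condition and showing it is compatible with the distributional trace. The issue is that $\sob^{1/2}$ has no trace operator, so the trace must be taken via $\Nscr$ and the approximation argument above. I would verify continuity of $\gamma_0$ on $\ker P\cap\Nscr$ with respect to the $C(\R;\Dcal_{1/2})$ topology through \cref{theo:smoothness and solvability in N'}(1).
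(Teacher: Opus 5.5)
Your proof is correct in substance, but it takes a different route from the paper's.

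\textbf{Comparison of routes.} The paper only uses classical existence for data in $\Dcal_1\oplus\Dcal_0$. It passes to $\Dcal_0\oplus\Dcal_{-1}$ by transposition with respect to $\omega_{\bar\Sigma}$, and then reaches $\Dcal_{1/2}\oplus\Dcal_{-1/2}$ by complex interpolation. Normal regularity then comes from \cref{theo:smoothness and solvability in N'}, exactly as in your last step. The paper reads the Dirichlet condition directly off $u(t)\in\Dcal_{1/2}$; the upgrade to a strong trace is only remarked on later, after \eqref{eq:spectral form of ultrastatic causal propagator}.

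You instead work directly at $s=1/2$ with the functional calculus. Conservation of $\|A^{1/4}u(t)\|^2+\|A^{-1/4}\dot u(t)\|^2$ gives existence, continuous dependence and uniqueness at once, with no interpolation theory. It also produces the formula \eqref{eq:solution to ultrastatic} that the rest of the section uses anyway. Since $A$ may have continuous spectrum, phrase uniqueness without eigenvectors $e_\mu$. The clean version is that $\cos(t\sqrt A)u(t)-A^{-1/2}\sin(t\sqrt A)\dot u(t)$ and $\sqrt A\sin(t\sqrt A)u(t)+\cos(t\sqrt A)\dot u(t)$ have zero derivative in $\Dcal_{-1/2}$. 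Pairing with $\cos((t-s)\sqrt A)g$ alone does not give a conserved quantity; you need the companion term $\braket{\dot u(s),A^{-1/2}\sin((t-s)\sqrt A)g}$.

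\textbf{The trace step.} The one under-justified step is the one you flag yourself. \cref{theo:smoothness and solvability in N'}(1) is a qualitative characterisation. It gives no continuity of $\gamma_0$ on $\ker P$, and ``normal regularity holds uniformly'' is not something it provides. You would need an a priori estimate: for instance, write $P$ in normal form near $x=0$ and integrate the equation in $x$ to bound $u$ in $\sob^1((0,\eps);\sob^{-k})$ by its $L^2$-norm on a larger set.

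You can avoid the approximation argument altogether. Locally, $\Dcal_{1/2}=\sob^{1/2}_{00}$ carries the Hardy condition $\int|u(t)|^2\,\frac{\de x}{x}<\infty$ (cf.\ Theorem 11.7 in \cite{lions_non-homogeneous_1972_I}), uniformly for $t$ in compact sets since $u\in C(\R;\Dcal_{1/2})$. So for $\phi\in\Cinf_c(\partial M)$, Cauchy--Schwarz shows that $g(x)=\braket{u(x,\cdot),\phi}$ satisfies $\int_0^\eps|g(x)|^2\frac{\de x}{x}<\infty$. On the other hand, $u\in\Nscr$ makes $g$ smooth on $[0,\eps)$. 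Together these force $g(0)=\braket{\gamma_0u,\phi}=0$. This is precisely the mechanism behind the paper's later remark that the weak Dirichlet condition encoded by $\Dcal_{1/2}$ is in fact the strong one.
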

\begin{proof}
	That we have a solution for initial data in $\Dcal_{1}\oplus \Dcal_0$ is classical, see \cite{evans_partial_2010}. By duality with respect to the symplectic form
	\begin{equation*}
		\omega_{\bar{\Sigma}}((f_1,h_1),(f_2,h_2))=\int_{\bar\Sigma}f_2h_1-f_1h_2 \de\vol_{g_{\bar\Sigma}},
	\end{equation*}
	we obtain that initial data in $\Dcal_0\oplus\Dcal_{-1}$ also give unique distributional solutions. By reflexivity, the complex interpolation method is compatible with duality (see \cite{bergh_interpolation_1976}) and hence the two endpoints interpolate to give unique solvability in $C(\R;\Dcal_s)$ for initial data in any $\Dcal_s\oplus\Dcal_{s-1}$, $s\in [0,1]$.

	Given the above solution, we clearly have $u\in L^1_\locs(\R;\Dcal_s)$ so that $u$ is an (extensible) distribution on $\R\times\Sigma$. By \cref{theo:smoothness and solvability in N'} we find $u\in\Nscr(\bar{M})$. The proof is complete.
\end{proof}

The inverses $G^\pm_D$ can also be written explicitly, again via functional calculus. By Duhamel's principle, they are given by
\begin{equation}
	\label{eq:spectral form of ultrastatic propagators}
	G^+_D=\vartheta(t)\frac{\sin(t\sqrt{A})}{\sqrt{A}},\quad
	G^-_D=-\vartheta(-t)\frac{\sin(t\sqrt{A})}{\sqrt{A}},
\end{equation}
where $\vartheta$ is the Heaviside function. The Pauli--Jordan propagator, in this product case, is then obtained as the one-parameter family
\begin{equation}
	\label{eq:spectral form of ultrastatic causal propagator}
	G_D=S(t)=\frac{\sin(t\sqrt{ A})}{\sqrt{A}}.
\end{equation}
Notice that, indeed, $S(t)$ is a bounded operator (acting by convolution in $t$), $Sf(t)$ solves the homogeneous wave equation by construction, and, for any $v\in \Dcal_{-1/2}$,
\begin{equation*}
	\label{eq:spectral expansion of causal propagator}
	\braket{v,(Sf)(t)}=\int_{[0,\infty)}\frac{1}{\omega}\sin((t-s)\omega)\de s\de\mu_{v,f(t)}.
\end{equation*}
For every $t\in\R$ and any given $f\in\dot\sob^{1/2}(\bar{\Sigma})$, the above spectral integral gives an element of $\Dcal_{1/2}$. Since, moreover, $Sf$ is an element of $\Nscr(\bar{M})$, the weak Dirichlet boundary condition enforced by $\Dcal_{1/2}\subset\dot\sob^{1/2}_\locs(\bar{\Sigma})$ is in fact the strong one $Sf(t)|_{\partial\Sigma}=0$.

Thus, for every fixed $t\in\R$ and every $f\in\Cinf_c(\bar M)$, $Sf(t)$ is a solution and its Cauchy data $(Sf(0),(\partial_t(Sf))(0))$ lie in $\Hcal_\kappa=\Dcal_{1/2}\oplus\Dcal_{-1/2}$. The map $\kappa$ is in this case exactly the Cauchy data map into the Hilbert space $\Hcal_\kappa=\Dcal_{1/2}\oplus\Dcal_{-1/2}$, namely
\begin{equation}
	\kappa\colon \ker_DP\to \Dcal_{1/2}\oplus\Dcal_{-1/2},\quad \kappa(u)=(u|_{t=0},\partial_\n u|_{t=0}).
\end{equation}
That $\kappa$ has dense range is exactly the content of \cref{prop:flat initial data}. The symplectic form is given, in terms of Cauchy data, by the formula
\begin{equation}\label{eq:ultrastatic symplectic form}
	\omega_{\bar{\Sigma}}((f_1,h_1),(f_2,h_2))=\int_{\bar\Sigma}f_2h_1-f_1h_2 \de\vol_{g_{\bar\Sigma}},
\end{equation}
as can be promptly verified by performing one integration by parts. The one-particle Hilbert space $H_+$ is then constructed with the help of the complex structure
\begin{equation*}
	J(f,h)=\left(-A^{-1/2}h, A^{1/2}f\right).
\end{equation*}
The scalar product obtained from $\omega$ and $J$, $(\cdot,\cdot)=-\omega(\cdot,J\cdot)$, is positive-definite:
\begin{equation}\label{eq:ultrastatic Hilbert product}
	\begin{aligned}
	((f,h),(f,h))&=-\omega((f,h),J(f,h))=(A^{1/2}f,f)_{L^2}+(A^{-1/2}h,h)_{L^2}\\
	&=\|A^{1/4}f\|^2+\|A^{-1/4}h\|^2\ge 0,
	\end{aligned}
\end{equation}
which reconfirms that $\Hcal_\kappa \cong \Dcal_{1/2}\oplus \Dcal_{-1/2}$. The positive-energy projection onto $H_+$ is $p=\frac{1}{2}(\Id-\im J)$, i.e.
\begin{equation*}
	p(f,h)=(w,-\im \sqrt{A}w), \quad w\in\Dcal_{1/2}.
\end{equation*}
That is, $H_+$ is the graph of $-\im\sqrt{A}$, which we identify with $\Dcal_{1/2}$ itself via $(w,-\im\sqrt{A}w)\mapsto \sqrt{2}w$ (the normalisation makes this map an isometry). The inner product in $H_+$ is therefore given by
\begin{equation}
	(v,w)_{H_+}=(A^{1/4}v,A^{1/4}w)_{L^2}.
\end{equation}
In this representation, the field $\Phi$ is thus the $H_+$-valued supported distribution on $\bar{M}$ given by
\begin{equation}\label{eq:ultrastatic field}
	\Phi(\varphi)=pG_D(\varphi) =\frac{\im}{\sqrt2}A^{-1/2}\int_\R e^{\im t\sqrt{A}}\varphi(t,\cdot)\de t.
\end{equation}
By the general theory of \cref{sect:qft}, the distribution $\Phi$ solves the system \eqref{eq:mixed problem in ultrastatic setting}, so that it is normally regular and its $b$-wave-front set is a subset of the compressed causal cone $\tilde{V}\bar{M}$. The goal is to show that the past-directed covectors in $\bTs\bar{M}$ are absent from $\WF_b(\Phi)$. As in \cref{theo:weak and strong b-wave-front set}, we have
\begin{equation*}
	\label{eq:HS-valued WF via Banach-Steinhaus}
	\WF_b(\Phi)=\overline{\bigcup_{v\in H_+}\WF_b\left(\left(v,p\circ G_D(\cdot)\right)\right)}.
\end{equation*}
Hence in order to prove that $\WF_b(\Phi)\subset {\tilde V}^+\bar{M}$ it suffices to show this for the scalar distributions $u_v\equiv \left(v,\Phi(\cdot)\right)$, for all $v\in H_+$. Notice that $P u_v=0$, so that $u_v\in\Nscr(\bar{M})$ according to \cref{cor:kernel of NHO is normally regular}. Moreover $u_v|_{\partial M}=0$. Thus, in the interior the standard theory gives $\WF(u_v|_{M})\subset VM$. By \cref{theo:b-WF with elliptic boundary conditions} we see that $\bWF(u_v)\subset \Char(P;\Id)\subset{\tilde V\bar M}\cap T^\ast \partial M$, that is, the $b$-wave-front set of $u_v$ at a point $(0,q)\in\R\times\partial\Sigma$ is confined to the set of covectors $(\tau,0,\eta)\in\b{T^\ast_q\bar M}$ such that $\tau^2\geq|\eta|^2$ in canonical coordinates.

Summarising the above discussion:
\begin{theorem}
	Let $\bar{M}$ be an ultrastatic globally hyperbolic spacetime with timelike boundary, let $H_+\cong\Dcal_{1/2}$ be the one-particle Hilbert space as constructed in \cref{sect:qft}, and let $\Phi\in\Nscr(\bar{M};H_+)$ be the one-particle distribution. For any $v\in H_+$, $u_v\equiv(v,\Phi(\cdot))_{H_+}$ is a scalar, supported, normally regular distribution on $\bar{M}$ and is given by
	\begin{equation}
		u_v(\phi)=\left(v,\Phi(\phi)\right)_{H_+}=\frac{\im}{\sqrt2}\int_\R\left(v,e^{\im t\sqrt A}\phi(t,\cdot)\right)_{L^2(\bar\Sigma)}\de t ,\quad v\in H_+\cong\Dcal_{1/2}\subset L^2(\bar\Sigma).
	\end{equation}
\end{theorem}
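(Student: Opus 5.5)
The theorem asserts three things about $u_v$: it is scalar and supported, it is normally regular, and it is given by the explicit formula. I will establish them in the reverse of that order: first the formula, then the distributional and regularity properties, which follow from the general theory.

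\emph{The formula.} Start from \eqref{eq:ultrastatic field}. For $\phi\in\Cinf_c(\bar M)$, the vector $\Phi(\phi)$ is $\frac{\im}{\sqrt2}A^{-1/2}\int_\R e^{\im t\sqrt A}\phi(t,\cdot)\de t\in\Dcal_{1/2}$. Pair it with $v$ using the $H_+$ inner product $(v,w)_{H_+}=(A^{1/4}v,A^{1/4}w)_{L^2}$. By functional calculus, the factor $A^{1/2}$ coming from the two $A^{1/4}$ cancels the $A^{-1/2}$. The $t$-integral is a Bochner integral in $\Dcal_{-1/2}$, since $\phi(t,\cdot)\in\Cinf_c(\bar\Sigma)\subset\Dcal_{-1/2}$ and $e^{\im t\sqrt A}$ is unitary on each $\Dcal_s$. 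Moving the bounded functional $w\mapsto(v,w)$ inside the integral then gives the displayed expression. The pairing $(v,\cdot)_{L^2}$ is understood as the duality between $\Dcal_{1/2}$ and $\Dcal_{-1/2}$. I would first check the identification $H_+\cong\Dcal_{1/2}$ with the factor $\sqrt2$ (it produces the $1/\sqrt2$), and then verify the unitary conjugation with $A^{\pm1/4}$.

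\emph{Distributional properties.} The bound $|u_v(\phi)|\le C\|v\|_{\Dcal_{1/2}}\int\|\phi(t)\|_{\Dcal_{-1/2}}\de t$ holds. On compact supports the $\Dcal_{-1/2}$-norm is controlled by the $\sob^{-1/2}$-norm, hence by finitely many $\Cinf$ seminorms. So $u_v$ is continuous on $\Cinf_c(\bar M)$, which means it is a supported distribution.

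\emph{Normal regularity.} This comes from \cref{theo:propagator is a solution}: $\Phi(P\phi)=0$ for $\phi\in\Cinf_{D,c}$ gives $Pu_v=0$ in $M$. \cref{cor:kernel of NHO is normally regular} then gives $u_v\in\Nscr(\bar M)$. By \cref{lemma:extensible=supported for dual-conormal}, the supported and extensible viewpoints agree, so there is no ambiguity in calling $u_v$ supported and normally regular at once. The vanishing trace $\gamma_0u_v=0$ follows from \cref{theo:propagator is a solution} applied to $v$.

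\emph{Main obstacle.} The only delicate point is justifying the interchange of the $t$-integral with the spectral operators for test functions whose support meets $\partial\Sigma$. Such functions need not lie in $\Dcal_1$, since they do not satisfy the compatibility conditions. I would handle this by working in $\Dcal_{-1/2}=\sob^{-1/2}$, where \cref{lemma:restriction and Sobolev spaces} applies with $s=-1/2$, giving $\Cinf_c(\bar\Sigma)\subset\sob^{-1/2}_{\comps}$. Unitarity of $e^{\im t\sqrt A}$ on $\Dcal_{-1/2}$ then suffices.
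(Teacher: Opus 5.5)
Your proposal is correct and follows the paper's own argument, which is the discussion immediately preceding the statement. In both, the formula comes from \eqref{eq:ultrastatic field} by letting the two factors $A^{1/4}$ in the $H_+$ inner product cancel $A^{-1/2}$, and normal regularity comes from $Pu_v=0$ together with \cref{cor:kernel of NHO is normally regular}. The ``main obstacle'' you flag is not really one: $\phi(t,\cdot)\in\Cinf_c(\bar\Sigma)\subset L^2(\bar\Sigma)=\Dcal_0\subset\Dcal_{-1/2}$ holds directly, as in \cref{lemma:fourier transform in time}, so you need neither \cref{lemma:restriction and Sobolev spaces} at the endpoint $s=-1/2$ nor the identification $\Dcal_{-1/2}=\sob^{-1/2}$, which holds only when $\bar\Sigma$ is compact.
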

\begin{remark}\label{rem:ultrastatic field is local}
	It is well known that $\WF(u_v)\subset V^+M$ in the interior. Moreover, the computation is always local. We will therefore prove that, after localisation at a generic point $q\in\partial M$, the past-directed directions in $\tilde V_q\bar{M}$ cannot occur.
\end{remark}

\subsection*{The $b$-wave-front set estimate}
We isolate here some facts that will be used in the proof the \cref{theo:Hadamard form of quantum field}.
\begin{lemma}\label{lemma:fourier transform in time}
	Let $\rho\in\Cinf_c(\R)$ and consider its Fourier transform $\hat{\rho}\in\Scal(\R)$. Then, for every $w\in L^2(\bar\Sigma)$ and every $\tau\in\R$, the $L^2(\bar\Sigma)$-valued integral below converges absolutely and
	\begin{equation}\label{eq:time transform as functional calculus}
		(2\pi)^{-1/2}\int_\R e^{-\im t\tau}\rho(t)\,e^{\im t\sqrt A}w\de t
		=\hat\rho\left(\tau-\sqrt A\right)w ,
	\end{equation}
	the right-hand side being defined by the functional calculus of $\sqrt A$.
\end{lemma}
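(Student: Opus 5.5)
The identity is a direct consequence of the spectral theorem for the nonnegative self-adjoint operator $\sqrt A$ on $L^2(\bar\Sigma)$, combined with Fubini's theorem. Write $E$ for the spectral measure of $\sqrt A$, supported in $[m,\infty)$, and $\mu_w(\cdot)=(w,E(\cdot)w)$ for the finite spectral measure of $w$, with total mass $\|w\|^2$.

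First, absolute convergence. The integrand $t\mapsto e^{-\im t\tau}\rho(t)e^{\im t\sqrt A}w$ is strongly continuous, because $e^{\im t\sqrt A}$ is a strongly continuous unitary group. It is supported in the compact set $\supp\rho$ and has norm $|\rho(t)|\,\|w\|$. Hence the Bochner integral converges absolutely, with norm at most $(2\pi)^{-1/2}\|\rho\|_{L^1}\|w\|$.

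Next, I identify the result weakly. Fix $v\in L^2(\bar\Sigma)$ and pair with the integral; the Bochner integral commutes with the bounded functional $(v,\cdot)$. Inserting the spectral representation $(v,e^{\im t\sqrt A}w)=\int e^{\im t\omega}\de\mu_{v,w}(\omega)$, where $\mu_{v,w}$ is the complex spectral measure of total variation at most $\|v\|\|w\|$, gives
\[
(2\pi)^{-1/2}\int_\R\int_{[0,\infty)}\rho(t)e^{-\im t(\tau-\omega)}\de\mu_{v,w}(\omega)\de t .
\]
The integrand is bounded by $|\rho(t)|$, which is integrable against $\de t\,\de|\mu_{v,w}|$. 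Fubini therefore lets us perform the $t$-integral first. By the paper's normalisation of $\FT$, this yields $\hat\rho(\tau-\omega)$. We are left with $\int\hat\rho(\tau-\omega)\de\mu_{v,w}(\omega)=(v,\hat\rho(\tau-\sqrt A)w)$. The operator $\hat\rho(\tau-\sqrt A)$ is bounded, since $\hat\rho\in\Scal(\R)$ is a bounded Borel function.

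Since $v$ is arbitrary, the two vectors coincide. There is no real obstacle here. The only points needing care are the sign and normalisation conventions, namely $e^{-\im z\zeta}$ with prefactor $(2\pi)^{-1/2}$, which produce exactly $\hat\rho(\tau-\omega)$, and the justification of Fubini via the finiteness of the spectral measure.
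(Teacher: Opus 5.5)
Your proof is correct and follows the paper's argument essentially step for step. You obtain absolute convergence of the Bochner integral from the bound $|\rho(t)|\,\|w\|$, pair with an arbitrary $v$, expand $(v,e^{\im t\sqrt A}w)$ against the complex spectral measure $\mu_{v,w}$ (whose total variation is at most $\|v\|\,\|w\|$), apply Fubini, and recognise the inner integral as $\hat\rho(\tau-\omega)$ under the paper's normalisation of $\FT$; your remarks that the Bochner integral commutes with $(v,\cdot)$ and that $\hat\rho(\tau-\sqrt A)$ is bounded are small points the paper leaves implicit.
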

\begin{proof}
	The integrand is continuous in $t$ with values in $L^2(\bar\Sigma)$ and is bounded in norm by $|\rho(t)|\,\|w\|_{L^2}$, so the integral converges absolutely as a Bochner integral. For $v\in L^2(\bar\Sigma)$, functional calculus gives $\left(v,e^{\im t\sqrt A}w\right)=\int_{[0,\infty)}e^{\im t\omega}\de\mu_{v,w}(\omega)$, and the complex measure $\mu_{v,w}$ has finite total variation, bounded by $\|v\|_{L^2}\|w\|_{L^2}$. Hence Fubini's theorem applies to
	\begin{equation*}
		(2\pi)^{-1/2}\int_\R e^{-\im t\tau}\rho(t)\left(v,e^{\im t\sqrt A}w\right)\de t
		=\int_{[0,\infty)}\left((2\pi)^{-1/2}\int_\R e^{-\im t(\tau-\omega)}\rho(t)\de t\right)\de\mu_{v,w}(\omega).
	\end{equation*}
	The inner integral being $\hat\rho(\tau-\omega)$, we see that the right-hand side is $\left(v,\hat\rho(\tau-\sqrt A)w\right)$ by functional calculus. Since $v$ was arbitrary, the claim follows.
\end{proof}
\begin{lemma}\label{lemma:rapid decay of shifted cutoff}
	Let $\rho\in\Cinf_c(\R)$ and $\hat\rho=\FT\rho$.
	\begin{enumerate}
		\item For all $\alpha,\omega\ge0$ and all integers $0\le\theta\le N$ we have $\braket{\alpha+\omega}^{-N}\le\braket{\alpha}^{-(N-\theta)}\braket{\omega}^{-\theta}$. Consequently there are constants $C_{N,\theta,\rho}>0$ with \begin{equation}\label{eq:bound for Fourier transform of time-cutoff}
			\left|\hat\rho(\alpha+\omega)\right| \le C_{N,\theta,\rho}\braket{\alpha}^{-(N-\theta)}\braket{\omega}^{-\theta}, \qquad\alpha,\omega\ge0.
		\end{equation}
		\item Let $T$ be a self-adjoint operator on a Hilbert space with
		$\spec T\subset[0,\infty)$. Then for every $N\in\N$ there is a constant
		$C_{N,\rho}>0$, depending only on $N$ and $\rho$ and not on $T$, with
		\begin{equation}\label{eq:one-sided operator bound}
			\left\|\hat\rho\left(\tau-T\right)\right\|\le C_{N,\rho}\braket{\tau}^{-N}
			\qquad\text{for every }\tau\le0 .
		\end{equation}
	\end{enumerate}
\end{lemma}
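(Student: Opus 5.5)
For part (1), the plan is to prove the elementary inequality first and then combine it with the Schwartz decay of $\hat\rho$. Since $\alpha,\omega\ge0$, we have $\braket{\alpha+\omega}\ge\braket{\alpha}$ and $\braket{\alpha+\omega}\ge\braket{\omega}$. Indeed, $1+(\alpha+\omega)^2\ge 1+\alpha^2$ because $\alpha\omega\ge0$, and symmetrically for $\omega$.

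Splitting the exponent as $N=(N-\theta)+\theta$ with $0\le\theta\le N$, both pieces are nonnegative. This gives
\begin{equation*}
\braket{\alpha+\omega}^{-N}=\braket{\alpha+\omega}^{-(N-\theta)}\braket{\alpha+\omega}^{-\theta}\le\braket{\alpha}^{-(N-\theta)}\braket{\omega}^{-\theta}.
\end{equation*}
Since $\rho\in\Cinf_c(\R)$, we have $\hat\rho\in\Scal(\R)$, so $|\hat\rho(s)|\le C_{N,\rho}\braket{s}^{-N}$ for all real $s$. Applying this at $s=\alpha+\omega$ yields \eqref{eq:bound for Fourier transform of time-cutoff} with $C_{N,\theta,\rho}=C_{N,\rho}$, which in fact does not depend on $\theta$.

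For part (2), the plan is to estimate the operator norm by the supremum of $\hat\rho(\tau-\cdot)$ over the spectrum, using functional calculus. For a bounded Borel function $F$, we have $\|F(T)\|\le\sup_{\omega\in\spec T}|F(\omega)|$. Here $\hat\rho$ is bounded, so $\hat\rho(\tau-T)$ is a bounded operator and
\begin{equation*}
\|\hat\rho(\tau-T)\|\le\sup_{\omega\ge0}|\hat\rho(\tau-\omega)|.
\end{equation*}

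For $\tau\le0$ and $\omega\ge0$, set $\alpha=-\tau\ge0$. Then $|\tau-\omega|=\alpha+\omega$. Since $\braket{\cdot}$ is even, the Schwartz bound gives $|\hat\rho(\tau-\omega)|\le C_{N,\rho}\braket{\alpha+\omega}^{-N}$. Applying part (1) with $\theta=0$ bounds this by $C_{N,\rho}\braket{\tau}^{-N}$, uniformly in $\omega\ge0$.

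The resulting constant depends only on $N$ and $\rho$, not on $T$. The only input used about $T$ is $\spec T\subset[0,\infty)$. That uniformity is the point the later application needs, with $T=\sqrt A$. None of the steps is a real obstacle; the one thing to watch is the sign convention, namely that $\tau\le0$ and $\omega\ge0$ force $|\tau-\omega|$ to be $|\tau|+\omega$, so there is no cancellation between $\tau$ and the spectrum.
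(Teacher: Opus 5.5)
Your proof is correct and follows the paper's argument essentially verbatim. For (1) you use the comparison $\braket{\alpha+\omega}\ge\max\{\braket{\alpha},\braket{\omega}\}$ together with the Schwartz decay of $\hat\rho$, and for (2) you bound the operator norm by $\sup_{\omega\ge0}|\hat\rho(\tau-\omega)|$ via the spectral theorem and then apply (1) with $\theta=0$. You also note explicitly that $\braket{\tau-\omega}=\braket{|\tau|+\omega}$, which justifies using the bound at the negative argument $\tau-\omega$; the paper skips this step when it cites (1), which is stated only for $\hat\rho(\alpha+\omega)$.
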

\begin{proof}
	\begin{enumerate}
		\item Notice first that for positive $\alpha,\omega$ we have $\braket{\alpha+\omega}\geq \max\{\braket{\alpha},\braket{\omega}\}$. For every $N,\theta\in\N$ with $0\leq\theta\le N$ we then have, multiplying $N$ factors of $\braket{\alpha+\omega}^{-1}$ and using the above estimate, that
		\begin{equation}\label{eq:control on jap bracket of the sum}
			\braket{\alpha+\omega}^{-N}\le \braket{\alpha}^{-(N-\theta)}\braket{\omega}^{-\theta}.
		\end{equation}
		We have $\hat\rho\in\Sscr(\R)$ as well, so for every $N$ we find a positive constant $B_{N,\rho}$ with $|\hat\rho(s)|\le B_{N,\rho}\braket{s}^{-N}$. Together with the above estimate for $\braket{\alpha+\omega}$ this gives a constant $C_{N,\rho}>0$ with
		\begin{equation}
			|\hat\rho(\alpha+\omega)|\le C_{N,\rho}\braket{\alpha}^{-(N-\theta)}\braket{\omega}^{-\theta}.
		\end{equation}
	
		\item Since $\hat{\rho}$ is continuous and bounded, the spectral theorem gives $\left\|\hat\rho(\tau-T)\right\|=\sup_{\omega\in\spec T}\left|\hat\rho(\tau-\omega)\right|$. If $\tau\le0$ and $\omega\ge0$ then $\tau-\omega=-(|\tau|+\omega)$, so \eqref{eq:bound for Fourier transform of time-cutoff} with $\theta=0$ and $\alpha=|\tau|$ bounds the supremum by $C_{N,\rho}\braket{\tau}^{-N}$.
	\end{enumerate}
\end{proof}

Let now $q\in\partial\Sigma$ and choose adapted coordinates $(x,y)\in[0,\eps)\times\R^{n-2}$ near $q$, so that $(t,x,y)$ are coordinates on $\bar M$ near $q$ with dual $b$-coordinates $(\tau,\xi,\eta)$. Choose cutoffs $\chi\in\Cinf_c(\bar\Sigma)$ with $0\le\chi\le1$ and $\chi\equiv1$ near $q$, and $\rho\in\Cinf_c(\R)$ with $\rho\equiv1$ near $0$. For $\lambda\in\C$ and $\eta\in\R^{n-2}$ put
\begin{equation}\label{eq:test factor for MF transform}
	\psi_{\lambda,\eta}(x,y)\equiv\chi(x,y)\,x^{-\im\lambda-1}e^{-\im y\eta}.
\end{equation}
Then we have
\begin{lemma}\label{nc:lem:psi}
	Let $\lambda=\xi+\im L$ with $L>1/2$. Then $\psi_{\lambda,\eta}\in L^2(\Sigma)$	and $\|\psi_{\lambda,\eta}\|_{L^2}$ depends only on $L$ and $\chi$; in particular it is independent of $(\xi,\eta)$. In addition $\lambda\mapsto\psi_{\lambda,\eta}$ is a holomorphic $L^2(\Sigma)$-valued map on $\{\Im\lambda>1/2\}$.
\end{lemma}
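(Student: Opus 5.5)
The plan is to compute the $L^2$-norm directly in the adapted coordinates and then obtain holomorphy from a dominated-convergence argument. Because $\chi$ is supported in the coordinate patch around $q$, we can write
\begin{equation*}
	\|\psi_{\lambda,\eta}\|^2_{L^2}=\int_{[0,\eps)\times\R^{n-2}}|\chi(x,y)|^2\,\bigl|x^{-\im\lambda-1}\bigr|^2\,\bigl|e^{-\im y\eta}\bigr|^2\,\mu(x,y)\de x\de y ,
\end{equation*}
where $\mu$ is the smooth, positive density of $\de\vol_{g_{\bar\Sigma}}$ in these coordinates.

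For real $x>0$ and $\lambda=\xi+\im L$ we have $|x^{-\im\lambda-1}|=|e^{(-\im\xi+L-1)\log x}|=x^{L-1}$. We also have $|e^{-\im y\eta}|=1$. Substituting these, the integrand becomes $|\chi|^2x^{2L-2}\mu$, which contains neither $\xi$ nor $\eta$. So the norm depends only on $L$ and $\chi$, and on the fixed density $\mu$.

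Finiteness follows because $\chi$ is compactly supported, $\mu$ is bounded on its support, and $\int_0^\eps x^{2L-2}\de x<\infty$ exactly when $2L-2>-1$, that is, when $L>1/2$. This is where the threshold in the statement comes from.

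For holomorphy we use that $\lambda\mapsto x^{-\im\lambda-1}=e^{(-\im\lambda-1)\log x}$ is entire for each fixed $x>0$. It is cleanest to prove weak holomorphy and then invoke the standard fact that weakly holomorphic Hilbert-space-valued maps are strongly holomorphic. Take $v\in L^2(\Sigma)$ and consider $F(\lambda)=(v,\psi_{\lambda,\eta})$. On a compact subset of $\{\Im\lambda>1/2\}$ with $\Im\lambda\ge L_0>1/2$ and $\Im\lambda\le L_1$, the integrand is dominated by $|v|\,|\chi|\,x^{L_0-1}\mu$ when $x\le1$ and by $|v|\,|\chi|\,x^{L_1-1}\mu$ when $x\ge1$. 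This majorant is integrable by Cauchy--Schwarz, since $x^{L_0-1}\in L^2$ near $0$. Holomorphy of $F$ then follows from Morera's theorem together with Fubini, or equivalently from differentiation under the integral.

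Alternatively, one can get strong holomorphy directly. The map $\partial_\lambda\psi_{\lambda,\eta}=-\im\log x\,\psi_{\lambda,\eta}$ is still in $L^2$, because $|\log x|^2x^{2L-2}$ is integrable near $0$ for $L>1/2$. Dominated convergence then shows that the difference quotients converge in norm.

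None of these steps is a real obstacle. The only point requiring care is the uniform domination near $x=0$, and it is handled by keeping $\Im\lambda$ bounded below by some $L_0>1/2$ on compact sets.
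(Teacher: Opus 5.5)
Your proposal is correct and is essentially the paper's proof: the identity $|x^{-\im\lambda-1}|=x^{L-1}$ reduces the norm to $\int|\chi|^2x^{2L-2}\de\vol_{g_{\bar\Sigma}}$, which is finite precisely for $L>1/2$, and your ``alternative'' argument is exactly how the paper obtains holomorphy (the derivative $-\im\log x\,\psi_{\lambda,\eta}$ lies in $L^2$, and dominated convergence upgrades pointwise convergence of difference quotients to $L^2$-convergence). Your primary weak-holomorphy/Morera route is an equally valid standard variant; for the strong route, neither you nor the paper writes down a dominating function, and one choice is a multiple of $|\chi|\,|\log x|\,x^{L-\delta-1}$ near $x=0$, valid for increments $|h|\le\delta$ with $L-\delta>1/2$.
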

\begin{proof}
	We have $|x^{-\im\lambda-1}|=x^{L-1}$ and $|e^{-\im y\eta}|=1$, so $\|\psi_{\lambda,\eta}\|^2_{L^2}=\int|\chi|^2x^{2L-2}\de\vol_{g_{\bar\Sigma}}$,	which is finite for $L>1/2$ and manifestly independent of $\xi$ and $\eta$. The difference quotients in $\lambda$ converge pointwise to $-\im\psi_{\lambda,\eta}\log x$, which lies in $L^2(\Sigma)$ for $\Im\lambda>1/2$; dominated convergence upgrades this to convergence in $L^2(\Sigma)$, so $\lambda\mapsto\psi_{\lambda,\eta}$ is holomorphic there.
\end{proof}
We can now prove the main estimate for the Mellin--Fourier transform of the localised field.
\begin{lemma}\label{lemma:main estimate for WF_b}
	Fix $L>\max\{L_0,1/2\}$ with $L_0$ such that $\hat{\rho\chi\Phi}$ is holomorphic for $\lambda>L_0$. Then for every $N\in\N$ there is a constant $C_N=C_N(N,\rho,\chi,L,m_0)$, independent of $\tau$, $\xi$ and $\eta$, such that
	\begin{equation}\label{nc:eq:main estimate}
		\left\|\widehat{\rho\chi\Phi}(\tau,\xi+\im L,\eta)\right\|_{H_+}\le C_N\braket{\tau}^{-N}\qquad\text{for all }\tau\le0,\ \xi\in\R,\ \eta\in\R^{n-2}.
	\end{equation}
	Moreover $\lambda\mapsto\widehat{\rho\chi\Phi}(\tau,\lambda,\eta)$ is holomorphic in a neighbourhood of the line $\{\Im\lambda=L\}$.
\end{lemma}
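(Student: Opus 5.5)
We compute the Mellin--Fourier transform directly from the formula \eqref{eq:ultrastatic field}. By definition, $\widehat{\rho\chi\Phi}(\tau,\lambda,\eta)$ is, up to the factor $(2\pi)^{-n/2}$, the field $\Phi$ applied to the test function
\[
(t,x,y)\mapsto\rho(t)e^{-\im t\tau}\psi_{\lambda,\eta}(x,y).
\]
For $\Im\lambda>1/2$ this is not a smooth test function, since $x^{-\im\lambda-1}$ is singular at $x=0$. We therefore first check that the pairing is legitimate. The formula \eqref{eq:ultrastatic field} only requires the spatial factor to lie in $L^2(\bar\Sigma)$, because $A^{-1/2}$ maps $L^2$ into $\Dcal_1\subset\Dcal_{1/2}\cong H_+$. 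By \cref{nc:lem:psi}, $\psi_{\lambda,\eta}\in L^2$. The extended formula agrees with the canonical extension of the dual-conormal distribution $\Phi$ to the conormal function $x^{-\im\lambda-1}e^{-\im y\eta}$ (cf.\ \cref{lemma:extensible=supported for dual-conormal}). To see this, approximate $\psi_{\lambda,\eta}$ by $\Cinf_c(\bar\Sigma)$ functions, or use analytic continuation from large $\Im\lambda$, where both sides are holomorphic and agree. We then obtain
\[
\widehat{\rho\chi\Phi}(\tau,\lambda,\eta)=c\,A^{-1/2}\int_\R e^{-\im t\tau}\rho(t)e^{\im t\sqrt A}\psi_{\lambda,\eta}\de t
= c'\,A^{-1/2}\hat\rho(\tau-\sqrt A)\psi_{\lambda,\eta},
\]
where the second equality is \cref{lemma:fourier transform in time}.

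The estimate follows from this representation. The $H_+$-norm is $\|A^{1/4}\cdot\|_{L^2}$, so
\[
\|\widehat{\rho\chi\Phi}\|_{H_+}\lesssim\|A^{-1/4}\hat\rho(\tau-\sqrt A)\psi_{\lambda,\eta}\|_{L^2}\le\|A^{-1/4}\|\,\|\hat\rho(\tau-\sqrt A)\|\,\|\psi_{\lambda,\eta}\|_{L^2}.
\]
The first factor is bounded by $m_0^{-1/2}$, where $m_0$ is a lower bound of $\sqrt A$. The second factor is at most $C_{N,\rho}\braket{\tau}^{-N}$ for $\tau\le0$ by \cref{lemma:rapid decay of shifted cutoff}(2), applied with $T=\sqrt A$, whose spectrum lies in $[0,\infty)$. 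The third factor depends only on $L$ and $\chi$, by \cref{nc:lem:psi}. Multiplying the three bounds gives \eqref{nc:eq:main estimate}, uniformly in $\xi$ and $\eta$.

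Holomorphy is handled the same way. The map $\lambda\mapsto\psi_{\lambda,\eta}$ is holomorphic into $L^2$ on $\{\Im\lambda>1/2\}$ by \cref{nc:lem:psi}. Composing it with the fixed bounded operator $A^{-1/4}\hat\rho(\tau-\sqrt A)$, and then with the isometry onto $H_+$, preserves holomorphy. This covers a neighbourhood of the line $\Im\lambda=L$, because $L>1/2$.

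The main obstacle is the first step. We must justify that the Mellin--Fourier transform defined through the dual-conormal pairing agrees with the $L^2$ functional-calculus expression, including the $\rho$-localisation in $t$ and the fact that $\chi$ depends only on the spatial variables. Our plan is to check agreement first for $\Im\lambda$ large, where $\psi_{\lambda,\eta}$ is $C^k$ up to the boundary and the two pairings agree by continuity of $\Phi$ on $C^k$ test functions. We then extend the identity down to $\Im\lambda>1/2$ by uniqueness of analytic continuation, since both sides are holomorphic there.
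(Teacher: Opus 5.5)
Your proposal is correct and follows the paper's proof essentially step by step. You use the same closed form $c\,A^{-1/2}\hat\rho(\tau-\sqrt A)\psi_{\lambda,\eta}$ via \cref{lemma:fourier transform in time}, the same three-factor norm bound using \cref{lemma:rapid decay of shifted cutoff}(2) and \cref{nc:lem:psi}, and the same holomorphy argument through $\lambda\mapsto\psi_{\lambda,\eta}$. Your one addition is to justify that the dual-conormal Mellin pairing coincides with the $L^2$ functional-calculus expression: you check agreement for large $\Im\lambda$, where $\psi_{\lambda,\eta}$ is $C^k$ and vanishes to high order at $x=0$, and then continue analytically on the connected region $\{\Im\lambda>\max\{L_0,1/2\}\}$. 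The paper simply asserts that step ``by definition''.
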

\begin{proof}
	Let $\lambda=\xi+\im L$. The Mellin--Fourier transform of the compactly supported distribution $\rho\chi\Phi$ is, by definition,
	\begin{equation}
		\widehat{\rho\chi\Phi}(\tau,\lambda,\eta)
		=(2\pi)^{-n/2}\,\frac{\im}{\sqrt2}A^{-1/2}\int_\R e^{\im t\sqrt A}\rho(t)e^{-\im t\tau}\psi_{\lambda,\eta}\de t
	\end{equation}
	According to \cref{lemma:fourier transform in time}, this is the same as
	\begin{equation}\label{eq:MF transform in closed form}		
		\hat{\rho\chi\Phi}=\frac{\im}{\sqrt2}(2\pi)^{\frac{1-n}{2}}A^{-1/2}\hat\rho\left(\tau-\sqrt A\right)\psi_{\lambda,\eta},
	\end{equation}
	all operators being bounded on $L^2(\Sigma)$ and $\psi_{\lambda,\eta}$ being square-integrable on $\bar{\Sigma}$ in view of \cref{nc:lem:psi}. Since $\|A^{-1/2}u\|_{H_+}=\|A^{-1/4}u\|_{L^2}$ we obtain a constant $C>0$ such that
	\begin{equation*}
		\left\|\widehat{\rho\chi\Phi}(\tau,\lambda,\eta)\right\|_{H_+} \le C \left\|\hat\rho\left(\tau-\sqrt A\right)\right\|_{\mathcal BL^2}\left\|\psi_{\lambda,\eta}\right\|_{L^2}.
	\end{equation*}
	Taking into account \cref{lemma:rapid decay of shifted cutoff} (2), the operator norm of $\hat\rho(\tau-\sqrt{A})$ can be estimated by $C_{N,\rho}\braket{\tau}^{-N}$ for $\tau\le0$. Since the norm of $\psi_{\lambda,\eta}$ depends only on $L$ and $\chi$ in view of \cref{nc:lem:psi}, we have shown the required estimate. The holomorphy of $\hat{\rho\chi\Phi}$ is clear, since $\psi$ depends holomorphically on $\lambda$ and the operator applied to it is independent of the complex parameter.
\end{proof}
\begin{remark}\label{rem:interior positive energy}
	At an interior point the same computation applies verbatim with $\psi_{\lambda,\eta}$ replaced by $\chi(z)e^{-\im z\zeta}$, whose $L^2$-norm is again independent of $\zeta$; \cref{lemma:main estimate for WF_b} then yields	rapid decay of $\widehat{\rho\chi\Phi}$ for $\tau\le0$, uniformly in $\zeta$, and therefore $\WF(\Phi|_M)\subset\{\tau>0\}$. Thus, up to knowing that the singularities of a solution lie in the characteristic set of Klein--Gordon operator (a standard fact in ordinary microlocal analysis), the proof of \cref{theo:Hadamard form of quantum field} below also covers the interior singularities.
\end{remark}
We can now prove the main theorem of this section.
\begin{theorem}\label{theo:Hadamard form of quantum field}
	Let $\bar\Sigma$ be a Riemannian manifold with boundary with metric $g_{\bar\Sigma}$, and let $\bar{M}=\R\times\bar\Sigma$ be the globally hyperbolic spacetime with timelike boundary with the ultrastatic metric $\de t^2-g_{\bar\Sigma}$. Let $\Phi$ be the one-particle distribution constructed in \cref{sect:qft} for $P=\partial^2_{tt}+A$, where $A=\Delta_D+m^2$ for a positive constant $m$. Then $\WF_b(\Phi)\subset\tilde{V}^+\bar{M}$.
\end{theorem}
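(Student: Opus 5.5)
The plan is to combine the a priori localisation of singularities with the one-sided estimate of \cref{lemma:main estimate for WF_b}. By \cref{theo:propagator is a solution}, $\Phi\in\Nscr(\bar M;H_+)$ solves $P\Phi=0$ with $\gamma_0\Phi=0$. Hence each scalar $u_v=(v,\Phi(\cdot))$ satisfies $\WF_b(u_v)\subset\tilde V\bar M$ by \cref{theo:singularities of the BVP}, and \cref{theo:weak and strong b-wave-front set} together with closedness of $\tilde V\bar M$ gives $\WF_b(\Phi)\subset\tilde V\bar M=\tilde V^+\bar M\,\dot\cup\,\tilde V^-\bar M$. It therefore suffices to exclude past-directed covectors, and this can be done directly on the $H_+$-valued distribution via \cref{def:b-wave-front set of hilbert-valued distribution}. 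At interior points we use the standard fact $\WF(\Phi|_M)\subset VM$ and argue as in \cref{rem:interior positive energy}, so the essential case is a boundary point.

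Fix $q=(t_0,q')\in\R\times\partial\Sigma$ and a past-directed compressed causal covector $(\tau_0,0,\eta_0)$ with $\tau_0<0$ and $\tau_0^2\ge|\eta_0|^2$. By time-translation invariance of the ultrastatic setting (conjugating by $e^{\im t_0\sqrt A}$, which is unitary on $H_+$), we may take $t_0=0$, or else simply replace $\rho$ by a cutoff centred at $t_0$; \cref{lemma:fourier transform in time} and \cref{lemma:rapid decay of shifted cutoff} are insensitive to this. Choose the cutoffs $\rho,\chi$ as above \cref{nc:lem:psi} and $L>\max\{L_0,1/2\}$. Take the open cone
\[
\Gamma=\{(\tau,\xi,\eta)\colon \tau<-c\,|(\xi,\eta)|\}
\]
for some small $c>0$. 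This is a conic neighbourhood of $(\tau_0,0,\eta_0)$, since $|\tau_0|\ge|\eta_0|$ and $\tau_0\ne0$, so $|\tau_0|>c|\eta_0|$ once $c<1$. On $\Gamma$ we have $\tau\le0$ and $\braket{(\tau,\xi,\eta)}\lesssim_c\braket{\tau}$. \Cref{lemma:main estimate for WF_b} then gives
\[
\bigl\|\widehat{\rho\chi\Phi}(\tau,\xi+\im L,\eta)\bigr\|_{H_+}\le C_N\braket{\tau}^{-N}\lesssim\braket{(\tau,\xi,\eta)}^{-N}\quad\text{on }\Gamma,
\]
together with holomorphy near $\Im\lambda=L$. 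Hence the point is not in $\WF_b(\Phi)$. The key feature is that the bound in \cref{lemma:main estimate for WF_b} is uniform in $(\xi,\eta)$, which is exactly what makes the whole cone $\Gamma$, with arbitrary $\xi$ components, admissible.

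The main obstacle, as I see it, is justifying the closed formula \eqref{eq:MF transform in closed form} for $\Phi$ paired with the singular test factor $\psi_{\lambda,\eta}$, which lies in $L^2$ but is not smooth at $x=0$. Formally $\Phi$ is only tested on $\Cinf_c$ and $\Cdotinf_c$ functions. I would first approximate $\psi_{\lambda,\eta}$ by functions in $\Cdotinf_c$, for instance by cutting off near $x=0$. The $L^2$ continuity of the right-hand side of \eqref{eq:ultrastatic field} (the operators $A^{-1/2}$ and $\hat\rho(\tau-\sqrt A)$ are bounded, with $\|A^{-1/2}\cdot\|_{H_+}=\|A^{-1/4}\cdot\|_{L^2}$) then shows that the Mellin pairing, defined for $\Im\lambda$ large through dual-conormality, agrees with the $L^2$ formula. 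Holomorphy in $\lambda$ (\cref{nc:lem:psi}) and \cref{lemma:independence of the line} let us fix the line $\Im\lambda=L$ freely.
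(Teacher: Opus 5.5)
Your proposal is correct and follows the paper's proof: first the a priori localisation $\WF_b(\Phi)\subset\tilde V\bar M$ from \cref{theo:singularities of the BVP}, then a cone $\Gamma$ around $\tilde V^-$ on which $\braket{(\tau,\xi,\eta)}\lesssim\braket{\tau}$, then the $(\xi,\eta)$-uniform one-sided bound of \cref{lemma:main estimate for WF_b} to exclude past-directed covectors. Your additions only make explicit points the paper leaves implicit and do not change the argument: the time-shifted cutoff, the passage through the scalar $u_v$ via \cref{theo:weak and strong b-wave-front set} together with closedness of $\tilde V\bar M$, and the approximation argument justifying the closed formula for the singular test factor $\psi_{\lambda,\eta}$.
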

\begin{proof}
	By \cref{theo:propagator is a solution} and \cref{cor:HS-valued N' is preserved}, $\Phi\in\Nscr(\bar M;H_+)$ solves $P\Phi=0$, $\gamma_0\Phi=0$, so \cref{theo:singularities of the BVP} gives
	\begin{equation}\label{eq:field is in the causal cone}
		\WF_b(\Phi)\subset\tilde V\bar M.
	\end{equation}
	Since $\tilde V\bar M$ is the disjoint union of $\tilde V^+\bar M$ and $\tilde V^-\bar M$, it suffices to show that no past-directed compressed covector lies in $\WF_b(\Phi)$. The statement is local, so we fix a boundary point $(0,q)$, $q\in\partial\Sigma$, and coordinates $(x,y)$ on a precompact open neighbourhood $U$ of $q$ such that the causal cone is given by $\tilde{V}U\simeq U\times \{(\tau,0,\eta)\colon \tau^2\ge |\eta|^2_g\}$. Choose then any open conic neighbourhood $U'\times\Gamma\subset \bTs U$ of $q$ with $\Gamma\subset \{(\tau,\xi,\eta)\colon \tau<0\}$ and $\tilde V^- U'\subset U'\times\Gamma$. This is achieved by simply enlarging the opening angle of $\tilde V^-U$ by some positive $\epsilon$ in every fibre. Then we have $\braket{(\tau,\xi,\eta)}\lesssim \braket\tau$ in $\Gamma$.
	
	Choose $L>\max\{L_0,1/2\}$. By \cref{lemma:main estimate for WF_b}, the Mellin--Fourier transform $\lambda\mapsto\widehat{\rho\chi\Phi}(\tau,\lambda,\eta)$ is holomorphic near $\{\Im\lambda=L\}$ and satisfies
	\begin{equation*}
		\left\|\widehat{\rho\chi\Phi}(\tau,\xi+\im L,\eta)\right\|_{H_+}
		\le C_N\braket{\tau}^{-N}
	\end{equation*}
	for each $N$ with constants $C_N>0$. Since $\braket{(\tau,\xi,\eta)}\lesssim \braket{\tau}$, we deduce
	\begin{equation*}
		\left\|\widehat{\rho\chi\Phi}(\tau,\xi+\im L,\eta)\right\|_{H_+}
		\le C_N\braket{(\tau,\xi,\eta)}^{-N},
	\end{equation*}
	which proves the rapid decay along past-pointing directions. Since the only possibility is then $\WF_b(\Phi)\subset\tilde{V}^+\bar{M}$, the proof is complete.
\end{proof}

\subsection*{Example: the spatially compact case}
	We sketch here a similar, more precise computation in the spatially compact setting. Thus, assume $\bar\Sigma$ is compact, so that $\Dcal_{1/2}=\dot\sob^{1/2}(\bar{\Sigma})$, $\Dcal_{-1/2}=\sob^{-1/2}(\bar{\Sigma})$, and for every $s>0$ the operator $A^s$ has discrete spectrum, organised in a monotone non-decreasing sequence $\{\mu_j^{2s}\}_{j\in\N}$. If $V=0$ then the $\mu_j$ are just the square roots of the Laplace eigenvalues $\mathfrak{l}_j^2$; if $V=m^2$ and $s=1/2$ we find $\mu_j=\sqrt{\mathfrak l_j^2+m^2}$. With each $\mu_j$ is associated the normalised eigenfunction $\phi_j\in \Cinf(\bar{\Sigma})$, $\phi_j|_{\partial\Sigma}=0$, and the spectral measure takes the form
	\begin{equation}\label{eq:spectral measure for spatially compact}
		\mu_{f_1,f_2}=\sum_{j\ge0}\braket{f_1,\varpi_{\mu_j} f_2}\delta_{\mu_j},
	\end{equation}
	where $\varpi_\mu$ is the projection onto the eigenspace with eigenvalue $\mu$.

	The initial data $f$ and $h$ have then generalised Fourier expansions given by
	\begin{equation*}\label{eq:mode expansions of initial data}
		f=\sum_{j\ge 0} a_j \phi_j,\quad	h=\sum_{j\ge 0} b_j \phi_j,
	\end{equation*}
	where $(\sqrt{\mu_j}a_j)\in \ell^2$, $(b_j/\sqrt{\mu_j})\in\ell^2$. The solution of the wave equation with these initial data is obtained by a mode-by-mode computation and is
	\begin{equation}\label{eq:spatially compact solution}
		u(t)=\sum_{j\ge 0}\left(a_j\cos(\mu_j t)+\frac{b_j}{\mu_j}\sin(\mu_j t)\right)\phi_j\in\dot\sob^{1/2}(\bar{\Sigma}).
	\end{equation}
	The field is similarly obtained from \eqref{eq:ultrastatic field} using the discrete spectral measure above, i.e.
	\begin{equation}\label{eq:ultrastatic compact field}
		\Phi(\varphi)=\frac{\im}{\sqrt 2}\sum_{j\ge 0}\frac{1}{\mu_j}\int_\R e^{\im \mu_j t}(\phi_j,\varphi(t,\cdot))\de t \,\phi_j.
	\end{equation}

	Below is a sketch of how to derive the main estimate in the spatially compact case. Keeping in mind \cref{rem:ultrastatic field is local}, we choose cutoffs $\chi\in\Cinf_c(\bar\Sigma)$, $\chi= 1$ near $q$, and $\rho\in \Cinf_c(\R)$, $\rho=1$ near 0 and even, and perform a Mellin--Fourier transform. By definition,
	\begin{equation}\label{eq:Fourier transform of quantum field}
		\widehat{\chi\rho u_v}(\tau,\lambda,\eta)=(2\pi)^{-n/2}u_v(\chi(x,y)\rho(t) x^{-\im \lambda-1}e^{-\im(\tau t+y\eta)})
	\end{equation}
	where in principle $\Im\lambda>0$. Since, if $v=\sum_{j\geq 0} \bar{a_j}\phi_j$,
	\begin{equation}
		u_v(\varphi)=(v,\Phi(\varphi))=\sum_{j\ge 0}\frac{\im a_j}{\sqrt 2}\int_\R e^{\im t\mu_j}\braket{\varphi(t,\cdot),\phi_j}\de t
	\end{equation}
	we see, after integrating in $t$, that
	\begin{equation}\label{eq:MF transform of localised scalar field}
		\widehat{\chi\rho u_v}(\tau,\lambda,\eta)=(2\pi)^{-n/2}\sum_{j\geq 0}\im a_j\hat\rho(\mu_j-\tau)\int_0^\infty\int_{\R^{n-1}}x^{-\im\lambda}e^{-\im y\eta}\chi(x,y)\phi_j(x,y)\frac{\de x}{x}\de y.
	\end{equation}
	The Mellin integral is absolutely convergent for every $\lambda$ with $\Im \lambda>-1$ (in particular for real $\lambda$), since the Dirichlet eigenfunctions $\phi_j$ vanish at $\partial \Sigma$ to first order. As in the general case, it suffices to show that the whole integral is polynomially bounded in $\eta$ and $j$. In fact, this is here supplied by a bound on the $L^1$-norm of $\phi_j(x,y)/x$ on $\bar{\Sigma}$. By the fundamental theorem of calculus we have $\frac{1}{x}\phi_j(x,y)=\int_0^1\partial_x\phi_j(sx,y)\de s$. Integrating over $\bar\Sigma$ and applying Fubini--Tonelli and the Sobolev embedding leads to
	\begin{equation*}
		\label{eq:L1 norm of Dirichlet egf}
		\begin{aligned}
			\left\|\frac{\phi_j(x,y)}{x}\right\|_{L^1}&\leq \int_0^1\int_{\bar\Sigma}\left|\partial_x\phi_j(sx,y)\right|\de x\de y \de s\\
			&\lesssim\|\phi_j\|_{C^1}\lesssim\|\phi_j\|_{\sob^k}.
		\end{aligned}
	\end{equation*}
	As is well known, $\|\phi_j\|_{\sob^k}\lesssim\mu_j^{k}$, polynomially bounded in $j$ by the Weyl law. Moreover, $\|u\|_{\sob^k}$ is equivalent to $\|A^{k/2}u\|_{L^2}$, therefore we have
		\begin{equation}
			\label{eq:L1 norm of dirichlet egf in terms of eigenvalue}
			\|\phi_j\|^2_{\sob^k}\lesssim\sum_{l=0}^k\|A^{l/2}\phi_j\|^2_{L^2} \lesssim_k \mu_j^{2k}.
		\end{equation}
	Putting everything together we have a polynomial bound
	\begin{equation}
		\label{eq:polynomial bound for Dirichlet eigenfunctions}
		\left\|\frac{\phi_j(x,y)}{x}\right\|_{L^1}\lesssim \mu_j^{k}\lesssim j^{k'},
	\end{equation}
	where $k$ is chosen as above and $k'$ is some finite integer.

	On the other hand $(\sqrt{\mu_j}a_j)\in \ell^2$ since $v\in H_+$, so that by Cauchy--Schwarz it suffices to estimate
	\begin{equation}\label{eq:sum to be estimated for WF_b}
		\left(\sum_{j\geq 0}\mu_j^{2k-1}|\hat\rho(\mu_j-\tau)|^2\right)^{1/2}
	\end{equation}
	in terms of $\tau$. With $\alpha=-\tau$, this is exactly the situation of \cref{lemma:rapid decay of shifted cutoff} after reformulation with the spectral measure \eqref{eq:spectral measure for spatially compact}. Consequently, \eqref{eq:MF transform of localised scalar field} is rapidly decreasing in the region $\tau<0$ inside $\tilde{V}\bar{M}$; equivalently, the $b$-wave-front set of $u_v$ is contained in the compressed future causal cone. Notice that the strategy of the preceding subsection avoid the use of the Weyl law or of any local version.

\section{Hadamard states}
\label{sect:hadamard}

In this section we combine the previous results and prove \cref{theo:existence of Hadamard states} (Theorem (A) of the Introduction).
\begin{theorem}\label{theo:existence of Hadamard states}
	Let $\bar{M}$ be a globally hyperbolic spacetime with timelike boundary, $P=\Box+V$ a scalar, formally self-adjoint, normally hyperbolic operator and $V$ a real smooth function. Let $\mathfrak{U}(\bar{M})$ be the field algebra of $P$ as defined in \cref{sect:qft}. Then, there exists a pure, quasi-free Dirichlet--Hadamard state on $\mathfrak{U}(\bar{M})$.
\end{theorem}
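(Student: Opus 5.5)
Following the strategy announced in the introduction, I would prove \cref{theo:existence of Hadamard states} by a Fulling--Narcowich--Wald deformation. Fix a splitting $\bar M\simeq\R_t\times\bar\Sigma$ with metric $g=\beta\de t^2-g_t$ as in \cref{prop:properties of ghstb}, with $\nabla t$ tangent to $\partial M$. Then construct an interpolating spacetime $\bar M'=\R\times\bar\Sigma$ with metric $g'=\beta'\de t^2-g'_t$ and potential $V'$. For $t<-2$ take $\beta'=1$, $g'_t=g_{\bar\Sigma}$ for a fixed Riemannian metric on $\bar\Sigma$, and $V'=m^2$ constant. For $t>-1$ take $g'=g$, $V'=V$. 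In between, interpolate with cutoffs in $t$. The boundary stays timelike because $\partial_t$ is tangent to $\partial M$ throughout.

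Global hyperbolicity of $\bar M'$ follows from item (5) of \cref{prop:properties of ghstb}, since the slices $\{t\}\times\bar\Sigma$ remain Cauchy surfaces. This is standard in the boundaryless case, and with care in choosing $\beta'$ large in the transition region it should transfer via the characterisation in \cite{ake_hau_structure_boundary_2020}. Because $V$ is only assumed smooth, I would not need positivity of $V'$ except in the ultrastatic region, where $V'=m^2>0$ is imposed.

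On the ultrastatic past region, \cref{theo:Hadamard form of quantum field} gives the one-particle structure $(\Dcal_{1/2}\oplus\Dcal_{-1/2},J,p)$ with $\WF_b(\Phi_{us})\subset\tilde V^+$. Its Cauchy data on the slice $t=-3$ define a one-particle structure for the Dirichlet field of $P'$ on all of $\bar M'$. The procedure is to transport $\kappa$ via the Cauchy data map on $\ker_DP'$; the time-slice argument of \cref{sect:qft} identifies $\ker_D P'$ near $t=-3$ with $\ker_D$ of the ultrastatic operator on the region $t<-2$.

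The resulting $\Phi'=p'\circ G'_D$ agrees near $\{t=-3\}$ with the ultrastatic field, restricted to test functions supported there. Two points need checking:
\begin{itemize}
\item This restriction is legitimate because both propagators coincide on $t<-2$, by the finite propagation speed of \cref{theo:support of solutions}.
\item Hence $\WF_b(\Phi'|_{\mathcal U})\subset\tilde V^+\mathcal U$ for a neighbourhood $\mathcal U$ of $\{t=-3\}$.
\end{itemize}
Since $\Phi'\in\Nscr(\bar M';H_+)$ solves $P'\Phi'=0$, $\gamma_0\Phi'=0$ (\cref{theo:propagator is a solution}), the Hilbert-space-valued \cref{theo:propagation of future singularities} gives $\WF_b(\Phi')\subset\tilde V^+\bar M'$.

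Finally, $\bar M'$ and $\bar M$ agree (including $P$) on $t>-1$. Use the time-slice identification (in the style of \cref{sect:qft}) between $\ker_DP'$ and $\ker_DP$ through Cauchy data on $\{t=0\}$. This gives a symplectic isomorphism, so $p'$ transports to a one-particle structure on $\mathfrak U(\bar M)$. The one-particle distribution $\Phi$ of the transported state coincides with $\Phi'$ on test functions supported in $t>-1$. Hence $\WF_b(\Phi)\subset\tilde V^+$ near $\{t=0\}$, and a second application of \cref{theo:propagation of future singularities} on $\bar M$ yields the global inclusion. Purity is automatic, since the state is the Fock vacuum of a one-particle structure with dense range.

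The main obstacle is this symplectic identification. For the dense range of $\kappa$ and continuity I would use \cref{lemma:exact sequence} on both spacetimes together with the time-slice argument: $f\mapsto\tilde f$ supported near $t=0$ gives $G_D f$ and $G'_D\tilde f$ the same Cauchy data. That construction yields a bijection of quotients $\Cinf_c/P\Cinf_{D,c}$ preserving $\omega$, which is all one needs. The density of $\kappa(\ker_D P)$ in the Hilbert space is inherited from \cref{prop:flat initial data}.
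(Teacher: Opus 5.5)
Your architecture is the paper's: an ultrastatic model in the past, transport of the one-particle structure through the time-slice identification, and two applications of \cref{theo:propagation of future singularities}. Those steps are fine. The gap is the global hyperbolicity of the interpolating spacetime $\bar M'$, on which everything downstream rests: the existence of the Dirichlet propagators of $P'$ via \cref{theo:propagators}, the ultrastatic analysis behind \cref{theo:Hadamard form of quantum field}, and \cref{theo:propagation of future singularities} itself. Item (5) of \cref{prop:properties of ghstb} only converts ``$\{t\}\times\bar\Sigma$ is Cauchy for $g'$'' into global hyperbolicity. That the slices stay Cauchy is exactly what must be proved, and your construction does not guarantee it, for two reasons.

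\emph{First}, when $\bar\Sigma$ is noncompact the fixed metric $g_{\bar\Sigma}$ must be chosen \emph{complete}. Otherwise already the ultrastatic model $\de t^2-g_{\bar\Sigma}$ fails to be globally hyperbolic: take $\bar\Sigma$ with an interior point removed and a flat metric, and causal curves enter or leave through the incomplete end in finite coordinate time. \emph{Second}, making $\beta'$ large in the transition region goes the wrong way. For $g'=\beta'\de t^2-g'_t$, a larger $\beta'$ \emph{widens} the light cones, so $g'$-causal curves need not be $g$-causal and can escape to spatial infinity in finite time. A naive convex interpolation also widens the cones wherever $g_{\bar\Sigma}<\beta^{-1}g_t$.

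What is needed is the opposite: in the transition slab the $g'$-cones must lie inside the $g$-cones, and the spatial metric of the ultrastatic part must be complete. The paper achieves this with $g'=\beta'(\de t^2-h'_t)$ and $h'_t=\rho\beta^{-1}h_t+(1-\rho)\tilde h$, where $\tilde h$ is complete and satisfies $\tilde h\ge\beta^{-1}h_t$ for $t\in[-1,1]$. An inextensible $g'$-causal curve missing $\bar\Sigma_{-1}$ must then lie on one side of it, and both cases give a contradiction:
\begin{itemize}
\item If it lies in $\{t>-1\}$, it is also $g$-causal, so it meets the $g$-Cauchy surface $\bar\Sigma_{-1}$.
\item If it lies in $\{t<-1\}$, then $\dist_{\tilde h}(x(s_1),x(s_2))\le t(s_2)-t(s_1)$ together with completeness of $\tilde h$ forces it to converge, contradicting inextensibility.
\end{itemize}
The same argument shows that the ultrastatic model is globally hyperbolic. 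With the deformation chosen this way (complete spatial metric, narrower cones), the remainder of your proof goes through as written.
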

\begin{proof}
	We employ the deformation argument of \cite{fulling_singularity_1981}, see also \cite{drago_moller_2022} for a more modern presentation. That is, choose an isometric splitting $\bar{M}\cong\R_t\times\bar{\Sigma}$ with metric $g=\beta\de t^2-h_{t}$, $\beta>0$ smooth on $\bar{M}$. We will construct below two other globally hyperbolic spacetime with timelike boundary, the ultrastatic model $\bar{M}_{ult}$ and the interpolating spacetime $\bar{M}_{int}$. For the chosen time coordinate $t$, consider a past-cutoff $\rho\in\Cinf(\R\times\bar{\Sigma};[0,1])$ with $\rho=0$ for $t\leq -1$ and $\rho=1$ for $t\ge 1$. For a fixed Riemannian metric $\tilde{h}$ on $\bar{\Sigma}$, let $\beta'=\rho\beta+(1-\rho)$ and $h'_{t}=\rho \beta^{-1}h_{t}+(1-\rho)\tilde h$ and define the new metric
	\begin{equation}\label{eq:interpolating metric}
		g'=\beta'(\de t^2-h'_{t}).
	\end{equation}
	For any choice of $\rho$ and $\tilde{h}$, the metric $g'$ is Lorentzian, coincides with $g$ for $t\ge 1$, and is ultrastatic for $t\le -1$. Moreover, the covector field $\de t$ is timelike and tangent to $\R\times\partial \Sigma$. We may choose $\tilde h$ to satisfy in addition
	\begin{itemize}
		\item $\tilde{h}\ge \frac{1}{\beta}h_{t}$ for all $t\in [-1,1]$, and
		\item the Riemannian metric induced by $\tilde{h}$ on $\bar{\Sigma}$ is complete.
	\end{itemize}
	Indeed, if $\Theta(x)\equiv \max_{t\in[-1,1]}\sup_{v\in T_x\bar{\Sigma}\setminus \{0\}} \frac{h_t(v,v)}{\beta(t,x)h_{1}(v,v)}$, choose any smooth $\tilde{\Theta}\in\Cinf(\bar{\Sigma})$ with $\tilde{\Theta}\ge \Theta$. Then we may construct $\tilde{h}=\tilde{\Theta}h_{1}+\alpha$ where $\alpha$ is any smooth Riemannian metric inducing a complete metric on $\bar{\Sigma}$. Clearly $\tilde{h}\ge \frac{1}{\beta}h_t$ for $t\in [-1,1]$, and since $\tilde h\ge \alpha$ with $\alpha$ metrically complete we see that $\tilde{h}$ is also complete.

	We claim that the spacetime with timelike boundary $\R\times\bar{\Sigma}$ with metric $g'$ is globally hyperbolic, in that $\bar\Sigma_{-1}$ is a Cauchy surface. It is clearly everywhere spacelike, and we claim it is maximal. Indeed, let $c$ be an inextensible $H^1$-causal curve with respect to $g'$ and suppose, by contradiction, that $c$ does not intersect $\bar{\Sigma}_{-1}$. Since $t$ is a time function, we must have $c\subset t^{-1}(-1,\infty)$ or $c\subset t^{-1}(-\infty,-1)$. The first possibility cannot be, since $c$ is also causal with respect to $g$ and inextensible, and since $\Sigma_{-1}$ is Cauchy for $g$ it must intersect $c$. On the other hand, in $t^{-1}(-\infty,-1)$ $g'=\de t^2-\tilde h$ so that the condition that $c$ is future-directed and causal is expressed, in the frame $(t\circ c,x=\pi_{\bar\Sigma}\circ c)$, as
	\begin{equation}\label{eq:future-directed causal curve}
		\tilde{h}(\dot x(s),\dot x(s))\le |\dot t(s)|^2.
	\end{equation}
	Notice that the above inequality holds true only almost everywhere since $c$ is locally Lipschitz. If $c\colon [0,b)\to\bar{M}$ is future-directed causal, inextensible towards the future, and $c\subset t^{-1}(-\infty,-1]$, then we have $\dot t>0$ and for any $0\le s_1\le s_2<b$ we have
	\begin{equation}\label{eq:distance and time-separation}
		\dist_{\tilde{h}}(x(s_1),x(s_2))\le \int_{s_1}^{s_2}|\dot x(s)|\de s\le\int_{s_1}^{s_2} \dot t(s)\de s=t(s_2)-t(s_1),
	\end{equation}
	that is, the Riemannian distance of $x(s_1)$ and $x(s_2)$ is controlled by their time-separation. Since $t$ is monotone increasing and $t\le -1$ in the considered region, we must have $t(s)\to t_\infty\le -1$, and \eqref{eq:distance and time-separation} implies that, for any $(s_j)\subset [0,b)$ with $s_j\to b$, $x(s_j)$ is a Cauchy sequence in $\bar{\Sigma}$. By the assumed metric completeness of $\tilde h$, there is $x_\infty$ with $x(s_j)\to x_\infty$, so that $c(s_j)\to (t_\infty,x_\infty)\in\R\times\bar{\Sigma}$. That is, $c$ is not inextensible, a contradiction. It follows that $c$ must intersect $\bar{\Sigma}_{-1}$, that is, $\bar{\Sigma}_{-1}$ is a Cauchy surface for $g'$. By \cref{prop:properties of ghstb}, $\R\times\bar{\Sigma}$ is globally hyperbolic with $g'$.

	Let then $\bar M_{int}$ be the globally hyperbolic spacetime with timelike boundary $\R\times\bar{\Sigma}$ with metric $g'$ and let $\bar{M}_{ult}$ be the manifold $\R\times\bar\Sigma$ with the product metric $\de t^2-\tilde h$. That the latter is also a globally hyperbolic spacetime with timelike boundary follows since $\bar\Sigma$ is a Cauchy surface, by the same argument as for the region $t^{-1}(-\infty,-1]$ above. In the same interpolation region $[-1,1]$ we can also deform the potential by the convex combination $V'=\rho V+(1-\rho)m^2$. Therefore, on $\bar M_{int}$ we have the operator $P'=\Box_{g'}+V'$ which reduces to $\partial^2_{tt}+(\Delta+m^2)$ in $t^{-1}(-\infty,-1)$. On $\bar M_{ult}$, this form holds globally, so the analysis of \cref{sect:ultrastatic} applies. That is, the one-particle distribution $\Phi_{ult}$, satisfying $\WF_b(\Phi_{ult})\subset \tilde V^+\bar{M}_{ult}$ in view of \cref{theo:Hadamard form of quantum field}, defines a pure, quasi-free Dirichlet--Hadamard state on $\bar M_{ult}$.

	The globally hyperbolic spacetime with timelike boundary $\bar{M}_{int}$ coincides with $\bar M_{ult}$ in a neighbourhood $U_1$ of a Cauchy surface sitting at a time $t=T<-1$ and not intersecting $\bar\Sigma_{-1}$. By the time-slice axiom, the one-particle distribution $\Phi_{int}$ constructed from $P'$ has there the same microlocal form as on $\bar M_{ult}$. That is, $\WF_b(\Phi_{int}|_{U_1})\subset\tilde{V}^+U_1$. By \cref{theo:propagation of future singularities}, it satisfies
	\begin{equation}
		\WF_b(\Phi_{int})\subset \tilde V^+{\bar M}_{int}.
	\end{equation}
	In particular, this is the case on a neighbourhood $U_2$ of a Cauchy surface sitting at time $t=T'>1$, not intersecting $\bar\Sigma_1$. Again by the time-slice axiom, the one-particle distribution $\Phi$ on $\bar M$ has, in $U_2$, the same microlocal form as $\Phi_{int}$. Another application of \cref{theo:propagation of future singularities} shows that $\WF_b(\Phi)$ only contains future-pointing covectors on the whole of $\bar M$. This proves that the pure, quasi-free state obtained from the one-particle construction on $\bar{M}$ satisfies the Dirichlet--Hadamard condition, and concludes the proof.
\end{proof}
As a final consequence, with an immediate proof by comparing \cref{theo:existence of Hadamard states} and \cref{theo:uniqueness of hadamard states}, we have
\begin{corollary}
	If $\omega_2$ is the two-point function of the pure, quasi-free state constructed in \cref{theo:existence of Hadamard states} and $\omega_2'$ is the two-point function of any other quasi-free Dirichlet--Hadamard state on the Klein--Gordon algebra of $\bar{M}$, then $\omega_2-\omega_2'\in \Cinf(\bar{M}\times\bar{M})$, with the difference solving the Dirichlet problem in both variables.
\end{corollary}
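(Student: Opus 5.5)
The plan is to combine the existence theorem with the uniqueness result already established. The state $\omega$ of \cref{theo:existence of Hadamard states} is pure, quasi-free and Dirichlet--Hadamard. The other state $\omega'$ is Dirichlet--Hadamard by hypothesis, and its one-particle distribution is normally regular, as noted after \cref{def:hadamard states for Dirichlet}. Both states therefore satisfy the hypotheses of \cref{theo:uniqueness of hadamard states}, and that theorem gives $\omega_2-\omega_2'\in\Cinf(\bar M\times\bar M)$ at once. The smoothness part thus needs no new input, apart from checking that both states are defined on the same algebra $\mathfrak U(\bar M)$. This matters because the proof of \cref{theo:uniqueness of hadamard states} uses the symmetry of $w=\omega_2-\omega_2'$, which comes from the common commutator $-\im\braket{f,G_Dh}$.

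It remains to show that $w$ solves the Dirichlet problem in each variable. For $\phi\in\Cinf_{D,c}(\bar M)$ the field relation $\Phi(P\phi)=0$ holds in the algebra, so $\omega_2(P\phi,h)=\omega(\Phi(P\phi)\Phi(h))=0$, and similarly for $\omega_2'$ and for the second slot. Hence $w(P\phi,h)=w(f,P\phi)=0$. Because $w$ is smooth and $P$ is formally self-adjoint, Green's identity (as in the proof of \cref{lemma:distributional uniqueness for the retarded BVP}) converts this into $P^{(1)}w=0$ in the interior. It also gives the boundary relation $\int_{\partial M}\gamma_0^{(1)}w\,\gamma_1\phi=0$ for every $\phi\in\Cinf_{D,c}$.

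The Dirichlet trace is handled through the one-particle distributions. For each fixed $h$, \cref{theo:propagator is a solution} gives $\gamma_0\Phi=0$ and $\gamma_0\Phi'=0$, so $\gamma_0^{(1)}$ of $(\Phi(\bar{\cdot}),\Phi(h))$ vanishes, and likewise for $\Phi'$. By symmetry of $w$ the same holds in the second variable. Alternatively, the boundary relation above already forces $\gamma_0^{(1)}w=0$, since $\gamma_1\phi$ ranges over all of $\Cinf_c(\partial M)$ as $\phi$ ranges over $\Cinf_{D,c}$.

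The only delicate point is the Dirichlet trace of $\Phi'$ for an arbitrary Dirichlet--Hadamard state: \cref{theo:propagator is a solution} is stated for the Fock construction. I would rerun its argument with $\Phi_\omega$: first show $\Phi_\omega(P\phi)=0$ from the algebra relation, then obtain normal regularity from \cref{cor:HS-valued N' is preserved}, and finally deduce the vanishing trace from $\gamma_0^{(2)}K_D=0$ via the commutator. The Green's identity route avoids this detour altogether.
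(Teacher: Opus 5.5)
Your proposal is correct and follows the paper's proof, which consists exactly of combining \cref{theo:existence of Hadamard states} with \cref{theo:uniqueness of hadamard states}. The paper does not spell out the Dirichlet-problem clause. Your Green's identity argument on the smooth kernel $w$ fills it in correctly: it uses $w(P\phi,h)=w(f,P\phi)=0$ for $\phi\in\Cinf_{D,c}(\bar M)$, together with the fact that $\gamma_1\phi$ exhausts $\Cinf_c(\partial M)$, and this makes the more delicate detour through $\gamma_0\Phi'=0$ unnecessary.
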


\appendix

\section{The small $b$-calculus}\label{app:b-calculus}
We give here a brief account of the properties of the $b$-calculus of Melrose, following Section 2 in \cite{vasy_propagation_singularities_corners_2008}. A very nice expository reference is \cite{grieser_b-calculus_2000}, while the most complete geometric picture, with application to index theory, is \cite{melrose_atiyah-patodi-singer_1993}.

Let $\bar{M}$ be a manifold with boundary. The $b$-calculus is a family of operators $\Psi_b(\bar{M})$, acting initially from $\Cinf_c(\bar{M})$ to $\Cinf(\bar{M})$. While a symbolic construction proceeding along the lines of the classical theory on manifolds without boundary is possible, a much clearer geometric picture has in fact been elaborated over the years. Thus, a $b$-operator $A$ is described in terms of its Schwartz kernel in the double space $\bar{M}^2$. One immediate remark is that the diagonal does not meet the corner $(\partial M)^2$ cleanly, and the concept of conormal distribution, central to Hörmander's invariant definition of a pseudo-differential operator, is ill-defined at this level. The remedy, at the most elementary level, is the introduction of polar coordinates in $[0,\infty)^2$.

\begin{center}
\begin{tikzpicture}[
	>=Stealth, line join=round,
	lw/.style={line width=.9pt},
	dg/.style={line width=.9pt,dash pattern=on 4pt off 3pt},
	]
	
	\def\Side{3.6}   
	\def\Rad{1.15}   
	\def\Gap{3.1}    
	
	\begin{scope}
		\draw[lw] (\Rad,0) -- (\Side,0);                                
		\draw[lw] (\Rad,0) arc[start angle=0,end angle=90,radius=\Rad]; 
		\draw[lw] (0,\Rad) -- (0,\Side);                                
		\draw[lw] (0,\Side) -- (\Side,\Side) -- (\Side,0);              
		
		\draw[dg] ({\Rad/sqrt(2)},{\Rad/sqrt(2)}) -- (\Side,\Side);
		
		\node[left]  at (0,{(\Rad+\Side)/2}) {$\mathrm{lb}$};
		\node[below] at ({(\Rad+\Side)/2},0) {$\mathrm{rb}$};
		\node at ({0.42*\Rad},{0.42*\Rad})   {$\mathrm{ff}$};
		\node[above left] at (0.78*\Side,0.78*\Side) {$\Delta_b$};
		\node[below right] at (\Side,0) {$x$};
		\node[above left]  at (0,\Side)  {$x'$};
		\node at ({\Side/2},-1.15) {$X^2_b=\bigl[\,X^2;(\partial X)^2\,\bigr]$};
	\end{scope}
	
	\draw[->,lw] ({\Side+0.5*\Gap-0.6},{\Side/2}) -- ({\Side+0.5*\Gap+0.6},{\Side/2})
	node[midway,above] {$\beta$};
	
	\begin{scope}[shift={({\Side+\Gap},0)}]
		\draw[lw] (0,0) -- (\Side,0);
		\draw[lw] (0,0) -- (0,\Side);
		\draw[lw] (0,\Side) -- (\Side,\Side) -- (\Side,0);
		\draw[dg] (0,0) -- (\Side,\Side);          
		\fill (0,0) circle (1.7pt);                
		
		\node[rotate=90,above] at (0,{\Side/2}) {$\partial X\times X$};
		\node[below] at ({\Side/2},0) {$X\times\partial X$};
		\node[below left] at (0,0) {$(\partial X)^2$};
		\node[above left] at (0.78*\Side,0.78*\Side) {$\Delta$};
		\node at ({\Side/2},-1.15) {$X^2$};
	\end{scope}
\end{tikzpicture}
\end{center}

Explicitly, the interior of this manifold can be parametrised as $(r\cos \theta,r\sin\theta)$ for $r\in(0,\infty)$ and $\theta\in (0,\pi/2)$. Allowing for $r=0$ as well, we lose injectivity but gain surjectivity onto $(0,\infty)^2\cup \{(0,0)\}$, with the preimage of the origin being $\mathrm{ff}=\{(0,\theta),\theta\in(0,\pi/2)\}$. This is the ``front face''  of the blown-up space $[0,\infty)^2_b\equiv[[0,\infty)^2,\{(0,0)\}]$. Notice that the boundary of this manifold also contains the ``lateral faces'' $\mathrm{rb}=\R^+_r\times\{\theta=0\}$ and $\mathrm{lb}=\R^+_r\times\{\theta=\pi/2\}$, as well as two corners where $\mathrm{rb}$ and $\mathrm{lb}$ meet $\mathrm{ff}$. The blow-up space comes equipped with a smooth surjection $\beta\colon[0,\infty)^2_b\rightarrow [0,\infty)^2$, the ``blow-down'' map, given just by $\beta(r,\theta)=(r\cos\theta,r\sin\theta)$. Intuitively, this map ``shrinks'' the front face down to a single point and allows us to recover information on $[0,\infty)^2$. Working in $[0,\infty)^2_b$, there is a canonical lift of the diagonal $\diag=\{(x,x)\colon x\geq 0\}\subset [0,\infty)^2$ to a submanifold $\diag_b=[0,\infty)\times \{\theta=\pi/4\}$, namely $\diag_b$ satisfies $\beta(\diag_b)=\diag$. The upshot is that $\diag_b$ intersects the front face transversally, permitting us to define the notion of conormal distributions to $\diag_b$, all the way up to $\mathrm{ff}$. The Schwartz kernel of a $b$-operator on $[0,\infty)$ is precisely one such distribution $u\in\Dscr'([0,\infty)^2_b)$, together with the requirement that it has to vanish to infinite order on the lateral faces (since our manifold is not compact, we could add the requirement that the kernel be properly supported). The price is of course that $\beta$ is not invertible: smoothness of objects has to be checked after lifting, in coordinates that obfuscate the ``real'' geometry.

In the general case, for any manifold $\bar{M}$ with smooth boundary we can define a space $\bar{M}^2_b=[\bar{M}^2;(\partial \bar{M})^2]$ and a smooth surjection $\beta\colon \bar{M}^2_b\rightarrow \bar{M}^2$. Inside $\bar{M}^2_b$ there is a submanifold $\diag_b$, the inverse image of the diagonal in $\bar{M}^2$ under $\beta$, intersecting the front face $\mathrm{ff}=\beta^{-1}\left((\partial M)^2\right)$ transversally. Thus, the concept of conormality to $\diag_b$ makes sense all the way to the front face. This is the main requirement for the Schwartz kernel of a $b$-operator $A$. In addition, one imposes that $A$ is smooth at $\mathrm{ff}\setminus \diag_b$, that it vanishes to infinite order on the lateral faces $\mathrm{lb}, \mathrm{rb}$, and that it is properly supported whenever the manifold is not compact. These objects can be described, in local coordinates, in many different ways. We point the reader to the discussion following Theorem 18.3.5 in \textcite{hormander1994analysispseudodifferential} for one possibility, and to Sections 4.1 and 4.2 in \textcite{melrose_atiyah-patodi-singer_1993} for a more comprehensive account.

In order to describe the $b$-calculus symbolically, one introduces the $b$-tangent bundle and its dual, the $b$-cotangent bundle. We shall give various descriptions to emphasise the naturality of these objects. The reader should keep in mind that the only difference to the usual tangent and cotangent bundles is at the boundary. Let $\VF_b(\bar{M})$ denote the Lie algebra of vector fields tangent to the boundary. Every $V\in\VF_b(\bar{M})$ can be uniquely written, in a boundary chart $U$ with local coordinates $(x,y)$, as
\begin{equation}\label{eq:b-VF in local coordinates}
	V|_U=a^j(x,y)\partial_{y^j}+b(x,y)x\partial_x,
\end{equation}
where $a^j, b\in\Cinf(U)$. Let also $\Diff_b(\bar{M})$ denote the algebra of differential operators generated over $\Cinf(\bar{M})$ by $\VF_b(\bar{M})$.

The first description is via equivalence relations: $\bT\bar{M}$ is the disjoint union of the $b$-tangent spaces at each $q\in \bar{M}$, where the $b$-tangent space at $q\in \partial M$ is defined as the set of equivalence classes of $\VF_b(\bar{M})$ under the relation
\begin{equation}
	\label{eq:equivalence of b-vector fields}
	V\sim_q V'\iff\left\{\begin{aligned}
		\forall f\in\Cinf(\bar{M})\quad (V-V')f(q)&=0;\\
		\forall f\in\Cinf(\bar{M}), f|_{\partial M}=0,\implies \de\left((V-V')f\right)|_q&=0.
	\end{aligned}\right.
\end{equation}
A quick computation in boundary coordinates shows that the equivalence class at $q\in\partial M$ of $V\in\VF_b(\bar{M})$, $V=a^j(x,y)\partial_{y^j}+b(x,y)x\partial_x$ in local coordinates near $q$, can be represented uniquely by the vector $(a^1(q),\dots,a^{n-1}(q),b(q))$. Effectively, given two vector fields $V,V'\in\VF_b(\bar{M})$, we are saying that they are equivalent at $q\in\partial M$ if their normal component, evaluated at $q$, is the same \textit{after division by $x$}. Thus, in the $b$-tangent bundle, we retain some knowledge of the direction inside $\bar M$ from which we are approaching $\partial M$ (see the related construction of the $b$-groupoid of \cite{monthubert_2003_groupoids_corners}). In particular, for $q\in\partial M$, $\bT_q\bar{M}$ is \textit{not} $T_q\partial M$ but has the same dimension as the fibres inside.

This informal discussion can be made precise by saying that $\bT\bar{M}$ is a \textit{Lie algebroid}. This is a vector bundle $E$ over $\bar{M}$, together with a \textit{Lie bracket on sections} $[\cdot,\cdot]\colon\Gamma(E)^{\tens 2}\to\Gamma(E)$ (skew-symmetric+Jacobi identity), and an \textit{anchor}, that is, a vector bundle map $\varrho\colon E\to T\bar M$. This datum has to satisfy the ``anchored Leibniz rule'': for all $V,W\in\Gamma(E)$, and all $f\in\Cinf(\bar{M})$ we must have
\begin{equation}\label{eq:anchored leibniz rule}
	[V,fW]=(\varrho(V)f)W+f[V,W],
\end{equation}
in particular $\varrho$ is a Lie algebra homomorphism. For $\bT\bar{M}$, the sections are the sub-Lie algebra $\VF_b(\bar{M})$ of $\VF(\bar{M})$, however the anchor is not trivial: $\varrho$ is the evaluation map in $T\bar M$. In a chart $U$ with local coordinates $(x,y)$ and for a vector field $X$ with components $(a,b^1,\dots b^{n-1})$ with respect to the frame $(x\partial_x,\partial_{y^1},\dots,\partial_{y^{n-1}})$ of $\bT U$, we have $\varrho(X)=(xa,b^{1},\dots,b^{n-1})$ with respect to the ordinary frame $(\partial_x,\partial_{y^1},\dots,\partial_{y^{n-1}})$ of $T U$. Thus, at interior points $\varrho$ is an isomorphism of vector bundles, while at boundary points the anchor map has nontrivial kernel and cokernel, both of dimension $1$. Geometrically, one sees that $\varrho(\bT \bar{M})=TM\dot\cup T\partial M.$

The $b$-cotangent bundle is $\b{\Tstar}\bar{M}$, the dual bundle (also in the sense of Lie algebroids) to $\bT \bar{M}$. Its sections are so-called $b$-one-forms, $\b{\,\Omega^1(\bar{M})}$, and are written locally as smooth linear combinations of $\de y^j$ and $\frac{\de x}{x}$. Clearly, the notation $\frac{\de x}{x}$ is partially formal, since these objects are not smooth on the whole of $\bar{M}$ (unless the coefficient of $\frac{\de x}{x}$ vanishes at the boundary). Thus $b$-1-forms are not ordinary 1-forms. However, they can always be paired with a $b$-vector field, since the $x$ in the denominator ``cancels out'' with $x\partial_x$. On the other hand, any 1-form is a $b$-one-form.

Notice that, by duality, the ordinary cotangent bundle inherits a map into $\b{\Tstar \bar{M}}$. More specifically, the transpose of the anchor map, $\varrho^t$, is a smooth vector bundle map $\Tstar \bar{M}\rightarrow \b{\Tstar \bar{M}}$ whose image can be identified with $\Tstar M\dot\cup \Tstar \partial M$. In canonical coordinates $(x,y,\zeta,\eta)$ on $T^\ast \bar M$ (that is, $\zeta$ is the dual variable to $x$ and $\eta$ those to $y$), this is given by $\varrho^t(x,y,\zeta,\eta)=(x,y,\xi=x\zeta,\eta)$, so that the image of a section of $\Tstar \bar{M}$ in $\b{\Tstar \bar{M}}$ is obtained by ``multiplying and dividing by $x$''. For $A\subset \Tstar\bar{M}$, we denote by $\tilde{A}=\varrho^t(A)\subset\b\Tstar\bar{M}$, in particular $\tilde{\Tstar}\bar{M}$ is the image of $\varrho^t$. We call $\tilde{A}$ the \textit{compressed} $A$, for example $\tilde\Tstar\bar{M}=\Tstar{M}\cup T^\ast\partial M$ is the compressed cotangent bundle of $\bar{M}$, as it appears in the main body of the article.

As a second description of $\bT\bar{M}$ and $\b\Tstar\bar{M}$, recall that, for a boundaryless manifold $X$, there is a natural equivalence of vector bundles $\nu^\ast\diag\simeq T^\ast X$, the first as a bundle over the diagonal in $X\times X$ and the second over $X$. Drawing a parallel, one can define $\bT\bar{M}$ to be the normal bundle to the $b$-diagonal $\diag_b$ in $\bar{M}^2_b$ and $\b\Tstar\bar{M}$ as its conormal bundle. The equivalence of these two definitions can be seen using the various projections and blow-down maps over $T(\bar{M}^2_b)$ and can be found in \cite{hormander1994analysispseudodifferential}, pp.129--130.

In terms of symbols, operators in the small $b$-calculus can be defined in at least two equivalent pictures. In the first, one quantises \textit{lacunary} symbols $S_{la}$. These are smooth, compactly $x-$ and $y-$supported functions $a(x,y,\xi,\eta)$ on $\bar{\R^n_+}_{x,y}\times\R^n_{\xi,\eta}$ satisfying symbol estimates
\begin{equation}\label{eq:lacunary symbol estimates}
	|\partial_{x,y}^\alpha\partial_{\xi,\eta}^\beta a(z,\zeta)|\leq C_{\alpha,\beta}\braket{\zeta}^{m-|\beta|},
\end{equation}
and the \textit{lacunary condition}, namely, that the Fourier transform $\FT_{\xi\mapsto t}a$ vanishes for $t\le -1,x\ge 0$:
\begin{equation}
	\label{eq:lacunary condition}
	\int_\R e^{-\im t\xi}a(x,y,\xi,\eta)\de\xi=0,\quad t\le-1,\,x\geq 0.
\end{equation}
One quantises a lacunary symbol $a$ by applying left-quantisation to the pull-back of $a$ under the transposed anchor map $\varrho^t$: for $u\in\Cinf_c(\bar{\R^n_+})$ we have $(\varrho^t)^\ast a(x,y,\xi,\eta)=a(x,y,x\xi,\eta)$ and set therefore
\begin{equation}\label{eq:left-quantised lacunary symbol}
	\Op_{la}(a)u(x,y)=(2\pi)^{-n/2}\int_{\R^n} e^{\im (x\xi+y\eta)}a(x,y,x\xi,\eta)\hat u(\xi,\eta)\de \xi\de\eta.
\end{equation}

There is an alternative point of view, via so-called $b$-symbols as done by \textcite{hintz_2016_quasilinear_wave}. In fact, his class, quantised via his $\Op_b$ map, and the lacunary class as above produce the same operator classes, after passage to the logarithmic coordinate $x=-\log t$. We leave this quick check to the reader.

We finally introduce the class of $b$-operators.
\begin{definition}\label{def:b-operators}
	For a manifold with boundary $\bar{M}$, the classical $b$-operators $\Psi^m_b(\bar{M})$ are the properly supported operators $A\colon\Cinf_c(\bar{M})\to\Cinf(\bar M)$ such that, equivalently:
	\begin{enumerate}
		\item $A$ has a distributional kernel $k$ obtained as the push-forward to $\bar{M}^2$ of a distribution $K$ on $\bar{M}^2_b$, conormal to $\diag_b$ of order $m-n/4$, vanishing to infinite order at $\mathrm{lb}/\mathrm{rb}$, and one-step polyhomogeneous;
		\item $A$ is obtained as the quantisation $\Op_{la}(a)$ of a one-step polyhomogeneous symbol $a\in \sym{m}(\b\Tstar\bar{M})$ satisfying the lacunary condition \eqref{eq:lacunary condition} in each boundary chart.
	\end{enumerate}
\end{definition}
As customary, when writing down local expressions, we adopt the notation $D_j=-\im \partial_j$ for the ``Fourier derivative'', $\b{D}=(xD_x,D_y)$ for the ``$b$-Fourier nabla'' and, for a constant-coefficient symbol $\psi\in \sym{}_b(\bar{M})$, $\psi(\b{D})$ for the $b$-operator defined by the $b$-quantisation of the function $\psi$.

In our class, every $b$-operator has a well-defined principal symbol $\sigma_{pr}(A)$, a smooth homogeneous function on $\b\Tstar \bar{M}\setminus O$ of degree $m$, encoding the behaviour of the operator up to symbols of degree $m-1$. It descends to the quotient by the $\R^+$ action by dilations to give a smooth function $a_m\in\Cinf(\bSs{\bar{M}})$, where the cosphere bundle is defined as the set of \textit{oriented directions} in $\b{\Tstar \bar{M}}$ and bears no relation to the metric.

We have the following continuity properties of $b$-operators.
\begin{lemma}[cf. \textcite{melrose1982transformationofboundaryproblems}]\label{lemma:continuity of b-operators on distributions}
	Let $A\in \Psi^m_b(\bar{M})$. Then $A$ extends to a continuous operator on $\dot\Ascr(\bar{M})$ and on $\Ascr'(\bar{M})$. Furthermore, for each $k\in \Z$, $A$ extends to continuous operators from $\dot\sob^k_{\comps}(\bar{M})\to\dot\sob^{k-m}_{\locs}(\bar{M})$ and $\sob_{\comps}^k(\bar M)\to\sob_{\locs}^{k-m}(\bar M)$.
\end{lemma}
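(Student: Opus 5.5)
The plan is to deduce the statement from the continuity of $b$-operators on the smooth spaces and on the ordinary Sobolev scale, working locally in boundary charts. Since $A$ is properly supported, a partition of unity reduces everything to the model case $\bar{\R^n_+}$ with $A=\Op_{la}(a)$ as in \eqref{eq:left-quantised lacunary symbol}, $a$ compactly supported in $(x,y)$. Interior pieces are ordinary pseudo-differential operators, for which all claims are classical.

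For $\dot\Ascr(\bar M)$ and $\Ascr'(\bar M)$ I would use the weighted $b$-Sobolev description recalled in \cref{sect:distributions on manifolds with boundary}: $\Ascr_K=\bigcup_\alpha x^\alpha\sob^\infty_{b,K}$ and dual-conormal distributions lie in $\bigcup_\alpha x^\alpha\sob^{-\infty}_b$. The first step is the standard bound $A\colon x^\alpha\sob^s_b\to x^\alpha\sob^{s-m}_b$ for every $\alpha,s$. To prove it, I would use that $x^{-\alpha}Ax^\alpha$ is again in $\Psi^m_b$ (conjugation by a power of $x$ shifts the Mellin variable by $\im\alpha$, cf.\ \cref{lemma:properties of Mellin transform}(1), and the lacunary condition preserves the symbol class). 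Then $L^2_b$-boundedness of order-zero $b$-operators, obtained via the Mellin--Fourier isometry of \cref{lemma:properties of Mellin transform}(2) and a Hörmander square-root argument, gives the claim. Continuity on $\dot\Ascr$ then follows by taking the union over $\alpha$ and the intersection over $s$, with seminorms $\|(x\partial_x)^k\partial_y^\beta u\|$. Continuity on $\Ascr'$ follows by duality, because $\Ascr'=(\dot\Ascr_\comps)'$ and the formal adjoint $A^\ast$ is again in $\Psi^m_b$ and properly supported.

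For the ordinary Sobolev spaces with $k\in\Z$, I would first treat $k\ge0$. Here the key identity is the commutation relation $\partial_x\Op_{la}(a)=\Op_{la}(a')\partial_x+\Op_{la}(a'')$ with $a',a''$ lacunary of order $\le m$. Computing, $\partial_x$ applied to $a(x,y,x\xi,\eta)$ produces $\partial_xa+\xi\partial_\xi a$, and $\xi\partial_\xi a$ has order $m$. Iterating this reduces the estimate on $\sob^k$ to $L^2$-boundedness of order-$m$ operators from $\sob^m$-type spaces. The case $k\ge0$ is the main obstacle, because $\sob^k$ is not a $b$-space.

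The negative $k$ case and the supported spaces $\dot\sob^k$ then follow by duality via \cref{lemma:domains of powers of Laplacian}(3), $\dot\sob^s=(\sob^{-s})'$, applied to $A^\ast$. Lacunarity is exactly what ensures that $A$ preserves supports in $\bar M$ (its kernel vanishes for $x'>ex$ in log coordinates), so $\dot\sob$ is mapped to $\dot\sob$.
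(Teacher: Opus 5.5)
The paper gives no proof of this lemma, only a pointer to Melrose, so I am judging your sketch on its own merits.

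\textbf{The Sobolev half is false for $m<0$, so no proof can work there.} A $b$-operator of negative order gains no regularity in the normal direction at $\partial M$, because in $\Op_{la}(a)$ the symbol is evaluated at $x\xi$. A counterexample already exists on $[0,\infty)$. Let $Au(x)=\chi(x)\int_0^\infty\theta(s)\,u(sx)\,\de s$, with $\chi\in\Cinf_c([0,\infty))$ equal to $1$ near $0$, and $0\le\theta\in\Cinf_c((1/2,2))$, $\theta\not\equiv0$.
\begin{itemize}
\item Written as $\int k(x,t)\,u(x(1+t))\,\de t$, the kernel is $k=\chi(x)\theta(1+t)$. It is smooth, compactly supported and vanishes for $t\le-1$, so $A\in\Psi^{-\infty}_b\subset\Psi^m_b$ for every $m$.
\item Take $u=\chi_1$ with $\chi_1=1$ near $0$. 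Then $u\in\dot\sob^0_\comps$, but $Au$ is a nonzero constant near $0$. Hence $Au\notin\dot\sob^1_\locs$, let alone $\dot\sob^{-m}_\locs$ for all $m$.
\item Take $u=\chi_1x^{-1/4}$. Then $u\in\sob^0_\comps$, but $Au=c\,x^{-1/4}$ near $0$ with $c>0$, so $Au\notin\sob^1_\locs$.
\end{itemize}
This matches the appendix remark that $b$-smoothing operators need not even be compact on $L^2$.

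In your reduction the failure appears at the following point. After commuting $\partial^\alpha$, $|\alpha|\le k-m$, through $A$, you must put $A_{\alpha,\beta}\partial^\beta u$ into $L^2$ with $|\beta|>k$. Then $\partial^\beta u$ lies only in a negative-order ordinary Sobolev space, and $\Psi^m_b$ does not map it into $L^2$.

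What your scheme does prove is $\sob^k_\comps\to\sob^{k-m}_\locs$ for integers $k\ge m\ge0$. It uses $\sob^j_\comps\subset x^{-1/2}\sob^j_b$ together with $\partial_x\Op_{la}(a)=\Op_{la}(a-\im\partial_\xi a)\partial_x+\Op_{la}(\partial_x a)$. Note that the chain-rule term $\xi\,(\partial_\xi a)(x,y,x\xi,\eta)$ must be moved onto $\partial_x$; left inside $a''$ it carries a factor $x^{-1}$. For $m<0$ only $\sob^k\to\sob^k$ survives, via $\Psi^m_b\subset\Psi^0_b$. So the lemma should be stated for $m\ge0$; the rest of the paper only uses the action on distributions.

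\textbf{Two further steps need repair even for $m\ge0$.}

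First, the duality bookkeeping does not close. From $A^t\colon\sob^j_\comps\to\sob^{j-m}_\locs$ with $j\ge0$, duality yields $A\colon\dot\sob^l_\comps\to\dot\sob^{l-m}_\locs$ only for $l\le m$. But $\sob^k$ with $k<0$ is dual to $\dot\sob^{-k}$, which would require $A^t$ on $\dot\sob^{m-k}$ with $m-k>m$. So the positive-index supported spaces need a direct argument. One option is the Hardy description, for compact support and integer $k\ge0$:
\[
\dot\sob^k_\comps=\{u\colon x^{-(k-|\alpha|)}(x\partial_x)^j\partial_y^\alpha u\in L^2 \text{ for } j+|\alpha|\le k\}.
\]
This space is preserved using $x^{-c}Ax^{c}\in\Psi^m_b$ and the commutators of $A$ with $x\partial_x$ and $\partial_y$.

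Lacunarity is not the mechanism that puts $Au$ into $\dot\sob$. In the paper's normalisation it only guarantees that $\Op_{la}(a)u(x,\cdot)$ involves $u(x(1+t),\cdot)$ with $t>-1$, i.e. that $A$ acts on functions on $\bar M$ at all. Membership in $\dot\sob^{k-m}$ is a question of vanishing order at $\partial M$, which is exactly what fails for $m<0$.

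Second, $\bigcup_\alpha x^\alpha\sob^\infty_b$ characterises the extensible space $\Ascr$, not $\dot\Ascr$. The latter also contains $\delta^{(j)}(x)\otimes\phi(y)$, which vanishes on $M$. On such elements $A$ must be defined by transposition, $\braket{Au,\psi}=\braket{u,A^t\psi}$ with $\psi\in\Cinf(\bar M)$. You then have to show $A(\delta^{(j)}\otimes\phi)=\sum_{i\le j}\delta^{(i)}\otimes B_{ij}\phi$ with $B_{ij}$ pseudo-differential on $\partial M$. This follows because the boundary Taylor coefficients of $A^t\psi$ come from those of $\psi$ via indicial-type operators. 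Your duality step for $\Ascr'=(\dot\Ascr_\comps)'$ uses continuity of $A^t$ on precisely this supported space, so it inherits the gap.
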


The concept of ellipticity can now be defined: a classical $b$-operator is \textit{elliptic} if its principal symbol does not vanish anywhere on $\b{\Tstar \bar{M}}\setminus O$. Elliptic operators admit parametrices, namely inverses modulo $\Psi_b^{-\infty}$. Here the main difference with the boundaryless case appears: a smoothing operator is \textit{not} necessarily compact on $L^{2}(\bar{M})$. Thus, a more precise analysis is required to achieve, for example, the Fredholm property of elliptic elements. We refer here to more specialised literature, e.g. \cite{melrose_atiyah-patodi-singer_1993}, for a discussion.

For the purpose of microlocal analysis, one usually introduces various subsets of $\bTs \bar{M}$ associated with a $b$-pseudo-differential operator.
\begin{definition}\label{def:elliptic,characteristic,microsupport}
	Let $(q_0,p^0)\in\bTs\bar{M}$ and $B\in\Psi_b(\bar{M})$ be an operator with principal symbol $b_0$ and full symbol $b$ (in some local coordinate system around $(q_0,p^0)$). We say
	\begin{enumerate}
		\item $(q_0,p^0)$ is in the \textit{elliptic set} $\Ell_b(B)$ of $B$ if $b_0(q_0,p^0)\neq 0$;
		\item $(q_0,p^0)$ is in the \textit{characteristic set} $\Char_b(B)$ of $B$ if $b_0(q_0,p^0)= 0$;
		\item $(q_0,p^0)$ is \textit{not} in the \textit{microsupport} $\mu-\supp_b(B)$ of $B$ if $b\in S^{-\infty}(\Gamma)$ in some open conic neighbourhood $\Gamma$ of $(q_0,p^0)$.
	\end{enumerate}
\end{definition}
We conclude with the microlocal partition of unity.
\begin{lemma}[cf. \cite{melrose1982transformationofboundaryproblems}, Proposition 8.16]
	For a finite open cover $U_1,\dots, U_k$ of $\bSs{\bar{M}}$ we can find $B_1,\dots, B_k\in\Psi_b(\bar{M})$ such that $\mu-\supp(B_j)\subset U_j$ and $\Id-\sum_{j=1}^k B_j\in\Psi^{-\infty}_b(\bar{M})$.
\end{lemma}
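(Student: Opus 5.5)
The plan is to build the $B_j$ at the symbol level, quantize them chart by chart with $\Op_{la}$, and check that the remainder is residual.

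First I fix a partition of unity $\{\phi_j\}_{j=1}^k$ on $\bSs{\bar M}$ subordinate to $U_1,\dots,U_k$, so $\supp\phi_j\subset U_j$ and $\sum_j\phi_j=1$. If $\bar M$ is noncompact, I take this partition locally finite in the base, which is possible since $\bSs{\bar M}$ is fibrewise compact. Each $\phi_j$ is pulled back to a $0$-homogeneous function on $\bTs\bar M\setminus O$. I then multiply by $1-\kappa(\zeta)$, where $\kappa$ is a cutoff equal to $1$ near the zero section. This gives symbols $a_j\in\sym{0}(\bTs\bar M)$, one-step polyhomogeneous, with $\sum_j a_j=1-\kappa\in\sym{-\infty}$ near infinity in the fibres.

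Next I localize in the base. I choose a locally finite cover of $\bar M$ by coordinate patches $\Omega_\alpha$; boundary patches are adapted charts $(x,y)$ on which $\bTs$ trivializes with frame $\frac{\de x}{x},\de y$. On these I take a partition of unity $\chi_\alpha$, together with cutoffs $\tilde\chi_\alpha\in\Cinf_c(\Omega_\alpha)$ with $\tilde\chi_\alpha\equiv1$ near $\supp\chi_\alpha$. On each patch I would set
\begin{equation*}
B_{j}^{\alpha}=M_{\chi_\alpha}\,\Op_{la}(a_j^\alpha)\,M_{\tilde\chi_\alpha},
\end{equation*}
where $a_j^\alpha$ is the local expression of $a_j$, modified so that it is lacunary. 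This is the step where I invoke Hörmander's construction, already used in the proof of \cref{theo:b-wave front set and pseudos}: a symbol in $\sym{0}$ can be changed by an element of $\sym{-\infty}$ to satisfy \eqref{eq:lacunary condition}. The point to record is that such a change does not alter the symbol modulo $\sym{-\infty}$. Consequently, outside $\supp\phi_j$ the symbol $a_j^\alpha$ still lies in $\sym{-\infty}$, which gives $\mu\text{-}\supp(B_j^\alpha)\subset U_j$. The operators $B_j^\alpha$ are properly supported by construction, and I set $B_j=\sum_\alpha B_j^\alpha$, a locally finite sum.

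Finally I would check that $\Id-\sum_jB_j\in\Psi^{-\infty}_b(\bar M)$. Using the composition rules of the small calculus, the full symbol of $M_{\chi_\alpha}\Op_{la}(a)M_{\tilde\chi_\alpha}$ is $\chi_\alpha a$ modulo $\sym{-\infty}$. The reason is that $\tilde\chi_\alpha\equiv1$ near $\supp\chi_\alpha$, so the remaining kernel is supported away from $\diag_b$, smooth there, and vanishes to infinite order at the side faces; it is therefore residual. Summing over $j$ and $\alpha$, the full symbol of $\sum_jB_j$ is
\begin{equation*}
\sum_\alpha\chi_\alpha(1-\kappa)=1-\kappa \mod \sym{-\infty},
\end{equation*}
which agrees with the symbol of $\Id$ modulo $\sym{-\infty}$.

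The main obstacle I expect is invariance. The $\sym{-\infty}$ lacunary correction and the coordinate-dependent quantization must respect both the microsupport and the residual class across charts. I would handle this through the kernel picture of \cref{def:b-operators}: conormality to $\diag_b$ on $\bar M^2_b$ and residual errors are coordinate-invariant notions, and the principal symbol, as well as the "$\sym{-\infty}$ in a cone" property, transform as functions on $\bTs\bar M$.
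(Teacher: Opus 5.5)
Your construction is correct, and it is the standard proof behind Melrose's Proposition 8.16, which the paper quotes without giving any argument of its own: you quantise, chart by chart, a conic partition of unity on $\bSs\bar M$ after correcting it by $\sym{-\infty}$ to be lacunary. The invariance worry is easiest to settle by summing over $j$ inside each chart first, so that $\sum_j B_j^\alpha=M_{\chi_\alpha}\Op_{la}(1+r_\alpha)M_{\tilde\chi_\alpha}=M_{\chi_\alpha}+R_\alpha$, where $r_\alpha\in\sym{-\infty}$ is lacunary and $R_\alpha$ has a kernel smooth on $\bar M^2_b$ vanishing to infinite order at $\mathrm{lb},\mathrm{rb}$; this is a coordinate-free condition, so summing over $\alpha$ gives $\Id$ modulo $\Psi^{-\infty}_b(\bar M)$ without transporting full symbols between charts (also, $1-\kappa$ equals $1$ near fibre infinity, so it is not in $\sym{-\infty}$ as you wrote; it is $1$ modulo $\sym{-\infty}$).
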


\printbibliography

\end{document}